\numberwithin{equation}{section}
\theoremstyle{plain}
\newtheorem{thm}{Theorem}[section]
\newtheorem{lem}[thm]{Lemma}
\newtheorem{prop}[thm]{Proposition}
\newtheorem{cor}[thm]{Corollary}
\newtheorem{obs}[thm]{Observation}
\newtheorem{letterthm}{Theorem}
\theoremstyle{definition}
\newtheorem{defn}[thm]{Definition}
\newtheorem*{defn*}{Definition}
\newtheorem{exmp}[thm]{Example}
\newtheorem{rem}[thm]{Remark}
\newtheorem*{example}{Example}
\newtheorem{claim}[thm]{Claim}
\newcommand{\C}{\mathbb{C}}
\newcommand{\G}{\mathbf{G}}
\newcommand{\bH}{\mathbf{H}}
\newcommand{\N}{\mathbb{N}}
\newcommand{\Q}{\mathbb{Q}}
\newcommand{\R}{\mathbb{R}}
\newcommand{\Z}{\mathbb{Z}}
\newcommand{\cF}{\mathcal{F}}
\newcommand{\cM}{\mathcal{M}}
\newcommand{\cN}{\mathcal{N}}
\newcommand{\cS}{\mathcal{S}}
\newcommand{\cU}{\mathcal{U}}
\newcommand{\cZ}{\mathcal{Z}}
\newcommand{\tpi}{{\widetilde{\pi}}}
\newcommand{\txi}{{\widetilde{\xi}}}
\newcommand{\tsigma}{{\widetilde{\sigma}}}
\newcommand{\ttheta}{{\widetilde{\theta}}}
\newcommand{\tC}{{\widetilde{C}}}
\newcommand{\tE}{{\widetilde{E}}}
\newcommand{\tH}{{\widetilde{H}}}
\newcommand{\tM}{{\widetilde{M}}}
\newcommand{\tN}{{\widetilde{N}}}
\newcommand{\oH}{{\overline H}}
\newcommand{\Tr}{\operatorname{Tr}}
\newcommand{\tr}{\operatorname{tr}}
\newcommand{\Stab}{\operatorname{Stab}}
\newcommand{\Fix}{\operatorname{Fix}}
\newcommand{\conv}{\operatorname{conv}}
\newcommand{\ot}{\otimes}
\newcommand{\ovt}{\mathbin{\overline{\otimes}}}
\newcommand{\Aut}{\operatorname{Aut}}
\newcommand{\Hom}{\operatorname{Hom}}
\newcommand{\Ad}{\operatorname{Ad}}
\newcommand{\id}{\operatorname{id}}
\newcommand{\GL}{\operatorname{GL}}
\newcommand{\PGL}{\operatorname{PGL}}
\newcommand{\EL}{\operatorname{EL}}
\newcommand{\SL}{\operatorname{SL}}
\newcommand{\U}{\operatorname{U}}
\newcommand{\Prob}{\operatorname{Prob}}
\newcommand{\Sub}{\operatorname{Sub}}
\newcommand{\bary}{\operatorname{Bar}}
\newcommand{\rank}{\operatorname{rank}}
\newcommand{\Rad}{\operatorname{Rad}}
\newcommand{\dpr}{^{\prime\prime}}
\newcommand{\dd}{{\,\mathrm d}}
\newcommand{\eps}{\varepsilon}
\newcommand{\actson}{\curvearrowright}
\newcommand{\rC}{\operatorname{C}}
\newcommand{\Har}{\operatorname{Har}}
\newcommand{\Char}{\operatorname{Char}}
\newcommand{\PD}{\operatorname{PD}}
\begin{document}

\title[Charmenability of arithmetic groups of product type]{Charmenability of arithmetic groups of product type}

\author{Uri Bader}
\address{Faculty of Mathematics and Computer Science \\ The Weizmann Institute of Science \\ 234 Herzl Street \\ Rehovot 7610001 \\ ISRAEL}
\email{bader@weizmann.ac.il}
\thanks{UB is supported by ISF Moked 713510 grant number 2919/19}

\author{R\'emi Boutonnet}
\address{Institut de Math\'ematiques de Bordeaux \\ CNRS \\ Universit\'e Bordeaux I \\ 33405 Talence \\ FRANCE}
\email{remi.boutonnet@math.u-bordeaux.fr}
\thanks{RB is supported by a PEPS grant from CNRS and ANR grant AODynG, 19-CE40-0008}

\author{Cyril Houdayer}
\address{Universit\'e Paris-Saclay \\ Institut Universitaire de France \\ CNRS \\ Laboratoire de math\'ematiques d'Orsay \\ 91405 \\ Orsay  \\ FRANCE }
\email{cyril.houdayer@universite-paris-saclay.fr}
\thanks{CH is supported by Institut Universitaire de France and FY2019 JSPS Invitational Fellowship for Research in Japan (long term)}

\author{Jesse Peterson}
\address{Department of Mathematics \\ Vanderbilt University \\ 1326 Stevenson Center \\ Nashville \\ TN 37240 \\ USA}
\email{jesse.d.peterson@vanderbilt.edu}
\thanks{JP is supported by NSF Grant DMS \#1801125 and NSF FRG Grant \#1853989}

\subjclass[2010]{22D10, 22D25, 22E40, 37B05, 46L10, 46L30}
\keywords{Arithmetic groups; Characters; Irreducible lattices; Poisson boundaries; Semisimple algebraic groups; Tree automorphism groups; Unitary representations; von Neumann algebras}

\begin{abstract}
We discuss special properties of the spaces of characters and positive definite functions, as well as their associated dynamics,
for arithmetic groups of product type.
Axiomatizing these properties, we define the notions of {\em charmenability} and {\em charfiniteness} and study their applications to the
topological dynamics, ergodic theory and unitary representation theory of the given groups.
To do that, we study singularity properties of equivariant normal ucp maps between certain
von Neumann algebras.
We apply our discussion also to groups acting on product of trees.  
\end{abstract}

\maketitle

%%%%%%
\section{Introduction and statements of the main results}

For a countable discrete group $\Gamma$, we consider the convex set $\PD_1(\Gamma)\subset \ell^\infty(\Gamma)$ consisting of normalized positive definite functions and endow it with the weak$^*$-topology (which coincides with the topology of pointwise convergence)
and the $\Gamma$-action associated with the conjugation action of $\Gamma$ on itself.
This is a compact convex $\Gamma$-space.
Its compact convex subset consisting of $\Gamma$-fixed points is denoted by $\Char(\Gamma)$ and its elements are called {\em characters}\footnote{Beware that in some texts the term ``character" is reserved for an extreme point in this set.} of $\Gamma$. The GNS representation $(\pi, H, \xi)$ associated with a character $\phi \in \Char(\Gamma)$ generates a tracial von Neumann algebra $M = \pi(\Gamma)\dpr$. Then $\phi \in \Char(\Gamma)$ is an extremal character if and only if $M = \pi(\Gamma)\dpr$ is a factor, that is, a von Neumann algebra with trivial center.

The problem of the classification of characters of higher rank lattices has seen important progress in the last fifteen years. It has also attracted a lot of attention because of its connection with the theory of Invariant Random Subgroups (IRS) (see e.g.\ \cite{7s12, AGV12, Ge14}). Bekka \cite{Be06} obtained a complete classification of characters of $\SL_n(\Z)$ for $n \geq 3$. This result was later extended by Peterson \cite{Pe14} to all higher rank lattices with property (T). Recently, Boutonnet-Houdayer \cite{BH19} strengthened these results and obtained a complete classification of {\em stationary characters} of higher rank lattices in simple Lie groups. 
We refer to \cite{CP13, PT13, Be19, BeF20, LL20} for other classification results for characters.

%The goal of the present paper is two-fold. Firstly, we introduce the notion of {\em charmenability} for arbitrary countable discrete groups and develop a general framework to prove structure theorems pertaining to C$^*$-algebras associated with unitary representations and topological dynamics associated with minimal actions of {\em arbitrary} countable discrete groups. Secondly, we develop new techniques to study the noncommutative ergodic theory of irreducible lattices in products

Before stating our main theorems, we first introduce some terminology.

\begin{defn}
A character $\phi$ on $\Gamma$ is called {\em amenable} if the corresponding GNS representation $(\pi,H)$ is amenable in the sense of \cite{Be89}, that is, $\pi \otimes \overline \pi$ weakly contains the trivial representation. It is called {\em von Neumann amenable} if $\pi(\Gamma)\dpr$ is moreover an amenable von Neumann algebra. It is called {\em finite} if $H$ is finite dimensional.
\end{defn}

Note that if $\Gamma$ is amenable, then any $\Gamma$-invariant compact convex subset of $\PD_1(\Gamma)$ contains a character, and every character of $\Gamma$ is von Neumann amenable. Conversely, a non-amenable group always contains a character that is not von Neumann amenable, namely the regular character $\delta_e$. In fact, if $\Gamma$ is non-amenable, any character supported on the amenable radical of $\Gamma$ is not amenable.

\begin{defn} \label{def:charmenable}
The group $\Gamma$ is said to be {\em charmenable} if it satisfies the following two properties:
\begin{enumerate}
\item Every non-empty compact convex $\Gamma$-invariant subset of $\PD_1(\Gamma)$ contains a character.
\item Every extremal character of $\Gamma$ is either supported on the amenable radical $\Rad(\Gamma)$ or von Neumann amenable.
\end{enumerate}
Moreover, $\Gamma$ is said to be {\em charfinite} if it also satisfies the following properties:
\begin{enumerate} \setcounter{enumi}{2}
\item $\Rad(\Gamma)$ is finite.
\item $\Gamma$ has a finite number of isomorphism classes of unitary representations in each given finite dimension.
\item Every amenable extremal character of $\Gamma$ is finite. 
\end{enumerate}
\end{defn}

As we will see in \S\ref{sec:charamenable}, charmenable and \ charfinite groups enjoy
remarkable properties pertaining to the structure of C$^*$-algebras associated with their unitary representations and the stabilizer structure of their ergodic and topological actions. In particular, we will see the following (see \cite{GW14} for the notion of URS):
\begin{itemize}
\item For any charmenable group $\Gamma$ with trivial amenable radical, every URS carries a $\Gamma$-invariant Borel probability measure and every non-amenable unitary $\Gamma$-representation weakly contains the left regular representation.
\item Furthermore, for any charfinite group $\Gamma$, every URS and every ergodic IRS is finite. If $\Gamma$ moreover has property (T) and trivial amenable radical, then every weakly mixing representation weakly contains the left regular representation.
\end{itemize}

%We note that for a charmenable group every non-amenable character is supported on the amenable radical
%and for a charfinite group the amenable radical is actually finite, see \ref{123}.

The main results of the present paper prove charmenability or charfiniteness for certain arithmetic groups of product type. In view of the above remarks, these extend results obtained in \cite{BH19} for lattices in higher rank connected simple Lie groups (see for example \cite[Theorem D, Corollary F]{BH19}). To help distinguish between the two papers, let us specify that the strategy of \cite{BH19} based on a Nevo-Zimmer type theorem collapses for lattices in semi-simple groups in the presence of a rank one factor. This difficulty brings us to develop more abstract tools on non-commutative ergodic theory, leading naturally to the general notions of charmenable and charfinite groups. On the other hand, the approach of the present paper is inapplicable to lattices in simple groups.

Let us recall some definitions on arithmetic groups.

\begin{defn} \label{def:arith}
Let $K$ be a global field and ${\bf G}$ a connected non-commutative $K$-almost simple $K$-algebraic group.
Let $S$ be a (possibly empty, possibly infinite) set of non-archimedean inequivalent absolute values on $K$,
let $\mathcal{O}<K$ be the ring of integers and let $\mathcal{O}_S$ the corresponding localization,
that is,
\[ \mathcal{O}_S = \{ \alpha\in K\mid \forall s\in S, s(\alpha) \leq 1\}.  \]
Fix an injective $K$-representation $\rho:{\bf G} \to \GL_n$ and denote 
\[ \Lambda_S=\rho^{-1}(\GL_n(\mathcal{O}_S))\le {\bf G}(K). \]
The triple $(K,{\bf G},S)$ is said to be
\begin{itemize}
\item \emph{of a compact type}
if for every absolute value $v$ on $K$, the image of $\Lambda_S$ in ${\bf G}(K_v)$ is bounded,
\item \emph{of a simple type}
if there exists a unique absolute value $v$ on $K$ such that the image of $\Lambda_S$ in ${\bf G}(K_v)$ is unbounded
%and over $K_v$, ${\bf G}$ has a unique isotropic simple factor
\item and \emph{of a product type} otherwise.
%
% \emph{of non-simple type}
%\item either 
%there exist at least two distinct places $v_1$ and $v_2$ of $K$ such that the image of $\Lambda$ in each ${\bf G}(K_{v_i})$ is unbounded
%\item or
%there exists a place $v$ of $K$ such that extension of scalars of ${\bf G}$ to $K_v$ has at least two distinct $K_v$-simple factors ${\bf G}_1$
%and ${\bf G}_2$ such that the image of $\Lambda$ in each ${\bf G}_i(K_v)$ is unbounded.
\end{itemize}
The triple $(K,{\bf G},S)$ is said to be \emph{of higher rank} if it is either of a product type or of a simple type and $\rank_{K_v}({\bf G}) \geq 2$.

A subgroup $\Gamma\le {\bf G}(K)$ is called \emph{$S$-arithmetic} 
if it is commensurable with $\Lambda_S$.
It is called \emph{arithmetic} if it is $S$-arithmetic for some $S$ as above
and we regard its type as the type of $(K,{\bf G},S)$.
\end{defn}

\begin{example}
Let $K = \Q$, ${\bf G} = \SL_n$ for $n \geq 2$ and $S \subset \mathcal P$ a (possibly empty, possibly infinite) set of primes. If $S \neq \emptyset$, then $\SL_n(\Z_S) \leq \SL_n(\Q)$ is an $S$-arithmetic group of product type.
\end{example}

\begin{letterthm} \label{thm:AG}
Let $K$ be a global field and ${\bf G}$ a connected non-commutative $K$-almost simple $K$-algebraic group.
If $\Gamma\leq {\bf G}(K)$ is an arithmetic subgroup of a product type
then $\Gamma$ is charmenable.

Assume further that there exists an absolute value $v$ on $K$ such that ${\bf G}(K_v)$ has property {\em (T)} and for which the image of $\Gamma$ in ${\bf G}(K_v)$ is unbounded.
If either $S$ is finite or ${\bf G}$ is simply connected then 
$\Gamma$ is charfinite.
\end{letterthm}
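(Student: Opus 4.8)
The plan is to reduce both assertions to the boundary theory of the ambient product group, extending the single-factor analysis of \cite{BH19} to products by means of the singularity dichotomy for equivariant ucp maps. By the definition of product type, the set $V$ of places $v$ at which the image of $\Gamma$ in $\bG(K_v)$ is unbounded satisfies $|V|\ge 2$, and $\Gamma$ sits as an irreducible lattice in the locally compact group $G=\prod_{v\in V}\bG(K_v)$, each factor $G_v:=\bG(K_v)$ being non-compact and non-amenable. I fix an admissible probability measure $\mu=\prod_{v\in V}\mu_v$ on $G$ with Poisson boundary $(B,\nu)$, which factors as $B=\prod_{v\in V}B_v$ with $B_v$ the Furstenberg--Poisson boundary of $(G_v,\mu_v)$. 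The only structural facts I use for charmenability are that $\Gamma\curvearrowright B$ is amenable and, by irreducibility, that $\Gamma\curvearrowright B\times B$ is ergodic.

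For charmenability I verify conditions (1) and (2) of Definition~\ref{def:charmenable}. Let $\phi$ be an extremal character, with GNS triple $(\pi,H,\xi)$ and tracial factor $M=\pi(\Gamma)''$ carrying the trace-preserving conjugation $\Gamma$-action. Inducing this action to $G$ and tensoring with the boundary, I study the $G$-equivariant normal ucp maps $L^\infty(B)\ovt M\to M$ determined by $\xi$ and run a noncommutative Nevo--Zimmer dichotomy through one factor $G_{v_0}$, $v_0\in V$. Either the relevant map is \emph{singular}: then the contraction and relative proximality of $B_{v_0}$, together with irreducibility, force $\phi$ to vanish off $\Rad(\Gamma)$, so $\phi$ is supported on the amenable radical. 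Or the map is \emph{non-singular}: then it yields a $G_{v_0}$-equivariant copy of $L^\infty(B_{v_0})$ inside an amplification of $M$, and the amenability of $\Gamma\curvearrowright B$ is transported to $M$, making $\phi$ von Neumann amenable. This is exactly alternative (2). For (1), given a $\Gamma$-invariant compact convex $C\subseteq\PD_1(\Gamma)$, the weak$^*$-continuous affine map $\psi\mapsto\mu\ast\psi$ has a fixed point $\phi_0\in C$ by compactness, i.e.\ a $\mu$-stationary point; feeding $\phi_0$ through the same machinery and using ergodicity of $\Gamma\curvearrowright B\times B$, one upgrades stationarity to $\Gamma$-invariance, so $\phi_0\in\Char(\Gamma)\cap C$.

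For charfiniteness I invoke the extra hypotheses. Condition (3) is arithmetic: since $\bG$ is almost simple and non-commutative, $\Rad(\Gamma)$ is central, and the assumption that $S$ is finite or $\bG$ is simply connected makes this center, hence $\Rad(\Gamma)$, finite. For (4) and (5) the place $v$ at which $\bG(K_v)$ has property (T) and the image of $\Gamma$ is unbounded is decisive: through the dense projection of $\Gamma$ to this factor it supplies the rigidity needed so that, on the one hand, $\Hom(\Gamma,\U(d))$ consists of isolated points of a compact space, yielding finitely many $d$-dimensional unitary representations up to equivalence (condition (4)); and, on the other hand, weak containment of the trivial representation in $\pi\ovt\overline\pi$---that is, amenability of an extremal character---is promoted to genuine containment, producing a finite-dimensional subrepresentation and, by factoriality of $M$ and extremality of $\phi$, the finiteness of $\pi$ (condition (5)).

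The main obstacle is the product noncommutative Nevo--Zimmer step underpinning condition (2): correctly setting up the $G$-equivariant normal ucp maps on the boundary algebra, establishing the singular-versus-amenable alternative relative to a single factor $G_{v_0}$, and---most delicately---using irreducibility of the lattice to convert a statement about the one factor $G_{v_0}$ into a conclusion about all of $M=\pi(\Gamma)''$. This is the analytic core of the argument; by comparison the arithmetic inputs for charfiniteness (centrality and finiteness of the radical, compactness of the representation variety, and the property (T) promotion of weak containment) are comparatively routine once the dichotomy and the product boundary structure are available.
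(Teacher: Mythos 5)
There is a genuine gap at the analytic core of your argument. Your verification of condition (2) of Definition~\ref{def:charmenable} (and your upgrade of stationarity to invariance for condition (1)) rests on ``running a noncommutative Nevo--Zimmer dichotomy through one factor $G_{v_0}$,'' invoking contraction/proximality of $B_{v_0}$ as in \cite{BH19}. But this is precisely the step that fails in the product-type setting: the factors $\bG(K_v)$ may all have rank one (e.g.\ $\SL_2(\Z[1/p])$ inside $\SL_2(\R)\times\SL_2(\Q_p)$), and, as pointed out in \cite{NZ97,NZ00}, no Nevo--Zimmer-type structure theorem (commutative or noncommutative) holds in the presence of rank one factors; the contraction argument genuinely breaks there, it is not a technical obstacle. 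The paper says this explicitly and replaces \cite[Theorem B]{BH19} by a different mechanism: Theorem~\ref{non-trivial G points}, which uses induction and the continuity-algebra identification (Theorem~\ref{continuous elts}) to transfer a non-invariant $(\Gamma,N)$-structure to a non-invariant $(G_i,L^\infty(B_i))$-structure on the algebra $M_i$ of $G_i$-continuous elements, combined with the singularity criterion of Proposition~\ref{FMM implies singular}, which rests on the Mautner property and stable self-normalization of parabolic subgroups (Example~\ref{Mautner algps}) together with the freeness Lemma~\ref{freeness1}; the dichotomy ``invariant or singular'' then feeds into Propositions~\ref{top crit} and~\ref{top crit 2}, where condition (1) is also obtained from amenability of $B$ (a boundary map $\theta\colon B\to C$) rather than from a fixed point of a convolution operator. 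Without such a replacement, your dichotomy is an assertion with no proof in exactly the cases Theorem~\ref{thm:AG} is about.

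Two secondary points. First, you treat $\Gamma$ as an irreducible lattice in $\prod_{v\in V}\bG(K_v)$ from the outset; this requires $S$ finite (the paper reduces infinite $S$ to finite $S$ by the ascending-union Proposition~\ref{prop:stable}) and, when $\bG$ is not simply connected, the projections of $\Gamma$ need not be dense, which is why the proof of Proposition~\ref{SAF} has a separate ``general case'' replacing each complementary factor $H_i$ by the closure $H_i'$ of the projection of $\Gamma$. Second, your charfiniteness sketch misattributes condition (4): finiteness of the set of $d$-dimensional unitary representations does not come from property (T) of one factor or from isolation in a representation variety (which is delicate precisely because $\Gamma$ is not finitely generated when $S$ is infinite), but from Margulis superrigidity \cite[Ch.~VIII]{Ma91} together with \cite[Theorem 5.7]{Sh99}, as in Proposition~\ref{prop:ffd}; property (T) of the distinguished factor enters only in condition (5), via the char-(T) criterion of Proposition~\ref{prop:charTcrit}, where the key steps you omit are the identification of $G_1$-invariant vectors in the induced representation with the continuity space (Proposition~\ref{continuity rep}) and the triviality of $\Char(G_2)$ (Proposition~\ref{prop:trivcharlie}).
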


The proof of Theorem~\ref{thm:AG} will be given in \S\ref{sec:upgrade}.

The assumption that one of the factors has property (T) is not a necessary condition for prompting charmenability to charfiniteness. Indeed, using \cite[Theorem 2.6]{PT13}, we obtain the following result.

\begin{letterthm} \label{thm:Zp}
For every non-empty set of primes $S$, the group $\SL_2(\mathbb{Z}_S)$ is charfinite.
\end{letterthm}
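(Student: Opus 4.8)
The plan is to read off the first two conditions of charfiniteness from Theorem~\ref{thm:AG} and to verify the remaining three by hand, using the special structure of $\SL_2(\mathbb{Z}_S)$. First I would observe that, since $S\neq\emptyset$, the image of $\SL_2(\mathbb{Z}_S)$ is unbounded both in $\SL_2(\R)$ and in $\SL_2(\Q_p)$ for every $p\in S$; there are therefore at least two places of unbounded image, so the triple $(\Q,\SL_2,S)$ is of product type. Consequently Theorem~\ref{thm:AG} applies and shows that $\SL_2(\mathbb{Z}_S)$ is charmenable, which is precisely conditions (1) and (2) of Definition~\ref{def:charmenable}. It then remains to establish (3), (4) and (5). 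Condition (3) is quick: $\SL_2(\mathbb{Z}_S)$ is an irreducible lattice in $\SL_2(\R)\times\prod_{p\in S}\SL_2(\Q_p)$, a group of rank at least two, so by Margulis' normal subgroup theorem every normal subgroup is central or of finite index; since a finite-index subgroup of a non-amenable group is non-amenable, the amenable radical must be central, whence $\Rad(\SL_2(\mathbb{Z}_S))=\{\pm I\}$ is finite.

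The core of the argument is conditions (4) and (5), and here the hypothesis of the second part of Theorem~\ref{thm:AG} fails: neither $\SL_2(\R)$ nor $\SL_2(\Q_p)$ has property (T), as both have rank one, so the property-(T) mechanism that upgrades charmenability to charfiniteness is unavailable. My plan is to substitute for it the congruence subgroup property, which holds for $\SL_2(\mathbb{Z}_S)$ by Serre's theorem precisely because the set of places of unbounded image has at least two elements. With this in hand I would invoke \cite[Theorem 2.6]{PT13}, whose hypotheses are satisfied by $\SL_2(\mathbb{Z}_S)$, to obtain the two finiteness statements: that there are only finitely many isomorphism classes of unitary representations in each fixed dimension (condition (4)) and that every amenable extremal character is finite (condition (5)). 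The underlying mechanism is that higher-rank rigidity confines a finite-dimensional unitary representation to finite image---the simple non-compact local factors admit no nontrivial homomorphism into a compact group---while the congruence subgroup property bounds the supply of finite quotients carrying a faithful representation of each given dimension.

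The main obstacle is exactly this last step, i.e.\ proving (4) and (5) in the absence of property (T). The difficulty is that amenable characters need not be finite-dimensional a priori, so (5) is a genuinely character-theoretic statement rather than a consequence of representation rigidity alone; controlling it is what \cite[Theorem 2.6]{PT13} is designed to do, and verifying that $\SL_2(\mathbb{Z}_S)$ meets its hypotheses---ultimately resting on the congruence subgroup property and the rank-at-least-two structure of the ambient product---is the crux of the proof.
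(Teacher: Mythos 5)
Your architecture coincides with the paper's: since $S\neq\emptyset$ the triple $(\Q,\SL_2,S)$ is of product type, so conditions (1) and (2) of Definition~\ref{def:charmenable} follow from the product-type half of Theorem~\ref{thm:AG} (this is Proposition~\ref{SAF}, which is exactly how the paper starts); condition (3) is Margulis' normal subgroup theorem, cited in the paper as \cite[VIII(A), p.~259]{Ma91}; and condition (5) is precisely what the paper extracts from \cite[Theorem 2.6]{PT13}.

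The one genuine problem is your sourcing of condition (4). \cite[Theorem 2.6]{PT13} is a character-rigidity statement: it controls (extremal) characters, hence yields (5), but it does not count isomorphism classes of finite-dimensional unitary representations, and the paper does not pretend it does --- it devotes a separate result, Proposition~\ref{prop:ffd}, to condition (4). There, Margulis superrigidity \cite[VIII(B)(iii)]{Ma91} writes any $\rho:\Gamma\to\U(n)$ as an algebraic part (trivial here, since $\SL_2$ admits no nontrivial algebraic morphism to a compact unitary group) times a finite homomorphism, and then strong approximation together with Shalom's finiteness theorem \cite[Theorem 5.7]{Sh99} shows that all finite homomorphisms into $\U(n)$ factor through a single finite quotient. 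Your substitute mechanism --- superrigidity forces finite image, and Serre's congruence subgroup property identifies the profinite completion with $\prod_{p\notin S}\SL_2(\Z_p)$, where bounded-dimension representations involve only finitely many congruence quotients --- is a valid alternative for \emph{finite} $S$, in the style of Bekka's treatment of $\SL_n(\Z)$ \cite{Be06}; what it buys is concreteness, while the paper's route via \cite{Sh99} works uniformly for all global fields and all almost simple $\G$, where CSP is not always available. Two caveats, though. First, this counting argument is yours, not the content of \cite[Theorem 2.6]{PT13}; moreover the hypotheses of that theorem rest on $\Z_S$ having infinitely many units (equivalently at least two places), not on the congruence subgroup property, so your claim that CSP is ``what \cite[Theorem 2.6]{PT13} is designed to do'' is a misattribution. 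Second, Theorem~\ref{thm:Zp} allows $S$ infinite (e.g.\ $\Gamma=\SL_2(\Q)$), and both Shalom's theorem and any congruence-quotient count are, as the paper itself warns, badly behaved under inductive limits; the paper handles this by reducing the count of finite homomorphisms to a finite sub-level $S_0\subset S$ of product type and then applying the normal subgroup theorem to push the conclusion back up to $\Gamma$ --- a step your sketch would also need to include.
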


The proof of Theorem~\ref{thm:Zp} will be given in \S\ref{ss:fdur}.

Let us point out that the case $\Gamma=\SL_2(\mathbb{Q})$ (that is, where $S$ in the above theorem is the set of all primes)
is particularly interesting, as this group has no non-trivial finite dimensional unitary representations.
It follows that the only extremal characters on this group are the regular and the trivial characters and that
every $\Gamma$-invariant compact convex subset of $\PD_1(\Gamma)$ contains a convex combination of these two characters.
However, $\Gamma$ is not finitely generated.
It will be very interesting to find a finitely generated charfinite simple infinite group. 
We expect certain Kac-Moody groups to satisfy all of these properties.

When $\Gamma$ is of a simple type and the corresponding absolute value is archimedean (e.g.\ $\Gamma=\SL_n(\mathbb{Z})$), the conclusion of Theorem \ref{thm:AG} still holds under the assumption that $\Gamma$ is of higher rank (e.g.\ $n\geq 3$), that is, $\Gamma$ is charfinite in this case.
See Corollary \ref{cor:BH19} for an exact formulation and see also Remark~\ref{rem:comingsoon}.
The following is a slight strengthening of the above example.

\begin{letterthm} \label{thm:SD}
For any $n\geq 3$, the group $\SL_n(\mathbb{Z}) \ltimes \mathbb{Z}^n$ is charmenable.
\end{letterthm}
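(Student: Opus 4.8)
The plan is to verify the two defining conditions of charmenability for $\Gamma=\SL_n(\Z)\ltimes\Z^n$ by exploiting the normal amenable subgroup $A=\Z^n$ together with the fact, recorded in Corollary~\ref{cor:BH19}, that $\SL_n(\Z)$ -- as well as each of its finite-index subgroups, being arithmetic of the same simple, archimedean, higher-rank type -- is charfinite, in particular charmenable, for $n\ge 3$. First I would pin down the amenable radical. Since normal subgroups of $\SL_n(\Z)$ are central or of finite index (Margulis) and the finite-index ones are non-amenable, the image of $\Rad(\Gamma)$ under the quotient $q\colon\Gamma\to\SL_n(\Z)$ lies in the center $\{\pm I\}$; as $A\subseteq\Rad(\Gamma)$ and $q^{-1}(\{\pm I\})$ is normal and virtually abelian, one gets $\Rad(\Gamma)=q^{-1}(\{\pm I\})$, so in particular $A\subseteq\Rad(\Gamma)$.

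The core is the dichotomy of condition~(2). Let $\phi$ be an extremal character with GNS triple $(\pi,H,\xi)$ and factor $M=\pi(\Gamma)\dpr$ carrying its trace $\tau$. Restricting $\phi$ to $A$ and applying Bochner's theorem produces a probability measure $\mu$ on $\hat A=\mathbb{T}^n$, namely the spectral measure of $\pi(A)\dpr$ relative to $\xi$. Since $A$ is abelian and normal, conjugation by $\pi(\gamma)$, $\gamma\in\Gamma$, acts on $\pi(A)\dpr\cong L^\infty(\mathbb{T}^n,\mu)$ through the linear $\SL_n(\Z)$-action on $\mathbb{T}^n$, so $\mu$ is $\SL_n(\Z)$-invariant. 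The key point is that for any $\SL_n(\Z)$-invariant Borel set $E\subseteq\mathbb{T}^n$ the spectral projection $p_E\in\pi(A)\dpr$ is fixed by all these conjugations, hence central in $M$; factoriality forces $p_E\in\{0,1\}$, so $\mu$ is $\SL_n(\Z)$-ergodic. By the classical classification of ergodic $\SL_n(\Z)$-invariant probability measures on $\mathbb{T}^n$ (valid for $n\ge 2$), $\mu$ is either $\delta_0$, the Haar measure, or the uniform measure on a finite orbit of nonzero rational points.

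I would then treat the three cases. If $\mu=\delta_0$, then $\tau(\pi(v))=1$ for all $v\in A$ forces $\pi(A)=\{1\}$, so $\phi$ descends to an extremal character $\bar\phi$ of $\SL_n(\Z)$; by charmenability of $\SL_n(\Z)$, either $\bar\phi$ is von Neumann amenable, whence $M$ is amenable, or $\bar\phi$ is supported on $\{\pm I\}$ and $\phi$ is supported on $q^{-1}(\{\pm I\})=\Rad(\Gamma)$. If $\mu$ is Haar, then $\pi(A)\dpr\cong L^\infty(\mathbb{T}^n)$ is diffuse and the $\SL_n(\Z)$-action is essentially free and ergodic, so $M$ is the associated crossed product; uniqueness of the trace on this factorial crossed product identifies $M$ with $L\Gamma$ and $\phi$ with the regular character, supported on $\{e\}\subseteq\Rad(\Gamma)$. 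Finally, if $\mu$ is uniform on a finite orbit $F\not\ni 0$, let $p$ be the minimal spectral projection at a point $\theta\in F$: its stabilizer $\Sigma=\Stab_{\SL_n(\Z)}(\theta)$ is finite-index (hence charmenable of the same type), $A$ acts on $pMp$ through the scalar character $v\mapsto\chi_\theta(v)$, and $\gamma\mapsto\pi(\gamma)p$ (for $\gamma\in\Sigma$) defines an extremal character $\psi$ of $\Sigma$ generating the factor $pMp$. Charmenability of $\Sigma$ makes $\psi$ either von Neumann amenable or supported on $\{\pm I\}$, hence finite-dimensional; in both cases $pMp$ is amenable, and since $p$ has full central support in the factor $M$, $M$ itself is amenable. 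This settles condition~(2).

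For condition~(1), given a compact convex $\Gamma$-invariant $C\subseteq\PD_1(\Gamma)$, I would first use amenability of $A$ to see that the fixed-point set $C^{A}$ is nonempty, compact, convex and -- since $A$ is normal -- $\Gamma$-invariant, with the $\Gamma$-action on it factoring through $\SL_n(\Z)$; it then remains to produce an $\SL_n(\Z)$-fixed point in $C^{A}$. For this I would invoke the boundary-theoretic mechanism underlying Theorem~\ref{thm:AG}: a barycenter argument against a generating measure on $\SL_n(\Z)$ yields a stationary point of $C^{A}$, and the amenable $\Gamma$-boundary -- the flag variety of $\SL_n(\R)$, on which the amenable radical acts trivially -- should upgrade stationarity to $\Gamma$-invariance, giving the desired character. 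The main obstacle is precisely this last upgrade: whereas the dichotomy reduces cleanly to measure rigidity on $\mathbb{T}^n$ plus the already-established charmenability of $\SL_n(\Z)$, the passage from a stationary point to an invariant character cannot simply be imported from the simple case of \cite{BH19} and must be carried out for the non-semisimple group $\Gamma$, the thing to verify being that the normal amenable subgroup $A$, acting trivially on the relevant boundary, poses no obstruction to the stationary-to-invariant rigidity.
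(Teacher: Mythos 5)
Your proposal does not follow the paper's route, and it is incomplete in exactly the place you flag yourself. The paper never verifies conditions (1) and (2) of Definition~\ref{def:charmenable} separately: it applies the criterion of Proposition~\ref{prop:chmcrit3} to $\SL_n(\Z)\ltimes\Z^n$ (denoted $\Lambda$ there) with $B$ the flag manifold of $\SL_n(\R)$, checking condition (b) --- every measurable $\Lambda$-map $B\to\PD_1(\Rad(\Lambda))\cong\Prob(\mathbb{T}^n)$ is essentially constant --- by Furstenberg's stiffness theorem \cite{Fu98}, and condition (a) by combining \cite[Theorem B]{BH19} with the singularity criterion (Proposition~\ref{FMM implies singular}) and the freeness statement (Lemma~\ref{freeness1}); these two conditions then yield both halves of charmenability at once. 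Your treatment of the fixed-point property (1) stops precisely where this machinery is needed. The reduction to finding an $\SL_n(\Z)$-fixed point in $C^A$ is correct, but charmenability of $\SL_n(\Z)$ cannot produce such a point: the fixed-point property of a charmenable group concerns compact convex subsets of its \emph{own} $\PD_1$ with the conjugation action, whereas $C^A$ consists of positive definite functions on the larger group and admits no evident $\SL_n(\Z)$-equivariant affine embedding into $\PD_1(\SL_n(\Z))$ (restricting functions to $\SL_n(\Z)$ is equivariant but not injective, and a fixed point of the restricted family need not lift to one in $C^A$). The ``stationary-to-invariant upgrade'' you defer is therefore not a routine adaptation but the actual content of the theorem; in the paper it is exactly what conditions (a) and (b) supply, with the stiffness input \cite{Fu98} being what controls the normal subgroup $A$ acting trivially on $B$. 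As written, condition (1) is unproven, and this is the essential gap.

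Your argument for the dichotomy (2) --- spectral measure on $\mathbb{T}^n$, invariance and ergodicity from factoriality, and the classification of ergodic invariant measures (which is indeed classical for $n\geq2$: $\hat\mu$ is constant on sets of fixed content, and Wiener's lemma forces a non-atomic invariant measure to be Haar) --- is a genuinely different and attractive route, which the paper does not take; but it contains one false step. In the finite-orbit case you assert that if the stabilizer character $\psi$ of $\Sigma=\Stab_{\SL_n(\Z)}(\theta)$ is supported on $\{\pm I\}$ then $\psi$ is finite-dimensional and $pMp$ is amenable. This is wrong: a character supported on the centre generally has an infinite-dimensional, non-amenable GNS factor --- the extremal components of the regular character $\delta_e$ of $\Sigma$ are the basic examples, their GNS algebras being (twisted) group von Neumann algebras of the non-amenable group $\Sigma/(\Sigma\cap\{\pm I\})$. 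You appear to have conflated the two horns of charfiniteness: it is the \emph{amenable} extremal characters that are finite, not those supported on the radical. The sub-case is nonetheless harmless when handled correctly: writing $1=\sum_{\theta'\in F}p_{\theta'}$ and using the trace property,
\[
\phi(v\gamma)=\sum_{\theta'\in F,\ \gamma\theta'=\theta'}\chi_{\theta'}(v)\,\tau\bigl(p_{\theta'}\pi(\gamma)\bigr),
\]
and since the stabilizer characters at the various $\theta'$ are mutually conjugate, they all vanish off the central set $\{\pm I\}$ as soon as $\psi$ does; hence $\phi$ vanishes off $q^{-1}(\{\pm I\})=\Rad(\Gamma)$, landing in the \emph{other} horn of the dichotomy rather than the von Neumann amenable one. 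With this repair, and granting Corollary~\ref{cor:BH19} for the finite-index subgroups $\Sigma$, your proof of (2) goes through; the missing proof of (1) remains.
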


The proof of Theorem~\ref{thm:SD} will be given in \S\ref{ss:proof}.

A fundamental concept in this paper is the notion of a $(G,N)$-von Neumann algebra $M$,
which is a choice of an equivariant normal ucp map $M\to N$,
where $G$ is an lcsc group and $M,N$ are $G$-von Neumann algebras.
In \S\ref{ss:charcrit} we will give general criteria for charmenability 
based on the notion of \emph{singularity} of a $(G,N)$-structure. 
The proofs of all theorems presented above will rely on these charmenability criteria.
The proofs of Corollary \ref{cor:BH19} and Theorem~\ref{thm:SD} 
will also rely heavily on \cite[Theorem B]{BH19}
which forms a noncommutative Nevo-Zimmer structure theorem for stationary actions on von Neumann algebras.
However, as pointed out in \cite{NZ97, NZ00}, such a structure theorem cannot hold for semisimple Lie groups admitting 
a rank one factor and therefore the method of \cite{BH19} could not be applied for proving our main theorem, Theorem~\ref{thm:AG}.
To overcome this conceptual difficulty we develop a new strategy which applies in the setting of lattices with dense projections.

\begin{defn}
Let $I$ be a finite set and $G_i$ be an lcsc group for each $i\in I$.
Let $G=\prod_{i\in I} G_i$ and $\Gamma\le G$ be a lattice.
We say that $\Gamma$ has {\em dense projections} if 
its image in $\prod_{i \neq i_0} G_i$ is dense, for every $i_0\in I$.
\end{defn}

For such a lattice with dense projections $\Gamma \le G$ we will consider in Theorem \ref{non-trivial G points} 
the structure of $(\Gamma,N)$-von Neumann algebras, where $N$ is the $L^\infty$-algebra of the Furstenberg-Poisson boundary of $G$.
This theorem will allow us to shift the discussion on $\Gamma$-dynamical systems to $G_i$-dynamical systems, where $G_i$ is one of the simple factors. From there we will use the special form of $G_i$, its parabolic subgroups and Mautner phenomenon to deduce the desired singularity property. This second half is based on Proposition \ref{FMM implies singular}.  

In fact, for the groups considered in this paper, the combination of Theorem \ref{non-trivial G points} and Proposition \ref{FMM implies singular} implies condition (a) from Proposition \ref{top crit}, which is our replacement of \cite[Theorem B]{BH19}. Note that in the setting of \cite{BH19}, the non-commutative Nevo-Zimmer theorem implies this condition (a), although the proof is different.
%We note that none of Theorem \ref{non-trivial G points} and \cite[Theorem B]{BH19} implies the other, so we view the current work and \cite{BH19} as complementary.

Note that in the setting of Theorem~\ref{thm:AG}, if $S$ is finite and $\G$ is simply connected then the group $\Gamma$ is a lattice with dense projections in $\prod_{v\in V} \G(K_v)$, where $V$ denotes the set of places of $K$ under which $\Gamma$ is unbounded.
This indeed holds by the strong approximation theorem, see \cite[Theorem II.6.8]{Ma91}.
In the proof of Proposition~\ref{SAF}, which is a main step towards proving Theorem~\ref{thm:AG}, we will explain 
how to reduce the general case to the case above, finite $S$ and simply connected $\G$,
in which Theorem \ref{non-trivial G points} is applicable.
We emphasize that, apart of invoking the strong approximation theorem, our work 
here does not rely at all on arithmeticity properties of the groups under consideration
and our choice of presenting Theorem~\ref{thm:AG} for arithmetic lattices rather than lattices with dense projections in the first place
is a matter of taste more than anything else.

Our next theorem deals with a geometric situation which generalizes a product of rank one groups over non-archimedean fields.
% - it concerns lattices with dense projections in a product of groups of automorphisms of trees.

\begin{letterthm} \label{thm:Tree}
Let $n\geq 2$. For every $i=1,\ldots,n$, let $T_i$ be a bi-regular tree and denote by $\Aut^+(T_i)$ the group of bi-coloring preserving automorphisms of $T_i$. Let $\Gamma < \Aut^+(T_1) \times \cdots \times \Aut^+(T_n)$ be a cocompact lattice. Denote by $G_i$ the closure of the image of $\Gamma$ in $\Aut^+(T_i)$ and assume that $G_i$ is $2$-transitive on the boundary $\partial T_i$. Further assume that $\Gamma < G_1 \times \cdots \times G_n$ is with dense projections.  Then $\Gamma$ is charmenable.
\end{letterthm}

The proof of Theorem~\ref{thm:Tree} will be given in \S\ref{ss:lpt}.

Note that Theorem \ref{thm:Tree} implies that the finitely presented, torsion-free, simple groups constructed by Burger-Mozes in \cite{BM00} are charmenable. In particular, it follows that any non-trivial URS of such a group is necessarily supported on co-amenable subgroups. It remains an open problem to prove that the groups appearing in Theorem \ref{thm:Tree} are charfinite. The proof of Theorem \ref{thm:Tree} follows the same strategy as the one of Theorem \ref{thm:AG} except that we exploit the $2$-transitivity of each factor group $G_i$ in lieu of semisimplicity. 

\begin{rem}
It seems reasonable to expect that some or all of the groups in Theorem~\ref{thm:Tree} are actually charfinite,
but so far we are unable to prove property (5) in Definition~\ref{def:charmenable}.
The fact that these groups satisfy property (3) is well known and property (4) 
is established in Proposition~\ref{prop:ffdlt} below.
\end{rem}

\begin{rem} \label{rem:comingsoon}
In a sequel work, we will show that in Theorem~\ref{thm:AG}, the assumption that $\Gamma$ is of product type could be replaced by a higher rank assumption.
We will do this by combining the techniques developed in the current paper with the ones developed in \cite{BH19}. This will completely settle the question of charmenability for lattices in semisimple algebraic groups. 
\end{rem}

\subsubsection*{Acknowledgments}
We wish to thank Yair Glasner and Pierre-Emmanuel Caprace for providing us with the proof of Proposition~\ref{freeness2}.
We thank Pierre-Emmanuel Caprace also for finding a flawed claim we made in an earlier version of this paper and for various suggestions for improvements of our presentation. We are grateful to Adrian Ioana and Narutaka Ozawa for their valuable remarks. Part of this work was done when CH was visiting the University of Tokyo during January-July, 2020. He would like to thank Yasuyuki Kawahigashi and the Graduate School of Mathematical Sciences for their kind hospitality. Last but not least, we thank the anonymous referees for carefully reading our paper and providing useful remarks.

{
  \hypersetup{linkcolor=black}
  \tableofcontents
}

%%%%%
\section{Preliminaries}

In this section we collect various preliminary definitions and results.
\S\ref{ss:PDF} discusses positive definite functions 
and \S\ref{GactM} discusses group actions on operator algebras,
both are core concepts of this paper.
In \S\ref{ME} we discuss metric ergodicity and the Mautner property,
which will become important when discussing charmenability criteria in \S\ref{ss:charcrit}.

%%%
\subsection{Positive definite functions} \label{ss:PDF}

In this subsection we consider positive definite functions on a locally compact group $G$. The $L^p$-spaces over $G$ will be considered with respect to the Haar measure $\mu_G$.

Recall that a function $\phi\in L^\infty(G)$ is said to be positive definite if $\int_G (f^**f)\phi \dd\mu_G \geq 0$, for every $f\in L^1(G)$. A positive definite function is necessarily continuous, that is, agrees a.e.\ with a continuous function.
The set of all positive definite functions on $G$ is denoted $\PD(G)$ and we denote by $\PD_1(G)$ the subset of functions $\phi$ satisfying $\phi(e)=1$. We endow it with the subspace topology inherited from the weak*-topology on $L^\infty(G)$. 
In case $G$ is discrete,  $\PD_1(G)$ becomes a compact convex $G$-space.
The compact convex subset consisting of $G$-invariant points in $\PD_1(G)$ is denoted $\Char(G)$ and its elements are called characters. The extreme points of $\Char(G)$ are called extremal characters.

\begin{defn}
Let $\phi \in \PD(G)$. By definition, the associated {\em GNS triple} $(\pi_\phi,H_\phi,\xi_\phi)$ is the data of a unitary representation $\pi_\phi$ of $G$ on the Hilbert space $H_\phi$, together with a cyclic vector $\xi_\phi \in H_\phi$ satisfying $\langle \pi_\phi(g)\xi_\phi,\xi_\phi\rangle = \phi(g)$, for all $g \in G$. Such a triple is unique up to conjugation (i.e.\ up to an isomorphism of the Hilbert spaces, which intertwines the representations, and maps cyclic vector to cyclic vector). 

For $\phi\in \Char(G)$, $\pi_\phi$ extends to a unitary representation $\widetilde{\pi}_\phi$ of $G\times G$ on $H_\phi$
whose restriction to the left factor is $\pi_\phi$ and for which $\xi_\phi$ is invariant under the diagonal subgroup in $G\times G$.
\end{defn}

Every lcsc group has at least one character: the trivial character, namely the constant function~$1$.
Every non-trivial discrete group has at least one more character, the {\em regular character} $\delta_e$.
In general the trivial character might be the only character.

\begin{prop} \label{prop:trivcharlie}
Let $k$ be a local field and ${\bf G}$ a connected simply connected $k$-isotropic $k$-almost simple $k$-algebraic group
and denote $G={\bf G}(k)$.
Then $\Char(G)=\{1\}$. 
\end{prop}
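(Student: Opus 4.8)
The plan is to establish the stronger assertion that \emph{every} character of $G$ is trivial, bypassing any reduction to extremal characters or to the factor/trace structure of the associated von Neumann algebra. The entire argument is a Mautner-type contraction applied directly to the positive definite function.

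So let $\phi\in\Char(G)$ with GNS triple $(\pi,H,\xi)$, normalized so that $\|\xi\|^2=\phi(e)=1$ and $\phi(g)=\langle\pi(g)\xi,\xi\rangle$. The features I will use are that $\phi$ is conjugation-invariant and continuous. Fix a maximal $k$-split torus $\mathbf{S}\le\G$, which is nontrivial by $k$-isotropy, with relative root system ${}_k\Phi$, and for each $\alpha\in{}_k\Phi$ let $U_{(\alpha)}$ be the associated unipotent root subgroup. Since $\alpha$ is a nontrivial character of $\mathbf{S}$ and $k$ is local, I can pick $a\in\mathbf{S}(k)$ with $|\alpha(a)|<1$; conjugation by $a$ then contracts $U_{(\alpha)}(k)$, so that $a^nua^{-n}\to e$ for every $u\in U_{(\alpha)}(k)$. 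Conjugation invariance together with continuity of $\phi$ give
\[
\phi(u)=\phi(a^nua^{-n})\xrightarrow[n\to\infty]{}\phi(e)=1 ,
\]
hence $\phi(u)=1$ for all $u\in U_{(\alpha)}(k)$. Translating to the GNS level, $\langle\pi(u)\xi,\xi\rangle=1=\|\pi(u)\xi\|\,\|\xi\|$, so the equality case of Cauchy--Schwarz forces $\pi(u)\xi=\xi$. Thus $\xi$ is fixed by every root subgroup $U_{(\alpha)}(k)$, $\alpha\in{}_k\Phi$.

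It then remains to upgrade invariance under the root subgroups to invariance under all of $G$. The set $\{g\in G:\pi(g)\xi=\xi\}$ is a subgroup of $G$ containing all $U_{(\alpha)}(k)$. Because $\G$ is simply connected, $k$-isotropic and $k$-almost simple with $k$ local, the Kneser--Tits theorem yields $G=\G(k)=\G(k)^+$, i.e.\ $G$ is generated by the $k$-points of its root subgroups. Consequently $\pi(g)\xi=\xi$ for all $g\in G$, so $\phi(g)=\|\xi\|^2=1$ identically, and $\Char(G)=\{1\}$.

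The one genuinely nontrivial ingredient, and the step I expect to be the main obstacle, is the generation statement $G=\G(k)^+$: this is precisely where the ``simply connected'' and ``local field'' hypotheses enter, via the known resolution of the Kneser--Tits problem over local fields. The contraction step itself is the soft Mautner mechanism, relying only on $k$-isotropy (to furnish a root group that can be contracted by the torus) and on continuity of $\phi$. Both hypotheses are genuinely needed: without isotropy the group is compact and carries nontrivial characters, and without simple-connectedness abelianized quotients can likewise produce them.
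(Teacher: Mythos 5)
Your proof is correct, and it takes a genuinely different route from the paper's. The paper disposes of Propositions~\ref{prop:trivchartree} and~\ref{prop:trivcharlie} simultaneously by a soft argument: a character $\phi\neq 1$ produces an isometric action of $G\times G$ on $H_\phi$ in which the two factors have no fixed vector while $\xi_\phi$ is fixed by the diagonal, contradicting the fixed-point theorem for isometric actions of semisimple groups in \cite[Theorem 6.1]{BG14} (whose hypotheses are checked via \cite[Theorems 3.4 and 3.7]{BG14}). You instead run the Mautner mechanism by hand: torus contraction forces $\phi\equiv 1$ on each root group $U_{(\alpha)}(k)$, the equality case of Cauchy--Schwarz upgrades this to $\pi(u)\xi=\xi$, and Kneser--Tits over local fields (Platonov's theorem, available through \cite[\S I.1.5 and \S I.2.3]{Ma91}) identifies the group generated by the root groups, namely $\mathbf{G}(k)^+$, with all of $G$, so $\xi$ is $G$-fixed and $\phi=1$. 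Your approach is more elementary and self-contained modulo standard structure theory, and it isolates exactly where each hypothesis enters: isotropy furnishes a root group that the split torus can contract, and simple connectedness enters only through Kneser--Tits. What the paper's formulation buys in exchange is uniformity: the identical two-line argument also covers the tree case of Proposition~\ref{prop:trivchartree}, where there are no tori or root groups to contract. Two points you should make airtight in a final write-up: (i) the contraction $a^nua^{-n}\to e$ must be justified without the exponential map, since $k$ may have positive characteristic; this follows from the standard fact that $U_{(\alpha)}$ is a $k$-split unipotent group filtered by vector groups on which $\mathbf{S}$ acts through the weights $\alpha$ and $2\alpha$, both of absolute value $<1$ at $a$; (ii) you should record why the subgroups $U_{(\alpha)}(k)$, $\alpha\in{}_k\Phi$, generate $\mathbf{G}(k)^+$ --- they span the $k$-points of the unipotent radicals of a pair of opposite minimal $k$-parabolic subgroups, and $\mathbf{G}(k)^+$ is generated by those --- so that the Kneser--Tits equality $\mathbf{G}(k)^+=\mathbf{G}(k)$ applies to the stabilizer of $\xi$.
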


This Proposition is essentially due to Segal and von Neumann who proved it for non-compact simple Lie groups, see \cite{SN50}\footnote{See the discussion in p.\ 2 of \cite{BG14} for some history of ideas.}.

\begin{proof}
Our argument is essentially based on Howe-Moore theorem.
Take a character $\phi$ on $G$ and use the GNS representation to get an isometric action of $G\times G$ on $H_\phi$. The stabilizer $H < G \times G$ of $\xi_\phi$ is non-compact as it contains the diagonal subgroup. By \cite[Theorem 6.2]{BG14}, we find that $H$ must contain a non-trivial factor of $G$. So $\xi_\phi$ is either $G \times 1$ or $1 \times G$ invariant. Using again the invariance under the diagonal, we find that $\xi_\phi$ is $G \times 1$-invariant, i.e. $\pi_\phi$-invariant. This gives $\phi = 1$.
\end{proof}

The assumption that ${\bf G}$ is $k$-isotropic is equivalent to the non-compactness of ${\bf G}(k)$
and it is essential.
Indeed, non-trivial compact groups always admit non-trivial characters.
The following is well known.
\begin{prop} \label{prop:compt}
Let $K$ be a compact group.
Then the extremal characters of $K$ are in one to one correspondence with its irreducible representations;
the correspondence is given by assigning to the irreducible representation $\pi$ the character $g \mapsto \tr_\pi(\pi(g))$,
where $\tr_\pi$ is the normalized trace associated with $\pi$.
The GNS construction associated with this character is the direct sum of $\dim(\pi)$ copies of $\pi$.

In particular, every character of $K$ is obtained by a summable convex combination of countably many extremal characters.
\end{prop}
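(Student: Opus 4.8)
The plan is to identify the extremal characters of $K$ with $\widehat K$ via Peter--Weyl, and then to obtain the general convex decomposition from harmonic analysis on $K$. First I would check that for each irreducible unitary representation $\pi$ of $K$ on $H_\pi$, the normalized character $\chi_\pi(g) := \tr_\pi(\pi(g))$ lies in $\Char(K)$: it is normalized since $\chi_\pi(e)=\tr_\pi(\id)=1$, it is positive definite because the (unnormalized) trace of a unitary representation is positive definite, and it is conjugation-invariant by the trace property $\tr_\pi(\pi(hgh^{-1}))=\tr_\pi(\pi(g))$. To compute its GNS triple and prove extremality, I would realize the GNS space as the Hilbert--Schmidt operators $\mathrm{HS}(H_\pi)\cong H_\pi\otimes\overline{H_\pi}$ equipped with the normalized trace, with cyclic vector $\dim(\pi)^{-1/2}\,\id$ and $K$ acting by $g\mapsto\pi(g)\otimes 1$; then $\pi_{\chi_\pi}$ is $\pi$ with multiplicity $\dim(\pi)$, exactly as claimed, and $\pi_{\chi_\pi}(K)\dpr=B(H_\pi)\otimes 1\cong M_{\dim(\pi)}(\C)$ is a (type $\mathrm I$) factor, so $\chi_\pi$ is an extremal character. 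Injectivity of $\pi\mapsto\chi_\pi$ is immediate from Schur orthogonality, since distinct irreducible characters are orthogonal in $L^2(K)$.

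The core step is the decomposition of an arbitrary $\phi\in\Char(K)$ into the $\chi_\pi$. Here I would invoke Peter--Weyl: the unnormalized irreducible characters $\{\Tr\circ\pi\}_{\pi\in\widehat K}$ form an orthonormal family spanning the $L^2$-class functions, so $\phi=\sum_\pi c_\pi\,(\Tr\circ\pi)$ with $c_\pi=\int_K\phi\,\overline{\Tr\circ\pi}\,\dd\mu_K$. The point requiring care is positivity of the coefficients: the Fourier operator $\widehat\phi(\pi)=\int_K\phi(g)\,\pi(g)^*\dd\mu_K(g)$ is positive semidefinite because $\phi$ is positive definite, and it commutes with $\pi(K)$ because $\phi$ is a class function, so Schur's lemma forces $\widehat\phi(\pi)=\lambda_\pi\,\id$ with $\lambda_\pi\ge 0$, whence $c_\pi\ge 0$. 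Writing $a_\pi:=c_\pi\dim(\pi)\ge 0$ and using $\Tr\circ\pi=\dim(\pi)\,\chi_\pi$ gives $\phi=\sum_\pi a_\pi\chi_\pi$ with $\sum_\pi a_\pi=\phi(e)=1$, a convex combination; since the total mass is $1$, only countably many $a_\pi$ are nonzero and the family is summable.

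Finally I would combine the two halves. Every $\phi\in\Char(K)$ is a convex combination of the extremal characters $\chi_\pi$, so any extremal $\phi$ must coincide with a single $\chi_\pi$; together with the injectivity and extremality established above this yields at once the asserted bijection between extremal characters and $\widehat K$ and the ``in particular'' clause. The main obstacle I anticipate is justifying that the Peter--Weyl series converges to $\phi$ pointwise (and not merely in $L^2$), so that the identity $\phi=\sum_\pi a_\pi\chi_\pi$ holds as an equality of functions; I would handle this by noting that $\phi$ is continuous (being positive definite) and that both sides are continuous positive definite functions with the same Fourier coefficients, while the nonnegativity of the $a_\pi$ renders the convergence absolute and uniform.
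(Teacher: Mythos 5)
The paper itself offers no proof of this proposition (it is cited as ``well known''), so your argument stands or falls on its own merits. Most of it is sound: the realization of the GNS triple of $\chi_\pi$ on $H_\pi\ot\overline{H_\pi}$ with $K$ acting by $\pi\ot 1$ and cyclic vector a multiple of $\id$, the factoriality of $\pi_{\chi_\pi}(K)\dpr=B(H_\pi)\ot 1$, injectivity of $\pi\mapsto\chi_\pi$ via Schur orthogonality, and the positivity $c_\pi\geq 0$ deduced from $\widehat\phi(\pi)\geq 0$ together with Schur's lemma are all correct.

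The gap is exactly at the step you flagged, and the patch you propose is circular. Since $\|\chi_\pi\|_\infty=1$, absolute and uniform convergence of $\sum_\pi a_\pi\chi_\pi$ is \emph{equivalent} to $\sum_\pi a_\pi<\infty$, which is precisely the summability you are trying to establish; so saying that ``the nonnegativity of the $a_\pi$ renders the convergence absolute and uniform'' assumes the conclusion. Nonnegativity plus $L^2$ theory genuinely do not suffice: Parseval only gives $\sum_\pi a_\pi^2/\dim(\pi)^2<\infty$, and on $K=\mathrm{SU}(2)$ the class function $\sum_{n\geq 1}\tfrac1n(\Tr\circ\pi_n)$ (where $\dim\pi_n=n$) lies in $L^2$ and has positive semidefinite Fourier coefficients with $a_n=1$ for all $n$, which is not summable. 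What rules out such behaviour for your $\phi$ is its continuity and positive definiteness, which your argument never exploits at this stage; the missing lemma is the bound $\sum_{\pi\in F}a_\pi\leq\phi(e)$ for every finite $F\subset\widehat K$, which would follow, for instance, from knowing that $\phi$ minus its Fourier components over $F$ is still positive definite. The cleanest repair, which in fact subsumes your whole Fourier computation: write $\phi(g)=\langle\rho(g)\xi,\xi\rangle$ via GNS, decompose $\xi=\sum_\pi\xi_\pi$ along the isotypic components of $H_\rho$ (these are mutually orthogonal and $\rho$-invariant, so all cross terms $\langle\rho(g)\xi_\pi,\xi_{\pi'}\rangle$ with $\pi\neq\pi'$ vanish), obtaining $\phi=\sum_\pi\langle\rho(\,\cdot\,)\xi_\pi,\xi_\pi\rangle$ pointwise, absolutely and uniformly, since each term is dominated by $\|\xi_\pi\|^2$ and $\sum_\pi\|\xi_\pi\|^2=1$. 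Then average over conjugation: as $\phi$ is a class function and $\int_K\pi(h)\pi(g)\pi(h)^{-1}\dd h=\chi_\pi(g)\id$ by Schur's lemma, this yields $\phi=\sum_\pi\|\xi_\pi\|^2\chi_\pi$, the desired summable convex decomposition. The remainder of your argument (uniqueness of the coefficients, hence extremality and the bijection) then goes through unchanged.
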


\begin{defn}
Let $\phi \in \PD(G)$. We say that $\phi$ is
\begin{itemize}
\item {\em Compact} if $\pi_\phi$ is a compact representation, i.e.\ $\pi_\phi(G)$ is relatively compact in $\cU(H_\phi)$ for the strong operator topology.
\item {\em Amenable} if $\pi_\phi$ is amenable, i.e.\ there is a state $\Phi$ on $B(H)$ which is invariant under $\Ad(\pi_\phi(g))$, $g \in G$
or equivalently, $\pi\otimes \overline{\pi}$ weakly contains the trivial representation.
%\item {\em Totally amenable} if the $G$-invariant state $\Phi$ on $B(H)$ can be chosen as an extension of $\phi$, i.e. $\Phi \circ \pi = \phi$.
\item {\em von Neumann amenable} if $\pi_\phi(G)''$ is an injective von Neumann algebra, i.e.\ there is a conditional expectation $E: B(H_\phi) \to \pi_\phi(G)''$.
\end{itemize}
\end{defn}

A compact positive definite function is von Neumann amenable, hence amenable.
Note that a compact character is a character which factorizes through a character on the Bohr compactification of $G$. So by the previous proposition, any compact character is a countable convex combination of countably many extremal characters.

In the case where $\phi$ is a character, $\phi$ is von Neumann amenable if and only if there exists an $\Ad(\pi_\phi(G))$-central state on $B(H_\phi)$ such that $\Phi(x) = \langle x\xi_\phi,\xi_\phi\rangle$ for every $x \in \pi_\phi(G)\dpr$. In other words, $\Phi$ is an extension of $\phi$, which is normal on $\pi_\phi(G)\dpr$. Indeed, the existence of such a state implies the amenability property of $\pi_\phi(G)\dpr$, which is known to be equivalent to injectivity. On the other hand, if $\pi_\phi(G)\dpr$ is injective, then we can compose the conditional expectation $E: B(H_\phi) \to \pi_\phi(G)\dpr$ with the trace $\langle \, \cdot \, \xi_\phi,\xi_\phi\rangle$ on $\pi_\phi(G)\dpr$ to get the desired state extension.

We point out that the spaces of compact or (von Neumann) amenable PD-functions are not closed in general. For example, 
if $G$ is a non-amenable residually finite discrete group then the regular character, which is not amenable, lies in the closure of the compact characters. Nevertheless these sets are easily checked to be Borel sets. Moreover, we have the following convexity property.

\begin{lem}\label{convexity of amenable reps}
Let $\nu \in \Prob(\Char(G))$ and $t := \nu(\{\text{von Neumann amenable characters}\})$. Denote by $\phi := \bary(\nu)$, by $(H,\pi,\xi)$ the corresponding GNS triple, by $M := \pi(G)\dpr$ and by $\tau = \langle \, \cdot \, \xi,\xi\rangle$ the unique normal trace on $M$ that extends $\phi$.

Then there exists a projection $p \in M$ with trace at least $t$ such that $pMp$ is amenable. In particular, if $\nu$ is supported on the set of von Neumann amenable characters (i.e.\ $t = 1$) then $\bary(\nu)$ is von Neumann amenable.
\end{lem}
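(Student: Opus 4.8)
The plan is to realise the GNS data of the barycenter as a direct integral over $\Char(G)$ and then to exploit that injectivity (equivalently, amenability) of von Neumann algebras is stable both under direct integrals and under passage to the image of a conditional expectation.

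First I would build a direct integral model. For $\nu$-almost every $\psi$ write $(\pi_\psi, H_\psi, \xi_\psi)$ for its GNS triple, $M_\psi := \pi_\psi(G)\dpr$ and $\tau_\psi := \langle\,\cdot\,\xi_\psi,\xi_\psi\rangle$. Since $\Char(G)$ is a standard Borel space and $\psi\mapsto\psi(g)$ is Borel, one can fix a measurable field $\psi\mapsto(\pi_\psi,H_\psi,\xi_\psi)$ and form $\hat H := \int^\oplus H_\psi\dd\nu(\psi)$, the diagonal representation $\hat\pi := \int^\oplus\pi_\psi\dd\nu(\psi)$ and $\xi := (\xi_\psi)_\psi$; because $\phi = \bary(\nu)$ satisfies $\phi(g)=\int\psi(g)\dd\nu(\psi)$, the vector $\xi$ implements $\phi$. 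I set $\tilde M := \int^\oplus M_\psi\dd\nu$, which equals $(\{\hat\pi(g)\}\cup A)\dpr$ where $A := L^\infty(\Char(G),\nu)$ acts by diagonal multiplication and lies in the center of $\tilde M$. As each $\xi_\psi$ is cyclic and separating for $M_\psi$, the vector $\xi$ is cyclic and separating for $\tilde M$ and implements the faithful normal trace $\tau=\int\tau_\psi\dd\nu$. Consequently the diagonal algebra $\hat\pi(G)\dpr\subseteq\tilde M$ admits $\xi$ as a separating vector, so restriction to the cyclic subspace $\overline{\hat\pi(G)\xi}$ identifies $\hat\pi(G)\dpr$ with the GNS algebra $M=\pi(G)\dpr$ in a $\tau$-preserving way. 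From here on I regard $M\subseteq\tilde M$ as tracial von Neumann algebras sharing the trace vector $\xi$.

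Next I isolate an injective corner of $\tilde M$. Let $B\subseteq\Char(G)$ be the Borel set of von Neumann amenable characters, so $z:=1_B\in A$ is a central projection of $\tilde M$ with $\tau(z)=\nu(B)=t$. By definition $M_\psi$ is injective for $\nu$-a.e.\ $\psi\in B$, and a direct integral of injective von Neumann algebras is injective; hence $z\tilde M z=\int^\oplus_B M_\psi\dd\nu$ is injective. Finally I push this down to $M$. Since $Mz=zMz$ is a von Neumann subalgebra of the finite algebra $z\tilde M z$ with common trace vector $z\xi$, there is a trace-preserving conditional expectation $z\tilde M z\to Mz$; as injectivity passes to the image of a conditional expectation, $Mz$ is injective. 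Because $z$ is central in $\tilde M$, the map $x\mapsto xz$ is a normal surjective $*$-homomorphism $M\to Mz$ whose kernel is $M(1-q)$ for a central projection $q\in M$; then $zq=z$, so $z\le q$ and $\tau(q)\ge\tau(z)=t$, while $Mq\cong Mz$ is injective. Taking $p:=q$ gives a projection in $M$ with $\tau(p)\ge t$ and $pMp=Mq$ amenable, and when $t=1$ faithfulness of $\tau$ forces $q=1$, so $M$ itself is amenable and $\bary(\nu)$ is von Neumann amenable.

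I expect the genuine obstacle to lie in the measurable-field setup of the first paragraph: constructing a Borel field of GNS triples over $\Char(G)$ and, above all, correctly identifying the abstract GNS algebra $M$ of $\phi$ with the diagonal algebra $\hat\pi(G)\dpr$ inside $\tilde M$ via the common separating trace vector $\xi$. Once $M$ is placed inside $\tilde M$ as a trace subalgebra, the remaining steps are soft, relying only on the permanence of injectivity under direct integrals and under conditional expectations.
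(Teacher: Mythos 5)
Your proof is correct and follows essentially the same route as the paper's: the same direct integral decomposition over $\Char(G)$, the same central projection $\mathbf{1}_B$ (the paper's $p_0 = \mathbf{1}_{X_0}$), the same ``subalgebra of an amenable tracial von Neumann algebra is amenable'' step via a trace-preserving conditional expectation, and your central projection $q$ is precisely the central support of $p_0$ in $M'$ that the paper takes as its $p$. The only real difference is that you prove the trace-preserving identification of $M$ with the diagonal algebra $\hat\pi(G)\dpr$ by hand, using that the direct integral of the cyclic trace vectors is separating, whereas the paper simply cites \cite[Lemma 4.1]{AB18} for this identification.
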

\begin{proof}
For simplicity, we denote by $X = \Char(G)$ and by $X_0$ the subset of von Neumann amenable characters.
Denote $\phi=\bary(\nu)$ and identify $(\pi,H,\xi)$ with (the cyclic subspace of) 
\[(\tpi,\tH,\txi) := \int_X^\oplus (\pi_\psi,H_\psi,\xi_\psi) \dd\nu(\psi).\]
It is shown in \cite[Lemma 4.1]{AB18} that $\tpi(G)\dpr \simeq \pi(G)\dpr = M$ (and we observe that this identification preserves the trace). So in the sequel we will rather denote by $M = \tpi(G)''$.

Denote by $p_0 = \mathbf{1}_{X_0} \in B(\tH)$ the orthogonal projection onto the subspace $\int_{X_0}^\oplus H_\psi \dd\nu(\psi) \subset \tH$. Then this projection lies inside $\tpi(G)' = M'$ and satisfies $\langle p_0\txi,\txi \rangle = t$. Moreover $Mp_0$ is contained in the amenable tracial von Neumann algebra $\int_{X_0}^\oplus \pi_\psi(G)\dpr \dd\nu(\psi)$, so $Mp_0$ is amenable as well.
Denote by $p \in \cZ(M) = \cZ(M')$ the central support of $p_0 \in M'$. Then $pM$ is amenable and we have $\tau(p) = \langle p\txi,\txi\rangle \geq \langle p_0\txi,\txi \rangle = t$, as desired.
\end{proof}

However the above convexity property doesn't hold for compact characters. For example, if $G = \Z$, then the regular character is not compact, but by Fourier transform, it is the Lebesgue average of the compact characters $\phi_z: n \in \Z \mapsto e^{i2\pi nz}$, $z \in [0,1]$.

%%%
\subsection{Group actions on operator algebras} \label{GactM}

In this paper, we will consider group actions on C$^*$-algebras and von Neumann algebras. 
Let $G$ be an lcsc group. 

By a {\em $G$-C*-algebra} we mean a C$^*$-algebra $A$ endowed with a continuous map $G \times A \to A$, called the action map, which induces an action of $G$ on $A$, to be denoted $G \actson A$, by C*-algebra automorphisms. 
Such an action $G \curvearrowright A$ induces a weak$^*$ continuous affine action of $G$ on the state space $\cS(A)$ defined by the formula $g\phi := \phi \circ g^{-1}$, for all $g \in G$, $\phi \in \cS(A)$. In particular, every probability measure $\mu \in \Prob(G)$ defines a convolution operator
\[\phi \in \cS(A) \mapsto \mu \ast \phi := \int_G g\phi \dd\mu(g) \in \cS(A).\]
A fixed point for this convolution operator is called a {\em $\mu$-stationary state} on $A$. We will denote by $\cS_\mu(A) \subset \cS(A)$ the closed convex subset of $\mu$-stationary states.

By a {\em $G$-von Neumann algebra} we mean a von Neumann algebra $M$ endowed with a map $G \times M \to M$
which is continuous with respect to the ultraweak topology on $M$ and which induces 
an action of $G$ on $M$ by von Neumann algebra automorphisms. 
Recall that the ultraweak topology is the weak-* topology when $M$ is identified with the dual of its pre-dual $M = (M_\ast)^*$
and note that in general a $G$-von Neumann algebra is {\em not} a $G$-C*-algebra.
Again, such an action defines by duality an affine action $G \actson M_*$, which is continuous for the norm topology on $M_*$. As in the C*-case, any probability measure $\mu \in \Prob(G)$ gives rise to a convolution operator on $M_*$, and a normal state $\phi$ on $M$ fixed by this operator is called a {\em $\mu$-stationary state on $M$}.
%In that case, we say that $(M, \varphi)$ is a $(G, \mu)$-von Neumann algebra. 
We say that the action $G \curvearrowright M$ is {\em ergodic} if the fixed point algebra $M^G := \{x \in M \mid gx = x, \forall g \in G\}$ is trivial. 

To avoid notational misinterpretation, unless otherwise specified, we will generically use the notation $\sigma$ to denote our actions.

By a regularization argument, any $G$-von Neumann algebra $M$ admits an ultraweakly dense C*-subalgebra $A$ on which the action is norm continuous, see the proof of \cite[Proposition XIII.1.2]{Ta03b}). Since we assume $G$ to be second countable, if $M$ has separable predual we may choose $A$ to be a separable C*-subalgebra. This passage to a $G$-C*-algebra parallels the choice of a {\em compact model} in classical ergodic theory.

In the other direction, given a $G$-C*-algebra $A$, we may extend the $G$-action on $A$ to a $G$-action on $A^{**}$ but unfortunately this action is not continuous in general. However, when one restricts to certain corners of $A^{**}$ it may be continuous. 
To clarify here, let us mention that $A^{**}$ identifies with a von Neumann algebra: the universal enveloping von Neumann algebra of $A$. Its ultraweak topology is the weak-* topology on $A^{**} = (A^*)^*$. 

\begin{prop}\label{continuous vn extension}
Let $A$ be a $G$-C*-algebra and $N$ be a $G$-von Neumann algebra. Consider a $G$-equivariant unital completely positive (ucp) map $E: A \to N$.

Extend $E$ to a normal ucp map on $A^{**}$ and extend the $G$-action on $A$ to a (non-continuous) action on $A^{**}$. Denote by $z \in A^{**}$ the central support projection of $E$, i.e.\ the smallest projection in $\cZ(A^{**})$ such that $E(z) = 1$. Then $z$ is $G$-invariant and the $G$-action on $zA^{**}$ is a continuous von Neumann algebraic action.
\end{prop}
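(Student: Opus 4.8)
The plan is to exploit the fact that the central support $z$ is characterized by a variational/minimality property, and that $G$-equivariance of $E$ forces this characterization to be $G$-invariant. First I would establish that $z$ is $G$-invariant. For $g \in G$, the composition $x \mapsto E(g^{-1} x)$ is again a normal ucp map $A^{**} \to N$, and by equivariance of $E$ we have $E(g^{-1} x) = g^{-1} E(x)$ for $x \in A$, hence by normality for all $x \in A^{**}$. The central support of the map $x \mapsto E(g^{-1}x)$ is then precisely $g^{-1} z$ (the smallest central projection on which this map is faithful/nonzero), but since $g^{-1}E(\cdot)$ differs from $E(\cdot)$ only by the automorphism $g^{-1}$ of $N$, the two maps have the same support behaviour and so $g^{-1} z = z$. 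Thus $z$ is a $G$-invariant projection in $\cZ(A^{**})$, and the (possibly discontinuous) $A^{**}$-action restricts to an action by automorphisms on the corner $z A^{**}$.

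The main content is showing that the restricted action $G \actson zA^{**}$ is ultraweakly continuous. Here the key idea is to transport continuity from $N$ back through $E$. Concretely, I would first reduce continuity of the action to separate ultraweak continuity of the map $g \mapsto \sigma_g(a)$ for each fixed $a$, and further, using that $G$ is second countable and a standard $\eps$-$\delta$/net argument, to continuity at the identity $e \in G$. The crucial observation is that on $zA^{**}$ the normal ucp map $E$ is faithful, by the very definition of $z$ as the central support. Faithfulness means that the family of normal states $\{\varphi \circ E : \varphi \in N_*^+\}$ separates points of $zA^{**}$ and, more usefully, that $\|E(x^*x)\| = 0 \iff zx = 0$, so one can control the ultraweak topology on bounded subsets of $zA^{**}$ by pulling back the ultraweak topology on $N$ via $E$.

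The technical heart — and what I expect to be the main obstacle — is to make this transport of topology precise. The difficulty is that $E$ is only completely positive, not a homomorphism, so it does not directly intertwine the topologies; one must use a Kadison–Schwarz/Stinespring-type inequality $E(x)^* E(x) \le E(x^* x)$ together with faithfulness to convert smallness of $E((\sigma_g(a) - a)^*(\sigma_g(a)-a))$ in $N$ into ultraweak smallness of $\sigma_g(a) - a$ in $zA^{**}$. For $a$ in the norm-continuous dense C*-subalgebra $A$, equivariance gives $E(\sigma_g(a)) = \sigma_g^N(E(a))$, and continuity of the $N$-action together with norm continuity on $A$ yields that $g \mapsto E(\sigma_g(a))$ is ultraweakly continuous; combined with the Kadison–Schwarz estimate and the faithfulness of $E$ on $zA^{**}$, this forces $g \mapsto z\sigma_g(a)$ to be ultraweakly continuous at $e$. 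Finally I would pass from $a \in A$ to arbitrary $x \in zA^{**}$ by an ultraweak density and uniform-boundedness argument (the action is by isometries, so equicontinuity on the unit ball lets one interchange the limits), thereby concluding that $G \actson zA^{**}$ is a genuine continuous von Neumann algebraic action.
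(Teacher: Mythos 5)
Your first step (the $G$-invariance of $z$) is fine: by equivariance $E(\sigma_g(z)) = \sigma_g(E(z)) = 1$, so $\sigma_g(z) \geq z$ for every $g$, and applying this to $g^{-1}$ gives equality. The continuity argument, however, has two genuine gaps. First, your ``crucial observation'' that $E$ is faithful on $zA^{**}$ is false: $z$ is the \emph{central} support of $E$, not its support. $E$ is faithful on $pA^{**}p$, where $p$ is the smallest (not necessarily central) projection with $E(p)=1$, and $z$ is only the central cover of $p$. Already for $A^{**} = M_2(\C)$, $N = \C$ and $E$ the $(1,1)$ matrix coefficient one has $z = 1$ while $E(e_{22}) = 0$. (One can exhibit a separating \emph{family} of functionals $x \mapsto \varphi(E(b^*xb))$, $b \in A$, $\varphi \in N_*^+$, but turning a separating family into control of the ultraweak topology on bounded sets is a more delicate statement than the one you invoke, and Kadison--Schwarz goes the wrong way for it: it bounds $E(x)^*E(x)$ by $E(x^*x)$, not $x$ by $E(x^*x)$.)

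Second, and more fatally, the last step --- passing from $a \in A$ to arbitrary $x \in zA^{**}$ ``by ultraweak density and uniform boundedness/equicontinuity'' --- cannot work. Note that for $a \in A$ the continuity of $g \mapsto z\sigma_g(a)$ is trivial, since the action on $A$ is norm continuous; so your whole proof reduces to this density step. But that step uses no property of $z$ beyond invariance, so if it were valid it would show that the extended action on \emph{all} of $A^{**}$ is continuous. This is false: for $G = \R$ acting on $A = C_0(\R)$ by translation, the atomic summand of $A^{**}$ is $\ell^\infty(\R)$ with predual $\ell^1(\R)$, it is invariant under the extended action, and the predual action is translation of atoms, with $\|\delta_t - \delta_0\|_1 = 2$ for all $t \neq 0$; hence the extended action on $A^{**}$ is not point-ultraweakly continuous. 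The three-$\eps$ argument you propose needs $|(\omega \circ \sigma_g)(x - a_i)|$ to be small uniformly in $g$, which amounts to norm precompactness of $\{\omega \circ \sigma_g\}$ in the predual --- essentially the continuity being proved, so the argument is circular. The missing idea, which is what the paper's proof supplies, is a \emph{spatial} realization: $zA^{**} \cong \pi(A)''$ for the minimal Stinespring dilation of $E$, built on the completion $\cH$ of $A \ot L^2(N)$ with respect to $\langle a \ot \xi, b \ot \eta\rangle = \langle E(b^*a)\xi,\eta\rangle$, and the $G$-action on this corner is implemented by the unitaries $V_g(a \ot \xi) = \sigma_g(a) \ot U_g\xi$, where $U$ is the standard-form implementation of $G \actson N$. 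Strong continuity of $V$ --- which genuinely uses both the norm continuity of $G \actson A$ and the continuity of $G \actson N$ --- then yields point-ultraweak continuity of $\Ad(V_g)$ on $\pi(A)''$. It is this unitary implementation, not abstract density, that singles out the corner $zA^{**}$ as the continuity part of $A^{**}$.
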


\begin{proof} The $G$-invariance of $z$ is clear from the definition; only the continuity statement has to be checked.
One can prove this fact by identifying $zA^{**}$ with the ``Stinespring von Neumann algebra'' of $E$, and checking that the action can be unitarily implemented in the Stinespring representation, and hence extends to a continuous action on $zA^{**}$. 

The more concise argument we provide was communicated to us by Narutaka Ozawa. Denote by $M := zA^{**}$. Denote by $L$ the set of all normal linear functionals of the form $x \in M \mapsto (\psi \circ E)(axb) \in \C$, for $\psi \in N_*$, $a,b \in A$. 

{\bf Claim.} $L$ spans a norm dense subspace in the predual $M_*$.

We will use Hahn-Banach theorem. Let $x \in M$ be such that $\phi(x) = 0$ for all $\phi \in L$. Let us show that $x = 0$. The assumption is that $\psi(E(axb)) = 0$ for all $\psi \in N_*$, $a,b \in A$. Thus $E(axb) = 0$, for all $a,b \in A$, and by normality, this is also true for all $a,b \in A^{**}$. So in particular, we have $E(ux^*xu^*) = 0$ for every unitary $u \in A^{**}$. If $x \neq 0$, then we may find a nonzero spectral projection $p$ of $x^*x$ and a positive scalar $\alpha$ such that $\alpha p \leq x^*x$. Then $E(upu^*) = 0$, for all unitary $u \in A^{**}$ and thus $E$ vanishes on the central projection $z_0 := \bigvee_{u \in \cU(A^{**})} upu^*$. But $x \in M = zA^{**}$, so $p \leq z$, and hence $z_0 \leq z$. In addition $z_0 \neq 0$, so the projection $z-z_0$ is strictly less than $z$ and still $E(z-z_0) = 1$, contradicting the definition of the central support $z$. This proves the claim.

Since the $G$-actions on $N_*$ and on $A$ are norm continuous, one checks that for every $\phi \in L$, the map $g \in G \mapsto g\phi \in M_*$ is norm continuous. By the claim, the action $G \actson M_*$ is norm continuous as well, and this is known to be equivalent to the action $G \actson M$ being a continuous von Neumann algebraic action.
\end{proof}

More generally, $G$-invariant corners of $A^{**}$ on which the action is continuous have been studied by Ikunishi, see \cite{Ik87}.

Let us now give two results about fixed point algebras in stationary von Neumann algebras. We recall that a probability measure $\mu$ on an lcsc group $G$ is {\em generating} if its support generates $G$ as a semigroup. 

\begin{prop}\label{prop:stationary}
Let $G$ be an lcsc group with a generating probability measure $\mu \in \Prob(G)$ and $M$ be a $G$-von Neumann algebra with a faithful $\mu$-stationary state $\phi \in M_*$. The following facts hold true.
\begin{enumerate}
\item There exists a unique $\phi$-preserving normal conditional expectation $E_\mu : M \to M^G$. %Moreover, for every $x \in M$, we have $E_\mu(x) \in \overline{\co}^w \{\sigma_g(x) \mid g \in G\}$.
\item  Every $\mu$-stationary normal state $\psi \in M_\ast$ satisfies $\psi = \psi \circ E_\mu$. In particular, if the action $G \curvearrowright M$ is ergodic, then $\phi$ is the only $\mu$-stationary normal state on $M$.
\item Let $A \subset M$ be any ultraweakly dense unital $G$-$\rC^*$-subalgebra. Then $G \curvearrowright M$ is ergodic if and only if $\phi_{|A} \in \cS_\mu(A)$ is an extreme point.
\end{enumerate}
\end{prop}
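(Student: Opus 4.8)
The plan is to encode stationarity through a single transition operator on $M$ and reduce everything to its ergodic behaviour. Define the normal ucp map
\[ P \colon M \to M, \qquad P(x) = \int_G \sigma_{g^{-1}}(x) \dd\mu(g), \]
the integral being taken ultraweakly (so that $P$ is the adjoint of the norm-continuous operator $\omega \mapsto \int_G \sigma_g\omega \dd\mu(g)$ on $M_*$). The stationarity of $\phi$ reads exactly $\phi \circ P = \phi$, and more generally a normal state $\psi$ is $\mu$-stationary if and only if $\psi \circ P = \psi$. Since $P$ is $\phi$-preserving and $\phi$ is faithful, Kadison--Schwarz gives $\|P(x)\|_\phi \le \|x\|_\phi$, so $P$ extends to a contraction on $L^2(M,\phi)$ and the von Neumann mean ergodic theorem becomes available.

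For (1), I would run the noncommutative mean ergodic theorem: the Cesàro averages $A_N = \tfrac1N\sum_{n=0}^{N-1}P^n$ converge point-ultraweakly to a $\phi$-preserving normal conditional expectation $E_\mu$ onto the fixed-point algebra $M^P = \{x : P(x) = x\}$. That $M^P$ is a subalgebra is the usual equality case of Kadison--Schwarz: if $P(x) = x$ then $P(x^*x) \ge P(x)^*P(x) = x^*x$ while $\phi(P(x^*x)) = \phi(x^*x)$, so faithfulness forces $P(x^*x) = x^*x$, placing $x$ in the multiplicative domain. The crux is the identification $M^P = M^G$, where faithfulness and the generating hypothesis enter, and this is the step I expect to be the main obstacle. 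The inclusion $M^G \subseteq M^P$ is clear. For the converse, from $P(x^*x) = P(x)^*P(x)$ and the variance identity
\[ 2\bigl(P(x^*x) - P(x)^*P(x)\bigr) = \int_G\!\!\int_G \bigl(\sigma_{g^{-1}}(x) - \sigma_{h^{-1}}(x)\bigr)^*\bigl(\sigma_{g^{-1}}(x) - \sigma_{h^{-1}}(x)\bigr) \dd\mu(g)\dd\mu(h), \]
applying the faithful state $\phi$ yields $\sigma_{g^{-1}}(x) = \sigma_{h^{-1}}(x)$ for $\mu\times\mu$-a.e.\ $(g,h)$, hence $\sigma_{g^{-1}}(x) = P(x) = x$ for $\mu$-a.e.\ $g$. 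Repeating the argument with $P^n$, which averages against $\mu^{*n}$, shows the closed subgroup $\Stab(x)$ is $\mu^{*n}$-conull for every $n$; since $\bigcup_n \supp(\mu^{*n})$ is dense by the generating hypothesis, $\Stab(x) = G$ and $x \in M^G$. Uniqueness of $E_\mu$ follows since any $\phi$-preserving expectation onto $M^G$ is pinned down by $\phi(E(x)n) = \phi(xn)$ for $n \in M^G$ together with faithfulness of $\phi|_{M^G}$.

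Part (2) is then formal: a $\mu$-stationary normal state $\psi$ satisfies $\psi \circ P^n = \psi$, hence $\psi \circ A_N = \psi$, and passing to the point-ultraweak limit with $\psi$ normal gives $\psi = \psi \circ E_\mu$. If the action is ergodic, then $M^G = \C 1$ forces $E_\mu = \phi(\,\cdot\,)1$, whence $\psi = \psi(1)\phi = \phi$.

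For (3) I treat the two implications separately. For ``ergodic $\Rightarrow$ $\phi_{|A}$ extreme'', suppose $\phi_{|A} = \tfrac12(\psi_1 + \psi_2)$ with $\psi_i \in \cS_\mu(A)$; then $\psi_i \le 2\phi_{|A}$, and a Radon--Nikodym argument in the standard form of $M$ produces $T \in M'$ with $0 \le T \le 2$ and $\psi_i(a) = \langle T a\xi_\phi,\xi_\phi\rangle$ on the ultraweakly dense $A$, so $\psi_i$ extends to a normal state $\tilde\psi_i$ on $M$; stationarity passes to $\tilde\psi_i$ by density of $A$, and part (2) forces $\tilde\psi_i = \phi$, giving $\psi_i = \phi_{|A}$. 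Conversely, if the action is not ergodic then $N := M^G \ne \C 1$, and $\omega_0 := \phi_{|N}$ is a faithful normal state whose commutant $N'$ in the standard form $N \subseteq B(L^2(N,\omega_0))$ is nontrivial; choosing a nonzero self-adjoint $h' \in N'$ with $\langle h'\xi_0,\xi_0\rangle = 0$ and $\|h'\| \le 1$, the functionals $\omega_\pm := \langle \,\cdot\,(1\pm h')\xi_0,\xi_0\rangle$ are distinct normal states on $N$ with $\tfrac12(\omega_+ + \omega_-) = \omega_0$. Transporting through the affine bijection $\omega \mapsto \omega \circ E_\mu$ between normal states on $N$ and normal $\mu$-stationary states on $M$ (valid because $E_\mu \circ P = E_\mu$ and, by part (2), every normal stationary state factors through $E_\mu$), I obtain distinct normal stationary states $\psi_\pm = \omega_\pm\circ E_\mu$ with $\tfrac12(\psi_+ + \psi_-) = \phi$; restricting to the ultraweakly dense subalgebra $A$ keeps them distinct, exhibiting $\phi_{|A}$ as a nontrivial convex combination in $\cS_\mu(A)$.
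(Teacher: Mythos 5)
Your argument is correct, and for (1), (2) and the forward half of (3) it is essentially the paper's proof: the paper uses the same convolution operator (there called $T_\mu$), the same $\|\cdot\|_\phi$-variance computation combined with the generating hypothesis to identify the fixed points of $T_\mu$ with $M^G$, and the same domination/normal-extension trick to get extremality of $\phi_{|A}$; the only cosmetic difference is that the paper takes an ultrafilter limit of the Ces\`aro averages, which sidesteps the convergence statement you outsource to the noncommutative mean ergodic theorem. The genuine divergence is in the converse half of (3). The paper proves it directly: for $p \in M^G$ with $0 \le p \le 1$ it forms $\psi = \langle\,\cdot\,JpJ\xi,\xi\rangle$ in the standard form of $M$, checks stationarity of $\psi$ by an explicit computation with the Jones projection $e_\mu$ (which fixes $\xi$ and commutes with $J$ and $p$), and then extremality plus cyclicity of $\xi$ force $JpJ \in \C 1$, i.e.\ $M^G = \C 1$. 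You argue contrapositively: if $M^G \ne \C 1$, you split $\phi_{|M^G}$ into two distinct normal states by a commutant perturbation in the standard form of $M^G$, and push them forward through the affine bijection $\omega \mapsto \omega \circ E_\mu$ onto normal stationary states of $M$; that bijection is itself a consequence of (1), (2) and the identity $E_\mu \circ P = E_\mu$, which, as you note, follows either from the Ces\`aro construction or from uniqueness in (1), since $E_\mu \circ P$ is another $\phi$-preserving normal conditional expectation onto $M^G$. Your version recycles the already-established correspondence between normal stationary states on $M$ and normal states on $M^G$ instead of redoing a stationarity computation, which is a little more conceptual; the paper's version stays entirely inside the standard form of $M$ and yields the implication directly rather than by contraposition. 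Both rest on the same structural fact, so the difference is one of packaging rather than substance.
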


\begin{proof}
Given $(G,\mu)$ and $M$ as in the statement, define the convolution ucp map
\[T_\mu: x \in M \mapsto \check{\mu} \ast x = \int_G \sigma^{-1}_g(x) \dd\mu(g) \in M.\]
Since $\mu \ast \phi = \phi$, we have $\phi \circ T_\mu = \phi$. Since $\phi \in M_\ast$ is faithful, this implies that $T_\mu : M \to M$ is a faithful normal ucp map. Next, choose a non-principal ultrafilter $\omega \in \beta(\N) \setminus \N$ and define 
\[E_\mu: x \in M \mapsto \lim_{n \to \omega} \frac1n \sum_{k = 1}^n T_\mu^n(x) \in M.\]
Here the limit is meant for the ultra-weak topology. Observe that $E_\mu$ is a ucp map on $M$, it is idempotent and its image is the set of elements invariant under $T_\mu$. 

(1) Let $x \in M$ such that $E_\mu (x) = x$. Then $T_\mu(x) = x$ and since $\phi = \phi \circ T_\mu$, we find that
\begin{align*} \int_G \|x - \sigma_g^{-1}(x)\|_\phi^2 \dd\mu(g) & = \int_G \left(\phi(x^*x) - 2\Re ( \phi(x^*\sigma_g^{-1}(x))) + \phi(\sigma_g^{-1}(x^*x)) \right) \dd\mu(g)\\
& = \Vert x \Vert_\phi^2 - 2\Re (\phi(x^*T_\mu(x))) + \phi \circ T_\mu(x^*x) = 0.
\end{align*}
This implies that $\sigma_{g}^{-1}(x) = x$ for $\mu$-almost every $g \in G$. Since $\mu$ is generating and $G \curvearrowright M$ is continuous, it follows that $x \in M^G$. Therefore, $E_\mu : M \to M^G$ is a conditional expectation. Since $\phi \circ E_\mu = \phi$ and $\phi$ is a faithful normal state this implies that $E_\mu$ is also faithful and normal; it is the unique $\phi$-preserving conditional expectation onto $M^G$.

(2) For every $\mu$-stationary normal state $\psi \in M_\ast$, we have $\psi = \psi \circ T_\mu$. So the formula $\psi = \psi \circ E_\mu$ follows from the concrete formula defining $E_\mu$. If the action is ergodic then $\psi(x)1 = E_\mu(x) = \phi(x)1$ for every $x \in M$, showing that $\psi = \phi$. 

(3) Assume first that the action is ergodic. If $\psi \in \cS_\mu(A)$ is a positive linear functional such that $\psi \leq \phi$, then $\psi$ extends continuously to a normal linear functional on $M$, which must be proportional to $\phi$ thanks to (2). This implies that $\phi_{|A} \in \cS_\mu(A)$ is an extreme point.

Conversely, assume that $\phi_{|A} \in \cS_\mu(A)$ is an extreme point. Take an element $p \in M^G$, with $0 \leq p \leq 1$.
Denote by $(L^2(M),L^2(M)_+,J)$ the standard form of $M$, and by $\xi \in L^2(M)_+$ the unique positive vector implementing $\phi$ (see \cite{Ha73}). Define a linear functional $\psi \in M_\ast$ by the formula
\[\psi(x) = \langle x JpJ \xi, \xi\rangle, \text{ for every } x \in M.\]
We claim that $\psi$ is $\mu$-stationary as well. In fact, since $\phi \circ E_\mu = \phi$ we have $e_\mu(\xi) = \xi$, where $e_\mu$ is the orthogonal projection $L^2(M) \to L^2(M^G)$ corresponding to the conditional expectation $E_\mu$. Indeed this equivalence follows from the fact that $e_\mu$ maps positive vectors to positive vectors, and $e_\mu(\xi)$ implements $\phi \circ E_\mu$ (thanks to the formula $e_\mu x e_\mu = E_\mu(x)e_\mu$, for all $x \in M$).

The projection $e_\mu$ commutes with $J$ and since $p$ is $G$-invariant, we have $e_\mu p = p e_\mu$. So for every $x \in M$,
\[\psi(x) = \langle xJpJe_\mu(\xi),e_\mu(\xi)\rangle = \langle e_\mu x e_\mu JpJ\xi ,\xi\rangle = \langle  E_\mu(x)e_\mu JpJ\xi ,\xi\rangle = \psi \circ E_\mu(x).\] 
This shows that indeed $\psi$ is $\mu$-stationary. Moreover, it is obvious that $0 \leq \psi \leq \phi$, so by extremality of $\phi_{|A} \in \mathcal S_\mu(A)$, $\psi$ must be proportional to $\phi$ on $A$, and hence on $M$ (by ultraweak continuity): $\psi = c \phi$ for some $c \in [0,1]$. 
This implies that $\langle JpJ x\xi, y\xi\rangle = c \langle x\xi , y\xi\rangle$ for every $x,y \in M$. Since $\phi$ is faithful, $\xi$ is a cyclic vector and hence $JpJ = c 1 \in \C 1$ and so $M^G = \C 1$.
\end{proof}

\begin{lem}\label{lem:structure-product}
Let $G = G_1 \times G_2$ be the product of two lcsc groups. Choose generating measures $\mu_1 \in \Prob(G_1)$, $\mu_2 \in \Prob(G_2)$, and denote by $\mu = \mu_1 \ot \mu_2 \in \Prob(G)$ the product measure on~$G$.

Let $M$ be a $G$-von Neumann algebra with a faithful normal $\mu$-stationary state $\phi \in M_*$. Choose $i\neq j \in \{1,2\}$. The following facts hold true.
\begin{enumerate}
\item $\phi$ is $\mu_i$-stationary.
\item The unique $\phi$-preserving normal conditional expectation $E_{i} : M \to M^{G_i}$ is $G_j$-equivariant.
\item If $\phi$ is not $G_j$-invariant, $\phi_{|M^{G_i}}$ is not $G_j$-invariant. In particular, we have $M^{G_i} \neq \C 1$. 
\end{enumerate}
\end{lem}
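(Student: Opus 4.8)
The plan is to prove (1) first and then deduce (2) and (3) from it by fairly formal manipulations. Throughout I would work with the convolution operators $T_{\mu_i}\colon x \mapsto \int_{G_i}\sigma_g^{-1}(x)\dd\mu_i(g)$ on $M$ and their preduals $\psi \mapsto \mu_i \ast \psi$ on $M_\ast$, keeping in mind two structural facts coming from $G = G_1\times G_2$: first, since $G_1$ and $G_2$ commute, the operators attached to $\mu_1$ commute with those attached to $\mu_2$, and $\mu = \mu_1 \ast \mu_2 = \mu_2 \ast \mu_1$ as measures on $G$; second, each $M^{G_i}$ is globally $G_j$-invariant, because $\sigma_h$ sends $G_i$-fixed elements to $G_i$-fixed elements when $h \in G_j$.

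For (1), say $i = 1$. Noting that $\mu$ is generating on $G$, I may apply Proposition~\ref{prop:stationary} to $(G,\mu)$ and the faithful $\mu$-stationary state $\phi$, obtaining the $\phi$-preserving normal conditional expectation $E_\mu\colon M \to M^G$ with $\phi \circ E_\mu = \phi$. The idea I would exploit is that $\rho := \mu_1 \ast \phi$ is itself a normal $\mu$-stationary state: indeed $\mu \ast \rho = \mu_1 \ast (\mu \ast \phi) = \mu_1 \ast \phi = \rho$, using that $\mu_1 \ast \mu = \mu \ast \mu_1$. Proposition~\ref{prop:stationary}(2) then gives $\rho = \rho \circ E_\mu$. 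Since $E_\mu(M) = M^G$ consists of $G_1$-fixed elements, $\rho$ and $\phi$ agree there — for $y \in M^G$ one has $\rho(y) = \int_{G_1}\phi(\sigma_g^{-1}y)\dd\mu_1(g) = \phi(y)$ — whence, for every $x \in M$,
\[ \rho(x) = \rho(E_\mu(x)) = \phi(E_\mu(x)) = \phi(x). \]
This shows $\mu_1 \ast \phi = \phi$, i.e.\ $\phi$ is $\mu_1$-stationary; the case $i = 2$ is identical.

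Granting (1), statement (2) is quick: Proposition~\ref{prop:stationary}(1) applied to $(G_i,\mu_i)$ yields the unique $\phi$-preserving normal conditional expectation $E_i\colon M \to M^{G_i}$, realised as an ultraweak ultrafilter limit of the Cesàro averages $\frac1n\sum_{k=1}^n T_{\mu_i}^k$. For $h \in G_j$, the automorphism $\sigma_h$ commutes with $T_{\mu_i}$ (commuting factors), hence with each Cesàro average, and hence — by ultraweak continuity of $\sigma_h$ — with $E_i$; this is precisely the asserted $G_j$-equivariance. I would stress that one cannot shortcut this via uniqueness of $E_i$, since $\phi$ need not be $G_j$-invariant and so $\sigma_h^{-1}E_i\sigma_h$ need not be $\phi$-preserving: the explicit averaging formula is what does the work. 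Statement (3) I would prove by contraposition. Assume $\phi_{|M^{G_i}}$ is $G_j$-invariant; then, using $\phi = \phi \circ E_i$ and the equivariance just proved, for $h \in G_j$ and $x \in M$,
\[ \phi(\sigma_h(x)) = \phi(E_i(\sigma_h(x))) = \phi(\sigma_h(E_i(x))) = \phi(E_i(x)) = \phi(x), \]
the third equality using the assumed $G_j$-invariance of $\phi$ on $M^{G_i}$ (valid since $E_i(x)\in M^{G_i}$); so $\phi$ is $G_j$-invariant. The final clause follows at once: if $M^{G_i} = \C 1$ then $\phi_{|M^{G_i}}$ is trivially $G_j$-invariant, which by the above would force $\phi$ to be $G_j$-invariant, contrary to hypothesis.

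The \textbf{main obstacle} is entirely concentrated in (1). The temptation is to decouple the stationarity relation using only that $\psi \mapsto \mu_1 \ast \psi$ and $\psi \mapsto \mu_2 \ast \psi$ are commuting contractions of $M_\ast$ whose product fixes $\phi$; but commuting contractions sharing a fixed point of their product need not fix it separately, so this cannot work without extra input. The role of the conditional expectation $E_\mu$ — and, through it, of the generating hypothesis on the factors, which guarantees that $M^G$ really is the fixed-point algebra — is exactly to supply the positivity and rigidity that force the decoupling; this is the one place where the hypotheses are genuinely used.
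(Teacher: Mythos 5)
Your proof is correct. For parts (1) and (3) it coincides with the paper's own argument: the paper likewise observes that $\mu_j\ast\phi$ is a normal $\mu$-stationary state because the convolutions commute, that it agrees with $\phi$ on $M^G$, and then applies Proposition~\ref{prop:stationary}(2) so that both states factor through $E_\mu$; and it derives (3) from (2) exactly as you do. The genuine divergence is in (2), where the paper argues by uniqueness (``for $g\in G_j$, $gE_ig^{-1}$ must equal $E_i$'') while you argue from the explicit Ces\`aro--ultrafilter construction of $E_i$. Both routes are valid, but your emphatic side claim that the uniqueness route \emph{cannot} work --- on the grounds that $\sigma_h^{-1}E_i\sigma_h$ need not be $\phi$-preserving when $\phi$ is not $G_j$-invariant --- is mistaken. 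For $h\in G_j$ the state $\phi\circ\sigma_h$ is normal, faithful and $\mu_i$-stationary (by the same commutation computation you use in (1)), so Proposition~\ref{prop:stationary}(2), applied to the $G_i$-action with its faithful $\mu_i$-stationary state $\phi$, gives $(\phi\circ\sigma_h)\circ E_i=\phi\circ\sigma_h$; hence
\[
\phi\circ\bigl(\sigma_h E_i\sigma_h^{-1}\bigr)=\bigl((\phi\circ\sigma_h)\circ E_i\bigr)\circ\sigma_h^{-1}=(\phi\circ\sigma_h)\circ\sigma_h^{-1}=\phi,
\]
so $\sigma_h E_i\sigma_h^{-1}$ is a $\phi$-preserving normal conditional expectation onto $M^{G_i}$ (note $\sigma_h(M^{G_i})=M^{G_i}$ since the factors commute), and uniqueness forces $\sigma_h E_i\sigma_h^{-1}=E_i$. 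So the paper's one-line deduction is legitimate, modulo this small observation left implicit; your averaging argument is an equally correct, self-contained alternative that trades the characterizing property of $E_i$ for its explicit construction.
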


\begin{proof}
(1) Note that $\mu \ast \mu_j = \mu_j \ast \mu$. Then we have 
\[\mu_j \ast \phi = \mu_j \ast \mu \ast \phi = \mu \ast (\mu_j \ast \phi).\]
This shows that $\mu_j \ast \phi$ is a $\mu$-stationary normal state. Since $\mu_j \ast \phi_{|M^G} = \phi_{|M^G}$, Proposition \ref{prop:stationary}(2) implies that $\mu_j \ast \phi = \phi$.

(2) Take $g \in G_j$. Then $g^{-1}\phi$ is still $\mu_i$-stationary, so by Proposition \ref{prop:stationary}.(2) we deduce that $g^{-1}\phi = (g^{-1}\phi) \circ E_i$. This rewrites as $\phi = \phi \circ (gE_ig^{-1})$. So $gE_ig^{-1}$ is a $\phi$-preserving conditional expectation onto $M^{G_i}$. By uniqueness, it coincides with $E_i$.

(3) follows trivially from (2).
\end{proof}

\subsection{Metric ergodicity and the Mautner property} \label{ME}

Metric ergodicity is an important tool that we use.
We recall its definition.

\begin{defn} \label{def:ME}
Let $G$ be an lcsc group and $B$ a $G$-Lebesgue space.
The action of $G$ on $B$ is called {\em metrically ergodic} if every measurable $G$-map from $B$ into a separable metric $G$-space $X$ on which $G$ acts continuously by isometries is essentially constant, equal to a $G$-fixed point.
We will say that $B$ is $G$-metrically ergodic.
\end{defn}

We refer to \cite[Section 2]{BF14} for examples and further extensions of this notion. 
For homogeneous spaces, metric ergodicity is closely related to the Mautner phenomenon.

\begin{defn}
Let $P$ be an lcsc group and $A \leq P$ a closed subgroup.
The pair $(P,A)$ is said to have the {\em Mautner property}
if for every continuous action of $P$ on a metric space $X$ by isometries, every point $x \in X$ which is $A$-invariant is $P$-invariant. 
\end{defn}

The following is an immediate consequence of \cite[Lemma~6.3]{BG14}.

\begin{lem}\label{mautner ME}
Let $P$ be an lcsc group and $A\le P$ a closed subgroup.
Endow $P/A$ with the unique $P$-invariant measure class.
Then the action of $P$ on $P/A$ is metrically ergodic if and only if 
the pair $(P,A)$ has the Mautner property.
\end{lem}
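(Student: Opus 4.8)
The plan is to set up a dictionary between $P$-equivariant maps out of $P/A$ and $A$-fixed points in the target, and to read off both implications from it. Concretely, for any metric $P$-space $X$ on which $P$ acts continuously by isometries, a $P$-equivariant map $f\colon P/A\to X$ is completely determined by the single value $x_0:=f(eA)$ through the formula $f(pA)=p\cdot x_0$, and well-definedness of this formula is exactly the requirement that $x_0$ be $A$-invariant. The whole content of the lemma is then the translation between the measurable category (metric ergodicity) and the topological category (the Mautner property) under this correspondence $f\leftrightarrow f(eA)$.

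First I would prove that the Mautner property implies metric ergodicity. Let $f\colon P/A\to X$ be a measurable $P$-map into a separable metric $P$-space. Pulling back along the quotient map $\pi\colon P\to P/A$ and using $P$-equivariance, the pullback is a.e.\ of the form $p\mapsto p\cdot x_0$ for an essential value $x_0$, while its genuine right $A$-invariance forces $g a\cdot x_0=g\cdot x_0$ for a.e.\ $g$ and all $a$, hence $a\cdot x_0=x_0$ for a.e.\ $a\in A$ by Fubini. Since the action on $X$ is continuous and isometric, $\Stab_A(x_0)$ is a closed subgroup of $A$; being of full measure it is all of $A$, so $x_0$ is genuinely $A$-invariant. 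The Mautner property then makes $x_0$ a $P$-fixed point, and $f$ is a.e.\ equal to this fixed point, which is precisely what metric ergodicity demands.

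Conversely, to deduce the Mautner property from metric ergodicity, let $P$ act continuously by isometries on a metric space $X$ (not assumed separable) and let $x_0\in X$ be $A$-invariant. I would first replace $X$ by the orbit closure $\overline{P\cdot x_0}$: the orbit map $P\to X$ is continuous and $P$ is second countable, so this is a separable, $P$-invariant metric subspace. The formula $f(pA):=p\cdot x_0$ is well-defined by $A$-invariance of $x_0$, and it is continuous since it factors the continuous orbit map through the open quotient map $\pi$. Metric ergodicity gives that $f$ is essentially constant, equal to some $P$-fixed point $x$. Because $f$ is continuous and the $P$-invariant measure class on the homogeneous space $P/A$ has full support, ``essentially constant'' upgrades to ``everywhere constant'', whence $x_0=f(eA)=x$ is $P$-fixed, as required.

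The hard part will be the passage between the measurable and topological worlds, and this is exactly the point at which I would invoke \cite[Lemma~6.3]{BG14}. Two steps carry the weight: turning the almost-everywhere $A$-invariance of the essential value $x_0$ into honest invariance, which relies on $\Stab_A(x_0)$ being closed together with the fact that a closed full-measure subgroup coincides with the ambient group; and upgrading ``essentially constant'' to ``everywhere constant'' for the continuous map $f$, which uses the full support of the quasi-invariant measure class on $P/A$. Everything else is the purely formal bookkeeping of the correspondence $f\leftrightarrow f(eA)$.
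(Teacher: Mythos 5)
Your proof is correct. For comparison: the paper offers no argument at all for this lemma --- it simply records it as an immediate consequence of \cite[Lemma~6.3]{BG14} --- and the dictionary you set up between measurable $P$-maps $P/A\to X$ and $A$-fixed points in $X$ is precisely the content that citation encapsulates, so your proposal amounts to a correct, self-contained proof of what the paper outsources. One quantifier in your first direction deserves tightening: the conull set of $g\in P$ on which $F(ga)=(ga)\cdot x_0$ depends on $a$, so what you actually obtain is ``for every $a\in A$, for a.e.\ $g\in P$, $ga\cdot x_0=g\cdot x_0$''; but this already yields $a\cdot x_0=x_0$ for \emph{every} $a\in A$, by picking one good $g$ and cancelling it (each $g$ acts bijectively). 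The detour through ``a.e.\ $a\in A$'', Fubini, and closedness of $\Stab_A(x_0)$ is therefore unnecessary --- and slightly delicate as stated, since $A$ is typically a null subset of $P$, so ``a.e.\ $a\in A$'' would have to refer to the Haar measure of $A$ itself, requiring a Fubini argument over $P\times A$ rather than over $P$. Your second direction (passing to the orbit closure for separability, continuity of $f$ via the open quotient map, full support of the invariant measure class on $P/A$, and the closed-plus-conull-implies-everything upgrade) is exactly right and is the cleanest way to handle the non-separable target allowed in the Mautner property.
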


\begin{defn} \label{def:RMP}
Let $G$ be a topological group and $P\le G$ a closed subgroup. We will say that 
\begin{itemize}
\item $P$ has the {\em relative Mautner property} in $G$
if for every $g\in G$, the pair $(P,P \cap gPg^{-1})$ has the Mautner property. 
\item $P$ is {\em stably self normalizing} in $G$ if 
every intermediate closed subgroup $P\le Q \le G$ is its own normalizer in $G$.
\end{itemize}
\end{defn}

Note that both the relative Mautner property and the stably self normalizing property of $P$ in $G$
are conjugation invariant: if $P$ has it then also $gPg^{-1}$ for any $g\in G$. 

\begin{lem} \label{lem:RMP}
Let $G$ be an lcsc group and $P\le G$ a closed subgroup. Consider an intermediate closed subgroup $P \le H \le G$.
\begin{enumerate}
\item If $P$ is stably self normalizing or has the relative Mautner property in $G$, then the same is true in $H$.
\item If $P$ satisfies both of these properties, then for every $g \in H$, the pair $(H,P \cap gPg^{-1})$ has the Mautner property.
\end{enumerate}
\end{lem}
\begin{proof}
(1) is obvious.

(2) We may assume $H = G$. By an obvious transitivity property, we only need to prove that the pair $(G,P)$ has the Mautner property. Let $X$ be a metric space on which $G$ acts continuously by isometries and let $x\in X$ be a $P$-fixed point.
Denote by $S$ the stabilizer of $x$. Note that $(gPg^{-1},P \cap gPg^{-1})$ has the Mautner property, so $x$ must be fixed by $gPg^{-1}$, for every $g \in G$. So the closed group $Q$ generated by $\bigcup_{g \in G} gPg^{-1}$ is contained in $S$.
Since $Q$ is a normal subgroup of $G$ which contains $P$, the stably self-normalizing condition implies that $Q = G$. Hence $S = G$, as desired.
\end{proof}

\begin{exmp}\label{Mautner algps}
Let $k$ be a local field and ${\bf G}$ a connected non-commutative $k$-isotropic $k$-almost simple $k$-algebraic group.
Let ${\bf P}$ be a minimal $k$-parabolic subgroup.
We consider the subgroup ${\bf G}(k)^+$ discussed in \cite[Sections I.1.5 and I.2.3]{Ma91}
and recall that it is the image of $\tilde{\bf G}(k)$ under the covering map $\tilde{\bf G}\to {\bf G}$,
where $\tilde{\bf G}$ is the simply connected cover of ${\bf G}$, 
see \cite[Theorem I.2.3.1(a) and Proposition I.1.5.5]{Ma91}.
Let $G$ be an intermediate closed subgroup ${\bf G}(k)^+ \le G \le {\bf G}(k)$
and set $P=G\cap {\bf P}(k)$.
Note that by \cite[Proposition 20.5]{Bo91} we have a natural identification ${\bf G}(k)/{\bf P}(k)={\bf G}/{\bf P}(k)$.
We claim that $G$ acts transitively on ${\bf G}/{\bf P}(k)$ with stabilizer $P$ and $P$ is stably self normalizing and it has the relative Mautner property
in $G$.
We claim further that for every intermediate subgroup $P\le H\le G$, we have $H=G\cap {\bf Q}(k)$
for some intermediate $k$-parabolic subgroup ${\bf P}\le {\bf Q}\le {\bf G}$.

The fact that $G$ acts transitively on ${\bf G}(k)/{\bf P}(k)$ follows from 
the fact that already ${\bf G}(k)^+$ does.
Indeed, ${\bf G}(k)^+{\bf P}(k)={\bf G}(k)$ by \cite[Proposition I.1.5.4(vi)]{Ma91}.
It is now also clear that the stabilizer of the base point is $P$.

Next we show that $P$ has the relative Mautner property
in $G$.
We fix $g\in G$ and consider the corresponding conjugation of ${\bf P}$, ${\bf P}^g$.
Using \cite[Corollary 20.7(i)]{Bo91} we find a maximal $k$-split torus ${\bf T}\subset {\bf P}\cap {\bf P}^g$
and using \cite[Theorem 22.6(i),(ii)]{Bo91} we lift ${\bf P}$ and ${\bf T}$ to a $k$-parabolic $\tilde{\bf P} \subset \tilde{\bf G}$ and a maximal 
$k$-split torus $\tilde{\bf T}\subset \tilde{\bf P}$.
We let ${\bf U}$ and $\tilde{\bf U}$ be the corresponding unipotent radicals of ${\bf P}$ and $\tilde{\bf P}$,
which are defined over $k$ by \cite[Theorem 20.5]{Bo91}.
We set $\tilde{T}=\tilde{\bf T}(k)$ and $\tilde{U}=\tilde{\bf U}(k)$.
It is a standard fact, the classical Mautner phenomenon, that the pair
$(\tilde{T}\tilde{U},\tilde{T})$ has the Mautner property.
Let us explain why this implies that $(P,P \cap P^g)$ has the Mautner property.  
Thanks to Lemma \ref{mautner ME}, it is enough to consider the equivariant map $\tilde{T}\tilde{U}/\tilde{T} \to P/P \cap P^g$,
induced by the natural homomorphism $\tilde{T}\tilde{U}\to P$, and to observe that $\tilde{T}\tilde{U}$ acts transitively on $P/P \cap P^g$.
In fact, we may consider further the map $P/P \cap P^g\to {\bf P}(k)/{\bf P}(k)\cap {\bf P}^g(k)$
and note that $U={\bf U}(k)$ acts transitively on the latter by the Bruhat decomposition \cite[Theorem 21.15]{Bo91}
and the map $\tilde{U}\to U$ is surjective by \cite[Proposition 22.4(ii)]{Bo91}.
Thus indeed, $P$ has the relative Mautner property
in $G$.

We now let $H$ be an intermediate subgroup $P\le H\le G$.
We let $\tilde{H}$ be its preimage in $\tilde{\bf G}(k)$ and note that $\tilde{\bf P}(k)\le \tilde{H}$.
By \cite[Theorem 21.15]{Bo91} we have that $(\tilde{\bf G}(k),\tilde{\bf P}(k),N_{\tilde{\bf G}}(\tilde{\bf T})(k),S)$ is a Tits system where 
$S$ is the associated set of generators of the corresponding Weyl group.
We conclude that $\tilde{H}=\tilde{\bf Q}(k)$ for some $k$-parabolic subgroup $\tilde{\bf Q}$ in $\tilde{\bf G}$
and it is self normalizing.
Since $\tilde{\bf G}(k)$ acts transitively on $G/H$, as it acts transitively on $G/P$,
we get that there is a unique $\tilde{\bf G}(k)$-equivariant isomorphism between $\tilde{\bf G}(k)/\tilde{\bf Q}(k)$
and $G/H$.
By \cite[Theorem 22.6(i)]{Bo91} there exists a $k$-parabolic subgroup ${\bf Q}$ in ${\bf G}$ corresponding to $\tilde{\bf Q}$,
thus $\tilde{\bf G}(k)/\tilde{\bf Q}(k)$ and ${\bf G}(k)/{\bf Q}(k)$ are isomorphic as $\tilde{\bf G}(k)$ spaces,
and we conclude having a unique ${\bf G}(k)^+$-equivariant isomorphism between ${\bf G}(k)/{\bf Q}(k)$
and $G/H$. As ${\bf G}(k)^+$ is normal in $G$, we get that $G$ acts by conjugation on the set of all such ${\bf G}(k)^+$-equivariant isomorphisms,
thus this unique isomorphism must be $G$-invariant for the conjugation action, equivalently it is $G$-equivariant.
It follows that indeed, $H=G\cap {\bf Q}(k)$.
Moreover, it follows that $G/H$ has no non-trivial $G$-equivariant self maps, thus $H$ is self normalizing.

%
%Finally, in order to see that $P$ is stably self normalizing 
%in $G$ one can invoke the theory of {\em Tits systems},
%showing that  $(G,P,N,S)$ is a Tits system where $N$ is the normalizer in $G$ of a $k$-split torus of ${\bf G}$
%and $S$ is the corresponding set of generators of the associated Weyl group.
%The verification that this tuple indeed forms a Tits system is standard and it follows the proof of \cite[Theorem 21.15]{Bo91}.
%The fact that $P$ is stably self normalizing 
%in $G$ now follows by \cite[Chapitre IV, 2.5, 2.6]{Bo07}.
%Alternatively (and maybe more in the spirit of this paper) 
%one may use \cite[Corollary 5.2]{BS04} and endow $G/P$ with a quasi-invariant stationary measure making it 
%the Furstenberg-Poisson boundary of $G$ with respect to an admissible measure and observe that 
%factors of the Furstenberg-Poisson boundary, thus spaces of the form $G/Q$ for an intermediate closed subgroup $P\le Q \le G$,
%have no non-trivial automorphisms, while any element $n\in N_G(Q)-Q$ does provide a non-trivial automorphism of $G/Q$ by the map $gQ \mapsto gnQ$.
\end{exmp}

\begin{exmp}\label{Mautner trees}
Consider a thick simplicial tree $T$ and a group $G < \Aut(T)$ acting co-compactly on $T$. Denote by $\partial T$ the visual boundary of the tree, and assume that the action of $G$ on $\partial T$ is $2$-transitive.

Fix $\xi \in \partial T$, and denote by $P$ the stabilizer of $\xi$ in $G$. Then $G/P$ is homeomorphic with $\partial T$. 
Since $G$ acts $2$-transitively on $G/P$, we have a decomposition $G = P \sqcup PgP$, for any $g \in G \setminus P$.
In particular, $P$ is a maximal proper subgroup in $G$. So its normalizer is either equal to $P$ or to $G$, and the later case is impossible under the $2$-transitivity assumption. In conclusion $P$ is stably self normalizing. The next lemma ensures the Mautner condition.
\end{exmp}

\begin{lem}\label{lem:Mautner trees}
Keep the notation from Example \ref{Mautner trees}. Then $(G,P)$ has the relative Mautner property.
\end{lem}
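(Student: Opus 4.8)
The plan is to fix $g \in G$ and establish the Mautner property for the pair $(P, P \cap gPg^{-1})$. If $g \in P$ then $gPg^{-1} = \Stab_G(g\xi) = \Stab_G(\xi) = P$ and the statement is vacuous, so I assume $g \notin P$, i.e.\ $\eta := g\xi \neq \xi$. Then
\[ P \cap gPg^{-1} = \Stab_G(\xi) \cap \Stab_G(\eta) =: A \]
is the setwise stabilizer of the bi-infinite geodesic $L$ joining $\eta$ to $\xi$. I would then unwind the Mautner property directly: let $P$ act continuously and isometrically on a metric space $X$ and let $x \in X$ be an $A$-fixed point; the goal is to show that $x$ is $P$-fixed.

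First I would produce a good hyperbolic element. Since $G$ acts cocompactly on the infinite tree $T$ it has no bounded orbit and hence fixes no vertex, and since it acts $2$-transitively on $\partial T$ it fixes no end; hence $G$ contains a hyperbolic element. As $2$-transitivity makes $G$ act transitively on the set of bi-infinite geodesics, I may conjugate and, if necessary, invert such an element to obtain a hyperbolic $a$ whose axis is $L$ and whose attracting fixed point is $\xi$; in particular $a \in A$. Let $U \leq P$ denote the horospherical subgroup $\bigcup_t \Fix_G(\mathrm{HB}_t)$, the union of the pointwise stabilizers of the horoballs $\mathrm{HB}_t$ centered at $\xi$. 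Because $a$ translates towards $\xi$, every vertex $v$ has $a^n v$ descending into each horoball, so a direct computation gives $a^{-n} u a^{n} \to e$ for every $u \in U$. The standard Mautner computation then applies: for $u \in U$, using that $a x = x$ and that the action is isometric and continuous,
\[ \| u x - x \| = \lim_{n} \| a^{-n} u a^{n} x - x \| = 0, \]
so $x$ is fixed by $U$. Thus $x$ is fixed by the whole subgroup $\langle A, U\rangle \leq P$.

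To finish I would show that $\langle A, U \rangle = P$, for which it suffices to prove that $U$ acts transitively on $\partial T \setminus \{\xi\} \cong P/A$ (the $P$-equivariant identification coming from $2$-transitivity, under which $A = \Stab_P(\eta)$). Indeed, granting this, for any $p \in P$ there is $u \in U$ with $u\eta = p\eta$, whence $u^{-1}p \in \Stab_P(\eta) = A$ and $p \in UA$; so $P = UA \subseteq \langle A, U\rangle$ and $x$ is $P$-fixed. Since $g$ was arbitrary, this proves that $(G,P)$ has the relative Mautner property. If the horospherical orbit $U\eta$ turns out to be only conull rather than everything, the same conclusion follows by transferring metric ergodicity along the $Q$-equivariant map $Q/\langle a\rangle \to P/A$ for $Q = \langle a\rangle \ltimes U$, using Lemma \ref{mautner ME} together with the fact that $(Q, \langle a\rangle)$ has the Mautner property by the contraction above.

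The main obstacle is precisely the transitivity of $U$ on $\partial T \setminus \{\xi\}$, which is the tree counterpart of the transitivity of the unipotent radical exploited in Example \ref{Mautner algps}. I would prove it as a local-to-global statement: to send $\eta_1$ to $\eta_2$ by a horospherical element, I would work at the branch vertex $c$ where the rays $[\xi, \eta_1)$ and $[\xi, \eta_2)$ separate, prescribe the element to fix the horoball below $c$ pointwise, and use that the local action of $G$ at $c$ is transitive on the directions pointing away from $\xi$---a consequence of the $2$-transitivity of the boundary action---to match the two directions, extending the partial map through the subtrees using cocompactness and the closedness of $G$. This local-to-global construction, where the hypotheses of Example \ref{Mautner trees} genuinely enter, is the step I expect to require the most care.
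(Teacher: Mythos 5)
Your reduction to $g \notin P$, the production of a hyperbolic element $a \in A = P \cap gPg^{-1}$ with axis $L$ and attracting endpoint $\xi$, and the contraction computation showing that an $A$-fixed point $x$ is fixed by every element of $G$ fixing a horoball centered at $\xi$ pointwise are all correct; up to that point you follow the same opening moves as the paper, which also works with a hyperbolic element of $P \cap gPg^{-1}$ supplied by $2$-transitivity. The genuine gap is the step you yourself flag: the transitivity of the horospherical group $U$ on $\partial T \setminus \{\xi\}$, which is what you need to conclude $P = UA$. The local-to-global construction you sketch --- prescribe an automorphism to fix the horoball below the branch vertex $c$ pointwise, match the two directions using the local action, then ``extend through the subtrees'' --- produces an element of $\Aut(T)$, but nothing in closedness or cocompactness of $G$ places that element inside $G$. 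The ability to realize partially prescribed automorphisms inside $G$ is precisely Tits' independence property, which is not among the hypotheses and genuinely fails for groups the lemma must cover: rank one simple algebraic groups over non-archimedean local fields, such as $\PSL_2(\Q_p)$ acting on its Bruhat--Tits tree, are closed, cocompact and $2$-transitive on the boundary, yet there the pointwise fixator of a horoball centered at $\xi$ is a compact abelian subgroup of the root group $\cong \Q_p$, so almost none of the automorphisms your gluing procedure prescribes belong to $G$. (For these groups the transitivity of $U$ happens to be true, but for algebraic reasons --- the unipotent radical of the minimal parabolic, exactly as in Example \ref{Mautner algps}; in the stated generality the transitivity, or even the non-triviality, of $U$ is a serious structural assertion that you neither prove nor cite.) Your fallback, replacing transitivity by a conull $U$-orbit and transferring metric ergodicity through Lemma \ref{mautner ME}, suffers from the same missing input: you have no lower bound at all on the size of the $U$-orbits.

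This difficulty is exactly what the paper's proof is engineered to avoid: it never produces a single horospherical element. Instead it uses that $P$ acts transitively on $\partial T \setminus \{\xi\}$, hence (a Polish group acting transitively on a Polish space) the orbit map $p \mapsto pg\xi$ is open, so $V_\eps := \{pg\xi \mid p \in P,\ d(px,x) < \eps\}$ is a neighborhood of $g\xi$; then north--south dynamics of the hyperbolic $k \in A$ on $\partial T$ gives, for any $h \in P$, some $n \in \Z$ with $k^n h g\xi \in V_\eps$, i.e.\ $k^n h g\xi = pg\xi$ with $d(px,x) < \eps$; finally $p^{-1}k^n h \in P \cap gPg^{-1}$ fixes $x$, whence $d(hx,x) = d(k^n hx, x) \le d(k^n hx, px) + d(px,x) < \eps$. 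No structure theory of $P$ beyond its transitivity on the punctured boundary is needed. If you want to keep the horospherical strategy, you would have to establish the transitivity statement (a Moufang-type property of boundary-$2$-transitive closed subgroups of $\Aut(T)$) as a theorem in its own right; as it stands, your proof is incomplete at its crucial step.
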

\begin{proof}
We first point out that the $2$-transitivity assumption implies that $G$ has no fixed point in $T$. 
Indeed, otherwise its closure in $\Aut(T)$ is compact, thus so is its unique non-diagonal orbit in $\partial T \times \partial T$
and it follows that the diagonal is open, thus $\partial T$ is discrete, contradicting the thickness assumption.
It follows that $G$ contains a hyperbolic element.

Fix $g \in G$ and take a continuous isometric action $P \actson X$ on a metric space $X$. Assume that $x \in X$ is a $P \cap gPg^{-1}$-invariant point. We need to prove that it is $P$-invariant. Obviously, we may assume that $g \notin P$. In this case we observe that the $P$-orbit of $g\xi$ is open in $\partial T$, equal to $\partial T \setminus \xi$. In particular, the orbit map $p \in P \mapsto pg\xi \in \partial T$ is open.

Take $h \in P$ and $\eps > 0$. Consider the neighborhood $U:= \{pg\xi \mid p \in P, d(px,x) < \eps\}$ of $g\xi$ in $\partial T$. Since $G$ contains a hyperbolic element, it contains a hyperbolic element $k$ with axis $[g\xi,\xi]$, by $2$-transitivity. Then $k \in P \cap gPg^{-1}$, and we may find $n \in \Z$ such that $k^nhg\xi \in U$.
By definition of $U$, there exists $p \in P$ such that $d(px,x) < \eps$ and $pg\xi = k^nhg\xi$. In this case, the element $p^{-1}k^nh$ belongs to $P \cap gPg^{-1}$, and therefore fixes $x$. We may then compute:
\[d(hx,x) = d(hx,k^{-n}x) = d(k^nhx,x) \leq d(k^nhx,px) + d(px,x) < 0 + \eps.\]
Since $\eps$ is arbitrarily small, $h$ must fix $x$.
\end{proof}

%%%%%
\section{Charmenable and charfinite groups} \label{sec:charamenable}

In this section, we discuss charmenable and charfinite groups as defined in Definition~\ref{def:charmenable}.
In \S\ref{ss:prop} we list formal consequences of these definitions, in \S\ref{ss:unitary} we discuss unitary representations of such groups and in \S\ref{ss:perm} we consider some of their
permanence properties.
Giving actual criteria for charmenability and charfiniteness is the core of this work and will be taken in later sections (\S\ref{ss:charcrit}
and \S\ref{sec:upgrade}). 
At this stage we will just state the obvious:

\begin{obs}
Every amenable group is charmenable and every finite group is charfinite.
Further, an amenable group is charfinite if and only if it is finite. 
\end{obs}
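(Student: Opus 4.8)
The plan is to verify each clause directly against Definition~\ref{def:charmenable}, using only the amenability of $\Gamma$ together with elementary facts about finite groups; no serious machinery is required here, so this should be a matter of unwinding definitions.

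First I would prove that an amenable group $\Gamma$ is charmenable. For condition (1) of charmenability, I would invoke the fixed-point characterization of amenability: every continuous affine action of $\Gamma$ on a nonempty compact convex set has a fixed point. Applied to a compact convex $\Gamma$-invariant subset $C \subseteq \PD_1(\Gamma)$, equipped with the conjugation action, this yields a $\Gamma$-fixed point of $C$, which is by definition a character lying in $C$. For condition (2), I would use that every unitary representation of an amenable group generates an injective, hence amenable, von Neumann algebra; thus every character of $\Gamma$ is von Neumann amenable, and in particular every extremal one is. This is precisely the remark recorded just after the definition of amenable characters, so both clauses follow at once.

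Next I would show that a finite group $\Gamma$ is charfinite. Being finite it is amenable, hence charmenable by the previous step, so conditions (1) and (2) hold; it remains to check (3)--(5). Condition (3) is immediate, since $\Rad(\Gamma) \le \Gamma$ is finite. For condition (4), a finite group has only finitely many isomorphism classes of irreducible representations, all finite-dimensional; a $d$-dimensional representation decomposes as a direct sum of irreducibles of total dimension $d$, and there are only finitely many such multisets, whence finitely many isomorphism classes in each fixed dimension. For condition (5), every unitary representation of a finite group is finite-dimensional (already $\ell^2(\Gamma)$ is), so every character of $\Gamma$ is finite, in particular every amenable extremal character.

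Finally I would establish the equivalence for amenable groups. The direction ``finite $\Rightarrow$ charfinite'' is the previous paragraph. For the converse, the key observation is that if $\Gamma$ is amenable then $\Gamma$ is a normal amenable subgroup of itself, so $\Rad(\Gamma) = \Gamma$; charfiniteness condition (3) then forces $\Gamma = \Rad(\Gamma)$ to be finite. This last implication is the only point requiring even a moment's thought, and it is not really an obstacle; the whole statement is a direct consequence of the definitions together with the standard amenability facts used above.
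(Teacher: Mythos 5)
Your proposal is correct in substance and follows the same route the paper implicitly takes: the Observation is stated without proof, resting on the remark recorded after the definition of amenable characters (Day's fixed-point theorem gives condition (1) for amenable groups, and injectivity of von Neumann algebras generated by representations of amenable groups gives condition (2)), together with the identity $\Rad(\Gamma)=\Gamma$ for amenable $\Gamma$, which is exactly your argument for the converse direction.

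One step, however, is wrong as written: it is \emph{not} true that every unitary representation of a finite group is finite dimensional (an infinite direct sum of copies of the trivial representation is a counterexample), so your justification of condition (5) does not stand as stated. The fix is immediate: what condition (5) requires is that the \emph{GNS} representation of an (amenable extremal) character be finite dimensional, and GNS representations are cyclic, so for finite $\Gamma$ the space $H_\phi$ is spanned by the finitely many vectors $\pi_\phi(g)\xi_\phi$, $g \in \Gamma$, whence $\dim H_\phi \leq |\Gamma|$. With that substitution your proof is complete.
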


%%%
\subsection{Properties of charmenable and charfinite groups} \label{ss:prop}

The following lemma will be often used without mention.

\begin{lem}\label{soft charm}
Every character of a charmenable group $\Gamma$ is a convex combination of a von Neumann amenable character and a character supported on $\Rad(\Gamma)$.
In particular, 
the set of characters supported on $\Rad(\Gamma)$ is a face of $\Char(\Gamma)$
and its complement set consists of amenable characters.

Furthermore, if $\Gamma$ is charfinite, then the GNS representation of an amenable character contains a finite dimensional subrepresentation.
\end{lem}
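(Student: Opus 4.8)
The plan is to extract the finite-dimensional subrepresentation from the extremal (central) decomposition of the amenable character, using charmenability to control the pieces and charfiniteness to force an atom. I may assume $\Gamma$ is non-amenable, since otherwise charfiniteness forces $\Gamma$ to be finite by the Observation above and the statement is trivial. Let $\phi$ be an amenable character, let $(\pi_\phi, H_\phi, \xi_\phi)$ be its GNS triple and $M = \pi_\phi(\Gamma)\dpr$ the associated tracial von Neumann algebra with trace $\tau = \langle \, \cdot \, \xi_\phi, \xi_\phi \rangle$. Disintegrating $M$ over its center $\cZ(M)$ (equivalently, decomposing $\phi$ into extremal characters as in the set-up of Lemma~\ref{convexity of amenable reps}), I would write $\phi = \int_Z \psi_z \dd\nu(z)$, where $\nu \in \Prob(\Char(\Gamma))$ is carried by the extremal characters and $z \mapsto (\pi_{\psi_z}, H_{\psi_z}, \xi_{\psi_z})$ is the corresponding measurable field of GNS triples, with $M \simeq \int_Z^\oplus \pi_{\psi_z}(\Gamma)\dpr \dd\nu(z)$ identifying the center with $L^\infty(Z,\nu)$.

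Next I would split $Z$ using charmenability. By condition (2) of Definition~\ref{def:charmenable}, for $\nu$-almost every $z$ the extremal character $\psi_z$ is either supported on $\Rad(\Gamma)$ or von Neumann amenable. Set $Z_a = \{z : \psi_z \text{ is not supported on } \Rad(\Gamma)\}$ and $Z_r = Z \setminus Z_a$; then $\psi_z$ is von Neumann amenable for $\nu$-a.e.\ $z \in Z_a$. I claim $\nu(Z_a) > 0$: if $\nu(Z_a) = 0$, then $\psi_z(g) = 0$ for $\nu$-a.e.\ $z$ and every $g \notin \Rad(\Gamma)$, hence $\phi(g) = \int_Z \psi_z(g)\dd\nu(z) = 0$ for such $g$, so $\phi$ would be supported on $\Rad(\Gamma)$. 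Since $\Gamma$ is non-amenable, a character supported on $\Rad(\Gamma)$ is not amenable, contradicting the amenability of $\phi$.

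Finally I would exploit charfiniteness on the piece over $Z_a$. For $\nu$-a.e.\ $z \in Z_a$ the character $\psi_z$ is von Neumann amenable, hence amenable and extremal, so by condition (5) it is finite; its GNS representation is a finite-dimensional factor representation, a multiple of a single finite-dimensional irreducible representation. By condition (4) there are only finitely many isomorphism classes of unitary representations of $\Gamma$ in each fixed finite dimension, so the finite extremal characters form a countable set $\{\chi_1, \chi_2, \ldots\}$ and the field $z \mapsto \psi_z$ is $\nu$-essentially valued in it on $Z_a$. Pushing $\nu|_{Z_a}$ forward, I obtain an atomic measure of total mass $\nu(Z_a) > 0$, so some $\chi_i$ is charged: the central projection $p \in \cZ(M)$ onto $\{z \in Z_a : \psi_z \simeq \chi_i\}$ is nonzero with $\tau(p) > 0$. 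Since $p$ commutes with $\pi_\phi(\Gamma)$, the subspace $pH_\phi$ is $\Gamma$-invariant and the restriction of $\pi_\phi$ to it is a multiple of the finite-dimensional irreducible representation $\sigma_{\chi_i}$ attached to $\chi_i$; in particular $\pi_\phi$ contains a finite-dimensional subrepresentation.

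The main obstacle I anticipate is the measure-theoretic bookkeeping of the central decomposition: verifying that $Z_a$ and the field $z \mapsto \psi_z$ are measurable, that ``von Neumann amenable'' and ``finite'' cut out measurable subsets, and above all that charfiniteness condition (4) genuinely renders the field countably-valued on $Z_a$, so that the pushforward measure is atomic. Once atomicity is secured, the extraction of the finite-dimensional subrepresentation from a charged atom via the corresponding central projection is routine.
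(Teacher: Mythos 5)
Your proposal establishes only the final (``Furthermore'') clause of the lemma, and for that clause your argument is correct and essentially the paper's own: decompose the character into extremal ones, invoke condition (2) of Definition~\ref{def:charmenable} to isolate the part not supported on $\Rad(\Gamma)$, use condition (5) to see that the von Neumann amenable extremal characters appearing there are finite, and condition (4) to see that the finite extremal characters form a countable set, so that the relevant measure is atomic and a charged atom yields a finite-dimensional subrepresentation. The paper phrases the extraction via the dominated positive definite function $\phi_2$ rather than via a central projection, but the mechanism is identical, and the measurability issues you flag are treated as routine there (the sets of von Neumann amenable and of finite characters are Borel).

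The genuine gap is that the first two assertions of the lemma are never proved. First, the convex decomposition statement concerns an arbitrary character of a charmenable group, with no charfiniteness assumed; your opening reduction (``otherwise charfiniteness forces $\Gamma$ to be finite'') is unavailable there, and the amenable case must instead be handled by noting that then every character is von Neumann amenable. More importantly, your central decomposition only yields the desired decomposition $\phi=\nu(Z_r)\,\phi_1+\nu(Z_a)\,\phi_2$, with $\phi_1$ supported on $\Rad(\Gamma)$, once one knows that the barycenter $\phi_2$ of the von Neumann amenable characters over $Z_a$ is itself von Neumann amenable: this is precisely the content of Lemma~\ref{convexity of amenable reps}, which you cite only for its setup and never for this conclusion. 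Second, the assertion that the set $\Char_{\Rad}(\Gamma)$ of characters supported on $\Rad(\Gamma)$ is a face of $\Char(\Gamma)$ whose complement consists of amenable characters is entirely unaddressed, and it is not a formal consequence of the decomposition: the paper's proof additionally observes that if $\phi_2\neq 0$ then the GNS representation of $\phi$ contains that of $\phi_2$ and is therefore amenable, giving ``$\phi$ amenable $\iff$ $\phi\notin\Char_{\Rad}(\Gamma)$''; the face property then follows because a character dominating a positive multiple of an amenable character has amenable GNS representation. As written, your text contains the raw material for the first assertion but does not state or conclude it, and contains nothing towards the second; it proves one of the three claims of the lemma.
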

\begin{proof}
The lemma holds trivially if $\Gamma$ is amenable, thus we assume that this is not the case.
In particular, the characters supported on $\Rad(\Gamma)$ are non-amenable.
We denote the set of such characters $\Char_{\Rad}(\Gamma)$.

Given $\phi\in\Char(\Gamma)$ we get by Choquet's representation theorem $\nu\in \Prob(\Char(\Gamma))$ supported on the extreme points 
such that $\phi=\bary(\nu)$.
By the assumption that $\Gamma$ is charmenable we have that $\nu$ is a convex combination of $\nu_1$ and $\nu_2$,
where $\nu_1$ is supported on $\Char_{\Rad}(\Gamma)$ and $\nu_2$ is supported on the set of von Neumann amenable characters.
We conclude that $\phi$ is a convex combination of $\phi_1$ and $\phi_2$, where $\phi_i=\bary(\nu_i)$.
Clearly, $\phi_1\in \Char_{\Rad}(\Gamma)$ and by Lemma \ref{convexity of amenable reps},
$\phi_2$ is von Neumann amenable.
This proves the first claim.

If $\phi_2=0$ then $\phi=\phi_1\in \Char_{\Rad}(\Gamma)$.
Otherwise, the GNS representation of $\phi$ contains the GNS representation of $\phi_2$ and is thus amenable.
We get that $\phi$ is amenable if and only if it is not in $\Char_{\Rad}(\Gamma)$ and conclude that, indeed, $\Char_{\Rad}(\Gamma)$ is a face
whose complement consists of amenable characters.

Finally, if $\Gamma$ is charfinite and $\phi$ is amenable then the GNS representation of $\phi_2$, thus also of $\phi$,
contains a finite dimensional subrepresentation as $\nu_2$ is atomic and its atoms consist of finite characters. 
Indeed, $\nu_2$ is supported on a countable set of finite characters.
\end{proof}

The next proposition is a special case of Proposition~\ref{prop:IRS} and Proposition~\ref{prop:URS}.
Nevertheless we state it here for its importance and the clarity of its proof.

\begin{prop} \label{prop:NST}
Every normal subgroup $N \lhd \Gamma$ of a charmenable group is amenable or co-amenable in $\Gamma$.
If further $\Gamma$ is a charfinite group then $N$ is finite or of finite index in $\Gamma$.
\end{prop}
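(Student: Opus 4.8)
The plan is to probe charmenability using the single canonical character attached to $N$. Because $N \lhd \Gamma$ is normal, the indicator $\phi := \mathbf{1}_N$ is a conjugation-invariant normalized positive definite function, hence $\phi \in \Char(\Gamma)$, and its GNS triple is the quasi-regular representation on $\ell^2(\Gamma/N)$ with cyclic vector $\delta_{eN}$. Normality makes $N$ act trivially on $\Gamma/N$, so this representation is the left regular representation of the quotient group $\Lambda := \Gamma/N$ and the generated tracial von Neumann algebra is $M = L(\Lambda)$ with its canonical trace. This supplies two translations I will use repeatedly: $\phi$ is von Neumann amenable (that is, $M = L(\Lambda)$ is injective) precisely when $\Lambda$ is amenable, i.e.\ $N$ is co-amenable in $\Gamma$; and $\phi$ is supported on $\Rad(\Gamma)$ precisely when $N \subseteq \Rad(\Gamma)$, i.e.\ $N$ is amenable.

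First I would apply Lemma~\ref{soft charm} to decompose $\phi = t\phi_1 + (1-t)\phi_2$ with $t \in [0,1]$, where $\phi_1$ is a character supported on $\Rad(\Gamma)$ and $\phi_2$ is von Neumann amenable. The decisive step is to evaluate this identity at points $n \in N$, where $\phi(n) = 1$. Since $\phi_1,\phi_2$ are normalized characters we have $\Re\phi_i(n) \le 1$, so the convex equality $t\,\Re\phi_1(n) + (1-t)\,\Re\phi_2(n) = 1$ forces $\phi_1(n) = 1$ as soon as $t > 0$. In that case every $n \in N$ lies in the support of $\phi_1$, whence $N \subseteq \Rad(\Gamma)$ and $N$ is amenable. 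Contrapositively, if $N$ is not amenable then $t = 0$, so $\phi = \phi_2$ is von Neumann amenable; by the dictionary above $L(\Lambda)$ is injective, $\Lambda$ is amenable, and $N$ is co-amenable. This yields the dichotomy.

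For the charfinite statement I would rerun the two cases. If $N$ is amenable then $N \subseteq \Rad(\Gamma)$, which is finite by charfiniteness, so $N$ is finite. If $N$ is not amenable then, as above, $\phi = \phi_2$ is von Neumann amenable, hence amenable; the charfinite clause of Lemma~\ref{soft charm} then furnishes a nonzero finite-dimensional subrepresentation of the GNS representation of $\phi$, that is, of the regular representation of $\Lambda = \Gamma/N$. I would finish with the standard fact that the regular representation of a discrete group contains a nonzero finite-dimensional subrepresentation only if the group is finite: a finite-dimensional invariant subspace would produce, by translating a fixed nonzero $\ell^2$ vector along an escaping sequence, infinitely many almost-orthogonal vectors of equal norm inside a finite-dimensional space, which is absurd (the relevant inner products are matrix coefficients of the regular representation and hence lie in $c_0(\Lambda)$). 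Hence $\Lambda$ is finite and $N$ has finite index.

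The conceptual heart, and the only place requiring care, is the evaluation of the decomposition on $N$: the fact that $\phi \equiv 1$ on $N$ rigidly pins down both summands and prevents a genuine mixture of an amenable part and a $\Rad$-supported part, which is exactly what upgrades Lemma~\ref{soft charm} into a clean dichotomy rather than an inequality. The two von Neumann algebraic translations — injectivity of $L(\Lambda)$ being equivalent to amenability of $\Lambda$, and the absence of finite-dimensional subrepresentations in the regular representation of an infinite group — are classical and pose no real difficulty.
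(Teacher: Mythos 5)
Your proof is correct and takes essentially the same route as the paper: both probe charmenability with the character $\chi_N$, identify its GNS data with the regular representation of $\Gamma/N$ (so that amenability of the character translates into amenability of the quotient), and feed this into Lemma~\ref{soft charm} together with its charfinite clause to get the finite-dimensional subrepresentation, hence finiteness of $\Gamma/N$. The only real difference is cosmetic: where the paper directly invokes the face property stated in Lemma~\ref{soft charm} (a character not supported on $\Rad(\Gamma)$ is amenable), you re-derive that dichotomy for $\chi_N$ by evaluating the convex decomposition at elements of $N$, a valid self-contained substitute.
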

\begin{proof}
If $N$ is non-amenable, then $\phi:= \chi_N$ is not supported on $\Rad(\Gamma)$. 
Thus $\pi_\phi$ is an amenable representation and we get that $\Gamma/N$ is an amenable group.
If $\Gamma$ is charfinite and $N$ is infinite then also $\phi$ is not supported on $\Rad(\Gamma)$, which is finite,
thus again $\pi_\phi$ is amenable, hence it contains a finite dimensional subrepresentation.
However, $\pi_\phi$ is the regular representation of $\Gamma/N$ and so it follows that indeed, $\Gamma/N$ is finite.
\end{proof}

We denote by $\Sub(\Gamma)$ the space consisting of all subgroups of $\Gamma$
and endow it with the Chabauty topology.
This is a compact space on which $\Gamma$ acts by conjugation.
An IRS of $\Gamma$ is a $\Gamma$-invariant probability measure on $\Sub(\Gamma)$ (see \cite{AGV12}).

\begin{prop} \label{prop:IRS}
Let $\Gamma$ be a charfinite group and assume that $\Gamma$ acts ergodically on the probability space $(X,\mu)$ preserving the measure $\mu$.
Then either $X$ is essentially finite or the stabilizer of a.e.\ point of $X$ is contained in $\Rad(\Gamma)$.
In particular, every ergodic IRS of $\Gamma$ is finite.
\end{prop}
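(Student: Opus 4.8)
The plan is to encode the action as a character and then feed it into the charmenability/charfiniteness dichotomy. First I would introduce the \emph{stabilizer map} $s\colon X\to\Sub(\Gamma)$, $s(x)=\Stab_\Gamma(x)$, which is Borel and $\Gamma$-equivariant; since $\mu$ is invariant and ergodic, its pushforward $\nu:=s_*\mu$ is an ergodic IRS. To $\nu$ I attach the character
\[\phi(\gamma)=\mu(\Fix(\gamma))=\nu(\{H\in\Sub(\Gamma)\mid \gamma\in H\}),\]
whose positive definiteness I would witness by realizing its GNS representation as the cyclic sub-representation generated by the diagonal vector $(\delta_{eH})_H$ inside the direct integral of quasi-regular representations $\int^\oplus_{\Sub(\Gamma)}\lambda_{\Gamma/H}\dd\nu(H)$.

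Next I would split into cases using ergodicity. The set $\{x\in X\mid \Stab(x)\subseteq\Rad(\Gamma)\}$ is $\Gamma$-invariant (as $\Rad(\Gamma)$ is normal), hence null or conull. If it is conull we are in the second alternative of the statement, and since $\Rad(\Gamma)$ is finite there are only finitely many possible stabilizers, so $\nu$ has finite support. It remains to treat the null case, where $\nu$-a.e.\ $H\not\subseteq\Rad(\Gamma)$, and to prove that then $X$ is essentially finite. Because $\Gamma$ is countable and $\{H\not\subseteq\Rad(\Gamma)\}=\bigcup_{\gamma\notin\Rad(\Gamma)}\{H\ni\gamma\}$ has positive measure, some $\gamma_0\notin\Rad(\Gamma)$ satisfies $\phi(\gamma_0)>0$; thus $\phi$ is not supported on $\Rad(\Gamma)$, so by Lemma~\ref{soft charm} $\phi$ is amenable, and (again by Lemma~\ref{soft charm}, using charfiniteness) its GNS representation contains a nonzero finite-dimensional sub-representation $V$.

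The heart of the argument---and the step I expect to be the main obstacle---is turning this finite-dimensional sub-representation into finiteness of the stabilizers. The key elementary lemma is that for any $H\le\Gamma$ the quasi-regular representation $\lambda_{\Gamma/H}$ admits a nonzero finite-dimensional sub-representation if and only if $[\Gamma:H]<\infty$. Indeed, a finite-rank $\Gamma$-invariant projection $P$ on $\ell^2(\Gamma/H)$ has constant diagonal $\langle P\delta_{gH},\delta_{gH}\rangle\equiv c$ along the transitive $\Gamma$-action, so $\Tr(P)=c\,[\Gamma:H]$; finiteness of the rank then forces $c=0$ (whence $P=0$) whenever the index is infinite. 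Applying this fiberwise, for each $H$ the evaluation $\Xi\mapsto\Xi_H$ is a $\Gamma$-intertwiner from $V$ into $\lambda_{\Gamma/H}$, so its image is a finite-dimensional sub-representation of $\lambda_{\Gamma/H}$ and hence vanishes whenever $[\Gamma:H]=\infty$. Since $V\neq 0$, this forces $\nu(\{H\mid [\Gamma:H]<\infty\})>0$, and ergodicity of $\nu$ upgrades this to full measure. Thus $\nu$-a.e.\ stabilizer has finite index, every $\Gamma$-orbit in $X$ is finite, and ergodicity makes $X$ a single finite orbit, i.e.\ essentially finite. In both cases $\nu$ is supported on the finitely many conjugates of a single subgroup, which also yields the final assertion that every ergodic IRS is finite.
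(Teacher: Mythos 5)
Your proof is correct and shares the paper's skeleton --- the character $\phi(\gamma)=\mu(\Fix(\gamma))$ realized inside a direct integral of quasi-regular representations, combined with Lemma~\ref{soft charm} and charfiniteness --- but it handles the key analytic step differently, and in a worthwhile way. The paper argues in the contrapositive direction: assuming $X$ is not essentially finite, it invokes \cite[Proposition~3.1]{PT13} as a black box to conclude that the representation on $L^2(R)$ (hence the GNS representation of $\phi$) is weakly mixing, so that by charfiniteness $\phi$ cannot be amenable and must be supported on $\Rad(\Gamma)$. You instead prove the needed implication directly and elementarily: a nonzero finite-dimensional subrepresentation of the GNS representation, pushed into the fibers $\ell^2(\Gamma/H)$, yields a nonzero finite-rank invariant projection on some $\lambda_{\Gamma/H}$, and the constant-diagonal trace computation forces $[\Gamma:H]<\infty$; ergodicity then upgrades a.e.\ finiteness of orbits to essential finiteness of $X$. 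Your route buys self-containedness --- the only nonelementary input is Lemma~\ref{soft charm} --- at the cost of reproving a special case of the Peterson--Thom weak mixing result that the paper simply cites; the paper's route buys brevity.

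Two small points to tighten. First, evaluation at a single fiber $H$ is not literally well defined on a.e.-equivalence classes of sections, so the phrase ``for each $H$ the evaluation is a $\Gamma$-intertwiner'' should be replaced by: after fixing representatives of a basis of $V$ and using countability of $\Gamma$, for $\nu$-a.e.\ $H$ the fiber space $V_H=\{\Xi_H\mid\Xi\in V\}$ is a finite-dimensional $\lambda_{\Gamma/H}$-invariant subspace, and it is nonzero on a set of positive measure; this works precisely because your direct integral $\int^\oplus\lambda_{\Gamma/H}\dd\nu(H)$ acts fiberwise. Second, your closing sentence establishes finiteness only for the stabilizer IRS of the given action; to conclude that \emph{every} ergodic IRS is finite you must, as the paper does, invoke the fact that every ergodic IRS arises as the stabilizer distribution of an ergodic pmp action \cite{AGV12}. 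Neither point is a gap in substance, but both should be made explicit.
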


\begin{proof}
We assume $X$ is not essentially finite and consider the character $\phi(g)=\mu(\Fix(g))$.
We note that the GNS representation associated with $\phi$ is a sub-representation of $L^2(R)$ where $R\subset X\times X$ is the orbit equivalence relation endowed with
the $\mu$-integration of the counting measures on the fibers of the first coordinate projection.
By \cite[Proposition~3.1]{PT13}, this representation is weakly mixing and we deduce by charfiniteness that $\phi$ is indeed supported on $\Rad(\Gamma)$.
We note that the IRS associated with $X$ via the stabilizer map $X\to \Sub(\Gamma)$ is finite, as there are only finitely many subgroups in $\Rad(\Gamma)$.
The last bit of the proposition follows by the fact that every IRS of $\Gamma$ can be obtained as the image such a stabilizer map.
\end{proof}

Recall that a URS of $\Gamma$ is a minimal $\Gamma$-invariant subset of $\Sub(\Gamma)$ (see \cite{GW14}).

\begin{prop}\label{prop:URS}
Let $\Gamma$ be a charmenable group and $X$ a URS of $\Gamma$. 
Then either
every $H\in X$ is contained in $\Rad(\Gamma)$ or
$X$ carries a $\Gamma$-invariant probability measure.
Furthermore, if $\Gamma$ is charfinite then $X$ is finite.
\end{prop}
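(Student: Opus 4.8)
The plan is to transport the URS into the space of positive definite functions, extract a character via charmenability, and read off the dichotomy from whether that character is supported on $\Rad(\Gamma)$. First I would use the $\Gamma$-equivariant map $\iota\colon \Sub(\Gamma)\to \PD_1(\Gamma)$, $H\mapsto \mathbf{1}_H$, where $\mathbf{1}_H$ is positive definite as the diagonal coefficient of the quasi-regular representation $\lambda_{\Gamma/H}$ at $\delta_{eH}$. This map is injective, continuous from the Chabauty topology to the weak$^*$-topology, and intertwines the conjugation actions. Hence $K:=\overline{\conv}\,\iota(X)$ is a compact convex $\Gamma$-invariant subset of $\PD_1(\Gamma)$, and since $\iota(X)$ is compact, every element of $K$ has the form $\phi_\mu:=\int_X \mathbf{1}_H\dd\mu(H)$ for some $\mu\in\Prob(X)$. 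By Definition~\ref{def:charmenable}(1), $K$ contains a character $\phi=\phi_\mu$.

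Next I record the topological input: for each $g\in\Gamma$ the cylinder $\{H:g\in H\}$ is clopen, so $\Sub(\Rad(\Gamma))=\bigcap_{g\notin\Rad(\Gamma)}\{H:g\notin H\}$ is closed and $\Gamma$-invariant, whence by minimality $X\cap \Sub(\Rad(\Gamma))$ equals $X$ or is empty. I then split according to Lemma~\ref{soft charm}, which says $\phi_\mu$ is either supported on $\Rad(\Gamma)$ or amenable (if $\Gamma$ is amenable the first alternative of the proposition is automatic, as $\Rad(\Gamma)=\Gamma$). If $\phi_\mu\in\Char_{\Rad}(\Gamma)$, then $\phi_\mu(g)=\mu\{H:g\in H\}=0$ for every $g\notin\Rad(\Gamma)$, so $\mu$-almost every $H$ lies in $\Rad(\Gamma)$; thus $X\cap\Sub(\Rad(\Gamma))\neq\emptyset$, and minimality forces $X\subseteq\Sub(\Rad(\Gamma))$, the first alternative.

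It remains to handle the case where $\phi_\mu$ is amenable, and here I would produce the invariant measure as follows. If $H\le\Gamma$ is co-amenable, fix a $\Gamma$-invariant mean $m_H$ on $\ell^\infty(\Gamma/H)$ and consider the continuous $\Gamma$-equivariant orbit map $o_H\colon \Gamma/H\to X$, $gH\mapsto gHg^{-1}$ (well defined since $H$ normalizes its own coset, with dense image by minimality). Then $f\mapsto m_H(f\circ o_H)$ is a $\Gamma$-invariant state on $C(X)$, hence by Riesz representation a $\Gamma$-invariant Borel probability measure on $X$. So it suffices to find one co-amenable $H\in X$. I expect this to be the main obstacle: one must upgrade the amenability of the averaged representation $\pi_{\phi_\mu}=\int_X^\oplus \lambda_{\Gamma/H}\dd\mu(H)$ to co-amenability (existence of almost invariant vectors) of the individual quasi-regular representations $\lambda_{\Gamma/H}$ for $\mu$-almost every $H$. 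This is the exact analogue of the computation behind Proposition~\ref{prop:NST}, where $N\lhd\Gamma$ is normal, $\lambda_{\Gamma/N}$ is literally the regular representation of $\Gamma/N$, and its amenability immediately gives amenability of $\Gamma/N$, i.e.\ co-amenability of $N$; the difficulty is that the $H\in X$ need not be normal, so one needs the corresponding statement for genuine quasi-regular representations together with a disintegration passing from the averaged representation to $\mu$-a.e.\ fibre.

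Finally, for charfiniteness I would argue as follows. In the first alternative, $\Rad(\Gamma)$ is finite by Definition~\ref{def:charmenable}(3), so $\Sub(\Rad(\Gamma))$ is finite and hence so is $X$. In the second alternative, the invariant probability measure on $X$ is an IRS of $\Gamma$; by Proposition~\ref{prop:IRS} each of its ergodic components is finite, i.e.\ supported on a finite $\Gamma$-orbit, and minimality of $X$ identifies this orbit with $X$, so $X$ is finite.
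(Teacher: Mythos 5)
Your first and last steps coincide with the paper's own proof: the paper uses the same map $H \mapsto \mathbf{1}_H$, obtains from Definition~\ref{def:charmenable}(1) a character $\phi = \ttheta(\mu)$ with $\phi(g) = \mu(\{H \in X \mid g \in H\})$, handles the case $\phi \in \Char_{\Rad}(\Gamma)$ by minimality exactly as you do, and deduces charfiniteness from finiteness of $\Rad(\Gamma)$ together with Proposition~\ref{prop:IRS}. The genuine gap is in the amenable case, which you correctly identify as the main obstacle and then leave open. Your reduction to finding a single co-amenable $H \in X$ is sound (the pushforward of a mean on $\Gamma/H$ under the orbit map does give an invariant measure), but nothing in your argument produces such an $H$: what you actually have is Bekka amenability of the direct integral $\tpi = \int_X^\oplus \lambda_{\Gamma/H}\dd\mu(H)$, and this does not localize to $\mu$-a.e.\ fibre. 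An $\Ad(\tpi(\Gamma))$-invariant state on $B(\mathcal K)$ is only a finitely additive object; when $\mu$ is non-atomic it cannot be compressed to the (null) fibres, and there is no disintegration theory for means, so the fibrewise upgrade you hope for has no reason to hold.

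The paper closes this gap without any fibrewise statement. Since $\pi_\phi$ is amenable and $\pi_\phi$ is a subrepresentation of $\tpi$, the representation $\tpi$ on $\mathcal K := \int_X^\oplus \ell^2(\Gamma/H)\dd\mu(H)$ is amenable, so there is a state $\Phi$ on $B(\mathcal K)$ invariant under $\Ad(\tpi(g))$, $g \in \Gamma$. The key observation you are missing is that $C(X)$ embeds $\Gamma$-equivariantly into $B(\mathcal K)$ via the $*$-homomorphism $\alpha(f) = \int_X^\oplus f_H \dd\mu(H)$, where $f_H \in \ell^\infty(\Gamma/H)$ is defined by $f_H(\bar g) = f(gHg^{-1})$; then $\Phi \circ \alpha$ is a $\Gamma$-invariant state on $C(X)$, i.e.\ the desired invariant measure on $X$. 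Note that this is your own mechanism run in the aggregate: for a single $H$, restricting an $\Ad$-invariant state on $B(\ell^2(\Gamma/H))$ to the multiplication algebra $\ell^\infty(\Gamma/H)$ yields an invariant mean on $\Gamma/H$ (so for one quasi-regular representation, Bekka amenability is in fact equivalent to co-amenability of $H$). The paper performs that restriction on the whole bundle at once, with $\alpha(C(X))$ playing the role of the commutative subalgebra, and thereby obtains the invariant measure on $X$ directly, without ever needing co-amenability of any particular $H \in X$.
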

\begin{proof}
The map $\theta: H \in \Sub(\Gamma) \mapsto \mathbf{1}_H \in \PD_1(\Gamma)$ is continuous and $\Gamma$-equivariant. The push-forward of measures, together with the barycenter map $\Prob(\PD_1(\Gamma)) \to \PD_1(\Gamma)$ further yield a continuous affine $\Gamma$-map 
\[\ttheta : \Prob(X) \to \Prob(\PD_1(\Gamma)) \to \PD_1(\Gamma).\]
By charmenability, the closed convex $\Gamma$-subset $\ttheta(\Prob(X)) \subset \PD_1(\Gamma)$ contains a fixed point $\phi = \ttheta(\mu)$. By definition of $\ttheta$, we have $\phi(g) = \mu(\{H \in X \mid g \in H\})$. We observe that the GNS representation $\pi_\phi$ of $\phi$ is a sub-representation of the direct integral representation $\tpi$ on the space
\[\mathcal K := \int_X^\oplus \ell^2(\Gamma/H) \dd\mu(H).\]
If $\phi$ is supported on $\Rad(\Gamma)$ then $\mu$-almost every $H\in X$ is contained in $\Rad(\Gamma)$. Since $X$ is a URS, we find in this case that every $H \in X$ is contained in $\Rad(\Gamma)$.
We now assume this is not the case.
Thus $\pi_\phi$ is amenable by Lemma~\ref{soft charm}
and we get that the representation $\tpi$ on $\mathcal K$ is amenable as well: there exists a state $\Phi$ on $B(\mathcal K)$ which is invariant under conjugacy by elements $\tpi(g)$. %When we restrict this state to the diagonal von Neumann subalgebra $M := \int_X^\oplus \ell^\infty(\Gamma/H) \dd\mu(H)$, we obtain a $\Gamma$-invariant state, where $\Gamma$ acts on $M$ diagonally on each component $\ell^\infty(\Gamma/H)$.
Observe moreover that there is a $\Gamma$-equivariant *-homomorphism
\[\alpha: f \in C(X) \mapsto \int_X^\oplus f_H \dd\mu(H) \in B(\mathcal K),\]
where $f_H \in \ell^\infty(\Gamma/H)$ is defined by $f_H: \bar{g} \in \Gamma/H \mapsto f(gHg^{-1})$.
The composition $\Phi \circ \alpha$ is a $\Gamma$-invariant state on $C(X)$, i.e.\ a $\Gamma$-invariant Borel probability measure on $X$.

Finally, if $\Gamma$ is charfinite then both possibilities imply that $X$ is finite:
the first one by the finiteness of $\Rad(\Gamma)$ and the second by Proposition~\ref{prop:IRS}.
\end{proof}

\subsection{Unitary representations of charmenable and charfinite groups} \label{ss:unitary}

The fundamental fact that any positive definite function on a group could be viewed as a state on its universal C*-algebra is indispensable in this work.
Using it we get easily the following.

\begin{prop}
For a charfinite group, every unitary representation either weakly contains a finite dimensional subrepresentation
or weakly contains a representation which is induced from a finite normal subgroup.
\end{prop}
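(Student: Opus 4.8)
The plan is to translate the assertion about weak containment into a statement about a suitable convex $\Gamma$-invariant subset of $\PD_1(\Gamma)$, to which charmenability and Lemma~\ref{soft charm} can then be applied. Given a unitary representation $\pi$ of $\Gamma$, I would first consider the set
\[
\PD_1^\pi(\Gamma) := \{\phi \in \PD_1(\Gamma) \mid \pi_\phi \prec \pi\}
\]
of normalized positive definite functions whose GNS representation is weakly contained in $\pi$. Under the identification of $\PD_1(\Gamma)$ with the state space of the universal C*-algebra $\rC^*(\Gamma)$, this is precisely the (pullback of the) state space of the quotient $\rC^*_\pi(\Gamma) = \pi(\rC^*(\Gamma))$, namely the set of states vanishing on $\ker \pi$. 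Hence $\PD_1^\pi(\Gamma)$ is weak*-closed and convex (in fact a face), and it is $\Gamma$-invariant because $\pi_{g \cdot \phi}$ is unitarily equivalent to $\pi_\phi$ for every $g \in \Gamma$. By charmenability (property~(1) of Definition~\ref{def:charmenable}), $\PD_1^\pi(\Gamma)$ contains a character $\phi \in \Char(\Gamma)$, and by construction $\pi_\phi \prec \pi$.

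Next I would invoke the dichotomy established in Lemma~\ref{soft charm}: a character of a charmenable group is either supported on $\Rad(\Gamma)$ or amenable. In the amenable case, the charfinite half of Lemma~\ref{soft charm} guarantees that $\pi_\phi$ contains a finite dimensional subrepresentation $\rho$; since $\rho \prec \pi_\phi \prec \pi$ and weak containment is transitive, $\pi$ weakly contains the finite dimensional representation $\rho$, which is the first alternative.

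The remaining case, when $\phi$ is supported on the amenable radical, is where the only genuine computation lies. Here I would set $R := \Rad(\Gamma)$, which is finite by charfiniteness (property~(3) of Definition~\ref{def:charmenable}), and show that the GNS representation $\pi_\phi$ is canonically isomorphic to the induced representation $\Ind_R^\Gamma(\tau)$, where $\tau$ is the finite dimensional GNS representation of the finite group $R$ attached to the restricted character $\phi_{|R}$. Indeed, since $\phi$ vanishes off $R$, the GNS inner product $\langle \delta_g, \delta_h\rangle = \phi(h^{-1}g)$ vanishes unless $gR = hR$, so $H_\phi$ decomposes as an orthogonal sum indexed by $\Gamma/R$, each summand being a copy of $H_\tau$, while $\Gamma$ permutes the summands by left translation; a direct check of the matrix coefficients on the cyclic vector $\xi_\phi$ (which reproduces exactly $\phi$, concentrated on the identity coset) identifies this data with $\Ind_R^\Gamma(\tau)$. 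Since $\pi_\phi \prec \pi$, we conclude that $\pi$ weakly contains $\Ind_R^\Gamma(\tau)$, a representation induced from the finite normal subgroup $R$, which is the second alternative.

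I expect the induced-representation identification in the last paragraph to be the main point to get right; the remainder is a matter of correctly packaging ``weak containment in $\pi$'' as membership in $\PD_1^\pi(\Gamma)$, and of checking that weak containment passes to subrepresentations and composes transitively, both of which are standard for representations of $\rC^*(\Gamma)$.
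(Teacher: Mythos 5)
Your proposal is correct and follows essentially the same route as the paper: the same compact convex set of positive definite functions extending to states on $C^*_\pi(\Gamma)$, the same application of charmenability to extract a character, and the same dichotomy from Lemma~\ref{soft charm}. The only difference is that you spell out the identification $\pi_\phi \cong \Ind_{\Rad(\Gamma)}^\Gamma(\tau)$ for a character supported on $\Rad(\Gamma)$, which the paper states without proof; your verification of it is accurate.
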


%\begin{rem}
%Recall that if a group has (T)
%then its amenable unitary representations actually contain a finite dimensionl subrepresentations, \cite[Corollary~5.9]{Be89}.
%Whether this is a charcaterization of Property (T) seems to be an open question.
%\end{rem}

\begin{proof}
Let $\Gamma$ be a charfinite group and $\pi$ a unitary representation.
We let $C\subset \PD_1(\Gamma)$ be the $\Gamma$-invariant compact convex subset consisting of positive definite functions that extend to states on $C^*_\pi(\Gamma)$. By charmenability there exists a character $\phi\in C$ which we now fix. We get that the GNS representation $\pi_\phi$ is weakly contained in $\pi$.
If $\phi$ is supported on $\Rad(\Gamma)$ then $\pi_\phi$ is induced from $\Rad(\Gamma)$ which is finite.
Otherwise, $\pi_\phi$ is amenable thus it contains a finite dimensional subrepresentation, by Lemma \ref{soft charm}.
This subrepresentation is therefore weakly contained in $\pi$.
\end{proof}

Let $\Gamma$ be a discrete group. Any character of $\Gamma$ can be viewed as a trace on the universal C*-algebra $C^*(\Gamma)$. Naturally, for the regular character $\delta_e$, the corresponding trace on $C^*(\Gamma)$ is called the {\em regular trace}. Note that if $\pi$ is a unitary representation of $\Gamma$ then $\pi$ weakly contains the regular representation of $\Gamma$ if and only if the regular trace on $C^*(\Gamma)$ factors through the projection $C^*(\Gamma) \to C^*_\pi(\Gamma)$. We still call ``regular trace'' the trace obtained on $C^*_\pi(\Gamma)$ through this factorization.

\begin{prop}\label{prop:charmenable}
Assume $\Gamma$ is a charmenable discrete group with trivial amenable radical. 
Let $\pi$ be a unitary representation of $\Gamma$. Denote by $A := C^*_\pi(\Gamma)$. 
If $\pi$ is non-amenable then the following facts are true.
\begin{enumerate}
\item $\pi$ weakly contains the regular representation, the regular trace $\tau$ is the unique trace on $A$, and every proper ideal of $A$ is contained in $I_\tau := \{x \in A \mid \tau(x^*x) = 0\}$.
\item The regular trace $\tau$ on $A = C^*_\pi(\Gamma)$ satisfies a Powers property: for every $x \in A$, 
\[\tau(x) \in \overline{\conv}(\{\pi(g)x\pi(g)^* \mid g \in \Gamma\}).\]
\item There exists $\mu \in \Prob(\Gamma)$ whose support generates $\Gamma$ and such that the only $\mu$-stationary state on $A$ is $\tau$. 
\end{enumerate}
\end{prop}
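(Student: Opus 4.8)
The plan is to treat the three assertions in turn, deriving (1) directly from charmenability, then (3), and finally reading off (2) from (3).

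\textbf{Part (1).} The engine is the following observation, valid for any non-amenable $\pi$: no amenable positive definite function is weakly contained in $\pi$. Indeed, if $\psi \in \PD_1(\Gamma)$ is amenable with $\pi_\psi \prec \pi$, then $1 \prec \pi_\psi \otimes \overline{\pi_\psi} \prec \pi \otimes \overline{\pi}$, forcing $\pi$ to be amenable, contrary to hypothesis. I would then view the state space $C := \cS(A)$ as a compact convex $\Gamma$-invariant subset of $\PD_1(\Gamma)$ (a state on $A = C^*_\pi(\Gamma)$ restricts to a positive definite function $\prec \pi$, and conversely). By charmenability $C$ contains a character $\phi$, and by Lemma~\ref{soft charm} together with $\Rad(\Gamma) = \{e\}$ we may write $\phi = t\delta_e + (1-t)\psi$ with $\psi$ von Neumann amenable; since $(1-t)\psi \le \phi$ we get $\psi \prec \pi$ whenever $t < 1$, contradicting the observation. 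Hence $t = 1$, i.e.\ $\delta_e = \tau \in C$, so the left regular representation $\lambda$ satisfies $\lambda \prec \pi$ and $\tau$ is a trace on $A$. The same argument shows $\tau$ is the \emph{only} character in $C$; as tracial states on $A$ restrict to characters in $C$ and vice versa, $\tau$ is the unique trace. Finally, for a proper closed two-sided ideal $J \lhd A$, the states vanishing on $J$ form a nonempty (since $A/J \neq 0$), compact, convex, $\Gamma$-invariant subset of $C$, hence contain a character, which must be $\tau$; thus $\tau(J) = 0$, and since $x^*x \in J$ for $x \in J$ we conclude $J \subseteq I_\tau$.

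\textbf{Part (3).} Fix a generating measure $\mu$ whose Furstenberg--Poisson boundary $(B,\nu)$ is $\Gamma$-metrically ergodic (see \S\ref{ME}). Given a $\mu$-stationary state $\phi$ on $A$, Furstenberg's theorem on stationary points of the compact metrizable convex $\Gamma$-space $\cS(A)$ (here $A$ is separable, $\Gamma$ being countable) provides a measurable $\Gamma$-equivariant boundary map $\beta \colon B \to \cS(A)$, obtained as the almost sure weak$^*$ limit $\beta(b) = \lim_n g_1 \cdots g_n \cdot \phi$, with $\phi = \int_B \beta \, d\nu$. The goal is to show $\beta$ is essentially constant, equal to the unique conjugation-fixed state $\tau$ from Part (1), which yields $\phi = \tau$ and hence $\cS_\mu(A) = \{\tau\}$. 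The natural route is to feed $\beta$ into metric ergodicity. \emph{This is where I expect the main obstacle to lie}: metric ergodicity applies to maps into a separable metric $\Gamma$-space with an \emph{isometric} action, whereas the conjugation action on $\cS(A)$ in its weak$^*$ topology is not equicontinuous and admits no invariant compatible metric when $\Gamma$ has infinite conjugacy classes. Overcoming this requires replacing $\cS(A)$ by a genuinely isometric model carrying the boundary data — equivalently, proving the boundary-rigidity statement that the only equivariant boundary realization is constant — so that the metric-ergodicity machinery of \S\ref{ME} (and \cite{BF14}) can be applied. This reduction of stationarity to invariance is the technical heart of the matter.

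\textbf{Part (2).} This I would deduce from (3). With $\mu$ as above, set $Q(x) = \int_\Gamma \pi(g)^* x \pi(g) \, d\mu(g)$, so that $\mu^{*k} * \omega = \omega \circ Q^k$ for every state $\omega$. Since $\tau$ is the unique $\mu$-stationary state, every weak$^*$ limit point of the Cesàro averages $\frac{1}{n}\sum_{k=0}^{n-1} \mu^{*k} * \omega$ is stationary, hence equals $\tau$; thus for each $x \in A$ we have $\omega\big(\frac{1}{n}\sum_{k<n} Q^k(x)\big) \to \tau(x)$ for all states $\omega$, i.e.\ $y_n := \frac{1}{n}\sum_{k<n} Q^k(x) \to \tau(x)\,1$ in the weak topology of $A$. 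As each $y_n$ lies in the convex set $\overline{\conv}\{\pi(g)^* x \pi(g) \mid g \in \Gamma\}$, Mazur's theorem (weak closure equals norm closure for convex sets) places $\tau(x)\,1$ inside it; reindexing $g \mapsto g^{-1}$ gives precisely the Powers property $\tau(x) \in \overline{\conv}\{\pi(g) x \pi(g)^* \mid g \in \Gamma\}$.
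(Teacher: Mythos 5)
Your Part (1) is correct and essentially the paper's own argument: the paper likewise produces a character in the $\Gamma$-invariant compact convex set of positive definite functions extending to states on $A$ and identifies it with $\delta_e$ using Lemma~\ref{soft charm} and non-amenability of $\pi$. For the ideal statement the paper argues slightly differently (it represents $A/I$ faithfully and re-runs the weak containment argument on the composed representation, so that $A \to C^*_\lambda(\Gamma)$ factors through $A/I$), but your variant via the invariant compact convex set of states annihilating $J$ is valid. Your deduction of (2) from (3) is also correct; it is the easy converse direction of \cite[Theorem 5.1]{HK17}.

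The genuine gap is Part (3), and you flag it yourself: you never prove that the boundary map $\beta : B \to \cS(A)$ is essentially constant, and the obstacle you name is real --- Definition~\ref{def:ME} only applies to \emph{isometric} actions on separable metric spaces, whereas the conjugation action on $\cS(A)$ with its weak$^*$ topology admits no invariant compatible metric in general, so the metric ergodicity of $(B,\nu)$ cannot be fed the map $\beta$ directly. Since your Part (2) is deduced from Part (3), both (2) and (3) remain unproven. Note also a structural issue: statement (3) only asserts the \emph{existence} of some generating $\mu$, and there is no reason the conclusion should hold for a measure fixed in advance merely because its Poisson boundary is metrically ergodic; the measure must be \emph{constructed}. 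The paper's route is exactly the reverse of yours: it proves (2) first, purely from charmenability, by showing inductively that every non-empty closed convex $\Gamma$-subset of $\cS(A)^n$ contains the point $(\tau,\dots,\tau)$, then extracting by a diagonal argument a sequence $\mu_n \in \Prob(\Gamma)$ with $\mu_n \ast \phi \to \tau$ weakly for \emph{every} state $\phi$, so that $\tau(x)1$ lies in the weak --- hence by Hahn--Banach the norm --- closed convex hull of $\{\pi(g)x\pi(g)^* \mid g \in \Gamma\}$; it then obtains (3) from (2) by the proof of \cite[Theorem 5.1]{HK17}, which builds the desired $\mu$ out of the Powers averaging property. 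Supplying such an argument (or any complete proof of (3), or of (2) independently of (3)) is what is missing from your proposal.
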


\begin{proof}
(1) We let $C\subset \PD_1(\Gamma)$ be the $\Gamma$-invariant compact convex subset consisting of positive definite functions that extend to states on $A = C^*_\pi(\Gamma)$. By charmenability there exists a character $\phi\in C$ which we now fix. We get that the GNS representation $\pi_\phi$ is weakly contained in $\pi$.
Since $\pi$ is non-amenable we have that $\pi_\phi$ is non-amenable, and $\phi$ must be the regular character. Thus the GNS representation associated with $\phi$ is $\lambda$, and we conclude that $\lambda$ is weakly contained in $\pi$.

Further, if $I$ is a proper ideal in $A$, then represent faithfully $A/I$ on a Hilbert space $H$. The composed representation $\Gamma \to A \to A/I \to B(H)$ is still non-amenable, and thus must weakly contain the regular representation. So the canonical map $A \to C^*_\lambda(\Gamma)$, whose kernel is precisely $I_\tau$, factors through $A/I$. This shows that $I \subset I_\tau$.

(2) Arguing as in the previous point, every non-empty closed convex $\Gamma$-subset of $\cS(A)$ must contain a trace, which must be equal to $\tau$. 

{\bf Claim.} For every $n \geq 1$, every non-empty closed convex $\Gamma$-subset of $\cS(A)^n$ contains the fixed point $\tau^{(n)} := (\tau,\dots,\tau)$.

We prove this by induction. We just observed it was true for $n = 1$. Assume it is true for some $n \geq 1$ and take a non-empty closed convex $\Gamma$-subset $C \subset \cS(A)^{n+1}$. Then image of $C$ under the first projection map contains $\tau$ by the $n= 1$ step. Now the set $C \cap \{\tau\}\times \cS(A)^n$ is non-empty, and identifies with a closed convex $\Gamma$-subset of $\cS(A)^n$, which contains $\tau^{(n)}$ by our induction assumption. Thus $C$ contains $\tau^{(n+1)}$.

Using the above claim and a diagonal argument, we may find a sequence $(\mu_n)_{n \geq 1} \in (\Prob(\Gamma))^\N$ such that 
$(\mu_n \ast \phi)_n$ converges $*$-weakly to $\tau$ for every state $\phi \in \cS(A)$. In particular $\mu_n \ast x$ converges weakly to $\tau(x)1$ for every $x \in A$. Thus $\tau(x)1$ belongs to the weak closure of the convex hull of $\{\pi(g)x\pi(g)^* \mid g \in \Gamma\}$. By Hahn-Banach theorem, it belongs to its norm closure, which is the desired Powers property.

(3) This follows from (2) by the proof of \cite[Theorem 5.1]{HK17}.
Note that \cite[Theorem 5.1]{HK17} essentially states that (2) is equivalent to (3) for the regular representation $\lambda$, but its proof applies verbatim for an arbitrary unitary representation, showing that (2) implies (3).
\end{proof}

For $\mu \in \Prob(\Gamma)$, we say that $\psi \in \PD_1(\Gamma)$ is a $\mu$-{\em character} if $\psi$ is a $\mu$-stationary state on $C^*(\Gamma)$ with respect to the conjugation action. The following is an easy consequence.

\begin{prop}\label{prop:charmenable-C*}
Assume $\Gamma$ is a charmenable discrete group with trivial amenable radical. 
Then $\Gamma$ is C*-simple.
If in addition 
$\Gamma$ has property {\em (T)} then it is charfinite.
Further, in this case we may find $\mu \in \Prob(\Gamma)$ whose support generates $\Gamma$ such that every $\mu$-character on $\Gamma$ is a character.
\end{prop}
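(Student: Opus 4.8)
For the first assertion I would apply Proposition~\ref{prop:charmenable} to the left regular representation $\pi = \lambda$. Since $\Gamma$ has trivial amenable radical it is non-amenable (a nontrivial amenable group equals its own amenable radical), so $\lambda$ is a non-amenable representation and Proposition~\ref{prop:charmenable}(1) gives that the regular trace $\tau$ is the unique trace on $A = C^*_\lambda(\Gamma)$ and that every proper ideal of $A$ is contained in $I_\tau := \{x \in A \mid \tau(x^*x) = 0\}$. On the reduced C*-algebra the regular trace is faithful: $\tau(x^*x) = \|x\delta_e\|^2$, and $x\delta_e = 0$ forces $x = 0$ because $x$ commutes with the right regular representation while $\{\rho(g)\delta_e \mid g \in \Gamma\}$ is total. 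Hence $I_\tau = \{0\}$, so $A$ has no nonzero proper ideal and $\Gamma$ is C*-simple.

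For the second assertion, assume in addition that $\Gamma$ has property (T); in particular $\Gamma$ is finitely generated, and for a symmetric generating measure $\mu$ supported on a Kazhdan set one has a uniform spectral gap: there is $\rho < 1$ such that $\|\sigma(\mu)\| \leq \rho$ for every unitary representation $\sigma$ of $\Gamma$ without invariant vectors. The plan is to choose $\mu$ so that it simultaneously realizes the conclusion of Proposition~\ref{prop:charmenable}(3) for \emph{every} non-amenable representation of $\Gamma$ at once: the uniform spectral gap should allow the proof of Proposition~\ref{prop:charmenable} (the Powers property, and the passage to a stationary measure following \cite[Theorem 5.1]{HK17}) to be run with a single $\mu$ independent of the representation. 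The output I am aiming for is that, for this $\mu$, the only conjugation-$\mu$-stationary state on $C^*_\pi(\Gamma)$ is the regular trace $\tau$, for every non-amenable unitary representation $\pi$.

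Granting such a $\mu$, let $\psi$ be a $\mu$-character with GNS triple $(\pi_\psi, H_\psi, \xi_\psi)$, so that $\psi$ descends to a conjugation-$\mu$-stationary state on $C^*_{\pi_\psi}(\Gamma)$. If $\pi_\psi$ is non-amenable, the previous step forces $\psi = \tau$, the regular character, which is $\Gamma$-invariant; hence $\psi$ is a character. If $\pi_\psi$ is amenable, then $\pi_\psi \otimes \overline{\pi_\psi}$ weakly contains the trivial representation, and property (T) upgrades this to a genuine invariant vector, i.e.\ a nonzero element of $\pi_\psi(\Gamma)' \cap \mathrm{HS}(H_\psi)$; its spectral projections produce a finite-dimensional subrepresentation of $\pi_\psi$. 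One then peels off the finite-dimensional content: the associated finite-dimensional characters are conjugation-invariant, and a stationarity argument (combined with Lemma~\ref{soft charm} and the discreteness of finite-dimensional representations of a property (T) group) shows that $\psi$ is supported on this invariant content and is therefore itself $\Gamma$-invariant.

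The main obstacle is the first step of the second part: producing a single generating measure $\mu$ that witnesses the conclusion of Proposition~\ref{prop:charmenable}(3) uniformly across all non-amenable representations. Charmenability yields the Powers property representation by representation and the existence of a suitable $\mu$ only \emph{after} fixing the representation; turning this into a uniform statement is precisely where property (T) must enter, through its uniform spectral gap, and making the quantitative estimate in the proof of \cite[Theorem 5.1]{HK17} uniform is the delicate point. A secondary difficulty is the amenable case, where one must verify that the stationary state genuinely sees only the invariant finite-dimensional part rather than a proper stationary-but-not-invariant combination.
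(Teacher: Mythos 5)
Your first paragraph (C$^*$-simplicity) is correct and is exactly the paper's argument: apply Proposition~\ref{prop:charmenable}(1) to $\lambda$, which is non-amenable since $\Gamma$ is non-amenable, and use faithfulness of the canonical trace on $C^*_\lambda(\Gamma)$ to get $I_\tau = \{0\}$.

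The second half has a genuine gap, and it is the one you flag yourself: you need a single generating measure $\mu$ realizing the conclusion of Proposition~\ref{prop:charmenable}(3) for \emph{all} non-amenable representations simultaneously, and you propose to get it by making the estimates behind \cite[Theorem 5.1]{HK17} quantitative and uniform via the uniform spectral gap of property (T). There is nothing there to uniformize: the measure in Proposition~\ref{prop:charmenable}(3) is produced by a weak$^*$ compactness and diagonal argument inside the state space of the \emph{fixed} algebra $C^*_\pi(\Gamma)$ (a sequence $\mu_n$ with $\mu_n \ast \phi \to \tau$ for every state $\phi$ of that algebra), so both the sequence and the limiting measure depend on the representation, and the norm of $\mu$ in representations without invariant vectors has no evident bearing on the conjugation dynamics on states. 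The missing idea --- and what the paper actually does --- is to avoid uniformity altogether by applying Proposition~\ref{prop:charmenable}(3) \emph{once}, to the universal weakly mixing representation $\pi$ (the direct sum of all weakly mixing representations on separable Hilbert spaces). Property (T) enters qualitatively, not quantitatively: it guarantees that $\pi$ is non-amenable (amenability plus (T) would produce a finite-dimensional subrepresentation, contradicting weak mixing), so that (3) applies to it, and that every representation weakly contained in $\pi$ is again weakly mixing, so that states of $C^*_\pi(\Gamma)$ are precisely the positive definite functions with weakly mixing GNS representation. Note that for a property (T) group ``non-amenable'' and ``weakly mixing'' coincide, so your desired uniform statement is in fact \emph{true}; but it follows by universality --- every non-amenable separable representation is a summand of $\pi$, hence its stationary states are stationary states of $C^*_\pi(\Gamma)$ and must equal $\tau$ --- not by uniform spectral gap estimates.

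Your amenable case is the second gap (your ``secondary difficulty''), and ``peeling off the finite-dimensional content'' is where the real work lies, since a $\mu$-character can mix invariant, finite-dimensional and weakly mixing parts, and the dichotomy on the GNS representation does not by itself split the state. The paper's mechanism is order-theoretic: given a $\mu$-character $\phi$, subtract a nonzero extreme point of the set of invariant positive definite functions dominated by $\phi$, producing a $\mu$-character $\phi_0$ dominating no nonzero invariant positive definite function; then show $\phi_0$ dominates no nonzero positive definite function with weakly mixing GNS representation (any such dominated function averages to a $\mu$-stationary one still dominated by $\phi_0$, which by the choice of $\mu$ is invariant, hence zero), so $\phi_0$ is \emph{compact}; finally $\pi_{\phi_0}(\Gamma)\dpr$ is then a tracial von Neumann algebra with a faithful normal invariant trace, and Proposition~\ref{prop:stationary}(2) forces the normal $\mu$-stationary state $\phi_0$ to be $\Gamma$-invariant, the desired contradiction. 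Some argument of this kind is needed to replace your sketch.
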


\begin{proof}
We assume as we may that $\Gamma$ is non-trivial, thus non-amenable.
The C*-simplicity of $\Gamma$ follows at once from Proposition~\ref{prop:charmenable}(1), applied to the regular representation
which is non-amenable.

Assume now $\Gamma$ has property (T)
and consider Definition~\ref{def:charmenable}.
By assumption, $\Gamma$ satisfies property (3).
It satisfies property (4) by  
\cite[Theorem~2.6]{Wa74}
and property (5) by
\cite[Theorem~1.1]{BV91}. 
It follows that $\Gamma$ is indeed charfinite.

Denote by $\pi$ the {\em universal weakly mixing} representation of $\Gamma$, i.e.\ the direct sum of all cyclic weakly mixing representations of $\Gamma$. Then since $\Gamma$ has property (T), $\pi$ is non-amenable and any representation weakly contained in $\pi$ is again weakly mixing.
Let $\mu \in \Prob(\Gamma)$ be a measure whose support generates $\Gamma$ and such that the only $\mu$-stationary state on $A  = C^*_\pi(\Gamma)$ is $\tau$, where $\tau$ is the regular trace on $A  = C^*_\pi(\Gamma)$, 
as guaranteed by Proposition~\ref{prop:charmenable}(3).

Let $\phi$ be a $\mu$-character.
We consider the compact convex subset of $\Gamma$-invariant positive definite functions which are dominated by $\phi$;
\[ S=\{\psi\in \PD(\Gamma)^\Gamma \mid \phi-\psi\in \PD(\Gamma)\} \subset \PD(\Gamma) \]
and claim that $\phi\in S$, thus $\phi$ is a character. 
We assume that this is not the case and argue to show a contradiction.
If $S=\{0\}$ we set $\phi_0=\phi$. Otherwise we let $\psi_0$ be a non-zero extreme point of $S$ and we set $\phi_0=(\phi-\psi_0)/(1-\psi_0(e))$.
In both cases, $\phi_0$ is a $\mu$-character which dominates no non-zero $\Gamma$-invariant positive definite function. 

{\bf Claim.} $\phi_0$ is a compact positive definite function.

By definition of $\pi$, a positive definite function on $\Gamma$ factorizes through a functional on $A = C^*_\pi(\Gamma)$ if and only if its GNS representation is weakly mixing, and conversely. Denote by $C \subset \PD(\Gamma)$ the closed convex subset consisting of such positive definite functions (including the null function). Then a positive definite function on $\Gamma$ is compact if and only if it does not dominate a positive definite function in $C$.
To prove the claim, we therefore consider the compact space 
\[ C_{0} = \{\psi\in C \mid \phi_0-\psi\in \PD(\Gamma)\} \subset C\]
and prove that $C_0 = \{0\}$. Note that this set is $\mu$-invariant. Take $\psi \in C_0$. By an averaging procedure, we may find $\psi_1 \in C_0$ which is $\mu$-stationary, and $\psi_1(1) = \psi(1)$. By definition of $C$, $\psi_1$ may be viewed as a multiple of a $\mu$-stationary state on $A$, and thus must be $\Gamma$-invariant, thanks to our choice of $\mu$. This forces $\psi_1 = 0$, since $\phi_0$ does not dominate non-zero invariant PD-functions. We thus find $\psi = 0$, and $C_0 = \{0\}$, proving our claim.

Since $\phi_0$ is compact, $M := \pi_{\phi_0}(\Gamma)\dpr$ is a tracial von Neumann algebra (contained in the direct sum of countably many finite dimensional algebras). Denote by $\Tr$ a normal faithful tracial state on $M$. Then $\Tr$ and $\phi_0$ are two $\mu$-stationary normal states on $M$. Since $\Tr$ is faithful and invariant, Proposition \ref{prop:stationary}(2) tells us that in fact, $\phi_0$ must be invariant as well. This is the desired contradiction.
\end{proof}

%%%
\subsection{Permanence properties} \label{ss:perm} 

\begin{prop}
Let $\Gamma$ be charmenable group.
Then for any normal subgroup $N\lhd \Gamma$, $\Gamma/N$ is charmenable.
Moreover, if $\Gamma$ is charfinite then so is $\Gamma/N$.
\end{prop}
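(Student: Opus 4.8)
The plan is to transport structure between $\Gamma$ and $\bar\Gamma := \Gamma/N$ through the quotient map $q : \Gamma \to \bar\Gamma$. The basic tool is the pullback $q^\ast : \PD_1(\bar\Gamma) \to \PD_1(\Gamma)$, $\bar\phi \mapsto \bar\phi\circ q$. One checks directly that $q^\ast$ is an affine homeomorphism onto the closed subset of those $\phi\in\PD_1(\Gamma)$ that are constant on the cosets of $N$, and that it is equivariant for the conjugation actions, in the sense that $q^\ast(\bar g\cdot\bar\phi)=g\cdot q^\ast(\bar\phi)$ whenever $q(g)=\bar g$. The second tool is the identification of GNS data: for $\phi=q^\ast(\bar\phi)$ the triple $(\pi_{\bar\phi}\circ q, H_{\bar\phi}, \xi_{\bar\phi})$ is, by uniqueness of the GNS construction, the GNS triple of $\phi$; in particular $H_\phi=H_{\bar\phi}$ and $\pi_\phi(\Gamma)\dpr=\pi_{\bar\phi}(\bar\Gamma)\dpr$. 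Consequently extremality (factoriality of the generated von Neumann algebra), von Neumann amenability, finite--dimensionality of the GNS space, and representation amenability are all simultaneously enjoyed by $\phi$ and $\bar\phi$.

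With these tools I would establish charmenability of $\bar\Gamma$ as follows. For condition (1) of Definition~\ref{def:charmenable}, given a compact convex $\bar\Gamma$-invariant $\bar C\subseteq\PD_1(\bar\Gamma)$, the image $q^\ast(\bar C)$ is a compact convex $\Gamma$-invariant subset of $\PD_1(\Gamma)$; charmenability of $\Gamma$ produces a character in it, and by equivariance, injectivity of $q^\ast$ and surjectivity of $q$ its preimage is a character of $\bar\Gamma$ lying in $\bar C$. For condition (2), let $\bar\phi$ be an extremal character of $\bar\Gamma$; then $\phi=q^\ast(\bar\phi)$ is an extremal character of $\Gamma$, so it is either von Neumann amenable or supported on $\Rad(\Gamma)$. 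In the first case $\bar\phi$ is von Neumann amenable because the two generate the same von Neumann algebra. In the second case I would use that $q(\Rad(\Gamma))\subseteq\Rad(\bar\Gamma)$ (the image of an amenable normal subgroup is amenable and normal): if $\bar g\notin\Rad(\bar\Gamma)$ then any lift $g$ avoids $\Rad(\Gamma)$, whence $\bar\phi(\bar g)=\phi(g)=0$, so $\bar\phi$ is supported on $\Rad(\bar\Gamma)$.

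For the charfinite statement, I would first invoke Proposition~\ref{prop:NST}: since $\Gamma$ is charfinite, $N$ is either of finite index or finite. If $N$ has finite index then $\bar\Gamma$ is finite, hence trivially charfinite, and we are done. So assume $N$ is finite and verify conditions (3)--(5) of Definition~\ref{def:charmenable} for $\bar\Gamma$ (conditions (1)--(2) being already established). Condition (4) holds for any quotient: pullback along the surjection $q$ is dimension-preserving and injective on isomorphism classes, so $\bar\Gamma$ has at most as many $d$-dimensional representations as $\Gamma$ for each $d$. Condition (5) also transfers directly: an amenable extremal character of $\bar\Gamma$ pulls back to one of $\Gamma$, which is finite by charfiniteness of $\Gamma$, and finiteness is read off the common GNS space. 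Condition (3) is where finiteness of $N$ is genuinely used: writing $R=q^{-1}(\Rad(\bar\Gamma))$, the group $R$ is a normal subgroup of $\Gamma$ that is an extension of the amenable group $\Rad(\bar\Gamma)$ by the finite group $N$, hence amenable, so $R\subseteq\Rad(\Gamma)$, which is finite; therefore $\Rad(\bar\Gamma)=R/N$ is finite.

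I expect the only real subtlety to lie in condition (3): for a non-amenable $N$ the preimage $q^{-1}(\Rad(\bar\Gamma))$ need not be amenable, so one cannot directly bound $\Rad(\bar\Gamma)$; this is exactly what forces the preliminary appeal to the dichotomy of Proposition~\ref{prop:NST}, after which the finite-index case is trivial and the finite case is handled by the extension argument above. The remaining points are routine once the pullback map $q^\ast$ and the GNS identification are in place, and I would double-check only that $q^\ast$ is affine, continuous, equivariant and injective so that conditions (1) and (2) transport without loss.
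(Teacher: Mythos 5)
Your proof is correct, and it rests on the same core device as the paper's: identifying $\PD_1(\Gamma/N)$ with its pullback inside $\PD_1(\Gamma)$ and matching GNS data. The organization differs in a way worth noting, though. The paper begins by disposing of the case where $\Gamma/N$ is amenable and then uses Proposition~\ref{prop:NST} to reduce to the case where $N$ is amenable, so that $N \le \Rad(\Gamma)$; the whole argument, including the transfer of the extremal-character dichotomy, is then run under that hypothesis. Your charmenability argument avoids this reduction entirely: conditions (1) and (2) are transported for an \emph{arbitrary} normal subgroup, using only the general inclusion $q(\Rad(\Gamma)) \subseteq \Rad(\Gamma/N)$, and Proposition~\ref{prop:NST} enters only in the charfinite half, where (as you correctly isolate) it is genuinely needed, since for non-amenable $N$ the preimage $q^{-1}(\Rad(\Gamma/N))$ need not be amenable. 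A second, smaller difference: the paper obtains the transfer of extremality from the fact that $\Char(\Gamma/N)$ is a face of $\Char(\Gamma)$, whereas you read it off factoriality of the common GNS von Neumann algebra; both are valid, and yours gives the equivalence in both directions at once. Finally, you spell out the ``moreover'' part (conditions (3)--(5)), which the paper leaves as an easy exercise; your handling of condition (3) via the amenable extension of $\Rad(\Gamma/N)$ by the finite group $N$, after the finite/finite-index dichotomy, is exactly the intended argument. Net effect: your route is marginally more self-contained for the charmenability statement (no case analysis), while the paper's up-front reduction lets it treat both statements uniformly under the single hypothesis $N \le \Rad(\Gamma)$.
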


\begin{proof}
The result is clear if $\Gamma/N$ is amenable. We thus may assume by Proposition~\ref{prop:NST} that $N$ is amenable,
thus $N\le \Rad(\Gamma)$.
We view $\PD_1(\Gamma/N)$ as a $\Gamma$-invariant closed subset of $\PD_1(\Gamma)$ in the obvious way.
The fact that every closed convex $\Gamma/N$-invariant subset of $\PD_1(\Gamma/N)$ contains a fixed point clearly follows from the corresponding property of $\Gamma$.
Note that $\Char(\Gamma/N)$ is in fact the subset of characters on $\Gamma$ that are equal to $1$ on $N$. Hence it is a face of $\Char(\Gamma)$.
in particular, extreme points of $\Char(\Gamma/N)$  are also extremal in $\Char(\Gamma)$.
So an extremal character on  $\Gamma/N$ which is not supported on $\Rad(\Gamma/N)$ may be viewed as an extremal character of $\Gamma$, which is not supported on $\Rad(\Gamma)$. In turn it must be von Neumann amenable, as a character of $\Gamma$, hence as a character of $\Gamma/N$.

The moreover part follows easily.
\end{proof}

The following proposition will be important for us when discussing charmenability of lattices in infinite restricted products.

\begin{prop}\label{prop:stable}
Let $\Gamma = \bigcup_{n \in \N} \Gamma_n$ be an ascending union.
If for every $n \in \N$, $\Gamma_n$ is charmenable, then so is $\Gamma$.
\end{prop}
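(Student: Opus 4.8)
The plan is to verify the two defining properties of charmenability from Definition~\ref{def:charmenable} for $\Gamma$, using the restriction maps $\rho_m\colon\PD_1(\Gamma)\to\PD_1(\Gamma_m)$ (restriction of positive definite functions), which are continuous, affine and $\Gamma_m$-equivariant for the conjugation actions, and which exhibit $\PD_1(\Gamma)$ as the inverse limit $\varprojlim_m\PD_1(\Gamma_m)$ of compact $\Gamma_m$-spaces: a positive definite function on $\Gamma$ is the same as a compatible family of positive definite functions on the $\Gamma_m$, since positive definiteness is tested on finite subsets, each contained in some $\Gamma_m$.

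For property (1), let $C\subseteq\PD_1(\Gamma)$ be compact, convex and $\Gamma$-invariant. Its set of characters is $C^\Gamma=\bigcap_m C^{\Gamma_m}$, a decreasing intersection of closed subsets of the compact set $C$, so it suffices to show each $C^{\Gamma_m}\neq\emptyset$. Fix $m$. For each $\ell\ge m$, charmenability of $\Gamma_\ell$ applied to the compact convex $\Gamma_\ell$-invariant set $\rho_\ell(C)\subseteq\PD_1(\Gamma_\ell)$ produces a $\Gamma_\ell$-fixed point, which is a fortiori $\Gamma_m$-fixed; hence $\rho_\ell(C)^{\Gamma_m}\neq\emptyset$. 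Since $C=\varprojlim_\ell\rho_\ell(C)$ and taking $\Gamma_m$-fixed points commutes with inverse limits, $C^{\Gamma_m}=\varprojlim_\ell\rho_\ell(C)^{\Gamma_m}$ is an inverse limit of nonempty compact spaces over $\N$, hence nonempty.

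For property (2), I first record the elementary radical identity $\Rad(\Gamma)=\bigcup_m\bigcap_{n\ge m}\Rad(\Gamma_n)$: the inclusions $\Rad(\Gamma_n)\cap\Gamma_m\subseteq\Rad(\Gamma_m)$ (for $m\le n$) and $\Rad(\Gamma)\cap\Gamma_n\subseteq\Rad(\Gamma_n)$ both follow from maximality of amenable radicals and yield the two containments. In particular, if $g\notin\Rad(\Gamma)$ then $g\notin\Rad(\Gamma_n)$ for all large $n$. Now let $\phi$ be an extremal character of $\Gamma$, with GNS data $(\pi_\phi,H_\phi,\xi_\phi)$, factor $M=\pi_\phi(\Gamma)\dpr$ and trace $\tau$; set $M_n=\pi_\phi(\Gamma_n)\dpr$, so $M=\overline{\bigcup_n M_n}$. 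Let $z_n\in\cZ(M_n)$ be the largest central projection with $z_nM_n$ injective. Decomposing $(1-z_n)M_n$ into factors, almost every summand is a non-injective factor and hence, by charmenability of $\Gamma_n$, yields an extremal character of $\Gamma_n$ supported on $\Rad(\Gamma_n)$; integrating, $\tau\big((1-z_n)\pi_\phi(g)\big)=0$ for every $g\in\Gamma_n\setminus\Rad(\Gamma_n)$. Combined with the radical identity, $\phi(g)=\tau(z_n\pi_\phi(g))$ for every $g\notin\Rad(\Gamma)$ and all large $n$. Any weak limit point $z$ of $(z_n)$ commutes with every $M_m$, hence lies in $\cZ(M)=\C1$ and is a scalar $c\cdot1$, and gives $\phi(g)=c\,\phi(g)$ off $\Rad(\Gamma)$. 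Hence either $\phi$ is supported on $\Rad(\Gamma)$, or it is not and then $\phi(g_0)\neq0$ for some $g_0\notin\Rad(\Gamma)$ forces $c=1$ for every weak limit point, so that $z_n\to1$ weakly and therefore $\tau(z_n)\to1$ with $z_n\to1$ strongly.

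It remains to treat this last case and show that $\phi$ is then von Neumann amenable, i.e.\ that $M$ is injective; this is the crux of the argument. One has an increasing sequence $M_n$ with dense union $M$, each carrying an injective central summand $z_nM_n$ with $\tau(z_n)\to1$, and must deduce injectivity of $M$. I would do this through the hypertrace criterion recalled after Proposition~\ref{prop:compt}: starting from hypertraces for the injective algebras $z_nM_n$ (compressed to $z_nH_\phi$), one builds states on $B(H_\phi)$ that are invariant under $\Ad\pi_\phi(\gamma)$ for $\gamma\in\Gamma_n$, and passes to a weak$^*$ limit along an ultrafilter to obtain an $\Ad\pi_\phi(\Gamma)$-invariant state $\Omega$ on $B(H_\phi)$. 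The delicate point---and the main obstacle---is to guarantee that $\Omega$ restricts to $\tau$ on all of $M$ and not merely on the dense subalgebra $\bigcup_n M_n$; this is exactly the statement that injectivity (equivalently semidiscreteness) is stable under the approximation $z_n\to1$, $z_nM_n$ injective, and is most cleanly established via the F\o lner/semidiscreteness characterization of injective finite von Neumann algebras. Once $\Omega|_M=\tau$ is secured, a Cauchy--Schwarz argument upgrades the $\pi_\phi(\Gamma)$-invariance of $\Omega$ to full $M$-centrality, so that $\Omega$ is an $M$-hypertrace and $M$ is injective, completing the dichotomy.
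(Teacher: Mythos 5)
Your verification of property (1) is fine and is essentially the paper's argument. For property (2) you take a genuinely different route, and it is correct up to a point: the central decomposition of $M_n=\pi_\phi(\Gamma_n)\dpr$ plus charmenability of $\Gamma_n$ does give $\tau((1-z_n)\pi_\phi(g))=0$ for $g\in\Gamma_n\setminus\Rad(\Gamma_n)$, and the weak-limit/factoriality argument correctly yields $z_n\to1$ strongly when $\phi$ is not supported on $\Rad(\Gamma)$. But the proof then stops exactly at its crux. What remains is the implication: $M_n\nearrow M$ weakly dense, $z_n\in\cZ(M_n)$, $z_nM_n$ injective, $z_n\to1$ strongly $\Rightarrow$ $M$ injective. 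You never prove this. Your candidate mechanism (compress hypertraces of $z_nM_n$ and take a weak$^*$ limit $\Omega$) fails for the reason you yourself identify: the limit is only controlled on $\bigcup_n M_n$, and a non-normal state agreeing with $\tau$ on a weakly dense subalgebra need not agree with $\tau$ on $M$; without $\Omega|_M=\tau$ the Cauchy--Schwarz upgrade to $M$-centrality cannot even start, since it requires $\Omega((x-a_i)(x-a_i)^*)=\tau((x-a_i)(x-a_i)^*)$ for Kaplansky approximants $a_i$ of a general $x\in M$. The fallback sentence is circular: ``the delicate point is exactly the statement that injectivity is stable under the approximation'' names the unproven statement and a technique (F{\o}lner/semidiscreteness) but contains no argument. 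Note also that the hypertrace route cannot be repaired by a cleverer choice of $\Phi_n$: to control $\Omega$ on all of $M$ one would need to precompose with a ucp extension of the expectation $M\to M_n$ to $B(H_\phi)$, whose existence is injectivity of $M_n$ --- the very thing at stake. The statement you need is in fact true, but it needs a real proof, e.g.: letting $E_n:M\to M_n$ be the $\tau$-preserving conditional expectations, the maps $x\mapsto E_n(x)z_n$ are normal completely positive, factor through the injective algebras $z_nM_n$, and converge pointwise $\ast$-strongly to $\id_M$ (martingale convergence together with $\tau(1-z_n)\to0$); this exhibits $M$ as semidiscrete, hence injective. Nothing of this sort appears in your proposal, so this is a genuine gap.

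It is worth noting that the paper sidesteps this difficulty entirely, which is why its argument closes: it restricts $\phi$ to $\Gamma_n$, decomposes $\phi_m=t_m\phi_m^1+(1-t_m)\phi_m^2$ for all $m\geq n$ by Lemma~\ref{soft charm}, extends $\phi_m^1,\phi_m^2$ by zero to $\Gamma$ and uses extremality of $\phi$ (with a witness $g_0$, $\phi(g_0)\neq0$, $g_0\notin\Rad(\Gamma_m)$) to force $t_m\to1$; by Lemma~\ref{convexity of amenable reps} this makes each GNS algebra $\pi_{\phi_n}(\Gamma_n)\dpr$ amenable \emph{outright}, so that $M$ becomes (after a compression argument) an increasing union of amenable algebras, where only standard permanence properties are needed. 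A further, minor, inaccuracy in your write-up: the claim that $g\notin\Rad(\Gamma)$ implies $g\notin\Rad(\Gamma_n)$ for all large $n$ does not follow from the identity $\Rad(\Gamma)=\bigcup_m\bigcap_{n\geq m}\Rad(\Gamma_n)$, which only gives ``for infinitely many $n$''. The stronger claim is true, but requires the additional observation that if $g\in\Rad(\Gamma_n)$ for infinitely many $n$ then every finite set of $\Gamma$-conjugates of $g$ lies in some $\Rad(\Gamma_n)$, so the normal closure of $g$ in $\Gamma$ is locally amenable and $g\in\Rad(\Gamma)$; alternatively one passes to a subsequence, as the paper does. Your limit argument does use ``all large $n$'' (for each fixed $g$), since the subnet defining the limit point $z$ is chosen independently of $g$.
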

\begin{proof}
Let $C\subset \PD(\Gamma)$ be a compact convex $\Gamma$-invariant subset.
For every $n$, let $C_n$ be the preimage of the $\Gamma_n$-invariants in the image of the restriction map $\PD(\Gamma)\to \PD(\Gamma_n)$.
Then $(C_n)_n$ is a descending sequence of compact sets, hence has non-trivial intersection. This shows that the set of $\Gamma$-invariants in $C$ is non-empty.
Fix an extremal character $\phi\in \Char(\Gamma)$ and assume the support of $\phi$ is not contained in $\Rad(\Gamma)$.
We argue to show that $\phi$ is von Neumann amenable.
We note that 
\[ \Rad(\Gamma)= \bigcup_{k=1}^{\infty} \bigcap_{n=k}^\infty \Rad(\Gamma_n). \]
Indeed, the inclusion $\subset$ is clear and $\supset$ follows from the fact that $g\in \Rad(\Gamma)$ iff for every finite set $F\subset \Gamma$,
the group generated by $\{g^f\mid f\in F\}$ is amenable.
We fix $g_0\in \Gamma\setminus\Rad(\Gamma)$ such that $\phi(g_0)\neq 0$.
By passing to a subsequence we assume as we may that 
for every $n$, $g_0 \in \Gamma_n \setminus \Rad(\Gamma_n)$.
For every $n \in \N$, we let $\phi_n=\phi_{|\Gamma_n} \in \Char(\Gamma_n)$.

{\bf Claim.} For every $n$, $\phi_n$ is a von Neumann amenable character of $\Gamma_n$.

Fix an index $n$, and denote by $N_n$ the GNS von Neumann algebra associated with $(\Gamma_n,\phi_n)$, and denote by $\tau$ the normal trace on $N_n$ extending $\phi_n$. We want to prove that $N_n$ is an amenable von Neumann algebra.
Apply Lemma~\ref{soft charm} to $\phi_m$, $m \geq n$, to get a decomposition 
\[ \phi_m = t_m \phi_m^1 + (1-t_m)\phi_m^2,\]
for some $t_m \in [0,1]$ and characters $\phi_m^1,\phi_m^2\in \Char(\Gamma_m)$ such that $\phi_m^1$ is von Neumann amenable as a character of $\Gamma_m$, and $\phi_m^2$ is supported on $\Rad(\Gamma_m)$.

Note that such a decomposition may be restricted to $\Gamma_n$, and that the restriction $\phi_m^1$ to $\Gamma_n$ is still von Neumann amenable. By Lemma \ref{convexity of amenable reps}, we may find a projection $p \in N_n$, with trace at least $t_m$ such that $pN_np$ is an amenable von Neumann algebra. So the claim will follow once we prove that $t_m$ tends to $1$ as $m$ goes to infinity. This later fact relies on the extremality assumption.

For every $m \geq n$, denote by $\psi_m^1$ and $\psi_m^2$ the positive definite extensions to $\Gamma$ of $\phi_m^1$ and $\phi_m^2$, respectively, obtained by assigning the value $0$ outside of $\Gamma_m$. Passing to a subsequence if necessary, we assume as we may that the sequences $\psi_m^1,\psi_m^2$ and $t_m$ all converge
and we observe that the limit functions are characters of $\Gamma$.
Since the sequence $t_m \psi_m^1 + (1-t_m)\psi_m^2$ converges to $\phi$ which is an extremal character, since $\phi(g_0)\neq 0$ while for every $m$, $\psi_m^2(g_0)=0$, it follows that $t_m \to 1$, as desired. This concludes the proof of the claim.

Let us now deduce that $\phi$ is a von Neumann amenable character on $\Gamma$. Denote by $(H,\pi,\xi)$ the GNS triple associated with $\phi$, and for every $n$, denote by $(H_n,\pi_n,\xi_n)$, the GNS triple of $\phi_n$. Naturally $H_n$ can be viewed as a subspace of $H$, in such a way that $\xi_n$ coincides with $\xi$, and $\pi_n$ is the restriction of $\pi$ to $H_n$. Since $\Gamma = \bigcup_n \Gamma_n$, we find that the increasing union of the spaces $H_n$ is dense in $H$. 

Define $M = \pi(\Gamma)\dpr$ and $M_n := \pi(\Gamma_n)\dpr$, for each index $n$. Denoting by $p_n: H \to H_n$ the orthogonal projection, we find that $p_n \in M_n'$ and $p_nM_n \simeq \pi_n(\Gamma_n)\dpr$ is amenable for all $n$. In particular $p_mM_n \subset p_m M_m$ is also amenable for all $m \geq n$. Now $p_m$ converges strongly to $1$, so we find that $M_n$ is amenable, and $M = (\bigcup_n M_n)\dpr$ follows amenable as well.
\end{proof}

%%%%%
\section{$(G,N)$-structures, singularity and criteria for charmenability}\label{sect:criterion}

Throughout this section $G$ denotes an lcsc group.
The first three subsections will be devoted to the study of $(G,N)$-von Neumann algebras with a focus on their singularity properties.
This study will be used to develop charmenability criteria in \S\ref{ss:charcrit}.

%\textcolor{red}{missing good intro here}
%In this section, we give some generalities about equivariant normal ucp maps between von Neumann algebras and their ergodic theoretic behavior on dense C*-subalgebras. This object of study is more general than stationary states and Furstenberg-Poisson boundary maps, and we will need this extra generality. Here $G$ will denote an arbitrary lcsc group.

%%%.
\subsection{Definition and examples of $(G,N)$-structures}\label{ss:GN}

The setting of $\mu$-stationary actions is quite general, but it is sometimes too loose for our purposes. This is because Furstenberg-Poisson boundaries don't always behave well when passing to subgroups, even to lattices. In contrast the amenability of an action remains when restricting to any closed subgroup.

For this reason, we want to study the more general data of a $G$-action $G \actson A$ on a C*-algebra $A$ together with a $G$-map $\theta: B \to \cS(A)$ from an amenable $G$-space $(B,\nu)$. In the commutative case, such boundary maps $\theta: B \to \Prob(X)$ naturally give rise to a measure class $\bary(\theta_*\nu)$ on $X$. Then measurable notions on $X$, such as ergodicity, are discussed. 

In the non-commutative case, it is the same: a boundary map $\theta: (B,\nu) \to \cS(A)$ naturally comes with a state $\phi = \bary(\theta_*\nu)$ and we want to study ``measurable'' aspects of the GNS von Neumann algebra $M = \pi_\phi(A)\dpr$ (such as ergodicity). In fact, we can keep track of $\theta$ purely in terms of $M$, as follows. By duality, the map $\theta$ gives rise to a $G$-ucp map $E: A \to L^\infty(B)$. If we denote by $z \in A^{**}$ the central support projection of the normal extension $E : A^{**} \to L^\infty(B)$, then $z$ is $G$-invariant (with respect to the normal extension of the action), and $M$ is naturally isomorphic with $zA^{**}$. Moreover Proposition \ref{continuous vn extension} shows that $M$ is indeed a $G$-von Neumann algebra. The map $E$ can thus be viewed as a normal $G$-ucp map $M \to L^\infty(B)$.

Note that in this picture, if $A$ is separable, we recover the initial map $\theta$ form the composition $A \to M \to L^\infty(B)$ by duality. So the two points of view are equivalent, but the advantage of expressing things in terms of $M$ is that this will allow us to change the compact model of the action.

\begin{defn}\label{defn:GN-vN}
Let $N$ be a $G$-von Neumann algebra. A {\em $(G,N)$-von Neumann algebra} will be the data $(M,E)$ of a $G$-von Neumann algebra $M$ and a normal $G$-ucp map $E : M \to N$. We will sometimes refer to $E$ as the $(G,N)$-structure map. We say that $(M, E)$ is {\em faithful} or {\em extremal} if $E$ 
satisfies the corresponding properties. If $E(M) \subset N^G$, we say that $(M, E)$ is $G$-{\em invariant}.
\end{defn}

Classically, stationary states give rise to boundary maps. This is our first example.

\begin{prop}\label{stationary structure}
Fix a generating probability measure $\mu \in \Prob(G)$, denote by $(B,\nu)$ the corresponding Furstenberg-Poisson boundary and set $N = L^\infty(B,\nu)$. We view $\nu$ as the state on $N$ given by integration w.r.t.\ the measure $\nu$. Let $M$ be a $G$-von Neumann algebra.

Then a $(G,N)$-structure map $E: M \to N$ gives rise to a $\mu$-stationary normal state $\varphi = \nu \circ E$ on $M$. Conversely, a normal $\mu$-stationary state $\varphi$ on $M$ gives rise to a normal $G$-ucp map $E: M \to \Har_\mu(G) \simeq L^\infty(B)$, defined by $E(x)(g) = \varphi(g^{-1}(x))$, for all $x \in M$, $g \in G$. These two maps are inverse of one another. Moreover, 
\begin{itemize}
\item $E$ is faithful if and only if $\varphi$ is faithful;
\item $E$ is extremal if and only if $\varphi$ is extremal;
\item $E$ is invariant if and only $\varphi$ is $G$-invariant.
\end{itemize}
\end{prop}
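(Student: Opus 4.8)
The plan is to establish the bijective correspondence between $(G,N)$-structure maps $E : M \to N = L^\infty(B,\nu)$ and $\mu$-stationary normal states $\varphi$ on $M$, and then to verify the three equivalences. The starting point is the harmonic-function description of the Furstenberg-Poisson boundary: the Poisson transform gives a $G$-equivariant identification of $L^\infty(B,\nu)$ with the space $\Har_\mu(G)$ of bounded $\mu$-harmonic functions on $G$, where $G$ acts by left translation and the state $\nu$ corresponds to evaluation at the identity $e$. First I would spell out both constructions. Given a $(G,N)$-structure map $E$, set $\varphi = \nu \circ E$; this is a normal state because $E$ is normal ucp and $\nu$ is a normal state, and I would check $\mu$-stationarity by using the $G$-equivariance of $E$ together with the fact that $\nu$ is $\mu$-stationary on $N$ (i.e.\ $\mu \ast \nu = \nu$ in the sense that $\int_G g\nu \dd\mu(g) = \nu$, which is exactly the harmonicity built into the Poisson boundary). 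Conversely, given a $\mu$-stationary normal $\varphi$, define $E(x)(g) = \varphi(g^{-1}x)$; I would verify that $x \mapsto E(x)$ lands in $\Har_\mu(G) \simeq L^\infty(B)$ precisely because $\mu$-stationarity of $\varphi$ is the statement that $g \mapsto \varphi(g^{-1}x)$ is $\mu$-harmonic, that $E$ is normal and ucp (inherited pointwise from $\varphi$), and that $E$ is $G$-equivariant by a direct computation $E(h x)(g) = \varphi(g^{-1}hx) = E(x)(h^{-1}g) = (hE(x))(g)$.

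Next I would check that these two assignments are mutually inverse. Starting from $E$, forming $\varphi = \nu \circ E$ and then re-forming $E'(x)(g) = \varphi(g^{-1}x) = \nu(E(g^{-1}x)) = \nu(g^{-1}E(x)) = (g E(x)\text{ evaluated via }\nu\text{ at }\ldots)$; unwinding the identification $N \simeq \Har_\mu(G)$ under which $\nu$ is evaluation at $e$ and the $G$-action is translation, this reads $E'(x)(g) = E(x)(g)$, so $E' = E$. The reverse round-trip is the identity by the analogous evaluation $\varphi'(x) = \nu(E(x)) = E(x)(e) = \varphi(e^{-1}x) = \varphi(x)$. The key point throughout is the dictionary $N \simeq \Har_\mu(G)$, $\nu \leftrightarrow \operatorname{ev}_e$, $G\text{-action} \leftrightarrow \text{left translation}$, which I would state once and invoke repeatedly.

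For the three equivalences I would argue as follows. For faithfulness: $\varphi = \nu \circ E$ and $\nu$ is a faithful normal state on $N$ (the boundary measure class has full support), so $\varphi$ faithful $\iff$ $E$ faithful follows from the fact that for $x \geq 0$ one has $\varphi(x) = \nu(E(x)) = 0 \iff E(x) = 0$ (using faithfulness of $\nu$) and that $E(x) = 0$ for all relevant $x$ exactly detects the kernel of $E$. For extremality, I would use the ambient definition of an extremal $(G,N)$-structure (the map $E$ being an extreme point among $G$-equivariant normal ucp maps $M \to N$, equivalently extremality of $\varphi$ among $\mu$-stationary states) and translate convex decompositions of $E$ into convex decompositions of $\varphi = \nu \circ E$ and back, using that the correspondence is affine. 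For invariance: $E(M) \subset N^G$ means the harmonic functions $E(x)$ are $G$-invariant, i.e.\ constant, which under evaluation at $e$ forces $E(x) = \varphi(x) 1$; conversely $\varphi$ being $G$-invariant gives $E(x)(g) = \varphi(g^{-1}x) = \varphi(x)$ constant, hence $E(M) \subset N^G = \C 1$.

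The main obstacle will be the careful handling of the identification $N = L^\infty(B,\nu) \simeq \Har_\mu(G)$ and making sure that the $G$-equivariance and normality pass correctly through the Poisson transform, especially the claim that $g \mapsto \varphi(g^{-1}x)$ genuinely defines an element of $\Har_\mu(G)$ rather than merely a bounded Borel function; this requires invoking the defining property of the Poisson boundary (every bounded $\mu$-harmonic function arises uniquely as a Poisson transform of an $L^\infty(B)$ function) and checking the harmonicity equation $\int_G \varphi((gh)^{-1}x)\dd\mu(h) = \varphi(g^{-1}x)$, which is exactly the $\mu$-stationarity of $\varphi$. The affineness needed for the extremality equivalence, and the measurability/continuity needed for normality, are then routine once this identification is pinned down.
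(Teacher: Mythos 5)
Your proposal is correct and follows essentially the same route as the paper, whose entire proof is to invoke the Poisson transform identification $\Har_\mu(G)\simeq L^\infty(B,\nu)$ (with $\nu$ corresponding to evaluation at $e$ and the $G$-action to left translation) and note that the remaining verifications are routine. Your write-up simply makes explicit the stationarity, inverse, faithfulness, affineness/extremality, and invariance checks that the paper declares trivial.
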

\begin{proof}
The fact that the two maps $E \mapsto \varphi$ and $\varphi \mapsto E$ are inverse of each other follows from the definition of the Poisson transform $\Har_\nu(G) \simeq L^\infty(B,\nu)$ (see e.g.\ \cite[\S 2]{BS04}). The other statements are trivial.
\end{proof}

Let us give now a purely non-commutative example.

\begin{exmp}\label{character structure}
Assume that $G$ is discrete and take an arbitrary $G$-algebra $N$, whose action is denoted by $\sigma$. Let $M$ be any tracial factor with separable predual and $\pi : G \to \mathcal U(M)$ any unitary representation such that $\pi(G)\dpr = M$. In other words, $M$ is the GNS von Neumann algebra of an extremal character on $G$. We denote by $(L^2(M),L^2(M)_+,J)$ the standard form of $M$.

The group $G$ acts on $N \ovt B(L^2(M))$ via the automorphisms $\sigma_g \otimes \Ad(J \pi(g) J)$, $g \in G$. The fixed point algebra $\cM = (N \ovt B(L^2(M)))^G$ admits another action $\tsigma: G \curvearrowright \cM$, given by $\Ad(1_N \otimes \pi(g))$, for $g \in G$. 

Denote by $\xi \in L^2(M)_+$ the unique cyclic vector implementing the trace $\tau$ and by $\Phi = \langle \, \cdot \, \xi, \xi \rangle \in B(L^2(M))_\ast$ the corresponding normal vector state. Since $\tau$ is a trace, we know that $a\xi = \xi a = Ja^*J\xi$ for every $a \in M$. Then for every $g \in G$, the normal ucp map $E = \id_N \ot  \Phi : \cM \to N$ satisfies 
\begin{align*}
E \circ \tsigma_g = \id_N \otimes (\Phi \circ \Ad(\pi(g)))&=  \id_N \otimes (\Phi \circ \Ad(J\pi(g)^*J)) \\
&= (\id_N \otimes \Phi) \circ (\Ad(1_N \otimes J\pi(g)^*J)) \\
&= (\id_N \otimes \Phi)  \circ ( \sigma_g \otimes \id) \\
&= \sigma_g \circ E.
\end{align*}
We used the invariance property for elements in $\cM$ to obtain the third line above. This shows that $(\mathcal M, E)$ is a $(G, N)$-von Neumann algebra.
\end{exmp}

\begin{lem}\label{faithful character structure}
In the above example, the structure map $E$ is faithful.
\end{lem}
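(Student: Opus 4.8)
I will verify faithfulness straight from the definition: starting with a positive element $x \in \cM$ satisfying $E(x) = 0$, I will prove that $x = 0$. It is convenient to fix a spatial picture first. Represent $N$ in its standard form on $L^2(N)$, with canonical unitary implementation $U : G \to \cU(L^2(N))$ so that $\sigma_g = \Ad(U_g)$. Then $N \ovt B(L^2(M))$ acts on $L^2(N) \otimes L^2(M)$, the $G$-action is implemented by the unitaries $W_g := U_g \otimes J\pi(g)J$, and the condition $x \in \cM$ says precisely that $x$ commutes with every $W_g$. The map $E = \id_N \otimes \Phi$ is the slice map of the vector state $\Phi = \langle \, \cdot \, \xi, \xi \rangle$, hence obeys the identity $\langle E(x)\zeta, \zeta\rangle = \langle x(\zeta \otimes \xi), \zeta \otimes \xi\rangle$ for all $\zeta \in L^2(N)$. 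Therefore $E(x) = 0$ together with $x \geq 0$ forces $x(\zeta \otimes \xi) = 0$ for every $\zeta$; that is, $x$ vanishes on the subspace $L^2(N) \otimes \C\xi$. I expect this to be the crux: since $\Phi$ is merely a vector state, it is far from faithful on $B(L^2(M))$, so this vanishing is a priori very weak, and the whole point is that membership of $x$ in the fixed-point algebra $\cM$ lets one upgrade it.

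Next I would exploit invariance. The key computation is to see how $W_g$ moves the vectors already annihilated by $x$. Using the trace identity $a\xi = Ja^*J\xi$ recorded in the Example (applied to $a = \pi(g)^*$), one gets $J\pi(g)J\xi = \pi(g)^*\xi$, and hence $W_g(\zeta \otimes \xi) = U_g\zeta \otimes \pi(g)^*\xi$. Combining this with the commutation $x W_g = W_g x$ and the vanishing from the first step yields, for every $g \in G$ and every $\zeta$, the identity $x(U_g\zeta \otimes \pi(g)^*\xi) = W_g\, x(\zeta \otimes \xi) = 0$. Letting $\zeta$ range over $L^2(N)$ and using that $U_g$ is unitary, this says $x(\eta \otimes \pi(g)^*\xi) = 0$ for all $\eta \in L^2(N)$ and all $g \in G$.

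Finally I would conclude by a cyclicity argument. Since the trace $\tau$ is faithful, $\xi$ is a cyclic vector for $M = \pi(G)\dpr$, so by the Kaplansky density theorem the closed linear span of $\{\pi(g)^*\xi \mid g \in G\}$ equals $\overline{M\xi} = L^2(M)$. Consequently $x$ annihilates all of $L^2(N) \otimes L^2(M)$, i.e.\ $x = 0$, which gives faithfulness of $E$. The only genuinely delicate point is the passage from the weak vanishing $x(\zeta \otimes \xi) = 0$ to the full conclusion; as indicated, this is exactly what the $G$-invariance built into $\cM$ (through the relation $J\pi(g)J\xi = \pi(g)^*\xi$) provides, spreading the annihilation across a total set of vectors.
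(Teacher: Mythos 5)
Your proof is correct and takes essentially the same route as the paper: the paper applies the equivariance $E\circ\tsigma_g=\sigma_g\circ E$ to $x^*x$ and invokes the fact that the support of $E=\id_N\ot\Phi$ is $1\ot p_\xi$, which is precisely your combination of the vector-state identity, the commutation $xW_g=W_gx$, and the relation $J\pi(g)J\xi=\pi(g)^*\xi$, both arguments concluding that $x$ annihilates $L^2(N)\ot\pi(G)\xi$ and finishing by cyclicity of $\xi$. Your version merely unpacks the paper's support-projection step spatially; the ingredients and the key identity are identical.
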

\begin{proof}
We keep the notation from the previous example. Let $x \in \cM$ be such that $E(x^*x) = 0$. Then for every $g \in G$, we get $E((1 \ot \pi(g)^*)x^*x(1 \ot \pi(g))) = \sigma_g^{-1}(E(x^*x)) = 0$. 

Now, viewed as a ucp map on $N \ovt B(L^2(M))$, the support of $E = \id_N \ot \Phi$ is $1 \ot p_\xi$, where $p_\xi$ is the rank one projection onto $\C \xi$. We thus find that $x(1 \ot \pi(g)p_\xi) = 0$ for every $g \in G$. Since $\pi(g)\xi$, $g \in G$, spans a dense subspace of $L^2(M)$, this indeed implies $x = 0$.
\end{proof}

In our last example we explain how to induce structure maps from a lattice to the ambient group $G$.

\begin{exmp}\label{induced structure}
Let $\Gamma < G$ be a lattice, $N$ be a $\Gamma$-von Neumann algebra and $(M,E)$ any $(\Gamma, N)$-von Neumann algebra. Equally denote by $\sigma$ the $\Gamma$ actions on $M$ and $N$. Denote by $\lambda$ and $\rho$ the translation actions of $G$ on $L^\infty(G)$ on the left and right, respectively. Define the fixed point von Neumann algebras 
\[\tM := (L^\infty(G) \ovt M)^{(\rho \ot \sigma)(\Gamma)} \quad \text{and} \quad \tN := (L^\infty(G) \ovt N)^{(\rho \ot \sigma)(\Gamma)}.\]
Since $E$ is $\Gamma$-equivariant, the map $\tE = \id \ot E$ maps $\tM$ into $\tN$. Moreover, this map clearly intertwines the induced $G$-actions $\lambda \ot \id$ on $\tM$ and $\tN$.
We call $(\tM,\tE)$ the {\em induced} $(G, \tN)$-von Neumann algebra. Note that it is faithful if $E$ is.

In the special case where $N$ is already a $G$-von Neumann algebra, we further have a faithful $G$-equivariant normal ucp map $E_N: \tN \to N$. Indeed, in this case the induced $G$-action on $\tN$ is conjugate with the diagonal $G$-action on $L^\infty(G/\Gamma) \ovt N$. Then $E_N$ is given by integrating on the first component, $E_N = m_{G/\Gamma} \ot \id$. So in this case we also get a $(G,N)$ structure $E_N \circ \tE$ on~$\tM$.
\end{exmp}

\begin{lem}\label{invariant induced}
Keep the setting of the above example, and assume that $N$ is a $G$-algebra on which $\Gamma$ acts ergodically\footnote{In fact the assumption that $N^\Gamma = N^G$ would also imply the equivalence between $(1)$ and $(3)$.}. The following are equivalent:
\begin{enumerate}
\item $E$ is $\Gamma$-invariant.
\item $\tE$ ranges into $(L^\infty(G) \ot 1)^{(\rho \ot \sigma)(\Gamma)} \simeq L^\infty(G/\Gamma)$;
\item $E_N \circ \tE$ is $G$-invariant.
\end{enumerate}
\end{lem}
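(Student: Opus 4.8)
The plan is to carry out everything in the concrete ``covariant function'' model. I would realize an element of $\tM$ (resp.\ $\tN$) as a bounded measurable map $X\colon G\to M$ (resp.\ $G\to N$) satisfying $X(g\gamma)=\sigma_\gamma^{-1}(X(g))$ for $\gamma\in\Gamma$, so that the induced action reads $((\lambda\ot\id)(h)X)(g)=X(h^{-1}g)$ and $\tE$ acts pointwise, $(\tE X)(g)=E(X(g))$. Since $N$ is a $G$-algebra I would recall the untwisting isomorphism $\Psi\colon\tN\to L^\infty(G/\Gamma)\ovt N$, $(\Psi F)(g)=\sigma_g(F(g))$, which is well defined because covariance makes $g\mapsto\sigma_g(F(g))$ right-$\Gamma$-invariant, which intertwines $\lambda\ot\id$ with the diagonal action, and under which $E_N$ becomes $F\mapsto\int_{G/\Gamma}\sigma_g(F(g))\dd\bar g$ against the $G$-invariant probability measure on $G/\Gamma$ (available since $\Gamma$ is a lattice). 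Finally I would record that ergodicity gives $N^\Gamma=\C 1$, and hence $N^G=\C 1$ as $N^G\subseteq N^\Gamma$. I will prove $(1)\Rightarrow(2)$, $(2)\Rightarrow(3)$, $(2)\Rightarrow(1)$ and $(3)\Rightarrow(2)$, which yields $(1)\Leftrightarrow(2)\Leftrightarrow(3)$.

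\textbf{The formal implications.} For $(1)\Rightarrow(2)$: if $E(M)\subseteq N^\Gamma=\C 1$, then $\tE=\id\ot E$ already sends $L^\infty(G)\ovt M$ into $L^\infty(G)\ot 1$, so $\tE(\tM)$ lands in $(L^\infty(G)\ot 1)\cap\tN=(L^\infty(G)\ot 1)^{(\rho\ot\sigma)(\Gamma)}\cong L^\infty(G/\Gamma)$. For $(2)\Rightarrow(3)$: on the subalgebra $L^\infty(G/\Gamma)\ot 1$ one has $\Psi(c\ot 1)=c\ot 1$, whence $E_N(c\ot 1)=\bigl(\int_{G/\Gamma}c\dd\bar g\bigr)1\in\C 1$, so $(E_N\circ\tE)(\tM)\subseteq\C 1=N^G$, i.e.\ $E_N\circ\tE$ is $G$-invariant. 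For $(2)\Rightarrow(1)$: fixing $m\in M$ and a measurable covariant lift $X_m\in\tM$ of the constant function $m$ on a fundamental domain for the right $\Gamma$-action, one has $(\tE X_m)(g_0)=E(m)$ on a set of positive measure, and $(2)$ forces $E(m)\in\C 1\subseteq N^\Gamma$; hence $E(M)\subseteq N^\Gamma$.

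\textbf{The main step $(3)\Rightarrow(2)$.} Here I would exploit that $L^\infty(G/\Gamma)\ot 1\subseteq\tM$ and that $\tE$ is an $L^\infty(G/\Gamma)\ot 1$-bimodule map (scalars pull out pointwise): for $X\in\tM$ and $c\in L^\infty(G/\Gamma)$ one has $(c\ot 1)X\in\tM$ and $\tE((c\ot 1)X)=(c\ot 1)\tE(X)$. Applying $(3)$ to $(c\ot 1)X$ and pushing through $\Psi$ gives $\int_{G/\Gamma}c(g)\,(\Psi\tE X)(g)\dd\bar g\in N^G=\C 1$ for every $c$, where $(\Psi\tE X)(g)=\sigma_g(E(X(g)))$. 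Taking $c=\mathbf{1}_A$ yields $\int_A(\Psi\tE X)\dd\bar g\in\C 1$ for all measurable $A\subseteq G/\Gamma$. The crux is then a de-integration step: a bounded weak$^*$-measurable $N$-valued function $h$ with $\int_A h\dd\bar g\in\C 1$ for all $A$ must satisfy $h(g)\in\C 1$ a.e. Applied to $h=\Psi\tE X$ this gives $\sigma_g(E(X(g)))\in\C 1$ a.e., hence $E(X(g))\in\C 1$ a.e.\ (since $\sigma_g$ fixes $\C 1$), which is exactly $(2)$.

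I expect the de-integration step to be the only genuine work. Assuming, as we may, separable preduals, I would prove it by testing against a countable norm-dense family of normal functionals $\omega$ annihilating $\C 1$: for each such $\omega$, $\int_A\omega(h)\dd\bar g=0$ for all $A$ forces $\omega\circ h=0$ a.e., and intersecting the countably many null sets gives $h(g)\in{}^\perp(\C 1)^\perp=\C 1$ a.e. Everything else above is a formal consequence of the covariant-function model and the bookkeeping of the $\Psi$-untwisting; note that ergodicity enters only through $N^\Gamma=\C 1$ in $(1)\Rightarrow(2)$, consistent with the footnote that $N^\Gamma=N^G$ alone would already yield $(1)\Leftrightarrow(3)$.
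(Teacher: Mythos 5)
Your proof is correct and takes essentially the same route as the paper's: the same covariant-function model and untwisting isomorphism (your $\Psi$ is the paper's $\theta$), the same fundamental-domain lift for $(2)\Rightarrow(1)$, the same $L^\infty(G/\Gamma)$-module/multiplicative-domain trick for $(3)\Rightarrow(2)$, and your de-integration step is precisely the point the paper asserts without detail (``the essential range of $\theta(f)$ is contained in $\C 1$''), which your countable-dense-family-of-normal-functionals argument fills in correctly under the (implicitly standing) separability assumption. One minor inaccuracy in a side remark only: ergodicity is not used solely in $(1)\Rightarrow(2)$ --- your $(3)\Rightarrow(2)$ also invokes it, through $N^G=\C 1$.
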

\begin{proof}
The implications $(1) \Rightarrow (2) \Rightarrow (3)$ are clear thanks to our ergodicity assumption.

$(2) \Rightarrow (1)$. Fix $x \in M$. We want to prove that $E(x)$ is a scalar operator. Choose a fundamental domain $\cF \subset G$ for the right $\Gamma$-action on $G$. Let $f \in \tM$ be the $\Gamma$-equivariant function $G \to M$ which is equal to $\sigma_\gamma^{-1}(x)$ on the translate $\cF \gamma$ of $\cF$, for all $\gamma \in \Gamma$. Then $\tE(f) \in \tN$ is constant on $\cF$, equal to $E(x)$. By $(2)$, this function has scalar values, so $E(x) \in \C1$.

$(3) \Rightarrow (2)$. Note that $L^\infty(G/\Gamma)$ may be viewed as a subalgebra of both $\tM$ and $\tN$, and that it lies in the multiplicative domain of $\tE$. Hence the image of $\tE$ is an $L^\infty(G/\Gamma)$-module.

Let us describe explicitly the map $E_N: \tN \to N$. Identify $\tN$ with the algebra of $\Gamma$-equivariant functions from $G$ to $N$ (with respect to the right action of $G$ on itself). For $f \in \tN$, define $\theta(f): G \to N$ by the formula $\theta(f)(g) = \sigma_g(f(g))$, for every $g \in G$. Since $f$ is $\Gamma$-equivariant, $\theta(f)$ is right $\Gamma$-invariant and thus defines an $N$-valued function on $G/\Gamma$, that is, an element of $L^\infty(G/\Gamma) \ovt N$. The map $\theta: \tN \to L^\infty(G/\Gamma) \ovt N$ defined this way is an onto $*$-isomorphism, which intertwines the induced action on $\tN$ with the diagonal $G$-action on $L^\infty(G/\Gamma) \ovt N$. Then 
\[E_N = (m_{G/\Gamma} \ot \id) \circ \theta: \tN \to N.\]
Observe that $\theta$ maps the algebra $(L^\infty(G) \ot 1)^{(\rho \ot \sigma)(\Gamma)} \subset \tN$ onto the subalgebra $L^\infty(G/\Gamma) \ot 1$. 
Take $f \in \tN$ in the image of $\tE$. By $(3)$, the element $\theta(f) \in L^\infty(G/\Gamma) \ovt N$ is such that 
\[(m_{G/\Gamma} \ot \id)(a\theta(f)) \in \C 1, \text{ for every } a \in L^\infty(G/\Gamma).\]
This implies that the essential range of $\theta(f)$, viewed as an $N$-valued function on $G/\Gamma$, is contained in $\C 1$. Hence $\theta(f) \in L^\infty(G/\Gamma) \ot 1$, as desired.
\end{proof}

\begin{rem}\label{rem:induction}
Let $\Gamma$ be a lattice in $G$, and $\mu \in \Prob(G)$ be a generating measure. Assume that $(B, \nu)$ is the $(G,\mu)$-Furstenberg-Poisson boundary and set $N = L^\infty(B,\nu)$. Assume moreover that $(B, \nu)$ is the $(\Gamma,\mu_0)$-Furstenberg-Poisson boundary for some admissible probability measure $\mu_0 \in \Prob(\Gamma)$. 
Let $M$ be a $\Gamma$-von Neumann algebra with a $\mu_0$-stationary state $\varphi$. By Example \ref{stationary structure}, there is a unique normal ucp $\Gamma$-map $E : M \to N$ so that $\nu \circ E = \varphi$. Using the previous observation, the normal state $\varphi = \nu \circ E_N \circ \tE$ on $\tM$ is faithful and $\mu$-stationary. This provides a more conceptual view of \cite[Theorem 4.3]{BH19}.
\end{rem}

%%%
\subsection{Singular states and singular structures} \label{ss:singstate}

%To be clear, let us recall the following definition:

\begin{defn}
Two states $\phi,\psi$ on a C*-algebra $A$ are called {\em singular}, denoted by $\phi \perp \psi$, if $\Vert \phi - \psi\Vert = 2$.
\end{defn}

\begin{lem}
Consider two states $\phi$, $\psi$ on the unital C*-algebra $A$. The following are equivalent:
\begin{enumerate}
\item $\phi \perp \psi$.
\item The support projections\footnote{We emphasize that we are talking here about the support projections and not the central support projections.} of $\phi$ and $\psi$ in $A^{**}$ are perpendicular;
\item There exists a sequence $(a_n)_n \in A^{\N}$ such that $0 \leq a_n \leq 1$, $\lim_n \phi(a_n) = 0$ and  $\lim_n \psi(a_n) = 1$.
\end{enumerate}
\end{lem}
\begin{proof}
$(1) \Rightarrow (3)$. By assumption, we may find a sequence $(x_n)_{n \in \N}$ in the unit ball of $A$ such that $\lim_n (\phi(x_n) - \psi(x_n)) = 2$. Replacing $x_n$ by its real part $(x_n + x_n^*)/2$ if necessary, we may assume that $x_n$ is self-adjoint for every $n$, hence $-1 \leq x_n \leq 1$. Since $\phi(x_n) \leq 1$ and $\psi(x_n) \geq -1$ for every $n$, we must actually have $\lim_n \phi(x_n) = 1$ and $\lim_n \psi(x_n) = -1$. So the elements $a_n := (1 - x_n)/2$ satisfy the conclusion of $(3)$.

$(3) \Rightarrow (2)$. Extend $\phi$ and $\psi$ to normal states on $A^{**}$. By definition, the support of $\phi$ is the smallest projection $r \in A^{**}$ such that $\phi(r) = 1$. Denote by $x \in A^{**}$ an ultraweak limit of the sequence $(a_n)_n$ given by $(3)$. Then $0 \leq x \leq 1$, $\phi(x) = 0$ and $\psi(x) = 1$. Denote by $p \in A^{**}$ the support projection of $x$. Then we have $\phi(p) = 0$ and $\psi(p) = 1$. So $p$ dominates the support of $\psi$ and $1-p$ dominates the support of $\phi$. This proves $(2)$.

$(2) \Rightarrow (1)$. Note that the norm of $\phi - \psi$ is equal to the norm of its normal extension to $A^{**}$. Assuming $(2)$, the support of $\psi$ is a projection $p \in A^{**}$ such that $\phi(p) = 0$, $\psi(p) = 1$. Then the element $x = 1-2p \in A^{**}$ satisfies $-1 \leq x \leq 1$ and $(\phi - \psi)(x) = 2$.
 \end{proof}

Observe that singularity passes to larger algebras: if $A \subset B$ is an inclusion of C*-algebras and $\phi, \psi \in \cS(B)$ have their restrictions to $A$ that are singular, then $\phi$ and $\psi$ themselves are singular. 

The following proposition extends this definition to boundary maps, and proves independence on the choice of a compact model.

\begin{prop}\label{singular maps}
Consider a separable von Neumann algebra $M$ and a probability space $(B,\nu)$ and two normal ucp maps $E_1,E_2: M \to L^\infty(B)$. For every separable C$^*$-subalgebra $A \subset M$, the restriction maps $E_k: A \to L^\infty(B)$, dualize to measurable maps $\theta_k^A: B \to \cS(A)$ for all $k \in \{1, 2\}$. The following are equivalent.
\begin{enumerate}
\item For every weakly dense, separable unital C*-subalgebra $A \subset M$, we have $\theta_1^{A}(b) \perp \theta_2^{A}(b)$, for almost every $b \in B$. 
\item There exists a separable unital C*-algebra $A \subset M$ such that $\theta_1^{A}(b) \perp \theta_2^{A}(b)$, for almost every $b \in B$. 
\item For every $\eps > 0$, there exist finitely many projections $p_1,\dots,p_N \in L^\infty(B)$, such that $\sum_{n=1}^N p_n = 1$ and $\Vert \phi_{1,n} - \phi_{2,n}\Vert \geq 2 - \eps$, for all $1 \leq n \leq N$, where $\phi_{1,n}, \phi_{2,n} \in M_*$ are the normal states defined by
\[\phi_{k,n}:  x \in M \mapsto \frac{1}{\nu(p_n)}\nu(p_nE_k(x)), \text{ for  all } k \in \{1,2\}, 1 \leq n \leq N.\]
\end{enumerate}
\end{prop}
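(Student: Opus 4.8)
The plan is to collapse all three conditions into a single statement about one intrinsic measurable function and then compare them through it. For a weakly dense separable unital C*-subalgebra $A' \subset M$ with a countable dense sequence $\{a_j\}$ of its positive unit ball, I would fix honest representatives of the elements $E_k(a_j) \in L^\infty(B)$; then for a.e.\ $b$ the assignment $a_j \mapsto E_k(a_j)(b)$ extends to the state $\theta_k^{A'}(b)$, and the measurable function
\[ D^{A'}(b) := \sup_j \big(E_1(a_j)(b) - E_2(a_j)(b)\big) \]
satisfies $\|\theta_1^{A'}(b) - \theta_2^{A'}(b)\| = 2\,D^{A'}(b)$ for a.e.\ $b$. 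Here I use that for states $\|\phi - \psi\| = 2\sup_{0 \le a \le 1}(\phi - \psi)(a)$, an expression symmetric in $\phi,\psi$ upon replacing $a$ by $1-a$. Thus the condition ``$\theta_1^{A'}(b) \perp \theta_2^{A'}(b)$ for a.e.\ $b$'' is exactly ``$D^{A'} = 1$ a.e.''.

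The conceptual heart, and the step I expect to be the main obstacle, is the lemma that $D^{A'}$ is independent of the weakly dense separable $A'$ and equals the intrinsic essential supremum $D^M := \bigvee_{0 \le x \le 1,\, x \in M}\big(E_1(x) - E_2(x)\big)$ in $L^\infty(B)$. The inequality $D^{A'} \le D^M$ is clear. For the reverse, given a positive contraction $x \in M$, Kaplansky density furnishes a net $a_\alpha$ in the positive unit ball of $A'$ converging to $x$ ultraweakly; since $E_1,E_2$ are normal, $E_1(a_\alpha) - E_2(a_\alpha) \to E_1(x) - E_2(x)$ weak-$*$ in $L^\infty(B)$. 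As each term is $\le D^{A'}$ a.e.\ and the order is preserved under weak-$*$ limits (test against nonnegative $L^1$-functions), one gets $E_1(x) - E_2(x) \le D^{A'}$ a.e., hence $D^M \le D^{A'}$. The delicate point is precisely this combination of normality with weak-$*$ order-closedness.

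With the lemma in hand, $(1) \Leftrightarrow (2)$ becomes immediate, both being equivalent to $D^M = 1$ a.e.: condition $(1)$ is literally $D^{A'} = 1$ a.e.\ for every weakly dense separable $A'$, which is the single statement $D^M = 1$ a.e.; and for $(2)$, a separable unital $A$ (not assumed dense) has $D^A \le D^M$, so $\theta_1^A(b) \perp \theta_2^A(b)$ a.e.\ forces $D^M = 1$ a.e., while conversely any weakly dense separable subalgebra is an eligible witness. It then remains to match $D^M = 1$ a.e.\ with $(3)$. The direction $(3) \Rightarrow D^M = 1$ is easy: from $\|\phi_{1,n} - \phi_{2,n}\| \ge 2 - \eps$ I would pick $x_n \in M$, $0 \le x_n \le 1$, with $\tfrac{1}{\nu(p_n)}\int_{p_n}(E_1(x_n) - E_2(x_n))\dd\nu \ge 1 - \eps$; since $E_1(x_n) - E_2(x_n) \le D^M$ a.e., summing over $n$ gives $\int_B D^M \dd\nu \ge 1 - \eps$ for every $\eps$, whence $D^M = 1$ a.e.

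For the converse $D^M = 1 \Rightarrow (3)$, the only genuine nuisance is producing a \emph{finite} partition. Fixing a weakly dense separable $A$ and writing $f_j := E_1(a_j) - E_2(a_j)$, so that $\sup_j f_j = D^M = 1$ a.e., I would set $B_j := \{b : f_j(b) > 1 - \eps/4\}$, which cover $B$ up to a null set, disjointify them as $C_j := B_j \setminus \bigcup_{i<j} B_i$, choose $N$ large enough that the leftover $L := B \setminus \bigcup_{j \le N} C_j$ has measure small compared with the fixed positive measure of a largest nonempty piece, and absorb $L$ into that piece. On each unmerged $C_j$ the single element $a_j$ already gives $\frac{1}{\nu(C_j)}\int_{C_j} f_j \dd\nu \ge 1 - \eps/4$, hence $\|\phi_{1,j} - \phi_{2,j}\| \ge 2 - \eps/2$, and on the enlarged piece the estimate survives because the bound $E_1(a_j) - E_2(a_j) \ge -1$ controls the contribution of the small set $L$. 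Taking the indicators of these pieces yields the desired finite partition. I expect no difficulty beyond the independence lemma and this finiteness bookkeeping.
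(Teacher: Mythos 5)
Your proposal is correct, and it reaches the result by a genuinely different organization than the paper, even though it rests on the same two technical pillars: the Kaplansky density theorem to bridge $M$ and a weakly dense separable subalgebra, and the covering/disjointification/truncation construction to manufacture the finite partition in (3). The paper proves the cycle $(1)\Rightarrow(2)\Rightarrow(3)\Rightarrow(1)$ directly; in $(3)\Rightarrow(1)$ it uses Kaplansky \emph{dually} -- since the states $\phi_{k,n}$ are normal, $\Vert \phi_{1,n}-\phi_{2,n}\Vert$ can be computed by testing against self-adjoint contractions of $A$ -- and then converts the resulting integral estimates into almost-everywhere statements via Markov's inequality (with $\delta<\eps^2$ bookkeeping). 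You instead isolate the intrinsic function $D^M$ and prove the independence lemma $D^{A'}=D^M$ for every weakly dense separable $A'$, using Kaplansky \emph{primally} (approximating positive contractions of $M$ by positive contractions of $A'$) together with normality of the $E_k$ and the fact that the order relation in $L^\infty(B)$ is weak-$*$ closed; all three conditions then collapse onto the single statement ``$D^M=1$ a.e.'' What your route buys: the independence of the choice of dense subalgebra becomes conceptually transparent, and Markov's inequality disappears, replaced by the soft integral argument in $(3)\Rightarrow D^M=1$. What the paper's route buys: it never needs lattice suprema over uncountable families, and its $(3)\Rightarrow(1)$ argument produces the slightly more quantitative estimate $\nu\bigl(\{b \mid \Vert\theta_1^A(b)-\theta_2^A(b)\Vert\ge 2-\eps\}\bigr)\ge 1-\eps$. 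Your handling of the leftover set also differs (absorption into one piece of positive measure, needing $\nu(L)\le \tfrac{\eps}{8}\nu(C_{j_0})$, versus the paper's proportional redistribution over all pieces); both work, and the only bookkeeping you should add is to discard the null sets among $C_1,\dots,C_N$ so that the states $\phi_{k,n}$ are defined, noting that a piece of maximal (in particular positive) measure exists because only finitely many disjoint pieces can exceed any fixed measure threshold.
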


\begin{defn}
Two normal ucp maps $E_1$, $E_2$ satisfying the above equivalent conditions are called {\em singular}.
\end{defn}

\begin{rem} \label{rem:subsing}
Note that by (2), if $M_0\le M$ is a von Neumann subalgebra and  $E_1,E_2: M \to L^\infty(B)$ are normal ucp maps whose restrictions to $M_0$ 
are singular normal ucp maps then $E_1$ and $E_2$ are singular normal ucp maps.
\end{rem}

\begin{proof}[Proof of Proposition \ref{singular maps}]
$(1) \Rightarrow (2)$ follows from the separability of $M$.

$(2) \Rightarrow (3)$. Assume that $(2)$ is true and take $\eps > 0$. Observe that statement $(3)$ has some flexibility. Instead of looking for a true partition of unity we may only look for pairwise orthogonal projections $p_1,\dots,p_n$ such that $\nu(1 - \sum_i p_i) < \eps$ and which satisfy the conclusion of $(3)$. Indeed, once this is achieved, we may distribute the remaining mass (of size $< \eps$) proportionally to each $p_i$, to form new projections $q_i$ which actually add up to $1$, and still satisfy the rest of the conclusion (up to inflating $\eps$ in a non-essential way).

Since $A$ is separable, we may find a sequence $(x_n)_{n \in \N}$ in $A$, which is dense in $A_{[0,1]} := \{x \in A \mid 0 \leq x \leq 1\}$. For each $n \in \N$, we define 
\[B_n^0:= \left\{b \in B \mid \theta_1^A(b)(x_n) \leq \eps/4 \quad \text{and} \quad \theta_2^A(b)(x_n) \geq 1 - \eps/4\right \}.\]
By density of the sequence $(x_n)_n$ in $A_{[0,1]}$, condition $(2)$ tells us that $\bigcup_{n\in \N} B_n^0$ is co-null in $B$. Let us pick pairwise disjoints sets $(B_n)_{n \in \N}$ of $B$ such that $B_n \subset B_n^{0}$ for every $n \in \N$, and still $\bigcup_n B_n$ is co-null in $B$. Fix $N$ large enough so that $\nu(1 - \bigcup_{n=1}^N B_n) < \eps$.

Then the projections $p_i = \mathbf{1}_{B_i}$, $i = 1 \dots N$, satisfy the desired conclusion. Indeed, for every $i \leq N$, we have $\Vert 1 - 2x_i \Vert \leq 1$, while
\[p_iE_1(1 - 2x_i) - p_iE_2(1 - 2x_i) \geq p_i(1 - 2\eps/4) - p_i(1 - 2(1-\eps/4)) = (2 - \eps)p_i.\]

$(3) \Rightarrow (1)$. Fix a separable dense C*-subalgebra $A \subset M$, and $\eps > 0$. We claim that the set 
\[B_\eps := \{b \in B \mid \Vert \theta_1^A(b) - \theta_2^A(b) \Vert \geq 2 - \eps\}\]
has measure at least $1 - \eps$. This claim clearly implies $(1)$.

Take projections $p_1,\dots, p_N \in L^\infty(B)$ as in condition $(3)$, with respect to some $\delta < \eps^2$.
Note that the corresponding states $\phi_{1,n}$, $\phi_{2,n}$, $n = 1, \dots, N$ are normal on $M$. So by Kaplansky density theorem, we have
\[\Vert \phi_{1,n} - \phi_{2,n}\Vert = \sup_{x \in A_{[-1,1]}} (\phi_{1,n}(x) - \phi_{2,n}(x)).\]
So for every $n = 1, \dots, N$, we may find a self-adjoint element $x_n \in A$, $\Vert x_n \Vert \leq 1$, such that $ \phi_{1,n}(x_n) - \phi_{2,n}(x_n) \geq 2 - \eps^2$.
Fix $n \leq N$,  and define $p_n := \mathbf{1}_{B_n}$, $f_n = p_n(E_1(x_n) - E_2(x_n)) \in L^\infty(B)$ and
\[B_{n,\eps} := \{ b \in B_n \mid  (2 - f_n)(b) \leq \eps\}.\]
Note that $2 - f_n$ has non-negative real values. By Markov inequality, we have
\begin{align*}
\nu(B_n \setminus B_{n,\eps}) \eps & \leq \int_{B_n} (2-f_n)(b) \dd\nu(b)\\
& = 2\nu(B_n) - \nu(B_n) ( \phi_{1,n}(x_n) - \phi_{2,n}(x_n) )\\
& \leq \nu(B_n)\eps^2.
\end{align*}
So we find $\nu(B_{n,\eps}) \geq (1 - \eps)\nu(B_n)$. Observe that $B_{n,\eps} \subset B_n \cap B_\eps$. So adding up over $n$, we get $\nu(B_\eps) \geq 1 - \eps$, as desired.
\end{proof}

\begin{defn} \label{def:Eg}
Let $G$ be an lcsc group. Let $(B,\nu)$ be a $G$-space and set $N = L^\infty(B)$. Let $(M,E)$ be a separable $(G,N)$-von Neumann algebra. We say that $(M,E)$ is a {\em singular $(G,N)$-von Neumann algebra} if the normal ucp maps $E_g: x \in M \mapsto E(gx) \in L^\infty(B)$, $g \in G$, are pairwise singular.
\end{defn}

%%%%
\subsection{Singularity criterion} \label{ss:singcrit}

Our next goal is to give a criterion for singularity of $(G,N)$-algebras for some $G$ and $N$.
It is inspired by \cite[Section 2, Theorem 2.5]{BF14}.

\begin{lem}
Let $G$ be an lcsc group and $P\le G$ a closed subgroup which is stably self normalizing and has the relative Mautner property in $G$
as defined in Definition~\ref{def:RMP}.
Let $A$ be a separable $G$-C*-algebra and $\phi$ an extremal $P$-invariant state on $A$. 

For every $g \in G$, either $g\phi = \phi$ or $g\phi \perp \phi$.
\end{lem}
\begin{proof}
We fix $g\in G$ such that $\phi$ and $g\phi$ are not singular and argue to show that $\phi$ is $g$-fixed.
We denote by $H$ the stabilizer of $\phi$ in $G$. It is a closed subgroup of $G$ containing $P$ so we only need to check that $g$ normalizes $H$. This amounts to show that $g\phi$ and $g^{-1}\phi$ are both $H$-invariant. 

Extend the $H$-action on $A$ to a non-continuous action on $A^{**}$. We still denote by $\phi$ the normal extension of $\phi$ to $A^{**}$ and we denote by $z \in \mathcal Z(A^{**})$ its central support projection. 
Note that $z$ is $H$-invariant since $\phi$ is $H$-invariant.
By Proposition~\ref{continuous vn extension}, $H$ acts continuously on $M := zA^{**}$, which means that the isometric action on the Banach space $M_*$ is continuous. The Mautner property will be applied to this specific isometric action.

We consider the central projection $\sigma_g(z) \in \mathcal Z(A^{**})$ and the normal positive linear functional 
\[ \phi_g: a \in M = zA^{**} \mapsto \phi(\sigma_g(z)a)=\phi(z\sigma_g(z)a).  \]
We observe that $\phi_g \in M_*$ is $P \cap gPg^{-1}$-invariant. By assumption, the pair $(P,P\cap gPg^{-1})$ has the relative Mautner property so $\phi_g$ also $P$-invariant.
Clearly, $\phi_g \leq \phi$ so by extremality of $\phi$ as a $P$-invariant state, $\phi_g$ must be proportional to $\phi$. In terms of the central support projection, this tells us that $z\sigma_g(z)$ is either null (in case $\phi_g = 0$) or it is equal to $z$.
We assumed that $\phi$ and $g\phi$ are not singular, so $z\sigma_g(z) = 0$ is excluded and we get $z\sigma_g(z)=z$.
We conclude that $z \leq \sigma_g(z)$.

Considering similarly the functional $\phi_{g^{-1}}$ and using the fact that $g^{-1}\phi$ and $\phi$ are not singular, we also get $z \le \sigma_{g^{-1}}(z)$, thus $\sigma_g(z) \leq z$.
We conclude that $z = \sigma_g(z)$. Since $\sigma_g(z)$ is the central support of $g\phi$ this means that $g\phi$ is in fact a normal state on $M = zA^{**}$. By assumption, this state is $P \cap gPg^{-1}$-invariant. By Lemma~\ref{lem:RMP} the pair $(H,P \cap gPg^{-1})$ has the Mautner property. Considering again the continuous isometric action of $H$ on $M_*$, we conclude that $g\phi$ is $H$-invariant. The same argument shows that $g^{-1}\phi$ is $H$-invariant, concluding the proof.
\end{proof}

\begin{prop}[Singularity criterion] \label{FMM implies singular}
Let $G$ be an lcsc group and $P\le G$ a closed subgroup. Assume that $P$ is stably self normalizing and that it has the relative Mautner property in $G$.
Denote by $N := L^\infty(G/P,\nu)$, where $\nu$ is a $G$-quasi-invariant Radon measure. 

Let $(M,E)$ be an extremal separable $(G,N)$-von Neumann algebra which is not $G$-invariant.
If $g \in G$ has a null set of fixed points in $G/Q$ for any intermediate proper closed subgroup $P < Q < G$, then $E$ and $E_g$ are singular.

In particular, if $G$ acts essentially freely on $G/Q$ for any intermediate proper closed subgroup $P < Q < G$, then $(M,E)$ is a singular $(G,N)$-von Neumann algebra. 

\end{prop}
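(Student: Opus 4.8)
The plan is to deduce part (2) from part (1) via a disintegration, so I first focus on part (1). Let $\phi$ be an extremal $P$-invariant state on the separable $G$-C*-algebra $A$, with GNS triple $(\pi_\phi,H_\phi,\xi_\phi)$, and let $u:P\to\cU(H_\phi)$ be the canonical strongly continuous unitary representation satisfying $u_p\xi_\phi=\xi_\phi$ and $u_p\pi_\phi(a)u_p^*=\pi_\phi(\sigma_p a)$. A standard argument identifies $P$-invariant positive functionals $\rho\le\phi$ with operators $T\in\pi_\phi(A)'$, $0\le T\le 1$, commuting with $u(P)$, and shows that extremality of $\phi$ is equivalent to: every $P$-invariant $\rho\le\phi$ is a scalar multiple of $\phi$. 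Fix $g\in G$ and assume $g\phi\not\perp\phi$; I must show $g\phi=\phi$. Let $\rho:=\phi\wedge g\phi$ be the largest positive functional dominated by both, so that $\rho(1)=1-\tfrac12\|\phi-g\phi\|>0$. Since this infimum is preserved by any automorphism fixing both $\phi$ and $g\phi$, the functional $\rho$ is invariant under $P\cap gPg^{-1}$.

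Now comes the crux. Writing $\rho(a)=\langle\pi_\phi(a)T\xi_\phi,\xi_\phi\rangle$ with $T\in\pi_\phi(A)'$ as above, the $(P\cap gPg^{-1})$-invariance of $\rho$ translates into $u_h(T\xi_\phi)=T\xi_\phi$ for every $h\in P\cap gPg^{-1}$. The temptation is to invoke the Mautner property directly for the state $\rho$, but the $G$-action on states is not norm-continuous; the key observation is that one should instead apply it to the \emph{vector} $T\xi_\phi\in H_\phi$, on which $P$ acts by the strongly continuous isometric representation $u$. Since $P$ has the relative Mautner property in $G$, the pair $(P,P\cap gPg^{-1})$ has the Mautner property (Definition \ref{def:RMP}), so the $(P\cap gPg^{-1})$-fixed vector $T\xi_\phi$ is in fact $P$-fixed. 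This forces $\rho$ to be $P$-invariant, whence $\rho=c\phi$ by extremality, with $c=\rho(1)>0$. Running the symmetric argument with the extremal $gPg^{-1}$-invariant state $g\phi$ — using that the relative Mautner property is conjugation-invariant, so that $(gPg^{-1},gPg^{-1}\cap P)$ also has Mautner — gives $\rho=c'\,g\phi$. Evaluating at $1$ yields $c=c'$ and hence $\phi=g\phi$, proving the dichotomy.

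For part (2), choose a weakly dense separable $G$-C*-subalgebra $A\subset M$ and disintegrate $E|_A:A\to L^\infty(G/P)$ into a measurable $G$-map $\theta:G/P\to\cS(A)$. The base-point value $\phi:=\theta(eP)$ is a $P$-invariant state, extremal because $(M,E)$ is, and not $G$-fixed because $(M,E)$ is not $G$-invariant; moreover $\theta(hP)=h\phi$. Since $E$ is $G$-equivariant, $E_g(x)(b)=E(\sigma_g x)(b)=E(x)(g^{-1}b)$, so $\theta_{E_g}(b)=\theta(g^{-1}b)$; by Proposition \ref{singular maps}, $E$ and $E_g$ are singular precisely when $\theta(b)\perp\theta(g^{-1}b)$ for a.e.\ $b\in G/P$. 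Set $Q:=\Stab_G(\phi)$, a closed subgroup with $P\le Q\lneq G$. By part (1), for $b=hP$ the states $\theta(b)=h\phi$ and $\theta(g^{-1}b)=g^{-1}h\phi$ are singular unless $h^{-1}gh\in Q$, i.e.\ unless the image $hQ\in G/Q$ is $g$-fixed. Thus the exceptional set is contained in the preimage under $G/P\to G/Q$ of $\Fix_g(G/Q)$, which is null by hypothesis (preimages of null sets under this quotient map remain null for the quasi-invariant measure class). Hence $E$ and $E_g$ are singular.

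Finally, for the ``in particular'' statement, essential freeness of $G$ on each $G/Q$ with $P\le Q<G$ makes $\Fix_g(G/Q)$ null for every $g\ne e$; since pairwise singularity of $\{E_g\}$ reduces, by translating the measure class, to singularity of $E$ and $E_{g_1^{-1}g_2}$ for $g_1\ne g_2$, the previous paragraph shows $(M,E)$ is a singular $(G,N)$-von Neumann algebra. The main obstacle throughout is the continuity issue in part (1): everything hinges on transporting the Mautner property from the norm-discontinuous action on states to the strongly continuous unitary action on the GNS space by testing against the single vector $T\xi_\phi$.
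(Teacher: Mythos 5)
Your part (2) is essentially the paper's own argument (stabilizer $Q=\Stab_G(\phi)$, fixed points in $G/Q$, pulling back null sets along $G/P\to G/Q$) and is fine as a reduction; the problem is part (1), on which everything rests. The object your proof hinges on, $\rho:=\phi\wedge g\phi$, ``the largest positive functional dominated by both,'' does not exist in general: the self-adjoint part of the dual of a C*-algebra is a lattice only when the algebra is commutative (Sherman's theorem), and, worse, non-singularity of two states does not even guarantee the existence of a nonzero common minorant. Concretely, take $A=M_2(\C)$ and the vector states $\phi=\omega_\xi$, $\psi=\omega_\eta$ with $\xi=e_1$, $\eta=(e_1+e_2)/\sqrt2$. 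These are pure, so any positive functional dominated by $\phi$ (resp.\ $\psi$) is a scalar multiple of $\phi$ (resp.\ $\psi$); since $\phi\neq\psi$, their only common minorant is $0$. Yet $\Vert\phi-\psi\Vert=2\sqrt{1-|\langle\xi,\eta\rangle|^2}=\sqrt2<2$, so they are not singular. Thus both the existence of $\rho$ and the formula $\rho(1)=1-\tfrac12\Vert\phi-g\phi\Vert$ (a commutative, measure-theoretic identity) fail, and the crux of your argument --- applying Mautner to the vector $T\xi_\phi$ --- never gets off the ground, because there is no $T$. A telling symptom: your proof of (1) never invokes the hypothesis that $P$ is stably self normalizing, only the relative Mautner property; one should not expect the proposition to follow from the latter alone.

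The paper circumvents exactly this obstruction. It passes to the normal extension of $\phi$ to $A^{**}$ and works with the \emph{central} support projection $z$ of $\phi$: cutting by the central projection $\sigma_g(z)$ always produces a genuine minorant $\phi(\sigma_g(z)\,\cdot\,)\le\phi$ on $zA^{**}$ --- centrality is what substitutes for the nonexistent lattice operation. Proposition~\ref{continuous vn extension} makes the $P$-action on $zA^{**}$ continuous, so the Mautner property of $(P,P\cap gPg^{-1})$ applies to this minorant (the predual action is isometric and norm-continuous there); extremality then forces $z\sigma_g(z)\in\{0,z\}$, non-singularity excludes $0$, and the symmetric argument with $g^{-1}$ yields $\sigma_g(z)=z$. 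Only at that point, with $H=\langle P,g\rangle$ acting continuously on $zA^{**}$, does the paper conclude $g\phi=\phi$, via Lemma~\ref{lem:RMP} (the Mautner property of the pair $(H,P)$) --- which is precisely where stable self-normalization enters. Your local computations (the correspondence between minorants of $\phi$ and operators in $\pi_\phi(A)'$, the Mautner step on GNS vectors, and all of part (2)) are sound, but as written the proof has a genuine gap at its central step.
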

\begin{proof}
The extremality assumption means that $E$ is extremal among the (normal) $G$-ucp maps $M \to N$. In particular, it implies that the restriction of $E$ to any weakly dense $G$-C*-subalgebra $A$ is extremal. Choose such a C*-subalgebra $A \subset M$, and assume that $A$ is separable. Then the restriction of $E$ to $A$ corresponds to a $G$-equivariant map $\theta: G/P \to \cS(A)$. Since $G$ acts transitively on $G/P$, we may assume that $\theta$ is everywhere defined and everywhere equivariant. In other words, there exists a $P$-invariant state $\phi$ on $A$ such that $\theta(gP) = g\phi$ for every $gP \in G/P$. 

The extremality condition implies that $\phi$ is extremal among $P$-invariant states on $A$. So by the previous lemma, we find that for every $g \in G$, either $g\phi = \phi$ or $g\phi \perp \phi$.

Assume that $E$ is not $G$-invariant, which implies that $\phi$ is not $G$-invariant. Denote by $Q < G$ the stabilizer of $\phi$. This is a proper closed subgroup and for every $g,h \in G$, we have $g\theta(hP) \perp \theta(hP)$ as soon as $ghQ \neq hQ$, or equivalently, as soon as $hQ$ is not in the fixed point set of $g$ inside $G/Q$. 
If $g \in G$ has a null set of fixed points in $G/Q$, then for almost every $hP \in G/P$, $g\theta(hP) \perp \theta(hP)$. This is exactly the singularity condition $E \perp E_g$.
\end{proof}

In order to apply the above proposition, we will need to verify the extremality condition on maps $E$. The following lemma gives a sufficient condition. 

\begin{lem}\label{ergodic extremal}
Take an lcsc group $G$ with a generating measure $\mu \in \Prob(G)$ and denote by $(B,\nu)$ the corresponding Furstenberg-Poisson boundary and by $N = L^\infty(B,\nu)$.

If $M$ is an ergodic $G$-von Neumann algebra, then there exists at most one $(G,N)$-structure map $E: M \to N$. In particular it is extremal.
\end{lem}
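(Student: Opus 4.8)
The plan is to translate the statement into a uniqueness statement about stationary states, and then to show that any such state is automatically faithful, reducing us to a situation already settled in the paper. Since $N = L^\infty(B,\nu)$ arises from the Furstenberg--Poisson boundary, Proposition~\ref{stationary structure} sets up a bijection between $(G,N)$-structure maps $E\colon M\to N$ and $\mu$-stationary normal states $\varphi = \nu\circ E$ on $M$. Thus ``at most one $(G,N)$-structure map'' is equivalent to ``at most one $\mu$-stationary normal state on $M$'', and the extremality assertion is then immediate: a unique element of the convex set of normal $G$-ucp maps $M\to N$ is trivially an extreme point. So I would reduce everything to showing that an ergodic $G$-von Neumann algebra admits at most one $\mu$-stationary normal state.

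To prove this, I would take two $\mu$-stationary normal states $\varphi_1,\varphi_2$, form $\varphi = \tfrac12(\varphi_1+\varphi_2)$, which is again $\mu$-stationary, and let $p\in M$ be its support projection. The heart of the argument is to show that $p$ is $G$-invariant; granting this, ergodicity forces $p\in M^G=\C 1$, and since $\varphi\neq 0$ we get $p=1$, i.e.\ $\varphi$ is faithful. At that point Proposition~\ref{prop:stationary}(2), applied with the faithful $\mu$-stationary state $\varphi$, yields that $\varphi$ is the \emph{unique} $\mu$-stationary normal state; in particular $\varphi_1=\varphi_2$, and hence $E_1 = E_2$, as desired.

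The main work, and the only real obstacle, is the invariance of the support $p$, the usual difficulty being that a stationary state need not be faithful. Evaluating the stationarity identity $\varphi = \int_G \varphi\circ\sigma_{g^{-1}}\dd\mu(g)$ on $p^\perp$ and using that each integrand is non-negative, I would deduce $\varphi(\sigma_{g^{-1}}(p^\perp))=0$ for $\mu$-almost every $g$; since $p$ is the support of $\varphi$ this means $\sigma_{g^{-1}}(p)\ge p$, equivalently $\sigma_g(p)\le p$, for $\mu$-a.e.\ $g$. As $\varphi$ is also $\mu^{*n}$-stationary for every $n$, the same computation gives $\sigma_g(p)\le p$ for $\mu^{*n}$-a.e.\ $g$. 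Now the set $C=\{g\in G : \sigma_g(p)\le p\}$ is closed, being the preimage of $\{0\}$ under the ultraweakly continuous map $g\mapsto (1-p)\sigma_g(p)$, and it contains a full-measure subset of $\supp(\mu^{*n})$ for each $n$, hence a dense subset of the closed sub-semigroup generated by $\supp(\mu)$, which is all of $G$ because $\mu$ is generating. Therefore $C=G$, i.e.\ $\sigma_g(p)\le p$ for all $g$; applying this to $g^{-1}$ and then $\sigma_g$ gives the reverse inequality, so $\sigma_g(p)=p$ for every $g\in G$. This establishes the invariance of $p$ and completes the plan.
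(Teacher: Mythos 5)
Your proof is correct, and its skeleton is the same as the paper's: via Proposition~\ref{stationary structure} you identify structure maps $E$ with $\mu$-stationary normal states $\varphi = \nu \circ E$, and then you invoke the uniqueness part of Proposition~\ref{prop:stationary}(2). The paper's own proof consists of exactly these two citations and nothing more. The genuine difference is that you noticed the hypothesis mismatch: Proposition~\ref{prop:stationary} is stated for a $G$-von Neumann algebra carrying a \emph{faithful} $\mu$-stationary normal state, whereas the lemma assumes no faithfulness of $E$ (in the paper's applications this is harmless, since the structure maps arising there are restrictions of faithful ones, but the lemma as stated does not grant it). Your middle step supplies precisely the missing ingredient: for any $\mu$-stationary normal state on an ergodic $G$-von Neumann algebra, the support projection $p$ satisfies $\sigma_g(p)\leq p$ for $\mu^{\ast n}$-a.e.\ $g$, hence for all $g\in G$ by closedness of $\{g \in G \mid (1-p)\sigma_g(p)=0\}$ together with the density of the semigroup generated by $\supp(\mu)$, whence $\sigma_g(p)=p$ for all $g$, and ergodicity gives $p=1$. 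Each of these steps is correct (in particular the identification of $\{\sigma_g(p)\leq p\}$ with the zero set of the ultraweakly continuous map $g\mapsto(1-p)\sigma_g(p)$ is valid for projections). So your argument is a self-contained and strictly more careful version of the paper's proof: what it buys is that the lemma holds exactly as stated, with faithfulness of stationary states derived rather than assumed.
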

\begin{proof}
By Example \ref{stationary structure}, a structure map $E$ is the same data as the normal $\mu$-stationary state $\phi = \nu \circ E$. But we saw in Proposition \ref{prop:stationary} that if $M$ is ergodic, there exists at most one normal $\mu$-stationary state on $M$.
\end{proof}

%%%
\subsection{Charmenability criteria} \label{ss:charcrit}

The proof of our main results will rely on the following criterion. It is an abstract version of the techniques used in \cite{BH19}.
Recall Definition~\ref{def:ME} of metric ergodicity.

\begin{prop}\label{top crit}
Let $\Gamma$ be a discrete group with trivial amenable radical. Assume that there exist a $\Gamma$-space $(B,\nu)$ which is separable, amenable and metrically ergodic such that $N = L^\infty(B)$ satisfies the following property. \begin{description}
\item[(a)] Every separable, ergodic, faithful $(\Gamma,N)$-von Neumann algebra $(M,E)$ is either invariant or $\Gamma$-singular.
\end{description}
Then $\Gamma$ is charmenable.
\end{prop}
\begin{proof}
We have two statements to verify.

{\em Fixed point property.} Let $C \subset \PD_1(\Gamma)$ be a closed convex $\Gamma$-subset. We need to show that $C$ contains a character $\phi$, i.e. a $\Gamma$-fixed point. Denote by $A = C^*(\Gamma)$ the universal C*-algebra of $\Gamma$, endowed with the conjugacy $\Gamma$-action by the unitaries $u_g$, $g \in \Gamma$. We may view $C$ as a compact convex $\Gamma$-subset of $\cS(A)$. 

By amenability of $(B,\nu)$, we may find a measurable $\Gamma$-map $\theta: B \to C$. We claim that the state $\phi := \bary({\theta}_*\nu) \in C$ is $\Gamma$-invariant. In fact, we claim that this holds for every measurable $\Gamma$-map $\theta: B \to \cS(A)$.

The set $\tC$ of such maps $\theta$ is a convex set, and since $A$ is separable, there is an affine bijection between $\tC$ and the convex set of $\Gamma$-equivariant ucp maps $E: A \to L^\infty(B)$. When endowed with the topology of pointwise ultraweak convergence, this later convex set of ucp maps is compact. This induces on $\tC$ a structure of compact convex space. So by Krein-Milman it is the closed convex hull of its extremal points. Moreover, the map $\theta \in \tC \mapsto \bary(\theta_*\nu) \in \cS(A)$ is affine and continuous. So it suffices to prove our claim under the assumption that $\theta$ is an extremal map in $\tC$.

Let $\theta: B \to \cS(A)$ be an extremal map, and denote by $E: A \to L^\infty(B)$ the corresponding $\Gamma$-equivariant ucp map. 
We may extend the $\Gamma$-action on $A$ to an action on $A^{**}$, and we may also extend $E$ to a normal $\Gamma$-ucp map $A^{**} \to L^\infty(B)$. 
We denote by $z \in \cZ(A^{**})$ the central support of $E$, and set $M = zA^{**}$, so that $(M,E)$ is a $(\Gamma,L^\infty(B))$-von Neumann algebra.

{\bf Claim.} The map $E$ is faithful and the $\Gamma$-action on $M$ is ergodic.

Denote by $p \in M$ the support projection of $E$ (so $p \in A^{**}$, $p \leq z$, and $z$ is the central support of $p$). Assume that $x \in M$ is such that $E(x^*x) = 0$. Since $E$ is equivariant and the action $\Gamma \actson A$ is a conjugacy action, we also have $E(u_g^*x^*x u_g) = \sigma_g^{-1}(E(x^*x)) = 0$, for every $g \in \Gamma$. This implies that $pu_g^*x^*xu_gp = 0$, and further $xu_gp = 0$ for every $g \in \Gamma$. Since the image of $A$ in $M$ is ultraweakly dense, we thus get $xyp = 0$ for every $y \in M$, and since the central support of $p$ in $M$ is $1$, this implies that $x = 0$. So $E$ is indeed faithful.

Let $q \in M^\Gamma$ be a $\Gamma$-invariant projection. Assume by contradiction that $q \notin \{0,1\}$. Then $q \in \cZ(M)$, and $E(q) \in L^\infty(B)^\Gamma = \C1$. Set $t \in [0, 1]$ so that $E(q) = t 1$. Since $E$ is faithful, we find that $t \in (0,1)$. We may thus define two ucp $\Gamma$-maps $E_1, E_2 : M \to L^\infty(B)$ by the formulae 
\[E_1(x) = \frac{1}{t} E(x q) \text{ and } E_2(x) = \frac{1}{1 - t} E(x (1-q)), \text{ for all } x \in M.\]
By construction, $E = t E_1 + (1 - t) E_2$. By extremality of $E_{|A}$, we find that $E_1$, $E_2$ and $E$ coincide on the image of $A$ in $M$. Now, since $E$ is normal on $M$ and $tE_1 \leq E$, $(1-t)E_2 \leq E$, we find that $E_1$ and $E_2$ are also normal on $M$. Thus these three maps coincide, which contradicts $E_1(q) = 1$, $E_2(q) = 0$. This finishes the proof of the claim.

Thanks to the claim, we may apply condition $(a)$. We find that either $\phi$ is invariant, or for almost every $b \in B$, for every $g \in \Gamma$, the states $g\theta(b) \in \cS(A)$ are pairwise singular. Let us prove that this later case is impossible.

In fact, we will check that a state $\psi$ on $A$ which is singular with respect to all its translates $g\psi$, $g \in \Gamma \setminus \{e\}$ is  the regular trace. In particular such a $\psi$ is $\Gamma$-invariant, so it cannot be singular.
Extend $\psi$ to a normal state on $A^{**}$, and denote by $q$ its support projection. By assumption, $\psi(q) = 1$ and $\psi(u_g q u_g^*) = (g^{-1}\psi)(q) = 0$ for every $g \in \Gamma \setminus \{e\}$. Therefore,
\begin{equation}\label{disjoint trick} 
\vert \psi(u_g) \vert = \vert \psi(u_gq) \vert = \vert \psi(u_gqu_g^*u_g) \vert \leq \psi(u_gqu_g^*)^{1/2}\psi(1)^{1/2} = 0.
\end{equation}
This shows that $\psi$ is the regular trace, as wanted.

{\em Classification of characters.}
Set $N = L^\infty(B)$. Take an extremal character $\tau$ on $\Gamma$, and denote by $M$ the corresponding GNS von Neumann algebra, which is a tracial factor. We consider the corresponding $(\Gamma,N)$-von Neumann algebra $(\cM,E) = ((N \ovt B(L^2(M)))^\Gamma, \id_N \ot \Phi)$ as defined in Example \ref{character structure}. By Lemma \ref{faithful character structure}, $E$ is faithful.

{\bf Claim.} The $\Gamma$-action on $\cM$ is ergodic.

This is where we use the condition that $(B,\nu)$ is metrically ergodic. By definition $\cM^\Gamma$ is the commutant of $1 \ot \pi_\tau(\Gamma)$ inside $\cM$ so it is equal to $(L^\infty(B) \ovt JMJ)^\Gamma$. This later algebra can be viewed as the algebra of $\Gamma$-equivariant measurable functions $B \to JMJ$, where the $\Gamma$-action on $JMJ$ is simply given by conjugacy by the unitaries $J\pi_\tau(g)J$, $g \in \Gamma$. Since $M$ is a tracial factor, $JMJ$ can be viewed as a subspace of its $L^2$-space, on which $\Gamma$ acts isometrically. So any equivariant function $B \to JMJ$ must be constant, equal to a scalar operator. Hence $\cM^\Gamma = \C1$ as desired.

So we are now in position to apply condition $(a)$. We find that either the structure map $E$ is invariant, or it is $\Gamma$-singular. We treat these two cases separately.

If $E$ is invariant, then $\cM = 1 \ovt M$. Indeed, assume that $E$ is invariant and take $f \in \cM$, which we view as a $\Gamma$-equivariant function $B \to B(L^2(M))$. Given $x,y \in M$, we have $1 \ot x, 1 \ot y \in \cM$, and hence $E((1 \ot y)^*f(1 \ot x))(b) = \Phi(y^*f(b)x) = \langle f(b)x\xi,y\xi\rangle$ does not depend on $b \in B$. Since $M$ is separable and $\xi$ is an $M$-cyclic vector, this implies that $f$ is essentially constant. Thus we find that $\cM = \cM \cap (1 \ovt B(L^2(M))) = 1 \ovt M$, as claimed. Since the action $\Gamma \curvearrowright B$ is amenable, $\cM$ is amenable and so is $M$.
In this case, $\tau$ is a von Neumann amenable character.

If $E$ is singular, we claim that $\tau$ is the regular character. Indeed, take a separable weakly dense C*-subalgebra $A_0 \subset \cM$ containing $1 \ot \pi(\Gamma)$. Denote by $\theta: B \to \cS(A_0)$ the measurable $\Gamma$-map corresponding to $E_{|A_0}$. Then computation \eqref{disjoint trick} tells us that for almost every $b \in B$, for every $g \in \Gamma$, $\theta(b)(1 \ot\pi(g)) = \delta_{g,e}$ and so $\theta(b) \circ (1 \ot \pi)$ is the regular character on $\Gamma$. In this case, the barycenter of these characters, which is exactly $\nu \circ E \circ (1 \ot \pi) = \tau$ is also the regular character, as claimed.
\end{proof}

We can also use condition (a) in Proposition \ref{top crit} to strengthen Proposition \ref{prop:URS}.

\begin{prop}
Let $\Gamma$ be a discrete group with trivial amenable radical. Let $(B,\nu)$ be an amenable ergodic $\Gamma$-space for which condition {\em (a)} in Proposition \ref{top crit} is satisfied.

Then any minimal action $\Gamma \curvearrowright X$ on a compact space is either topologically free or carries a $\Gamma$-invariant Borel probability measure.
\end{prop}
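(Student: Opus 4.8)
The plan is to follow the scheme of the proof of Proposition~\ref{top crit}, realizing the invariant/singular dichotomy provided by condition~(a) through the stabilizer structure of the minimal system. First I would attach to the minimal action $\Gamma\actson X$ its stabilizer uniformly recurrent subgroup $Z\subset\Sub(\Gamma)$, in the sense of \cite{GW14}: on a comeager $\Gamma$-invariant subset $X_0\subseteq X$ the stabilizer map $x\mapsto\Gamma_x$ is continuous, and the closure of its image is a single URS $Z$. The standard comeager-stabilizer argument shows that $Z=\{\{e\}\}$ if and only if the action is topologically free, so the whole problem reduces to proving that, when $Z\neq\{\{e\}\}$, the space $X$ carries a $\Gamma$-invariant Borel probability measure.

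To treat that case I would mirror the proof of Proposition~\ref{prop:URS}, now feeding in condition~(a) (equivalently charmenability, which it implies) in place of the bare fixed-point property. Using the continuous $\Gamma$-equivariant map $H\mapsto\mathbf 1_H$ of $\Sub(\Gamma)$ into $\PD_1(\Gamma)$ together with the amenability of $(B,\nu)$, one produces a character of the form $\phi(g)=\eta(\{H\in Z\mid g\in H\})$ for some $\eta\in\Prob(Z)$, whose GNS representation embeds into $\tpi$ acting on $\cK=\int_Z^\oplus\ell^2(\Gamma/H)\dd\eta(H)$. Since $\Rad(\Gamma)=\{e\}$ and $Z$ is nontrivial, $\phi$ is not supported on the radical, so Lemma~\ref{soft charm} makes $\phi$ amenable and hence $\tpi$ amenable, yielding a $\tpi(\Gamma)$-invariant state $\Phi$ on $B(\cK)$. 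The key new ingredient is the $\Gamma$-equivariant multiplication $*$-homomorphism $\alpha\colon C(X)\to B(\cK)$ which on the fiber $\ell^2(\Gamma/\Gamma_x)\simeq\ell^2(\Gamma x)$ acts by multiplication by $f|_{\Gamma x}$; a direct check gives $\alpha(g\cdot f)=\tpi(g)\alpha(f)\tpi(g)^*$, so that $\Phi\circ\alpha$ is a $\Gamma$-invariant state on $C(X)$, i.e.\ a $\Gamma$-invariant probability measure on $X$, as desired.

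The hard part will be setting up the multiplication homomorphism $\alpha$ uniformly over all of $Z$. For $\eta$-almost every $H$ this $H$ is a genuine point stabilizer $\Gamma_x$, so that $\Gamma/H$ embeds as the orbit $\Gamma x\subset X$ and $\alpha$ is transparent; but $Z$ also contains the Chabauty limits of stabilizers, where one must define the orbit embedding $\Gamma/H\to X$ by a limiting procedure and verify its measurability and equivariance, and this is precisely where the minimality of the action and the fine description of the stabilizer URS from \cite{GW14} are needed. A secondary technical point, inherited from the proof of Proposition~\ref{prop:URS}, is the passage from amenability of the cyclic subrepresentation $\pi_\phi$ to amenability of the full field $\tpi$ on $\cK$. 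Once these are in place, the dichotomy ``$Z$ trivial $\Rightarrow$ topologically free'' versus ``$Z$ nontrivial $\Rightarrow$ invariant measure on $X$'' closes the argument.
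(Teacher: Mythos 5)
Your proposal has a genuine gap, and it sits exactly where you place the phrase ``the hard part'': the construction of the equivariant $*$-homomorphism $\alpha\colon C(X)\to B(\cK)$. Everything else in your outline (the stabilizer URS reduction, the character $\phi=\ttheta(\eta)$, the invariant state $\Phi$ on $B(\cK)$) is a faithful rerun of Proposition~\ref{prop:URS}, but that argument only ever produces an invariant measure on $Z\subset\Sub(\Gamma)$, not on $X$. To push it down to $X$ you need a measurable, $\eta$-a.e.\ defined, $\Gamma$-\emph{equivariant} assignment $H\mapsto x_H\in X$ with $H\subseteq\Gamma_{x_H}$, and this is not a technical measurability issue curable by minimality and the structure theory of \cite{GW14}: it is impossible in general. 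Concretely, let $\Lambda=F_2\times F_2$ (trivial amenable radical), $N=F_2\times 1$, and let $\Lambda$ act on $X=\partial F_2$ through the quotient $\Lambda/N\simeq F_2$. This action is minimal, its stabilizer URS is the singleton $Z=\{N\}$ (so $Z$ is nontrivial and trivially carries an invariant measure), yet equivariance of $H\mapsto x_H$ would force $x_N$ to be a global $\Lambda$-fixed point, which does not exist -- and indeed $X$ carries no invariant measure. Of course this $\Lambda$ does not satisfy condition (a), so it does not contradict the proposition; but it shows that your bridging step ``nontrivial URS with invariant measure $\Rightarrow$ invariant measure on $X$'' is false as a general principle, so any repair must re-invoke condition (a) in an essential way at the level of $X$ itself -- which your proposal never does. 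Relatedly, your assertion that $\eta$-almost every $H\in Z$ is a genuine point stabilizer is unjustified: $\eta$ comes out of an abstract fixed-point/barycenter argument, and nothing controls its support inside $Z$; and even at genuine stabilizers the choice of a point $x$ with $\Gamma_x=H$ is far from unique, so equivariance of the selection is a real constraint, not bookkeeping.

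For contrast, the paper's proof never leaves $X$: by amenability of $(B,\nu)$ one chooses an \emph{extremal} measurable $\Gamma$-map $\theta\colon B\to\Prob(X)$, sets $\eta=\bary(\theta_*\nu)$ (quasi-invariant, with full support by minimality), and observes that $\theta$ induces a faithful normal ergodic $(\Gamma,L^\infty(B))$-structure $F\colon L^\infty(X,\eta)\to L^\infty(B)$ to which condition (a) applies directly. If $F$ is invariant, $\eta$ is an invariant measure; if $F$ is singular, then $\theta(b)\perp g\theta(b)$ for a.e.\ $b$ and all $g\neq e$, which forces $\theta(b)(\Fix(g))=0$, hence $\eta(\Fix(g))=0$, and full support of $\eta$ gives that $\Fix(g)$ has empty interior, i.e.\ topological freeness. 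Note the logic runs in the opposite direction from yours: topological freeness is the \emph{output} of the singular branch of the dichotomy, rather than a hypothesis whose negation you must convert into an invariant measure. If you want to salvage your URS-based route, you would in effect have to prove this direct statement anyway, so I recommend abandoning the detour through $\Sub(\Gamma)$.
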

\begin{proof}
As in the proof of Proposition \ref{top crit}, we may choose an extremal measurable $\Gamma$-map $\theta : B \to \Prob(X)$. Set $\eta = \bary(\theta_\ast \nu) \in \Prob(X)$. Then $\eta$ is $\Gamma$-quasi-invariant and by minimality of $\Gamma \curvearrowright X$, the topological support of $\eta$ equals $X$. The $\Gamma$-ucp map $C(X) \to L^\infty(B)$ coming from $\theta$ extends to a well-defined faithful normal $\Gamma$-ucp map $F : L^\infty(X, \eta) \to L^\infty(B)$. By extremality of $\theta$, the nonsingular action $\Gamma \curvearrowright (X, \eta)$ is ergodic. Note that $\eta = \nu \circ F$.

By condition (a), $F$ is either $\Gamma$-invariant or $\Gamma$-singular. The former case implies that $\eta$ is a $\Gamma$-invariant Borel probability measure. Let us assume that $F$ is singular and argue that the action is topologically free. By definition, singularity of $F$ exactly means that $\theta(b) \perp g\theta(b)$, for every $g \in \Gamma \setminus \{e\}$, for almost every $b$. Fixing $g \in \Gamma \setminus \{e\}$, this condition further implies that $\theta(b)(\Fix(g)) = 0$, for almost every $b \in B$. Integrating this quantity w.r.t.\ $\nu$, we get $\eta(\Fix(g)) = 0$. Since $\eta$ has full support, this forces $\Fix(g)$ to have empty interior. So indeed the action is topologically free. 
\end{proof}

The above criterion can be adapted also for groups with a non-trivial amenable radical, but we need an extra stiffness assumption.

\begin{prop}\label{top crit 2}
Let $\Lambda$ be a countable group. Take a separable, metrically ergodic amenable $\Lambda$-space $(B,\nu)$ and write $N = L^\infty(B)$. The following conditions together imply that $\Lambda$ is charmenable.
\begin{description}
\item[(a')] For every separable, ergodic, faithful $(\Lambda,L^\infty(B))$-von Neumann algebra $(M,E)$, either $E$ is invariant or the maps $E_g,E_h$
given in Definition~\ref{def:Eg} are singular for every $g,h \in \Lambda$ such that $h^{-1}g\notin \Rad(\Lambda)$.
\item[(b)] Every measurable $\Lambda$-map $B \to \PD_1(\Rad(\Lambda))$ is essentially constant. 
\end{description}
\end{prop}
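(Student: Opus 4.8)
The plan is to mirror the two-part proof of Proposition~\ref{top crit}, establishing separately the fixed point property (item~(1) of Definition~\ref{def:charmenable}) and the classification of extremal characters (item~(2)), but feeding in the weaker singularity supplied by (a') together with the stiffness hypothesis (b) to control the amenable radical. Throughout I set $A = C^*(\Lambda)$ with its conjugation $\Lambda$-action and view $\PD_1(\Lambda) \subset \cS(A)$.

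\emph{Fixed point property.} Let $C \subset \PD_1(\Lambda)$ be a closed convex $\Lambda$-subset. Using amenability of $(B,\nu)$ I would choose a measurable $\Lambda$-map $\theta: B \to C$ and aim to show that $\phi := \bary(\theta_*\nu)$ is $\Lambda$-invariant. Exactly as in Proposition~\ref{top crit}, a Krein--Milman reduction lets me assume $\theta$ is extremal, and then the central support $z$ of the associated ucp map $E : A \to N$ produces a faithful, ergodic $(\Lambda,N)$-von Neumann algebra $(M,E)=(zA^{**},E)$; the internal faithfulness/ergodicity claim of Proposition~\ref{top crit} is insensitive to the radical and carries over verbatim. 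Now I apply (a'). If $E$ is invariant, then ergodicity of $(B,\nu)$ gives $E(M)\subset\C1$, so $\theta$ is essentially constant and, by equivariance, $\phi=\theta(b)$ is $\Lambda$-invariant.

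\emph{The singular case, where (b) enters.} Suppose instead $E_g\perp E_h$ whenever $h^{-1}g\notin\Rad(\Lambda)$. Translating through $\theta$ and using normality of $\Rad(\Lambda)$, this reads $\theta(b)\perp k\theta(b)$ for every $k\notin\Rad(\Lambda)$, for a.e.\ $b$. The disjoint-trick computation~\eqref{disjoint trick}, run with $u_k$ in place of $u_g$, then yields $\theta(b)(u_k)=0$ for all $k\notin\Rad(\Lambda)$. On the other hand $b\mapsto \theta(b)|_{\Rad(\Lambda)}$ is a measurable $\Lambda$-map into $\PD_1(\Rad(\Lambda))$ for the conjugation action, so by (b) it is essentially equal to a fixed $\psi_0$, which is $\Lambda$-invariant by equivariance. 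Hence $\theta(b)$ is essentially the single function $\phi_0$ equal to $\psi_0$ on $\Rad(\Lambda)$ and to $0$ elsewhere; being a.e.\ constant and equivariant, $\phi_0$ is $\Lambda$-invariant, so $\phi=\phi_0$ is a character in $C$.

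\emph{Classification of characters and the main obstacle.} Given an extremal character $\tau$ with GNS tracial factor $M$, I form $(\cM,E)=((N\ovt B(L^2(M)))^\Lambda,\ \id_N\ot\Phi)$ as in Example~\ref{character structure}; it is faithful by Lemma~\ref{faithful character structure} and ergodic by metric ergodicity of $(B,\nu)$, exactly as in Proposition~\ref{top crit}. Applying (a'): if $E$ is invariant, the argument of Proposition~\ref{top crit} gives $\cM=1\ovt M$, whence amenability of $\Lambda\curvearrowright B$ forces $M$ amenable, i.e.\ $\tau$ is von Neumann amenable. Otherwise $E_g\perp E_h$ for $h^{-1}g\notin\Rad(\Lambda)$; here I use that $E(1\ot\pi(k))=\tau(k)1$ is scalar while the disjoint trick applied to the implementing unitaries $1\ot\pi(k)$ (of the action $\tsigma$) gives $\theta(b)(1\ot\pi(k))=0$ for $k\notin\Rad(\Lambda)$, so comparing the two expressions yields $\tau(k)=0$ for all $k\notin\Rad(\Lambda)$, i.e.\ $\tau$ is supported on $\Rad(\Lambda)$. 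The conceptual difficulty, and the place where the argument genuinely departs from Proposition~\ref{top crit}, is the fixed point property: the weaker singularity of (a') only kills the off-radical values of the conjugation-stationary states $\theta(b)$ and no longer forces them to be the regular trace, so one cannot conclude $\Lambda$-invariance of $\phi$ directly (as $\nu$ is merely quasi-invariant). Hypothesis (b) is precisely what repairs this, by upgrading the a priori $b$-dependent restriction $\theta(b)|_{\Rad(\Lambda)}$ to a constant; I expect the only delicate bookkeeping to be verifying measurability and equivariance of this restriction map and the translation of (a') into the pointwise singularity of the $\theta(b)$.
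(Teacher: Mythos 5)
Your proof is correct and follows essentially the same route as the paper: reduce to an extremal boundary map, apply (a'), use the disjoint trick \eqref{disjoint trick} to kill the off-radical values of $\theta(b)$, invoke (b) to pin down the restriction to $\Rad(\Lambda)$, and adapt the character classification of Proposition \ref{top crit} so that the singular case yields a character supported on $\Rad(\Lambda)$. The only (immaterial) difference is in the fixed-point part: the paper phrases the singular case as a contradiction---an essentially constant, $\Lambda$-invariant $\theta$ cannot be singular with respect to its translates---whereas you conclude the invariance of $\phi$ directly from the constancy of $\theta$; both are valid.
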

\begin{proof}
The proof follows the lines of the previous proposition. Let us explain the changes that come up.

In the fixed point property, condition $(a')$ ensures that for every $\Lambda$-map, $\theta: B \to \cS(C^*(\Lambda))$, the state $\phi := \bary(\theta_*\nu)$ is either invariant or for almost every $b \in B$, 
for every $g \in \Lambda \setminus \Rad(\Lambda)$, $\theta(b) \perp \theta(gb)$. 
In the later case, computation \eqref{disjoint trick} tells us that $\theta(b)(\pi(g)) = 0$ for almost every $b \in B$, for every $g \in \Lambda \setminus \Rad(\Lambda)$. 
Further, $\theta(b)$ is supported on $C^*(\Rad(\Lambda))$ for almost every $b \in B$. 
So in this case we may view $\theta$ as a $\Lambda$-map from $B$ into $\PD_1(\Rad(\Lambda))$. 
By condition (b) such a map must be constant, and hence its essential image must be a single $\Lambda$-invariant state. In particular, it cannot be singular with respect to its translates. So the second possibility is impossible, and $\phi$ is invariant.

The second part of the proof about classification of characters follows exactly the proof of Proposition \ref{top crit}.
\end{proof}

The following is a version of Proposition~\ref{top crit 2} which is somewhat easier to manage. 

\begin{prop} \label{prop:chmcrit3}
Let $\Lambda$ be a countable group and denote $\Gamma=\Lambda/\Rad(\Lambda)$. Take a separable, metrically ergodic amenable $\Gamma$-space $(B,\nu)$ and write $N = L^\infty(B)$. The following conditions together imply that $\Lambda$ is charmenable.
\begin{description}
\item[(a)] Every separable, ergodic, faithful $(\Gamma,L^\infty(B))$-von Neumann algebra $(M,E)$ is either invariant or $\Gamma$-singular.
\item[(b)] Every measurable $\Lambda$-map $B \to \PD_1(\Rad(\Lambda))$ is essentially constant. 
\end{description}
\end{prop}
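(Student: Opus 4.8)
The plan is to deduce Proposition~\ref{prop:chmcrit3} from Proposition~\ref{top crit 2} applied to $\Lambda$ itself. Since $\Gamma=\Lambda/\Rad(\Lambda)$, the quotient map $\Lambda\to\Gamma$ turns the $\Gamma$-space $(B,\nu)$ into a $\Lambda$-space on which $\Rad(\Lambda)$ acts trivially, and it turns $N=L^\infty(B)$ into a $\Lambda$-von Neumann algebra on which $\Rad(\Lambda)$ acts trivially. Condition (b) is literally the same statement in both propositions, so it remains to check that $(B,\nu)$ is a separable, metrically ergodic, amenable $\Lambda$-space, and that hypothesis (a) here implies hypothesis (a$'$) of Proposition~\ref{top crit 2}. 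Separability is immediate.

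\textbf{Transfer of metric ergodicity and amenability.} For metric ergodicity, let $f\colon B\to Y$ be a measurable $\Lambda$-map into a separable metric $\Lambda$-space with continuous isometric action. Since $\Rad(\Lambda)$ acts trivially on $B$, equivariance forces $f(b)\in Y^{\Rad(\Lambda)}$ for a.e.\ $b$; as $\Rad(\Lambda)$ is normal, $Y^{\Rad(\Lambda)}$ is a closed, $\Lambda$-invariant subspace on which $\Rad(\Lambda)$ acts trivially, hence a separable metric $\Gamma$-space into which $f$ is a $\Gamma$-map. Metric ergodicity of $(B,\nu)$ as a $\Gamma$-space then makes $f$ essentially constant, equal to a $\Gamma$-fixed point, which is a $\Lambda$-fixed point of $Y$. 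Amenability of the $\Lambda$-action follows from amenability of the $\Gamma$-action together with amenability of $\Rad(\Lambda)$, by the standard stability of amenable actions under extensions by amenable groups.

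\textbf{Verifying (a$'$): the singular case.} Let $(M,E)$ be a separable, ergodic, faithful $(\Lambda,N)$-von Neumann algebra. Since $\Rad(\Lambda)$ is normal, $M^{\Rad(\Lambda)}$ is a $\Lambda$-invariant von Neumann subalgebra on which $\Rad(\Lambda)$ acts trivially, hence a separable $\Gamma$-von Neumann algebra; set $E_0:=E|_{M^{\Rad(\Lambda)}}$. Because $\Rad(\Lambda)$ acts trivially on both $M^{\Rad(\Lambda)}$ and $N$, the map $E_0$ is a normal, faithful, $\Gamma$-equivariant ucp map, and $(M^{\Rad(\Lambda)})^\Gamma=M^\Lambda=\C1$, so $(M^{\Rad(\Lambda)},E_0)$ is a separable, ergodic, faithful $(\Gamma,N)$-von Neumann algebra to which hypothesis (a) applies. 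For $x\in M^{\Rad(\Lambda)}$ and any lift $g$ of $\bar g\in\Gamma$ one has $(E_0)_{\bar g}(x)=E(\sigma_g x)=E_g(x)$, independently of the lift. Hence if $E_0$ is $\Gamma$-singular, then for all $g,h\in\Lambda$ with $h^{-1}g\notin\Rad(\Lambda)$ the restrictions $E_g|_{M^{\Rad(\Lambda)}}$ and $E_h|_{M^{\Rad(\Lambda)}}$ are singular, whence $E_g\perp E_h$ by Remark~\ref{rem:subsing}. This is precisely the second alternative of (a$'$).

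\textbf{Verifying (a$'$): the invariant case, and the main obstacle.} The genuinely delicate point is to show that if $E_0$ is $\Gamma$-invariant then $E$ itself is $\Lambda$-invariant. Here I would exploit amenability of $\Rad(\Lambda)$: equivariance of $E$ and triviality of the $\Rad(\Lambda)$-action on $N$ give $E\circ\sigma_r=E$ for every $r\in\Rad(\Lambda)$. Fixing a F\o{}lner sequence $(F_n)$ for the countable amenable group $\Rad(\Lambda)$ and a free ultrafilter $\omega$, define the ucp map $\Psi\colon x\mapsto \lim_{n\to\omega}\frac{1}{|F_n|}\sum_{r\in F_n}\sigma_r(x)$, the ultraweak limit being taken in the unit ball. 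The F\o{}lner property yields $\Psi(M)\subseteq M^{\Rad(\Lambda)}$, while $E\circ\sigma_r=E$ together with normality of $E$ yields $E\circ\Psi=E$. Combining, $E(M)=E_0(\Psi(M))\subseteq E_0(M^{\Rad(\Lambda)})\subseteq\C1$, so $E$ is invariant, giving the first alternative of (a$'$). With (a$'$) and (b) established, Proposition~\ref{top crit 2} yields charmenability of $\Lambda$. The one step requiring care is this averaging argument, and in particular that $E\circ\Psi=E$ survives the passage to the ultrafilter limit, which is exactly where normality of $E$ is used.
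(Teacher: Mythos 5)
Your proof is correct, and its skeleton matches the paper's: view $B$ as a $\Lambda$-space, verify condition (a$'$) of Proposition~\ref{top crit 2} by passing to the subalgebra $M^{\Rad(\Lambda)}$ (a separable, ergodic, faithful $(\Gamma,N)$-algebra), settle the singular alternative via Remark~\ref{rem:subsing}, and reduce the invariant alternative to the implication that invariance of $E$ on $M^{\Rad(\Lambda)}$ forces invariance of $E$ on $M$. Where you genuinely diverge is in the proof of this last implication. The paper obtains a $\Lambda$-equivariant normal conditional expectation $E_0\colon M\to M^{\Rad(\Lambda)}$ with $E=E\circ E_0$ from Proposition~\ref{prop:stationary}: the state $\phi=\nu\circ E$ is faithful, normal and $\Rad(\Lambda)$-invariant, hence $\mu$-stationary for any generating $\mu\in\Prob(\Rad(\Lambda))$, and $E_0:=E_\mu$ is the resulting expectation, with $E=E\circ E_0$ following from Proposition~\ref{prop:stationary}(2) applied to the states $\nu'\circ E$. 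You instead average directly over a F\o{}lner sequence of $\Rad(\Lambda)$, producing a ucp map $\Psi$ with $\Psi(M)\subseteq M^{\Rad(\Lambda)}$ and $E\circ\Psi=E$; this is sound, since normality of $E$ lets it pass through the ultraweak ultrafilter limit, and normality of each $\sigma_s$ plus the F\o{}lner property gives the range condition. Your route is more elementary and self-contained: it bypasses the stationarity machinery, does not use faithfulness of $\nu\circ E$ at this step (only its $\Rad(\Lambda)$-invariance), and exploits amenability of $\Rad(\Lambda)$, which is automatic. The paper's route buys reuse of machinery already developed and would apply verbatim to any countable normal subgroup acting trivially on $B$, amenable or not; it also yields a genuine equivariant conditional expectation, although neither equivariance nor idempotence is actually needed for the conclusion. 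You also supply proofs of the transfer of metric ergodicity and amenability from the $\Gamma$-space $B$ to the $\Lambda$-space $B$, which the paper asserts without argument; both of your arguments (restriction of targets to $\Rad(\Lambda)$-fixed points, and stability of amenable actions under extensions with amenable kernel) are valid.
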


\begin{proof}
Seeing $B$ as a $\Lambda$-space, it is still metrically ergodic and amenable, thus we only need to verify condition (a') of Proposition~\ref{top crit 2}.
We let $(M,E)$ be a separable, ergodic, faithful $(\Lambda,L^\infty(B))$-von Neumann algebra for which $E$ is not invariant and claim that $E$ is not invariant on the $(\Gamma,L^\infty(B))$-von Neumann algebra $(M^{\Rad(\Lambda)},E)$.
This will finish the proof, using Remark~\ref{rem:subsing} and condition (a). To prove this claim, it suffices to find a conditional expectation $E_0: M \to M^{\Rad(\Lambda)}$ which is $\Lambda$-equivariant, and such that $E = E \circ E_0$.

Fix a faithful normal state $\nu$ on $L^\infty(B)$, and consider the faithful normal and $\Rad(\Lambda)$-invariant state $\phi= \nu \circ E$ on $M$.
Take a generating probability measure $\mu$ on $\Rad(\Lambda)$ and note that $\phi$ is $\mu$-stationary. Consider the normal conditional expectation $E_\mu:M\to M^{\Rad(\Lambda)}$ given in Proposition~\ref{prop:stationary}(1). Then $E_\mu$ is the unique $\phi$-preserving conditional expectation $E_0$ onto $M^{\Rad(\Lambda)}$. In particular, it does not depend on the choice of $\mu$. 

Fix $g \in \Lambda$, and denote by $\alpha_g \in \Aut(\Rad(\Lambda))$ the automorphism obtained by restricting the conjugation action of $g$. Then denote by $\mu_g := (\alpha_g)_*\mu$ the push forward measure. Using the explicit construction of $E_\mu$, a direct computation shows that $E_0 = E_{\mu_g} = \sigma_g E_\mu \sigma_g^{-1} = \sigma_g E_0 \sigma_g^{-1}$. This proves that $E_0$ is $\Gamma$-equivariant.

By Proposition~\ref{prop:stationary}(2), for every normal state $\nu'$ on $L^\infty(B)$, $\nu' \circ E = \nu' \circ E\circ E_0$.
It follows that $E = E\circ E_0$, finishing the proof of the claim.  
\end{proof}

\begin{cor}\label{kill center}
Let $\Lambda$ be a countable group and assume that $\Rad(\Lambda)$ is either finite or central in $\Lambda$. 
Denote $\Gamma=\Lambda/\Rad(\Lambda)$ and let $(B,\nu)$ be a separable, amenable and metrically ergodic $\Gamma$-space and set $N = L^\infty(B)$. 
If condition (a) is satisfied then $\Lambda$ is charmenable.
\end{cor}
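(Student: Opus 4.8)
The plan is to deduce the corollary directly from Proposition~\ref{prop:chmcrit3}. Since hypothesis (a) of that proposition is exactly what we assume here, it suffices to verify condition (b), namely that every measurable $\Lambda$-map $f \colon B \to \PD_1(\Rad(\Lambda))$ is essentially constant. Here the $\Lambda$-action on $\PD_1(\Rad(\Lambda))$ is the one induced by the conjugation action of $\Lambda$ on $\Rad(\Lambda)$, and the $\Lambda$-action on $B$ is the $\Gamma$-action inflated along $\Lambda \to \Gamma$. As noted at the start of the proof of Proposition~\ref{prop:chmcrit3}, viewing $(B,\nu)$ as a $\Lambda$-space it remains metrically ergodic, so it will be enough to exhibit $\PD_1(\Rad(\Lambda))$ as a separable metric $\Lambda$-space on which $\Lambda$ acts continuously by isometries; metric ergodicity will then force $f$ to be essentially constant, equal to a fixed point.

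First I would treat the case where $\Rad(\Lambda)$ is central. Then the conjugation action of $\Lambda$ on $\Rad(\Lambda)$ is trivial, hence so is the induced action on $\PD_1(\Rad(\Lambda))$. Since $\Rad(\Lambda)$ is countable, the weak$^*$-topology on $\PD_1(\Rad(\Lambda)) \subset \ell^\infty(\Rad(\Lambda))$ is compact metrizable, so $\PD_1(\Rad(\Lambda))$ is a separable metric space carrying the trivial, in particular continuous and isometric, $\Lambda$-action, and metric ergodicity applies. Next, for the case where $\Rad(\Lambda)$ is finite, I would use that $\PD_1(\Rad(\Lambda))$ is then a compact subset of the finite-dimensional Hilbert space $\ell^2(\Rad(\Lambda))$. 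The conjugation action permutes the canonical orthonormal basis $\{\delta_x\}_{x \in \Rad(\Lambda)}$, so it acts by $\ell^2$-isometries, and the image of $\Lambda$ in the finite permutation group of $\Rad(\Lambda)$ is finite, whence the action is continuous. Thus $\PD_1(\Rad(\Lambda))$ is again a separable metric $\Lambda$-space with continuous isometric action, and metric ergodicity yields that $f$ is essentially constant. In either case condition (b) holds and Proposition~\ref{prop:chmcrit3} gives charmenability of $\Lambda$.

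The one point requiring care, and where the hypotheses really enter, is that metric ergodicity rather than mere ergodicity is indispensable in the finite case: for a finite group acting freely on a finite ergodic space the identity map is a nonconstant equivariant map to an isometric space, so plain ergodicity would not suffice, whereas metric ergodicity precisely rules such maps out by demanding the value be a fixed point. The remaining routine verifications are that the weak$^*$-topology is metrizable, using countability of $\Rad(\Lambda)$, and that the conjugation action is genuinely isometric and continuous for the chosen metric; one may also observe, if desired, that any such $f$ automatically takes values in the $\Rad(\Lambda)$-fixed class functions, because $\Rad(\Lambda)$ acts trivially on $B$, on which the action descends to $\Gamma$, but this reduction is not needed once one works directly with the $\Lambda$-metric-ergodicity of $B$.
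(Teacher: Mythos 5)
Your proposal is correct and follows essentially the same route as the paper: verify condition (b) of Proposition~\ref{prop:chmcrit3}, handling the central case via (metric) ergodicity with the trivial action on $\PD_1(\Rad(\Lambda))$ and the finite case via metric ergodicity applied to the finite-dimensional isometric $\Lambda$-space $\PD_1(\Rad(\Lambda))$. The only cosmetic difference is that in the central case the paper invokes plain ergodicity of $B$ (the target action being trivial), whereas you phrase it through metric ergodicity; the content is identical.
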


\begin{proof}
We need to verify condition (b) of Proposition~\ref{prop:chmcrit3}.
In case $\Rad(\Lambda)$ is central this follows at once from the ergodicity of $B$.
In case $\Rad(\Lambda)$ is finite, $\PD_1(\Rad(\Lambda))$ is finite dimensional and this follows from the metric ergodicity of $B$.
\end{proof}

The rest of the paper is devoted to prove that these conditions (a) and (b) are satisfied in the cases of interest.

%%%%%
\section{$(G, N)$-structures, lattices with dense projections and induction}
\label{sect:vN}

In this section, we are interested in the following problem. Assume that $\sigma : \Gamma \curvearrowright X$ is an action of a discrete countable group on a topological vector space $X$ with some extra structure (typically $X$ is a Hilbert space or a von Neumann algebra). Let $\iota: \Gamma \to G_1$ be a group homomorphism into an lcsc group $G_1$ with dense image. Then  we want to give an algebraic description of the set of elements $x \in X$ such that the orbit map $ \Gamma \to X : \gamma \mapsto \sigma_\gamma(x)$ factors to a map defined on $\iota(\Gamma)$, which extends continuously to a map $G_1 \to X$. 

In our setting, $\Gamma$ will be a lattice in an lcsc group $G$ and the morphism $\iota$ extends to a continuous homomorphism $G \to G_1$. In this case, we shall identify this continuity space with a fixed point set in the induced action.

%%%
\subsection{Continuity vectors for unitary representations} \label{ss:contvectors}

Let $\Gamma < G$ be a lattice in an lcsc group $G$, let $G_1$ be a quotient of $G$ with kernel $G_2$. Denote by $\iota: G \to G_1$ the quotient map and assume that $\iota(\Gamma)$ is dense in $G_1$.

Let $\pi: \Gamma \to \cU(H)$ be any unitary representation, and denote by $(\tpi,\tH)$ the induced unitary representation of $G$.

We say that a vector $v \in H$ is $\iota$-{\em continuous} if $\lim_n \|\pi(\gamma_n)v - v\| = 0$ for any sequence $(\gamma_n)_{n \in \N}$ in $\Gamma$ such that $\iota(\gamma_n) \to e$ in $G_1$. We denote by $H_\iota$ the set of $\iota$-continuous vectors. Because the action of $\Gamma$ on $H$ is isometric, one checks that $H_\iota$ is a closed $\Gamma$-invariant subspace of $H$. Moreover, for any $v \in H_\iota$, there exists a unique continuous map $c_v : G_1 \to H$ such that $\pi(\gamma)v = c_v(\iota(\gamma))$ for every $\gamma \in \Gamma$. In other words, we may extend $\pi : \Gamma \to \cU(H_\iota)$ to a continuous unitary representation $\pi : G \to \cU(H_\iota)$ that factors through $G_1$ and that satisfies $\pi(g)v = c_v(\iota(g))$ for every $g \in G_1$ and every $v \in H_\iota$.

\begin{prop}\label{continuity rep}
We keep the notation as above. There is a $G$-equivariant surjective isometry
\[\kappa : H_\iota \to (\tH)^{G_2}.\] 
\end{prop}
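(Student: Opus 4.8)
The plan is to write $\kappa$ down explicitly via the continuous extension $\pi\colon G\to\cU(H_\iota)$ constructed just above the statement, and then identify it with evaluation at the identity of $G_2$-fixed sections. I realize the induced representation $\tH$ as the space of (classes of) measurable maps $f\colon G\to H$ with $f(g\gamma)=\pi(\gamma)^{-1}f(g)$ for $\gamma\in\Gamma$ and $\int_{G/\Gamma}\|f(g)\|^2\,dg<\infty$, on which $G$ acts by $(\tpi(g)f)(x)=f(g^{-1}x)$, normalizing the invariant measure so that $\operatorname{vol}(G/\Gamma)=1$. For $v\in H_\iota$ I set $\kappa(v)(g):=\pi(g)^{-1}v=c_v(\iota(g)^{-1})$. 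First I would dispatch the routine points: the right $\Gamma$-equivariance of $\kappa(v)$ is immediate from $\pi(g\gamma)^{-1}=\pi(\gamma)^{-1}\pi(g)^{-1}$; since $\kappa(v)(g)$ depends only on $\iota(g)$ and the extended representation kills $G_2=\ker\iota$, the section $\kappa(v)$ is $G_2$-fixed; a one-line computation gives $\kappa(\pi(g)v)=\tpi(g)\kappa(v)$, so $\kappa$ is $G$-equivariant; and since each $\pi(g)$ is unitary, $\|\kappa(v)\|_{\tH}^2=\|v\|^2\operatorname{vol}(G/\Gamma)=\|v\|^2$, so $\kappa$ is isometric. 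In particular $\kappa(H_\iota)$ is a closed subspace of $(\tH)^{G_2}$.

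The real content is surjectivity, which I would obtain by a mollification argument. Given $f\in(\tH)^{G_2}$, the relation $\tpi(h)f=f$ for $h\in G_2$ says exactly that $f$ is left $G_2$-invariant, hence descends to a measurable $\overline f\colon G_1\to H$ with $f=\overline f\circ\iota$, and the $\Gamma$-equivariance of $f$ becomes $\overline f(y\iota(\gamma))=\pi(\gamma)^{-1}\overline f(y)$. Thus $y\mapsto\|\overline f(y)\|$ is invariant under the right translation action of the dense subgroup $\iota(\Gamma)$ on $G_1$; since a dense subgroup acts ergodically on a group with Haar measure, $\|\overline f\|$ is essentially constant, so $\overline f\in L^\infty(G_1,H)$. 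Next, because $G_2$ is normal, $(\tH)^{G_2}$ is $\tpi(G)$-invariant and the action factors through a continuous representation $\tpi_1\colon G_1\to\cU((\tH)^{G_2})$. For $\rho\in C_c(G_1)$ with $\rho\ge 0$, $\int\rho=1$, the smoothed vector $f_\rho:=\int_{G_1}\rho(y)\,\tpi_1(y)f\,dy$ again lies in $(\tH)^{G_2}$, and a direct computation identifies its descended section with the genuine convolution $\overline{f_\rho}=\rho\ast\overline f$, which is \emph{continuous} because $\overline f\in L^\infty$ and $\rho\in C_c(G_1)$.

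I would then recover the continuity vector by evaluating at the identity: set $v_\rho:=\overline{f_\rho}(e)\in H$. The equivariance $\overline{f_\rho}(\iota(\gamma)^{-1})=\pi(\gamma)v_\rho$ together with continuity of $\overline{f_\rho}$ at $e$ shows $\pi(\gamma_n)v_\rho\to v_\rho$ whenever $\iota(\gamma_n)\to e$, that is $v_\rho\in H_\iota$; and comparing the two continuous, $\iota(\Gamma)$-equivariant maps $\overline{f_\rho}$ and $y\mapsto c_{v_\rho}(y^{-1})$, which agree on the dense set $\iota(\Gamma)$, yields $\overline{f_\rho}(y)=c_{v_\rho}(y^{-1})$ on all of $G_1$, i.e.\ $\kappa(v_\rho)=f_\rho$. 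Hence every smoothed vector lies in $\kappa(H_\iota)$; letting $\rho$ run through an approximate identity gives $f_\rho\to f$ in $\tH$, so $f$ lies in the closed subspace $\kappa(H_\iota)$, proving surjectivity.

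The step I expect to require the most care is the passage to $G_1$ and the convolution: one must justify that $(\tH)^{G_2}$ really consists of sections factoring through $G_1$ (using $G_2=\ker\iota$ and normality), that the $L^2$-structure on $G/\Gamma$ transports so that $\overline f$ is genuinely $L^\infty$ (for which the ergodicity of the dense $\iota(\Gamma)$-action is the clean input), and that the formal smoothing $f_\rho$ indeed corresponds to the pointwise convolution $\rho\ast\overline f$, so that continuity of the representative — and hence the clean evaluation $v_\rho=\overline{f_\rho}(e)$ — is actually available. The remaining identifications are bookkeeping with the left/right equivariance conventions.
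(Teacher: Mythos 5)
Your proof is correct, but the route to surjectivity --- the only substantive part --- is genuinely different from the paper's. The paper works directly on $G$ and never descends to $G_1$: it shows that \emph{every essential value} $v$ of $f \in \tH^{G_2}$ lies in $H_\iota$, by choosing $h_n \in G_2$ with $\gamma_n h_n \to e$ in $G$ and exploiting that $A_\eps \cap \bigl(A_\eps \cdot (\gamma_n h_n)^{-1}\bigr)$ has positive measure, where $A_\eps = \{x \in G \mid \Vert f(x)-v\Vert < \eps\}$; it then observes that $g \mapsto \pi(g)(f(g))$ is left $G_2$-invariant and right $\Gamma$-invariant, hence essentially constant by density of $G_2\Gamma$ in $G$, and its essential value $v$ satisfies $f = \kappa(v)$. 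You instead descend $f$ to a measurable $\overline f$ on $G_1$, upgrade it to $L^\infty$ via ergodicity of the dense translation action of $\iota(\Gamma)$, mollify by a $C_c(G_1)$ approximate identity to produce continuous equivariant sections, evaluate at $e$ to land in $H_\iota$, and conclude by closedness of the range of the isometry $\kappa$. Both arguments hinge on density of $\iota(\Gamma)$ in $G_1$, and each of your steps is sound (in particular the equivariance of $\rho \ast \overline f$ holds everywhere, not just a.e., since both sides are continuous, which is what legitimizes the evaluation at $e$). The trade-off: the paper's essential-value argument is measure-theoretically self-contained and avoids the three pieces of scaffolding your proof needs --- the measurable descent of $\tH^{G_2}$ through the quotient map (a Borel cross-section/Fubini argument), the ergodicity of dense-subgroup translations, and the interchange of the Bochner integral defining $f_\rho$ with pointwise convolution --- whereas your smoothing pattern is classical harmonic analysis, exhibits a dense subspace of genuinely continuous sections in $\tH^{G_2}$, and replaces the delicate positive-measure-overlap bookkeeping with soft limiting arguments.
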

\begin{proof}
Let us view $\tH$ as the Hilbert space of measurable maps $f : G \to H$ such that
\begin{itemize}
\item [$(\rm i)$] For every $\gamma \in \Gamma$ and almost every $g \in G$, $f(g \gamma) = \pi(\gamma^{-1})f(g)$.
\item [$(\rm ii)$] $\|f\|^2 = \int_{G/\Gamma} \|f(g)\|^2 \, {\rm d}m_{G/\Gamma}(g\Gamma) < +\infty$.
\end{itemize}
In this description, let us check that the map $\kappa: H_\iota \to \tH$ defined by $\kappa(v)(g) := \pi(g^{-1})v$, for every $v \in H_\iota$, $g \in G$, suits us. It is indeed isometric and $G$-equivariant, and it indeed ranges into $\tH^{G_2}$ by definition of $H_\iota$. It remains to prove that $\kappa$ is surjective. Fix $f \in \tH^{G_2}$. 

{\bf Claim.} Every essential value of $f$ is an element of $H_\iota$.

Indeed, let $v$ be any essential value of $f$ and take a sequence $(\gamma_n)_{n \in \N}$  in $\Gamma$ such that $\iota(\gamma_n) \to e$ in $G_1$. We want to check that $\lim_n \|\pi(\gamma_n)v - v\| = 0$. We may find elements $h_n \in G_2$ such that $\gamma_nh_n \to e$ in $G$. Take $\eps > 0$.
By assumption, the set $A_\eps = \{ x \in G \mid \Vert f(x) - v \Vert < \eps\}$ has positive measure in $G$. Since $\gamma_nh_n \to e$ in $G$, we may find $n \in \N$ large enough so that $A_\eps \cap (A_\eps \cdot (\gamma_nh_n)^{-1})$ has positive measure. As an element of $\tH^{G_2}$, the function $f: G \to H$ is left $G_2$-invariant (so right $G_2$-invariant as well since $G_2$ is normal in $G$) and right $\Gamma$-equivariant. Thus for every $g \in G$ and every $n \in \N$, we have $f(g(\gamma_nh_n)) = f(g\gamma_n) = \pi(\gamma_n^{-1})f(g)$. So for $n \in \N$ large enough, choosing $g \in A_\eps \cap (A_\eps \cdot (\gamma_nh_n)^{-1})$, we have
\[\Vert v - \pi(\gamma_n)v\Vert  \leq \Vert v - f(g)\Vert + \Vert f(g) - \pi(\gamma_n)v\Vert  \leq \Vert v - f(g)\Vert  + \Vert f(g\gamma_nh_n) - v \Vert \leq 2\eps.\]
As $\eps > 0$ can be arbitrarily small, this finishes the proof of the claim. 

Using this claim, we may modify $f$ on a null set if necessary to view it as an $H_\iota$-valued map. Then the measurable function $G \to H_\iota : g \mapsto \pi(g)(f(g))$ is well-defined, it is $G_2$-invariant and also right $\Gamma$-invariant. Since the product set $G_2 \Gamma$ is dense in $G$, this implies that the above measurable function is essentially constant. If we denote by $v \in H_\iota$ its essential value, we find that $f = \kappa(v)$. 
\end{proof}

\begin{rem}
In fact a similar result holds for more general metric $\Gamma$-spaces and $L^p$-induction, for arbitrary $p \in [1,\infty)$. 
We will not elaborate on this further here, as we will only make use of the above setting.
\end{rem}

%%%
\subsection{Continuity points in $(\Gamma,N)$-algebras}\label{ss:continuity-vN}

We now investigate the case of von Neumann algebras. We start with the following general terminology.

\begin{defn}\label{defn:continuous}
Consider a countable discrete group $\Gamma$, an lcsc group $G_1$ and a group homomorphism $\iota: \Gamma \to G_1$ with dense range. Let $M$ be a $\Gamma$-von Neumann algebra. We say that an element $x \in M$ is {\em $\iota$-continuous} if $\sigma_{\gamma_n}(x) \to x$ $*$-strongly in $M$ for any sequence $(\gamma_n)_{n \in \N}$ in $\Gamma$ such that $\iota(\gamma_n) \to e$ in $G_1$.
\end{defn}

When the map $\iota$ is obviously understood from $G_1$, we will also use the terminology $G_1$-continuous, instead of $\iota$-continuous.

From now on, we denote by $G = G_1 \times G_2$ a product of two lcsc groups and $\Gamma < G$ a lattice with dense projections.
For every $i \in \{1, 2\}$, we denote by $p_i : G \to G_i$ the projection map and for consistency of notation with the previous paragraphs, we denote by $\iota$ the restriction of $p_1$ to $\Gamma$. 

If a $\Gamma$-von Neumann algebra $M$ carries a $\Gamma$-invariant faithful normal state, then we can use metric considerations as in the previous subsection to identify the set of $\iota$-continuous elements with a fixed point subalgebra in the induced von Neumann algebra. This was observed in \cite[Section 4]{CP13} (see also the comment after \cite[Proposition 3.1]{Pe14}). Unfortunately in the cases of interest to us, no such state is assumed to exist. Instead we have a specific stationary state, coming from a Furstenberg-Poisson boundary of $G$. We aim to provide the analogous conclusion in this weaker setting.

For $i = 1,2$, choose an admissible Borel probability measure $\mu_i \in \Prob(G_i)$ and denote by $(B_i,\nu_i)$ the Furstenberg-Poisson boundary of $(G_i, \mu_i)$. Then the product $G$-space $(B,\nu) := (B_1,\nu_1) \times (B_2,\nu_2)$ is the Furstenberg-Poisson boundary of $G$ with respect to the product measure $\mu := \mu_1 \otimes \mu_2 \in \Prob(G)$ (see \cite[Corollary 3.2]{BS04}). We will write $N_1 = L^\infty(B_1)$, $N_2 = L^\infty(B_2)$ and $N = L^\infty(B) = N_1 \ovt N_2$. 

Observe that if $(M,E)$ is a $(\Gamma,N)$-von Neumann algebra then $E$ maps $\iota$-continuous elements in $M$ to $\iota$-continuous elements in $N$. We can therefore take advantage of the fact that $N$ is already a $G$-algebra. The following lemma will play an essential role.

\begin{lem}\label{exmp continuity points}
The set of $\iota$-continuous elements in $N$ is equal to $N_1 \ot 1$.
\end{lem}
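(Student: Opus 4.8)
The plan is to prove the two inclusions separately, writing $N_\iota$ for the set of $\iota$-continuous elements of $N$; the inclusion $N_1\ot 1\subseteq N_\iota$ is the soft one. I would first record that, since the $(G_2,\mu_2)$-boundary action $G_2\actson(B_2,\nu_2)$ is ergodic, one has $N_2^{G_2}=\C 1$ and hence $N^{G_2}=N_1\ovt N_2^{G_2}=N_1\ot 1$. Now $N=N_1\ovt N_2$ is a genuine $G$-von Neumann algebra (recall $B=B_1\times B_2$ is the Furstenberg-Poisson boundary of $G$), so for $x\in N_1\ot 1=N^{G_2}$ the element is fixed by $\{e\}\times G_2$, whence $\sigma_\gamma(x)=\sigma_{(p_1(\gamma),e)}(x)$ depends only on $\iota(\gamma)=p_1(\gamma)$. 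If $\iota(\gamma_n)\to e$ then $\sigma_{\gamma_n}(x)=\sigma_{(p_1(\gamma_n),e)}(x)\to x$ by strong continuity of the $G$-action at the fixed element $x$, so $N_1\ot 1\subseteq N_\iota$.

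For the reverse inclusion I would work on $L^2(B,\nu)$, writing $\langle\cdot,\cdot\rangle$ and $\|\cdot\|_2$ for the associated scalar product and norm (on bounded subsets of $N$ the $\ast$-strong topology coincides with the $L^2(\nu)$-topology). Let $x\in N_\iota$ and let $\gamma_n\in\Gamma$ satisfy $p_1(\gamma_n)\to e$; set $h_n=p_2(\gamma_n)$, and note that discreteness of $\Gamma$ forces $h_n\to\infty$ in $G_2$. The first step is to \emph{peel off the $G_1$-coordinate}: since $\{p_1(\gamma_n)^{-1}\}$ is relatively compact the corresponding Koopman operators on $L^2(B,\nu)$ are uniformly bounded (the Radon-Nikodym derivatives $\frac{\d g\nu}{\d\nu}$ are uniformly bounded for $g$ in a compact set), and the $G_1$-action is $L^2$-continuous at the fixed vector $x$; combining these with $\sigma_{\gamma_n}(x)=\sigma_{(p_1(\gamma_n),e)}\sigma_{(e,h_n)}(x)\to x$ yields $\sigma_{(e,h_n)}(x)\to x$ in $L^2(\nu)$.

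It then remains to deduce that $x$ lies in $N^{G_2}=N_1\ot 1$. Let $\id\ovt\nu_2:N\to N_1$ be the slice map and set $v=x-(\id\ovt\nu_2)(x)\ot 1$, so that the $\nu_2$-average of $v$ vanishes and $x\in N_1\ot 1$ if and only if $v=0$. On the one hand $\langle\sigma_{(e,h_n)}(x),v\rangle\to\langle x,v\rangle=\|v\|_2^2$. On the other hand, choosing the $\gamma_n$ so that $h_n^{-1}$ \emph{contracts} $\nu_2$ — possible because $(B_2,\nu_2)$ is a $\mu_2$-boundary, hence proximal — the quantity $\langle\sigma_{(e,h_n)}(x),v\rangle$ is an integral over $B_1$ of pairings of $x(b_1,\cdot)$ against the pushforwards $(h_n^{-1})_\ast\big(\overline{v(b_1,\cdot)}\,\nu_2\big)$; these are signed measures of total mass $(\id\ovt\nu_2)(v)(b_1)=0$ that concentrate at a single point, so the pairing tends to $0$. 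This would force $\|v\|_2^2=0$, i.e.\ $x\in N_1\ot 1$.

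The main obstacle is this last contraction step. The delicate point is that $x(b_1,\cdot)$ is only an $L^\infty$-function of the second variable, so the weak-$\ast$ convergence of $(h_n^{-1})_\ast\nu_2$ to a Dirac mass (tested only against $C(B_2)$) does not by itself control the pairing against $x(b_1,\cdot)$. To make this rigorous one must either concentrate at $\nu_2$-almost every point, via a Lebesgue-type differentiation argument along the shrinking shadows on which the boundary measures concentrate, or average over a family of contraction directions; both rely on proximality of the $(G_2,\mu_2)$-boundary together with the density of $p_2(\Gamma)$. An alternative, perhaps cleaner, route would be to identify $N_\iota$ with the $G_2$-fixed points of the induced $G$-algebra $L^\infty(G/\Gamma)\ovt N$ — a von Neumann algebraic counterpart of Proposition~\ref{continuity rep} — and to compute these using ergodicity of the \emph{measure-preserving} action $G_2\actson(G/\Gamma,m)$, which holds precisely because $\Gamma$ has dense projections.
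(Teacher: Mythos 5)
Your easy inclusion $N_1 \ot 1 \subseteq N_\iota$ is fine, and your first step (peeling off the $G_1$-coordinate to deduce $\sigma_{(e,h_n)}(x) \to x$) is essentially sound, though its justification should run through norm continuity of the predual action $g \mapsto \nu \circ \sigma_g$ (as in the estimates of Theorem~\ref{thm:continuity-vN}) rather than through uniform boundedness of Radon--Nikodym derivatives, which is not obvious for a general Poisson boundary. The genuine gap is the contraction step, which you yourself flag as the main obstacle; it is exactly where the paper inserts its one essential external input, namely \cite[Lemma 5.1]{Pe14}: for every positive-measure subset $E \subseteq B_2$ there is a sequence $(\gamma_n)$ in $\Gamma$ with $\iota(\gamma_n) \to e$ in $G_1$ and $\nu_2(p_2(\gamma_n)E) \to 1$. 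Note the form of this statement: it is a measure \emph{expansion} property of the $G_2$-components of suitable lattice elements, not weak-$*$ contraction of $(h_n^{-1})_*\nu_2$ to a Dirac mass. With it, the paper takes an essential value $y \in N_1$ of $f \in N_\iota$ viewed in $L^\infty(B_2,N_1)$, applies the lemma to $E_\eps = \{b \in B_2 : \|f(b)-y\|_{\nu_1} < \eps\}$, and computes
$\|f - y \ot 1\|_\nu^2 = \lim_n \int_{B_2}\|f(p_2(\gamma_n)^{-1}b) - y\|^2_{\nu_1}\, \dd\nu_2(b) \leq \eps^2 + (2\|f\|)^2\,\nu_2\bigl(B_2 \setminus p_2(\gamma_n)E_\eps\bigr) \to \eps^2$.
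This sidesteps entirely the problem you ran into (weak-$*$ convergence of measures controls pairings against $C(B_2)$, not against $L^\infty(B_2)$); no differentiation or averaging argument is needed. Your proposal has no substitute for this lemma: proximality of $(B_2,\nu_2)$ does not by itself produce contracting sequences inside the constrained set $\{p_2(\gamma) : \gamma \in \Gamma,\ p_1(\gamma) \text{ near } e\}$, and producing sequences of lattice elements with the required joint behavior of the two projections is the real content.

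Your alternative route --- identifying $N_\iota$ with $\tN^{G_2}$ and computing the latter by ergodicity --- is circular relative to the paper's architecture: the von Neumann algebraic counterpart of Proposition~\ref{continuity rep} is Theorem~\ref{continuous elts}, whose proof (together with Theorem~\ref{thm:continuity-vN}) invokes Lemma~\ref{exmp continuity points} itself. The Hilbert-space argument of Proposition~\ref{continuity rep} does not transfer directly to $N$ because the $\Gamma$-action on $(N,\|\cdot\|_\nu)$ is not isometric ($\nu$ is only quasi-invariant), which is precisely the difficulty the lemma is designed to overcome. Beware also that the natural map $\kappa(x)(g)=\sigma_g^{-1}(x)$, defined via the given $G$-action on $N$, lands in $\tN^{G_2}$ only when $x$ is already $G_2$-invariant, so even formulating your identification for $x \in N_\iota$ presupposes the extended action of Theorem~\ref{thm:continuity-vN}. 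Finally, ergodicity of the measure-preserving action $G_2 \actson G/\Gamma$ alone does not suffice for the fixed-point computation: the paper's Lemma~\ref{lem:fixed-point} needs ergodicity of the \emph{diagonal} action $G_2 \actson G/\Gamma \times B_2$, which is \cite[Corollary 2.18]{BS04}, a boundary-theoretic input beyond plain ergodicity on $G/\Gamma$.
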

\begin{proof}
Let $f \in N$ be a $\iota$-continuous element in $N$. We view $f$ as an $N_1$-valued function on $B_2$, $f \in L^\infty(B_2, N_1)$ and we choose an essential value $y \in N_1$ of $f$. 

For $\varepsilon > 0$, the set $E_\varepsilon = \{b \in B_2 \mid \|f(b) - y\|_{\nu_1} < \varepsilon\}$ has positive measure in $B_2$. By \cite[Lemma 5.1]{Pe14}, there exists a sequence $(\gamma_n)_{n \in \N}$ in $\Gamma$, so that $\iota(\gamma_n) \to e$ in $G_1$ and $\nu_2(p_2(\gamma_n) E_\varepsilon) \to 1$. Because $f$ is $\iota$-continuous, we find
\begin{align*}
\|f - y \ot 1\|_\nu^2  &= \lim_n \|\sigma_{\gamma_n}(f) - y \ot 1\|_\nu^2 \\
&= \lim_n \int_{B_2} \| \sigma_{p_1(\gamma_n)}(f(p_2(\gamma_n)^{-1} b))- y\|_{\nu_1}^2 \dd\nu_2(b) \\
&= \lim_n \int_{B_2} \| f(p_2(\gamma_n)^{-1} b)- \sigma_{\iota(\gamma_n)}^{-1}(y)\|_{\nu_1 \circ \sigma_{\iota(\gamma_n)}}^2 \dd\nu_2(b) \\
&= \lim_n \int_{B_2} \| f(p_2(\gamma_n)^{-1} b) - y\|_{\nu_1}^2 \dd\nu_2(b).
\end{align*}
This latter integral can split into two parts: the integral over $p_2(\gamma_n) E_\eps$, where the integrand is less than $\eps^2$, and the integral over the complementary set, whose measure goes to $0$ as $n$ goes to infinity (and where the integrand is bounded by $(2\Vert f \Vert)^2$). So we find that $\|f - y \ot 1\|_\nu^2 \leq \eps^2$. Since $\eps > 0$ can be arbitrary, we reach the desired conclusion that $f = y \otimes 1 \in N_1 \otimes 1$.
\end{proof}

\begin{thm}\label{thm:continuity-vN}
Let $(M,E)$ be a faithful $(\Gamma,N)$-von Neumann algebra. Denote by $M_1 \subset M$ the subset of $G_1$-continuous elements with respect to $\iota: \Gamma \to G_1$. 

Then $M_1 \subset M$ is a globally $\Gamma$-invariant von Neumann subalgebra and the action $\Gamma \curvearrowright M_1$ extends to a continuous action $G \curvearrowright M_1$ such that $G_2$ acts trivially.
\end{thm}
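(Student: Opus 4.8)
The easy part is the algebraic structure of $M_1$. First I would record that $M_1$ is a unital, self-adjoint, $\Gamma$-invariant subspace of $M$ which is closed under multiplication: self-adjointness and stability under products follow from the joint $*$-strong continuity of multiplication and of the adjoint on norm-bounded sets (since $\|\sigma_\gamma(x)\|=\|x\|$, the approximating elements stay in a fixed ball), while $\Gamma$-invariance follows by conjugating an approximating sequence: if $\iota(\gamma_n)\to e$ and $\gamma\in\Gamma$, then $\iota(\gamma^{-1}\gamma_n\gamma)\to e$, so for $x\in M_1$ one has $\sigma_{\gamma_n}(\sigma_\gamma(x))=\sigma_\gamma(\sigma_{\gamma^{-1}\gamma_n\gamma}(x))\to\sigma_\gamma(x)$ by $*$-strong continuity of $\sigma_\gamma$. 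Taking constant sequences $\gamma_n=\delta$ with $\iota(\delta)=e$ shows moreover that the $\Gamma$-action on $M_1$ factors through $\iota$; so the remaining task is to promote it to a continuous $G_1$-action (equivalently, a $G$-action with $G_2$ acting trivially) and to show that $M_1$ is weakly closed.

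The decisive reduction is that $E(M_1)\subseteq N_1\ot 1$. Indeed $E$ is normal and $\Gamma$-equivariant, hence $*$-strongly continuous on bounded sets, so it carries $\iota$-continuous elements of $M$ to $\iota$-continuous elements of $N$; by Lemma~\ref{exmp continuity points} these are exactly $N_1\ot 1$. The plan is then to linearise the $\Gamma$-action on $M_1$ by a genuine \emph{unitary} representation adapted to the boundary $N_1$. Setting $\varphi=\nu\circ E$, which is faithful and normal because $E$ is faithful, I would form the Stinespring/GNS Hilbert space $\cH_1$ of the normal ucp map $E_1:=E|_{M_1}:M_1\to N_1\subseteq B(L^2(N_1))$, with its canonical cyclic vector $\Omega_1=1\ot\widehat 1$ implementing $\varphi|_{M_1}=\nu_1\circ E_1$ and with the normal faithful representation $M_1\hookrightarrow B(\cH_1)$. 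The $\Gamma$-action is implemented on $\cH_1$ by the unitaries $w_\gamma=\sigma_\gamma\ot U^{(1)}_{\iota(\gamma)}$, where $U^{(1)}:G_1\to\mathcal U(L^2(N_1))$ is the continuous Koopman representation of the boundary action $G_1\curvearrowright(B_1,\nu_1)$; this is a well-defined unitary representation precisely because $E_1$ is $\Gamma$-equivariant with range in $N_1$, so that its second leg depends on $\gamma$ only through $\iota(\gamma)$.

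The crucial gain of working over $N_1$ rather than over $N$ is that $U^{(1)}_{\iota(\gamma)}$ is continuous in $\iota(\gamma)$. Consequently $\Omega_1$ is $\iota$-continuous (if $\iota(\gamma_n)\to e$ then $w_{\gamma_n}\Omega_1=1\ot U^{(1)}_{\iota(\gamma_n)}\widehat 1\to\Omega_1$), and the same estimate combined with the defining property $\sigma_{\gamma_n}(x)\to x$ of $x\in M_1$ shows that the GNS image $M_1\Omega_1$, and more generally $M_1\cdot\cH_{1,\iota}$, lies in the closed subspace $\cH_{1,\iota}$ of $\iota$-continuous vectors; hence $\cH_{1,\iota}$ is invariant under the von Neumann algebra generated by $M_1$. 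I would then apply Proposition~\ref{continuity rep} to $(\cH_1,w)$: the $\Gamma$-action on $\cH_{1,\iota}$ extends to a \emph{continuous} unitary $G$-representation $\pi$, and since $\iota$ is trivial on $G_2$, the subgroup $G_2$ acts trivially, so $\pi$ factors through $G_1$. Conjugation by $\pi(g)$ then transports the copy of $M_1$ inside $B(\cH_{1,\iota})$, and for $g=\lim\iota(\gamma_n)$ one has $\pi(\gamma_n)\,x\,\pi(\gamma_n)^*=\sigma_{\gamma_n}(x)\to\pi(g)x\pi(g)^*$ strongly on $\cH_{1,\iota}$; this produces the extended automorphisms $\sigma_g$ and, modulo the verification flagged below, identifies $M_1$ with the set of elements of the generated von Neumann algebra whose GNS vector and its adjoint are $\iota$-continuous — a $\sigma$-weakly closed condition — thereby giving that $M_1$ is a von Neumann subalgebra.

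The main obstacle, and the reason this is not a routine transcription of Proposition~\ref{continuity rep}, is that $\varphi=\nu\circ E$ is only $\mu$-\emph{stationary}, not $\Gamma$-invariant. Thus $\Gamma$ does not act isometrically on $L^2(M,\varphi)$, the translates $\gamma\cdot\nu$ degenerate in the $B_2$-direction as $\iota(\gamma)\to e$ (which forces $p_2(\gamma)\to\infty$), and the naive Cauchy/metric argument available in the invariant-state setting of \cite{Pe14} breaks down: bounded $*$-strong limits cannot be controlled uniformly along such sequences. The role of Lemma~\ref{exmp continuity points} is exactly to remove this pathology, by confining all the relevant data to the boundary $N_1$, on which the wandering $G_2$-direction is invisible. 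Making the passage from the Hilbert-space statement to the von Neumann-algebraic one fully rigorous — in particular the weak-closedness of $M_1$ and the fact that conjugation by $\pi(g)$ preserves $M_1$ itself rather than only the generated algebra — is where the remaining care is required.
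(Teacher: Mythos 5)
Your proposal is correct, and it takes a genuinely different route from the paper's proof, although both pivot on exactly the same key input: Lemma~\ref{exmp continuity points} combined with normality, equivariance and faithfulness of $E$, giving $E(M_1)\subseteq N_1\ot 1$. The paper exploits this directly inside $M$: for $x\in M_1$ and $\iota(\gamma_n)\to\iota(g)$ it computes $\Vert\sigma_{\gamma_n}(x)-\sigma_{\gamma_m}(x)\Vert_\phi^2=\nu\circ\sigma_{\gamma_m}\circ E(y_{n,m})$ with $y_{n,m}=(\sigma_{\gamma_m^{-1}\gamma_n}(x)-x)^*(\sigma_{\gamma_m^{-1}\gamma_n}(x)-x)\in M_1$, replaces $\sigma_{\gamma_m}$ by $\sigma_{\iota(\gamma_m)}$ (legitimate since $E(y_{n,m})\in N_1\ot1$), and invokes ultraweak continuity of the $G_1$-action on $N$ to get a $\Vert\cdot\Vert_\phi$-Cauchy sequence; weak closedness is then a Kaplansky-density rerun of the same estimate. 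You instead linearize: the covariant Stinespring dilation of $E|_{M_1}$ into $B(L^2(N_1))$, with $\Gamma$ implemented by $w_\gamma=\sigma_\gamma\ot U^{(1)}_{\iota(\gamma)}$ (well defined precisely because $E(M_1)\subseteq N_1\ot1$ and the Koopman unitaries absorb the $B_1$-cocycle), reduces the whole theorem to the Hilbert-space continuity-vector machinery of \S\ref{ss:contvectors}; note that what you actually need there is the elementary extension statement preceding Proposition~\ref{continuity rep}, not the proposition itself, whose content is the identification with $\tH^{G_2}$. The two verifications you flag do go through along the lines you indicate: writing $P$ for the weak closure of $M_1$ in $M$, normality of $E$ lets $\pi_1$ extend to a faithful normal representation $\tilde\pi_1$ of $P$ on $\cH_1$ satisfying $w_\gamma\tilde\pi_1(y)w_\gamma^*=\tilde\pi_1(\sigma_\gamma(y))$ and $\Vert\tilde\pi_1(y)\Omega_1\Vert=\Vert y\Vert_\phi$ (the functionals $\omega_{\Omega_1}\circ\tilde\pi_1$ and $\phi$ are normal and agree on $M_1$), and since $w_{\gamma_n}^*\Omega_1=1\ot U^{(1)}_{\iota(\gamma_n)^{-1}}\widehat{1}\to\Omega_1$, the estimate $\Vert\sigma_{\gamma_n}(y)-y\Vert_\phi\leq\Vert y\Vert\,\Vert w_{\gamma_n}^*\Omega_1-\Omega_1\Vert+\Vert w_{\gamma_n}(\tilde\pi_1(y)\Omega_1)-\tilde\pi_1(y)\Omega_1\Vert$ shows that $y\in P$ belongs to $M_1$ as soon as $\tilde\pi_1(y)\Omega_1$ and $\tilde\pi_1(y^*)\Omega_1$ are $\iota$-continuous vectors, the converse being your invariance of $\cH_{1,\iota}$ under $\pi_1(M_1)$; this characterization of $M_1$ inside $P$ is stable under bounded ultraweak limits (the map $y\mapsto\tilde\pi_1(y)\Omega_1$ is ultraweak-to-weak continuous on bounded sets and $\cH_{1,\iota}$ is weakly closed), so Krein--Smulian gives weak closedness, and it is stable under conjugation by $\pi(g)$ because $\cH_{1,\iota}$ is norm closed and $\tilde\pi_1(\sigma_{\gamma_n}(x))\Omega_1$ converges in norm. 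As for what each approach buys: the paper's argument is shorter and self-contained within $M$, while yours is more conceptual -- it re-uses the already-proved Hilbert-space statement and makes transparent why mere stationarity of $\phi$ is harmless, namely the Radon--Nikodym cocycle of $\phi\circ\sigma_\gamma$ on $M_1$ involves only the $B_1$-variable and is exactly what the Koopman representation carries.
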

\begin{proof}
One easily checks that $M_1$ is $\Gamma$-invariant. Moreover, $M_1$ is a $\ast$-subalgebra of $M$ simply because the multiplication map $M \times M \to M : (x, y) \mapsto xy$ is $\ast$-strongly continuous on uniformly bounded sets. 
The following claims prove the remaining statements.

{\bf Claim 1.} For any $x \in M_1$, the orbit map $\Gamma \to M : \gamma\mapsto \sigma_\gamma(x)$ extends to a continuous map $G \to M$, which only depends on the first variable and takes values in $M_1$.

The extension map is constructed by the classical extension argument for uniformly continuous maps into complete spaces, but we do it by hand. Take $x \in M_1$. 
%First, we prove that the orbit map $\Gamma \to M : \gamma\mapsto \sigma_\gamma(x) \in M$ extends to a continuous map $G \to M$. For this, 
Let $g \in G$ and take a sequence $(\gamma_n)_{n \in \N}$ in $\Gamma$ such that $\iota(\gamma_n) \to p_1(g)$ in $G_1$. We prove that $(\sigma_{\gamma_n}(x))_{n \in \N}$ converges $\ast$-strongly in $M$. For this, consider the faithful normal state $\phi = \nu \circ E \in M_\ast$ and recall that the strong topology on bounded sets of $M$ is given by the norm $\Vert \cdot \Vert_\phi$. For all $m, n \in \N$, we have 
\begin{align*}
\Vert \sigma_{\gamma_n}(x) - \sigma_{\gamma_m}(x) \Vert_\phi^2 & = \nu \circ  E ((\sigma_{\gamma_n}(x) - \sigma_{\gamma_m}(x))^*(\sigma_{\gamma_n}(x) - \sigma_{\gamma_m}(x)))\\
& = \nu \circ \sigma_{\gamma_m} \circ E (y_{n,m}),
\end{align*}
where $y_{n,m} = (\sigma_{\gamma_m^{-1}\gamma_n}(x) - x)^*(\sigma_{\gamma_m^{-1}\gamma_n}(x) - x)$. Since $x \in M_1$ and $E$ is normal, $E(y_{n,m})$ converges ultraweakly to $0$ as $n,m \to \infty$. Moreover, since $M_1$ is a $\Gamma$-invariant *-subalgebra of $M$, $y_{n,m} \in M_1$ and thus Lemma \ref{exmp continuity points} implies that $E(y_{n,m}) \in N_1 \ot 1$, for all $n, m \in \N$. In particular, we find 
\[\sigma_{\gamma_m} \circ E(y_{n,m}) = \sigma_{\iota(\gamma_m)} \circ E (y_{n,m}).\]
Since $\iota(\gamma_m) \to p_1(g)$ in $G_1$ and since the action map $ G \times N \to N$ is ultraweakly continuous, we conclude that $\sigma_{\gamma_m} \circ E(y_{n,m}) \to \sigma_{p_1(g)}(0) = 0$, ultraweakly in $N$ as $m \to \infty$ and $n \to \infty$.

This shows that the uniformly bounded sequence $(\sigma_{\gamma_n}(x))_{n \in \N}$ is $\|\cdot\|_\phi$-Cauchy and hence strongly converges to some $y \in M$. Applying the same argument with $x^*$ instead of $x$, we see that the sequence $(\sigma_{\gamma_n}(x))_{n \in \N}$ is $\ast$-strongly convergent to $y \in M$. The above computation also applies to show that the $\ast$-strong limit $y \in M$ does not depend on the choice of the sequence $(\gamma_n)_{n \in \N}$ but only on $p_1(g) \in G_1$. Therefore,  we may define $\sigma_g(x) = y$, which thus only depends on the first variable $p_1(g) \in G_1$. The independence on the sequence $(\gamma_n)_{n \in \N}$ also implies that the orbit map $g \in G \mapsto \sigma_g(x)$ is strongly continuous.

Let us check that for every $g \in G$, $\sigma_g(x) \in M_1$. Indeed, let $g \in G$ and $(\gamma_n)_{n \in \N}$ any sequence in $\Gamma$ such that $\iota(\gamma_n) \to e$ in $G_1$. We have to show that $\sigma_{\gamma_n}(\sigma_g(x)) \to \sigma_g(x)$ $\ast$-strongly. For any $\eps > 0$, we may find a neighborhood $U \subset G_1$ of $p_1(g)$ such that $\Vert \sigma_\gamma(x) - \sigma_g(x) \Vert_\phi < \eps$ for all $\gamma \in \Gamma$ such that $\iota(\gamma) \in U$. Take a neighborhood $U_1 \subset G_1$ of $e$ and a neighborhood $U_2 \subset G_1$ of $p_1(g)$ such that $U_1 U_2 \subset U$. Fix $n \in \N$ large enough so that $\iota(\gamma_n) \in U_1$. By definition of $\sigma_g(x)$, we may find $\gamma \in \Gamma$ such that $\iota(\gamma) \in U_2$ and
\[\Vert \sigma_{\gamma_n} (\sigma_\gamma(x) - \sigma_g(x))\Vert_\phi =  \|\sigma_\gamma(x) - \sigma_g(x)\|_{\phi \circ \sigma_{\gamma_n}} < \eps.\]
Then we have
\begin{align*}
\Vert \sigma_{\gamma_n}(\sigma_g(x)) - \sigma_g(x)\Vert_\phi & \leq \Vert \sigma_{\gamma_n} (\sigma_g(x) - \sigma_\gamma(x))\Vert_\phi + \Vert \sigma_{\gamma_n}(\sigma_{\gamma}(x)) - \sigma_g(x)\Vert_\phi\\
& \leq \eps + \Vert\sigma_{\gamma_n\gamma}(x) - \sigma_g(x)\Vert_\phi.
\end{align*}
But since $\iota(\gamma_n \gamma) \in U$, the last term above is also bounded by $\eps$, and hence for all $n \in \N$ large enough, we get
\[\Vert \sigma_{\gamma_n}(\sigma_g(x)) - \sigma_g(x)\Vert < 2\eps.\]
This proves that $\sigma_{\gamma_n}(\sigma_g(x)) \to \sigma_g(x)$ strongly. Applying the same reasoning to $x^* \in M_1$, we obtain $\sigma_{\gamma_n}(\sigma_g(x)) \to \sigma_g(x)$ $\ast$-strongly. This proves that $\sigma_g(x) \in M_1$ and finishes the proof of Claim 1.

{\bf Claim 2.} $M_1 \subset M$ is a von Neumann subalgebra and $\sigma : G \curvearrowright M_1$ is a continuous action.

Indeed, let $x \in (M_1)\dpr$ and take a sequence $(\gamma_n)_n$ in $\Gamma$ such that $\iota(\gamma_n) \to e$ in $G_1$. Fix $\eps > 0$ and take $x_0 \in M_1$ such that $\Vert x - x_0\Vert_\phi < \eps$. Since $(x - x_0)^*(x - x_0)$ is in the weak closure of $M_1$, Lemma \ref{exmp continuity points} implies that $E((x - x_0)^*(x - x_0)) \in N_1 \ot 1$, i.e.\ this element is $\iota$-continuous in $N$. In particular, $\lim_n \sigma_{\gamma_n}(E((x - x_0)^*(x - x_0))) = E((x - x_0)^*(x - x_0))$. Applying $\nu$, we find 
\[\limsup_n \Vert \sigma_{\gamma_n}(x - x_0)\Vert_\phi^2 = \limsup_n \nu \circ \sigma_{\gamma_n} \circ E((x - x_0)^*(x - x_0)) = \Vert x - x_0\Vert_\phi^2 < \eps^2. \]
This allows to compute 
\[\limsup_n \Vert \sigma_{\gamma_n}(x) - x \Vert_\phi \leq \limsup_n \left(\Vert \sigma_{\gamma_n}(x - x_0) \Vert_\phi + \Vert \sigma_{\gamma_n}(x_0) - x_0 \Vert_\phi + \Vert x_0 - x \Vert_\phi \right) < 2\eps.\]
As $\eps > 0$ can be arbitrarily small, this shows that $\sigma_{\gamma_n}(x) \to x$ strongly. Applying the same reasoning to $x^* \in M_1$, we obtain that $\sigma_{\gamma_n}(x) \to x$ strongly. So $x \in M_1$ and thus $M_1$ is indeed a von Neumann algebra. The fact the action $\sigma : G \curvearrowright M_1$ is continuous follows from Claim 1 and \cite[Proposition X.1.2]{Ta03a}. 
\end{proof}

\begin{thm}\label{continuous elts}
Keep the notation $\Gamma < G = G_1 \times G_2$, $\iota$, $N = N_1 \ovt N_2$ as above. Let $(M,E)$ be a separable faithful $(\Gamma,N)$-von Neumann algebra. Denote by $(\tM,\tE)$ the induced $(G,\tN)$-algebra as defined in Example \ref{induced structure}. 

The algebra $M_1 \subset M$ of $\iota$-continuous elements identifies with the fixed point algebra $\tM^{G_2}$. More precisely, there is a $G$-equivariant surjective isomorphism $\kappa: M_1 \to \tM^{G_2}$ such that $(E_N \circ \tE) \circ \kappa = E$, where $E_N : \tN \to N$ is as defined in Example \ref{induced structure}.
\end{thm}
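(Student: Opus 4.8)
The plan is to write down the natural candidate for $\kappa$ and then check, in order, that it is a well-defined normal $*$-isomorphism onto $\tM^{G_2}$ which intertwines the structure maps. By Theorem~\ref{thm:continuity-vN}, the $\Gamma$-action on $M_1$ extends to a continuous $G$-action $\sigma$ through which $G_2$ acts trivially, so I would set
\[\kappa : M_1 \to L^\infty(G)\ovt M, \qquad \kappa(x)(g) = \sigma_{g^{-1}}(x).\]
For $x \in M_1$ the orbit map $g \mapsto \sigma_{g^{-1}}(x)$ is $*$-strongly continuous and bounded by $\|x\|$ (Claim~1 in the proof of Theorem~\ref{thm:continuity-vN}), so $\kappa(x)$ is a legitimate element of $L^\infty(G)\ovt M$. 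A direct check gives $\kappa(x)(g\gamma) = \sigma_\gamma^{-1}(\kappa(x)(g))$ (right $\Gamma$-equivariance) and $\kappa(x)(hg) = \kappa(x)(g)$ for $h \in G_2$ (left $G_2$-invariance, as $G_2$ acts trivially on $M_1$), so $\kappa$ ranges in $\tM^{G_2}$. Pointwise application of the automorphisms $\sigma_{g^{-1}}$ makes $\kappa$ a $*$-homomorphism, and $G$-equivariance is immediate from $\kappa(\sigma_h(x))(g) = \sigma_{g^{-1}h}(x) = \kappa(x)(h^{-1}g) = (\lambda_h\ot\id)(\kappa(x))(g)$.

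The next step, which also feeds the intertwining identity, is to upgrade the $\Gamma$-equivariance of $E$ to $G$-equivariance on $M_1$. I would observe that for $x \in M_1$ both maps $g \mapsto E(\sigma_g(x))$ and $g \mapsto \sigma_g(E(x))$ factor through $p_1 : G \to G_1$: the first because $\sigma_g(x)$ depends only on $\iota(g)$ (Theorem~\ref{thm:continuity-vN}), the second because $E(x) \in N_1\ot 1$ by Lemma~\ref{exmp continuity points} and $G_2$ acts trivially on $N_1\ot 1$. Both are continuous and agree on $\iota(\Gamma)$, which is dense in $G_1$ by the dense projections hypothesis; hence $E(\sigma_g(x)) = \sigma_g(E(x))$ for all $g \in G$. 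With this in hand, using the explicit formula $E_N(f) = \int_{G/\Gamma} \sigma_g(f(g))\dd m_{G/\Gamma}(g\Gamma)$ extracted from Example~\ref{induced structure}, the integrand of $(E_N\circ\tE)(\kappa(x))$ becomes $\sigma_g(E(\sigma_{g^{-1}}(x))) = E(x)$, a constant, so $(E_N\circ\tE)\circ\kappa = E$. In particular $\kappa$ preserves the faithful normal states $\phi = \nu\circ E$ and $\tphi = \nu\circ E_N\circ\tE$, whence it extends to an isometry of GNS spaces and is automatically normal and (by faithfulness of $\phi$) injective; thus $\kappa(M_1)$ is a von Neumann subalgebra of $\tM^{G_2}$.

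It remains to prove surjectivity, and this is where I expect the real work. Identifying $\tM^{G_2}$ with the essentially bounded measurable maps $\bar F : G_1 \to M$ satisfying $\bar F(t\iota(\gamma)) = \sigma_\gamma^{-1}(\bar F(t))$, the goal is to produce $x \in M_1$ with $\bar F(t) = \sigma_{t^{-1}}(x)$ a.e. The formal skeleton mimics the surjectivity half of Proposition~\ref{continuity rep}: once one knows that every essential value $x_0$ of $\bar F$ lies in $M_1$, the function $\Phi(t) = \sigma_t(\bar F(t))$ is well defined, $M_1$-valued, and right $\iota(\Gamma)$-invariant; since $\iota(\Gamma)$ is dense in $G_1$ and translation is continuous, $\Phi$ is invariant under all of $G_1$, hence essentially constant, equal to the desired $x$, and then $\bar F = \kappa(x)$.

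The main obstacle is precisely the claim that the essential values of $\bar F$ are $\iota$-continuous. In the unitary setting of Proposition~\ref{continuity rep} one used crucially that $\Gamma$ acts \emph{isometrically}, so that the translate of a value near $x_0$ is again near $x_0$ in the same norm. Here $\phi = \nu\circ E$ is merely stationary, and the translated norms $\|\cdot\|_{\phi\circ\sigma_{\gamma_n}}$ are genuinely distorted, because the $B_2$-marginal of $\sigma_{\gamma_n}^*\nu$ is uncontrolled even when $\iota(\gamma_n)\to e$. The guiding fact that rescues the argument is Lemma~\ref{exmp continuity points}: since $E$ sends $\iota$-continuous elements into $N_1\ot 1$, on such elements the $B_2$-direction integrates out while the $B_1$-marginal converges, so the distortion disappears, i.e.\ $\|y\|_{\phi\circ\sigma_{\gamma_n}} \to \|y\|_\phi$ as $\iota(\gamma_n)\to e$. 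Coupled with the boundary-spreading statement \cite[Lemma 5.1]{Pe14} — which supplies a sequence $\gamma_n$ with $\iota(\gamma_n)\to e$ whose $G_2$-translates spread the boundary measure over the set where $\bar F \approx x_0$ — this is what replaces the isometry argument and lets one deduce $\sigma_{\gamma_n}(x_0)\to x_0$, that is, $x_0 \in M_1$. Making this last estimate precise, so that the approximation of $x_0$ by values of $\bar F$ survives passage through the distorted norms, is the delicate point of the whole proof.
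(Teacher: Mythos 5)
Your construction of $\kappa$, the verification that it is an injective $G$-equivariant $*$-homomorphism into $\tM^{G_2}$, and your derivation of $(E_N\circ\tE)\circ\kappa=E$ (by upgrading the $\Gamma$-equivariance of $E$ on $M_1$ to $G$-equivariance via density of $\iota(\Gamma)$ in $G_1$) are all correct and agree with the paper, which in fact leaves the intertwining identity as a one-line remark. The genuine gap is exactly where you locate it: surjectivity, i.e.\ showing that every essential value $y$ of $f\in\tM^{G_2}$ is $\iota$-continuous. But the mechanism you propose to tame the distorted norms is circular. Lemma \ref{exmp continuity points} says that $E$ maps \emph{$\iota$-continuous} elements of $M$ into $N_1\ot 1$; you want to apply it to $y$ and to $(f(g)-y)^*(f(g)-y)$ in order to get $\|\cdot\|_{\phi\circ\sigma_{\gamma_n}}\approx\|\cdot\|_{\phi}$ on these elements, yet whether $y$ (and the values $f(g)$) are $\iota$-continuous is precisely the statement under proof. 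Moreover \cite[Lemma 5.1]{Pe14} is not the tool that closes the estimate: in the paper it is used only inside the proof of Lemma \ref{exmp continuity points} itself; in the surjectivity argument the only input about translates one needs is that $A\cap\bigl(A\cdot\iota(\gamma_n)\bigr)$ has positive measure for $n$ large, which is automatic from continuity of translation on $G$, not a boundary-spreading phenomenon.

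The missing idea is Lemma \ref{lem:fixed-point}, an ergodicity statement that requires no continuity whatsoever of the values of $f$. One applies it first in one variable, to $\tE(f)$ and $\tE(f^*f)$, to get $E(f(g))\in N_1\ot 1$ and $E(f(g)^*f(g))\in N_1\ot 1$ a.e.; and then, crucially, in two variables: the function $F(g,h)=E(f(g)^*f(h))$ is $G_2\times G_2$-invariant and $\Gamma$-equivariant, and since dense projections make $G_2\actson G/\Gamma$ ergodic, hence by \cite[Corollary 2.18]{BS04} $G_2\actson G/\Gamma\times B_2$ ergodic, the lemma forces $F(g,h)\in N_1\ot 1$ for a.e.\ $(g,h)$. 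This two-variable claim is what handles the cross terms $E(f(g)^*y)$, $E(y^*f(g))$ in the expansion of $E\bigl((f(g)-y)^*(f(g)-y)\bigr)$, so that this element lies in $N_1\ot 1$, i.e.\ is genuinely $G_2$-invariant, for a.e.\ $g$ --- with no circularity. From there the distortion disappears for the right reason: $\nu\circ\sigma_{\gamma_n}$ and $\nu\circ\sigma_{\iota(\gamma_n)}$ agree on $G_2$-invariant elements, $\|\nu-\nu\circ\sigma_{\iota(\gamma_n)}\|\to 0$ as $\iota(\gamma_n)\to e$, and a triangle inequality through a point $g\in A\cap\bigl(A\cdot\iota(\gamma_n)\bigr)$ yields $\|\sigma_{\gamma_n}(y)-y\|_\phi\leq(1+\sqrt2)\eps$. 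Without the two-variable ergodicity claim your estimate cannot be closed, since the one-variable observation $\tE(f)\in L^\infty(G)\ovt N_1\ot 1$ says nothing about the mixed products of $f(g)$ with $y$.
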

\begin{proof}
We view $\tM = (L^\infty(G) \ovt M)^{(\rho \ot \sigma)(\Gamma)}$ as the algebra of $\Gamma$-equivariant functions from $G$ to $M$ (with respect to the right $\Gamma$-action on $G$).
We then define the map $\kappa: M_1 \to \tM$ by the formula 
\[\kappa(x)(g) = \sigma_g^{-1}(x) \in M, \text{ for all } x \in M_1, g \in G.\] 
This map is clearly $G$-equivariant, so it must range into $\tM^{G_2}$. It is also obvious that $\kappa$ is injective; let us prove that it is surjective.

Let $f \in \tM^{G_2}$. Proceeding as in the proof of Proposition \ref{continuity rep}, in order to show that $f$ is in the range of $\kappa$, it suffices to show that any essential value $y$ of $f$ is an element of $M_1$. 

Lemma \ref{lem:fixed-point} below implies that $\tN^{G_2} \subset L^\infty(G) \ovt N_1 \ovt 1$, so $\tE(f) \in L^\infty(G) \ovt N_1 \ovt 1$. Since $\tE$ is equal to $\id \ot E$, we deduce that $E(f(g)) \in N_1 \ovt 1$, for almost every $g \in G$. In particular, $E(y) \in N_1 \ovt 1$. We may apply the same reasoning to $f^*f \in (\tM)^{G_2}$ and deduce that $E(y^*y) \in N_1 \ovt 1$. This fact will be useful, but we need more.

{\bf Claim.} For almost every $g,h \in G$, we have $E(f(g)^*f(h)) \in N_1 \ovt 1$.

Indeed, the measurable function of two variables $F: G \times G \to N : (g,h)  \mapsto E(f(g)^*f(h))$ is $G_2 \times G_2$-invariant and it is $\Gamma$-equivariant in the sense that $F(g\gamma,h\gamma) = \sigma_\gamma^{-1}(F(g,h))$, for all almost all $g,h \in G$ and all $\gamma \in \Gamma$. The claim now follows from Lemma \ref{lem:fixed-point} below.

Using this claim and the observations preceding it, we find that for almost every $g \in G$, $E((f(g) - y)^*(f(g) - y)) \in N_1 \ovt 1$ and so $E((f(g) - y)^*(f(g) - y))$ is $G_2$-invariant. Let $(\gamma_n)_{n \in \N}$  be any sequence in $\Gamma$ such that $\iota(\gamma_n) \to e$ in $G_1$. We now show that $\sigma_{\gamma_n}(y) \to y$ $\ast$-strongly in $M$. Let $\eps > 0$ and consider the set of positive measure 
\[A= \{ g \in G \mid \Vert f(g) - y \Vert_\phi < \eps \}.\]
Choose $n \in \N$ large enough so that the intersection $A \cap (A \cdot \iota(\gamma_n))$ has positive measure and pick an element $g \in A \cap (A \cdot \iota(\gamma_n))$ such that $E((f(g) - y)^*(f(g) - y))$ is $G_2$-invariant. We may also assume that $n$ is large enough so that $\Vert \nu - \nu \circ \sigma_{\iota(\gamma_n)}  \Vert \cdot (2\Vert f \Vert)^2 < \eps^2$.

Then on the one hand, we have $g\iota(\gamma_n)^{-1} \in A$, and $\Vert f(g \gamma_n^{-1}) - y \Vert_\phi = \Vert f(g\iota(\gamma_n)^{-1}) - y \Vert_\phi < \eps$. On the other hand, we have
\[ \Vert f(g \gamma_n^{-1}) - \sigma_{\gamma_n}(y) \Vert_\phi^2 = \Vert \sigma_{\gamma_n}(f(g) - y)\Vert^2_\phi = \nu \circ \sigma_{\gamma_n} \circ E((f(g) - y)^*(f(g) - y)).\]
By our choice of $g$, $E((f(g) - y)^*(f(g) - y))$ is $G_2$-invariant and hence we may continue our computation
\begin{align*}
\Vert f(g \gamma_n^{-1}) - \sigma_{\gamma_n}(y) \Vert_\phi^2 & =  \nu \circ \sigma_{\iota(\gamma_n)} \circ E((f(g) - y)^*(f(g) - y))\\
& \leq \Vert f(g) - y\Vert_\phi^2 + \Vert \nu - \nu \circ \sigma_{\iota(\gamma_n)}\Vert \cdot (2\Vert f \Vert)^2\\
& < 2\eps^2.
\end{align*}
In conclusion, we see that
\[\Vert y - \sigma_{\gamma_n}(y) \Vert_ \phi \leq \Vert y - f(g\gamma_n^{-1}) \Vert_ \phi + \Vert f(g \gamma_n^{-1}) - \sigma_{\gamma_n}(y) \Vert_\phi < (1 + \sqrt{2})\eps.\]
This proves that $\sigma_{\gamma_n}(y) \to y$ strongly in $M$. Applying the same reasoning to $y^* \in M$ which is an essential value of $f^* \in (\widetilde M)^{G_2}$, we obtain $\sigma_{\gamma_n}(y) \to y$ $\ast$-strongly in $M$. So $y \in M_1$, as desired.

Finally, the equality $E_N \circ \tE \circ \kappa = E$ can be verified by making the map $E_N$ explicit.
\end{proof}

We used the following technical result.

\begin{lem}\label{lem:fixed-point}
Let  $\cN = L^\infty(G) \ovt L^\infty(G) \ovt N$ and define the action $\sigma : G_2 \times G_2 \times \Gamma \curvearrowright \cN$ by $\sigma_{(g,h,\gamma)} = \lambda_g\rho_{\gamma} \ot \lambda_h\rho_{\gamma}  \ot \sigma_\gamma$ for $g,h \in G_2$, $\gamma \in \Gamma$.
Then we have
\[\cN^{G_2 \times G_2 \times \Gamma} \subset L^\infty(G) \ovt L^\infty(G) \ovt N_1 \ovt 1_{B_2}.\]
In particular $\tN^{G_2} \subset L^\infty(G) \ovt N_1 \ovt 1_{B_2}$.
\end{lem}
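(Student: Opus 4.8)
The plan is to reduce the statement to a noncommutative version of Lemma~\ref{exmp continuity points} and then run the same boundary argument. Write $\cN = \cA \ovt N$ where $\cA = L^\infty(G) \ovt L^\infty(G)$ carries the action of $G_2 \times G_2$ by $\lambda_g \ot \lambda_h$ and of $\Gamma$ by $\rho_\gamma \ot \rho_\gamma$, and $N = N_1 \ovt N_2$ carries $\sigma_\gamma$. The first step is to integrate out the two $G_2$-factors: since $\lambda$ is left translation and $G_2$ is the second factor of $G = G_1 \times G_2$, the $\lambda(G_2)\times\lambda(G_2)$-invariants of $\cA$ are exactly the functions descending to $G_2\backslash G \times G_2\backslash G \cong G_1 \times G_1$, so $\cA^{G_2\times G_2} = L^\infty(G_1)\ovt L^\infty(G_1) =: \cA_1$, viewed inside $\cA$ as the left-$G_2$-invariant functions. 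On $\cA_1$ the residual action $\rho_\gamma \ot \rho_\gamma$ is right translation by $p_1(\gamma)$, so $\cA_1$ is a $G_1$-von Neumann algebra and the $\Gamma$-action factors through $\iota = p_1$. Hence any $F \in \cN^{G_2 \times G_2 \times \Gamma}$ lies in $(\cA_1 \ovt N)^\Gamma$; setting $\cN_1 := \cA_1 \ovt N_1 = L^\infty(G_1)\ovt L^\infty(G_1)\ovt N_1$ — a $G_1$-algebra on which $\Gamma$ acts through $p_1$ — we are reduced to proving $(\cN_1 \ovt N_2)^\Gamma \subset \cN_1 \ovt 1_{B_2}$, where $\Gamma$ acts diagonally via $p_1$ on $\cN_1$ and via $p_2$ on $N_2 = L^\infty(B_2)$.

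This is exactly the situation of Lemma~\ref{exmp continuity points}, with the commutative factor $N_1$ replaced by the $G_1$-algebra $\cN_1$. To run the same argument I would equip $\cN_1$ with a faithful normal state $\omega$, namely the tensor product of $\nu_1$ on $N_1$ with states on the two copies of $L^\infty(G_1)$ given by probability measures equivalent to Haar measure, and record that the Koopman representation of $G_1$ on $L^2(\cN_1,\omega)$ is strongly continuous and that $\omega \circ \sigma_{p_1(\gamma_n)} \to \omega$ whenever $p_1(\gamma_n) \to e$; both hold because right translation is $L^2$-continuous for the Haar class and $\nu_1$ is quasi-invariant. Given $F \in (\cN_1 \ovt N_2)^\Gamma$, I would view it as an element of $L^\infty(B_2, L^2(\cN_1,\omega))$, pick an essential value $y \in L^2(\cN_1,\omega)$, and for $\eps > 0$ set $E_\eps = \{ b \in B_2 : \|F(b) - y\|_\omega < \eps\}$, a set of positive $\nu_2$-measure.

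The key input, exactly as in Lemma~\ref{exmp continuity points}, is \cite[Lemma 5.1]{Pe14}: using that $p_2(\Gamma)$ is dense in $G_2$ and that $B_2$ is the $(G_2,\mu_2)$-boundary, there is a sequence $(\gamma_n)$ in $\Gamma$ with $p_1(\gamma_n) \to e$ in $G_1$ and $\nu_2(p_2(\gamma_n) E_\eps) \to 1$. Using the exact invariance $\sigma_{\gamma_n}(F) = F$ and expanding $\sigma_{\gamma_n}(F)(b) = \sigma_{p_1(\gamma_n)}\big(F(p_2(\gamma_n)^{-1} b)\big)$, the same computation as in Lemma~\ref{exmp continuity points} gives
\begin{align*}
\| F - y \ot 1 \|_{\omega \ot \nu_2}^2 = \lim_n \int_{B_2} \big\| F(p_2(\gamma_n)^{-1} b) - y \big\|_\omega^2 \dd\nu_2(b),
\end{align*}
where the twist by $\sigma_{p_1(\gamma_n)}$ and the measure distortion vanish in the limit by the continuity statements above. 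Splitting the integral over $p_2(\gamma_n) E_\eps$ (integrand $< \eps^2$) and its complement (measure $\to 0$, integrand $\le (2\|F\|)^2$) and letting $n \to \infty$ yields $\|F - y \ot 1\|_{\omega\ot\nu_2}^2 \le \eps^2$. As $\eps > 0$ is arbitrary, $F = y \ot 1 \in \cN_1 \ovt 1_{B_2}$, and unwinding the identification $\cA_1 \subset \cA$ gives $F \in L^\infty(G)\ovt L^\infty(G)\ovt N_1 \ovt 1_{B_2}$. The ``in particular'' assertion about $\tN^{G_2}$ is the special case with a single $L^\infty(G)$ factor and a single copy of $G_2$, proved identically.

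I expect the main obstacle to be purely bookkeeping: correctly identifying the residual $\Gamma$-action on $\cA_1 = \cA^{G_2\times G_2}$ as factoring through $p_1$, and verifying the continuity and quasi-invariance of the state $\omega$ on the noncommutative, non-probabilistic factor $L^\infty(G_1)\ovt L^\infty(G_1)$ so that the $L^2$-estimate borrowed from Lemma~\ref{exmp continuity points} goes through verbatim. The genuinely nontrivial analytic ingredient — producing group elements that are small in $G_1$ yet spread a positive-measure subset of $B_2$ to near-full measure — is already supplied by \cite[Lemma 5.1]{Pe14}.
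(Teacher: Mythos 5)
Your argument is correct, but it takes a genuinely different route from the paper's proof. You take invariants in stages: the left $G_2\times G_2$-action collapses the two $L^\infty(G)$ legs to $L^\infty(G_1)\ovt L^\infty(G_1)$, after which the residual $\Gamma$-action factors through $p_1$ on everything except $N_2$, and you then rerun the proof of Lemma~\ref{exmp continuity points} with the enlarged coefficient algebra $\cN_1=L^\infty(G_1)\ovt L^\infty(G_1)\ovt N_1$ in place of $N_1$, the key input again being \cite[Lemma 5.1]{Pe14}. The paper proceeds the other way around: it first untwists the $\Gamma$-invariants through an isomorphism $\Xi:\cN^\Gamma\to L^\infty(G/\Gamma)\ovt L^\infty(G)\ovt N$ given by $\Xi(F)(g\Gamma,h)=\sigma_g(F(g,hg))$, under which the remaining $G_2\times G_2$-invariance becomes invariance under $\lambda_g\ot\id\ot\sigma_g$ for $g\in G_2$, and then concludes from the ergodicity of the diagonal action $G_2\curvearrowright G/\Gamma\times B_2$, which follows from ergodicity of $G_2\curvearrowright G/\Gamma$ (dense projections) together with \cite[Corollary 2.18]{BS04}. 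The paper's route requires no new estimate---all the analysis is outsourced to the Bader--Shalom ergodicity theorem, and the role of the dense-projections hypothesis is transparent---whereas your route is more self-contained relative to this paper, reusing exactly the technique and the external input already needed for Lemma~\ref{exmp continuity points}; the price is redoing the $L^2$-estimate with operator-valued coefficients and checking the continuity properties of the auxiliary faithful normal state $\omega$, both of which are indeed routine, since for any continuous $G_1$-action $\alpha$ on a von Neumann algebra and any normal state $\omega$ the maps $g\mapsto\omega\circ\alpha_g$ (in predual norm) and $g\mapsto\alpha_g(y)$ (in $\|\cdot\|_\omega$, for fixed $y$) are continuous at $e$. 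In effect you establish a coefficient version of Lemma~\ref{exmp continuity points}: for any continuous $G_1$-von Neumann algebra $\cM$ with separable predual, the $\Gamma$-invariant (indeed the $\iota$-continuous) elements of $\cM\ovt L^\infty(B_2)$ lie in $\cM\ot 1$, which is slightly more general than what the statement of Lemma~\ref{lem:fixed-point} records.
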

\begin{proof}
Set $\mathcal P = L^\infty(G/\Gamma) \ovt L^\infty(G) \ovt N$ and define the action $\beta : G_2 \times G_2\curvearrowright \mathcal P$ by $\beta_{(g, h)} = \lambda_g \otimes \lambda_h \rho_g \otimes \sigma_g$ for $g, h \in G_2$. Define the unital $\ast$-isomorphism $\Xi : \cN^\Gamma \to \mathcal P$ by the formula
\[\Xi(F) (g\Gamma, h) = \sigma_g(F(g, hg)), \text{ for every } F \in \cN^\Gamma, \text{ almost every } (g, h) \in G \times G.\]
 One checks that the isomorphism $\Xi$ is onto and intertwines the action $G_2 \times G_2 \curvearrowright \mathcal N^\Gamma$ with the action $\beta : G_2 \times G_2 \curvearrowright \mathcal P$. 
 
Let now $F \in \cN^{G_2 \times G_2 \times \Gamma}$. Then $\Xi(F) \in  L^\infty(G/\Gamma) \ovt L^\infty(G_1) \ovt N$ and $\Xi(F)$ invariant under the automorphisms $\lambda_g \ot \id_{G_1} \ot \sigma_g$ for all $g \in G_2$. Since $\Gamma < G_1 \times G_2$ is a lattice with dense projections, the pmp action $G_2 \curvearrowright G/\Gamma$ is ergodic and \cite[Corollary 2.18]{BS04} implies that the diagonal action  $G_2 \actson G/\Gamma \times B_2$ is ergodic. This further implies that $\Xi(F) \in L^\infty(G/\Gamma) \ovt L^\infty(G) \ovt N_1 \ovt 1_{B_2}$ which in turn implies that $ F \in L^\infty(G) \ovt L^\infty(G) \ovt N_1 \ovt 1_{B_2}$.
\end{proof}

Combining the above result with Lemma \ref{lem:structure-product} we obtain the following key theorem.

\begin{thm}\label{non-trivial G points}
Take $k \geq 2$ and a product $G = G_1 \times \cdots \times G_k$ of $k$ lcsc groups. For every $1 \leq i \leq k$, choose an admissible Borel probability measure $\mu_i \in \Prob(G_i)$ and denote by $(B_i,\nu_i)$ the Furstenberg-Poisson boundary of $(G_i, \mu_i)$. Then the product $G$-space $(B,\nu) := (B_1,\nu_1) \times \cdots \times  (B_k,\nu_k)$ is the Furstenberg-Poisson boundary of $G$ with respect to the product measure $\mu := \mu_1 \ot \cdots \ot \mu_k \in \Prob(G)$ (see \cite[Corollary 3.2]{BS04}). Set $N = L^\infty(B,\nu)$.

Take a lattice with dense projections $\Gamma < G$ and a faithful $(\Gamma,N)$-von Neumann algebra $(M,E)$. If $E$ is not $\Gamma$-invariant, then there exists $1 \leq i \leq k$, such that the von Neumann subalgebra $M_i$ of $G_i$-continuous elements in $M$ is nontrivial,
$E_{|M_i}$ is not $G_i$-invariant and its image is in $L^\infty(B_i)\le L^\infty(B)$. 
\end{thm}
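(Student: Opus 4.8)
The plan is to reduce everything to the two-factor results of \S\ref{ss:continuity-vN} by lumping all but one factor together, and then to feed the induced $G$-von Neumann algebra into Lemma~\ref{lem:structure-product}. For each index $i$ write $G = G_i \times G^{(i)}$ with $G^{(i)} := \prod_{j \neq i} G_j$, and correspondingly $\mu = \mu_i \ot \mu^{(i)}$ with $\mu^{(i)} := \bigotimes_{j\neq i}\mu_j$, and $N = L^\infty(B_i) \ovt L^\infty(B^{(i)})$ with $B^{(i)} := \prod_{j\neq i} B_j$. I would first record that $\Gamma$ is again a lattice \emph{with dense projections} in the two-factor product $G_i \times G^{(i)}$: density of the projection onto $G^{(i)}$ is the dense projections hypothesis for $i_0 = i$, while density of $p_i(\Gamma)$ in $G_i$ follows by choosing any $i_0 \neq i$ (possible since $k \geq 2$) and projecting the dense image of $\Gamma$ in $\prod_{j\neq i_0} G_j$ further onto its factor $G_i$. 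Hence \S\ref{ss:continuity-vN} applies verbatim to this decomposition with $\iota = p_i|_\Gamma$; in particular the algebra $M_i$ of $G_i$-continuous elements is exactly the algebra of $\iota$-continuous elements, Lemma~\ref{exmp continuity points} gives $E(M_i) \subseteq L^\infty(B_i) \ot 1$, and Theorem~\ref{continuous elts} furnishes a $G$-equivariant isomorphism $\kappa : M_i \to \tM^{G^{(i)}}$ with $(E_N \circ \tE)\circ \kappa = E$, where $(\tM,\tE)$ is the induced $(G,\tN)$-algebra of Example~\ref{induced structure} and $E_N : \tN \to N$ the associated faithful $G$-map.

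Next I would transfer the non-invariance of $E$ to the induced algebra. Set $\tvphi := \nu \circ E_N \circ \tE \in \tM_*$. Faithfulness of $E$ makes $\tE$ and $E_N$ faithful, so $\tvphi$ is faithful, and a short computation using $G$-equivariance of $E_N \circ \tE$ together with $\mu \ast \nu = \nu$ shows that $\tvphi$ is $\mu$-stationary. I claim there is an index $i$ for which $\tvphi$ is not $G_i$-invariant. Indeed, if $\tvphi$ were invariant under every $G_i$, it would be invariant under the subgroup they generate, namely all of $G$; then Proposition~\ref{stationary structure} would force the structure map $E_N \circ \tE$ to be $G$-invariant, and Lemma~\ref{invariant induced} (whose hypothesis $N^\Gamma = N^G = \C 1$ holds because $\Gamma \curvearrowright (B,\nu)$ is ergodic) would force $E$ to be $\Gamma$-invariant, contrary to assumption.

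Fix such an $i$ and apply Lemma~\ref{lem:structure-product} to the two-factor product $G = G_i \times G^{(i)}$, with the generating measures $\mu_i$ and $\mu^{(i)}$ and the faithful $\mu$-stationary state $\tvphi$ on $\tM$. Taking the fixed-point factor to be $G^{(i)}$, part~(3) yields $\tM^{G^{(i)}} \neq \C 1$ and that $\tvphi|_{\tM^{G^{(i)}}}$ is not $G_i$-invariant. Transporting through the $G$-equivariant isomorphism $\kappa$, and using $\tvphi \circ \kappa = \nu \circ (E_N \circ \tE)\circ \kappa = \nu \circ E$ on $M_i$, this says precisely that $M_i \neq \C 1$ and that $\nu \circ E$ restricted to $M_i$ is not $G_i$-invariant. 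Since $E|_{M_i}$ is $G_i$-equivariant (by continuity and density of $p_i(\Gamma)$) with values in $L^\infty(B_i)$, and since $L^\infty(B_i)^{G_i} = \C 1$ by ergodicity of the boundary, the non-$G_i$-invariance of $\nu \circ E|_{M_i}$ forces $E(M_i) \not\subseteq \C 1$, i.e.\ $E_{|M_i}$ is not $G_i$-invariant. Combined with $E(M_i) \subseteq L^\infty(B_i)$ this is exactly the assertion.

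The main obstacle is conceptual rather than computational: it resides in the identification $M_i \cong \tM^{G^{(i)}}$ of Theorem~\ref{continuous elts}, which converts a statement about the bare $\Gamma$-algebra $M$ into one about the genuine $G$-von Neumann algebra $\tM$, where the product splitting of Lemma~\ref{lem:structure-product} becomes available. Once this bridge is in place the argument is bookkeeping; the only point requiring genuine care is the verification that each single-factor projection $p_i(\Gamma)$ is dense, so that the two-factor machinery of \S\ref{ss:continuity-vN} legitimately applies to every grouping $G_i \times G^{(i)}$.
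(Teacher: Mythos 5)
Your proof is correct and takes essentially the same route as the paper's: induce to $G$, combine Lemma~\ref{invariant induced} with Proposition~\ref{stationary structure} to see that the faithful $\mu$-stationary state $\nu \circ E_N \circ \tE$ fails to be $G_i$-invariant for some $i$, descend to $\tM^{G^{(i)}}$ via Lemma~\ref{lem:structure-product}(3), and identify that fixed-point algebra with $M_i$ through Theorem~\ref{continuous elts} and Lemma~\ref{exmp continuity points}. Your explicit check that $\Gamma$ has dense projections in each regrouping $G_i \times G^{(i)}$ is a detail the paper leaves implicit, but it does not alter the argument.
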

\begin{proof}
Following Example \ref{induced structure}, denote by $(\tM,\tE)$ the induced $(G,\tN)$-structure and view $E_N \circ \tE$ as a $(G,N)$-structure. If $E$ is not $\Gamma$-invariant, Lemma \ref{invariant induced} implies that $E_N \circ \tE$ is not $G$-invariant. Since $(B,\nu)$ is the Furstenberg-Poisson boundary of $G$, Example \ref{stationary structure} further implies that the faithful $\mu$-stationary state $\phi = \nu \circ E_N \circ \tE$ is not $G$-invariant on $\tM$.

In particular, there exists $i$ such that $\phi$ is not $G_i$-invariant. Gather the factors of $G$ to write it as a product of two groups $G_i \times H_i$. By Lemma \ref{lem:structure-product}, we find that $\phi$ is not $G_i$-invariant on $\tM^{H_i}$. Thanks to the observations in Example \ref{stationary structure}, this amounts to saying that $E_N \circ \tE$ is not $G_i$-invariant on $\tM^{H_i}$. By Theorem \ref{continuous elts}, this exactly means that $E$ is not invariant on the algebra of $G_i$-continuous elements $M_i$.
We saw in Lemma \ref{exmp continuity points} and the comment preceding it that indeed $E$ maps $M_i$ into $L^\infty(B_i)$.
\end{proof}

%%%%%
\section{Proofs of charmenability} \label{s:proof}

In this section, we prove Theorem~\ref{thm:SD} and Theorem~\ref{thm:Tree}, as well as Proposition~\ref{ss:proof}
which consists of the first half of
Theorem~\ref{thm:AG}.

%
%combine Theorem \ref{non-trivial G points} and Proposition \ref{FMM implies singular} to prove that some groups of interest satisfy the criterion from Proposition \ref{top crit} and hence are charmenable. 
%

\subsection{Arithmetic groups} \label{ss:proof}

The main result of this subsection is the following proposition.

\begin{prop} \label{SAF}
Let $K$ be a global field and ${\bf G}$ a connected non-commutative $K$-almost simple $K$-algebraic group.
If $\Gamma\leq {\bf G}(K)$ is an $S$-arithmetic subgroup of a product type
then $\Gamma$ is charmenable.
\end{prop}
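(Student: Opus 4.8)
The plan is to verify the charmenability criterion of Corollary~\ref{kill center}, after two preliminary reductions. First I would reduce to the case where $S$ is finite and $\mathbf G$ is simply connected. When $\Gamma$ is unbounded at infinitely many places, I would exhaust $\Gamma$ by an increasing sequence of $S$-arithmetic subgroups, each of product type with only finitely many unbounded places, and conclude via Proposition~\ref{prop:stable}. To pass from a general isogeny type to the simply connected cover $\pi:\widetilde{\mathbf G}\to\mathbf G$, I would pull $\Gamma$ back to an arithmetic subgroup $\widetilde\Gamma\le\widetilde{\mathbf G}(K)$; since $\ker\pi$ is finite central and $\widetilde\Gamma$ maps onto a finite-index subgroup of $\Gamma$, charmenability transfers through the quotient-permanence property of \S\ref{ss:perm} together with commensurability invariance. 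This funnels everything into the case of finite $S$ and simply connected $\mathbf G$, where Theorem~\ref{non-trivial G points} becomes applicable.

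In this reduced case the strong approximation theorem (\cite[Theorem II.6.8]{Ma91}) realizes $\Gamma$ as a lattice with dense projections in $G=\prod_{v\in V}\mathbf G(K_v)$, where $V$ is the finite set of places at which $\Gamma$ is unbounded, with $|V|\ge 2$ by the product-type hypothesis; decomposing each $\mathbf G(K_v)$ into its almost simple factors over local fields, I may assume each factor is isotropic and almost simple. Since $\Gamma$ is Zariski dense by Borel density, $\Rad(\Gamma)$ is contained in the center of $G$ and is therefore finite, so by Corollary~\ref{kill center} it suffices to verify condition~(a) of Proposition~\ref{prop:chmcrit3} for the quotient $\Gamma'=\Gamma/\Rad(\Gamma)$, which is again a lattice with dense projections, now in the product $\bar G=\prod_i \bar G_i$ of the \emph{adjoint} factors. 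I would take $N=L^\infty(B,\nu)$ with $(B,\nu)=\prod_i \bar G_i/\bar P_i$ the Furstenberg--Poisson boundary of $\bar G$ ($\bar P_i$ a minimal parabolic): this is an amenable $\Gamma'$-space, and being a boundary it is metrically ergodic for the lattice $\Gamma'$ (cf.\ \cite[Section 2]{BF14}).

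The heart of the matter is condition~(a). Given a separable, ergodic, faithful $(\Gamma',N)$-von Neumann algebra $(M,E)$ with $E$ not $\Gamma'$-invariant, I must show it is $\Gamma'$-singular. Theorem~\ref{non-trivial G points} applies: non-invariance yields a factor $\bar G_i$ for which the subalgebra $M_i\subset M$ of $\bar G_i$-continuous elements is nontrivial, $E_{|M_i}$ is not $\bar G_i$-invariant, and $E(M_i)\subset L^\infty(B_i)$. By Theorem~\ref{thm:continuity-vN} the $\Gamma'$-action on $M_i$ extends to $\bar G$ with the complementary factors acting trivially, whence $M_i^{\bar G_i}=M_i^{\bar G}\subset M^{\Gamma'}=\C 1$; thus $M_i$ is $\bar G_i$-ergodic and Lemma~\ref{ergodic extremal} makes $(M_i,E_{|M_i})$ extremal. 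I would then invoke the singularity criterion Proposition~\ref{FMM implies singular}(2) with $\bar P_i\le\bar G_i$: by Example~\ref{Mautner algps} the minimal parabolic $\bar P_i$ is stably self normalizing and has the relative Mautner property, and its intermediate subgroups are precisely the proper parabolics, on whose flag varieties the adjoint group $\bar G_i$ acts essentially freely, because the core of a proper parabolic is central and hence trivial. This gives that $E_{|M_i}$ is $\bar G_i$-singular.

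Finally I would descend this singularity to $\Gamma'$. As the projections of the dense-projection lattice $\Gamma'$ to each factor are injective, for $\gamma\ne e$ the image $\iota_i(\gamma)\in\bar G_i$ is nontrivial, and on $M_i$ one has $(E_\gamma)_{|M_i}=(E_{|M_i})_{\iota_i(\gamma)}$, which is singular relative to $E_{|M_i}$ by the previous step; Remark~\ref{rem:subsing} then forces $E\perp E_\gamma$ on all of $M$. Since $\gamma$ is arbitrary, $(M,E)$ is $\Gamma'$-singular, establishing condition~(a) and hence charmenability. The main obstacle I anticipate is precisely this condition~(a): one must correctly marry the continuity-algebra reduction of Theorem~\ref{non-trivial G points} (which converts the $\Gamma'$-dynamics into genuine $\bar G_i$-dynamics on a single semisimple factor) with the extremality bookkeeping and the parabolic geometry feeding Proposition~\ref{FMM implies singular}. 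The delicate points are the necessity of passing to the adjoint quotient to secure essential freeness on flag varieties, and the injectivity of the factor projections needed to propagate singularity to every nontrivial $\gamma$.
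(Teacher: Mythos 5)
Your argument works only when $\mathbf G$ is simply connected; the reduction to that case is a genuine gap. After pulling $\Gamma$ back to $\widetilde\Gamma\le\widetilde{\mathbf G}(K)$ and applying the simply connected case together with quotient-permanence, what you get is charmenability of the image $\pi(\widetilde\Gamma)$, which is only a \emph{finite-index subgroup} of $\Gamma$; to conclude you invoke ``commensurability invariance'' of charmenability, i.e.\ passage to finite-index overgroups. No such permanence property is available: \S\ref{ss:perm} establishes stability only under quotients and ascending unions, and stability under commensurability is not known (there is no obvious induction procedure for compact convex $\Gamma$-sets of positive definite functions, and extremality of characters does not behave well under restriction or induction along finite-index inclusions). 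The paper is careful on precisely this point: the finite-kernel/finite-index-image reductions to the simply connected cover are performed only for the charfiniteness ingredients (Propositions \ref{prop:ffd} and \ref{prop:charTAG}, where those specific properties visibly do transfer), whereas for charmenability the paper's proof of Proposition \ref{SAF} treats non-simply-connected $\mathbf G$ directly. That is the whole content of its ``general case'': since strong approximation no longer yields dense projections, one takes $G_i$ to be the \emph{closure} of the image of $\Gamma$ in $\G_i(k_i)$ (so $\G_i(k_i)^+\le G_i\le \G_i(k_i)$), replaces the complementary factor $H_i$ by the closure $H_i'$ of the projection of $\Gamma$, and shows — using metric ergodicity of $H_i'$ on $\prod_{j\neq i}B_j$, available by \cite[Corollary 5.2]{BS04} because $H_i'\supseteq\prod_{j\neq i}\G_j(k_j)^+$ — that the continuity algebra $\tM^{H_i}$ and its structure map admit an intrinsic description, so that the conclusion of Theorem \ref{non-trivial G points} still holds. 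Without this argument (or an actual proof that charmenability passes to finite-index overgroups), your proposal does not cover the general isogeny type.

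For the simply connected case your outline is essentially the paper's ``special case'', with one legitimate variation: you pass to $\Gamma/\Rad(\Gamma)$ and verify full singularity (condition (a)) via Corollary \ref{kill center}, where the paper keeps $\Gamma$ and verifies conditions (a') and (b) of Proposition \ref{top crit 2}, obtaining singularity of $E$ and $E_g$ only for $g\notin\cZ(\Gamma)$ via Lemma \ref{freeness1}; both routes go through the paper's machinery. Two smaller points would still need repair. First, the image of $\Gamma/\Rad(\Gamma)$ is in general \emph{not} dense in the full adjoint group: its closure in the $i$-th factor is only an intermediate group containing the ``plus'' subgroup (compare $\PSL_2\neq\PGL_2$ over $\R$ or $\Q_p$), so you must run the argument with these closures — which Example \ref{Mautner algps} and \cite[Corollary 5.2]{BS04} do accommodate. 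Second, triviality of the core of a proper parabolic only gives \emph{faithfulness} of the action on the flag variety; essential freeness requires that each nontrivial element has a null fixed-point set, which is the algebraic-subvariety argument of Lemmas \ref{lem:nullsetsG/H} and \ref{freeness1}, not a formal consequence of faithfulness.
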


%Let $I$ be a finite index set satisfying $|I|\geq 2$. 
%For every $i\in I$, let $k_i$ be a local field and ${\bf G}_i$ a connected $k_i$-isotropic simple $k_i$-algebraic group.
%Let $\Lambda<\prod {\bf G}_i(k_i)$ be a lattice with dense projections. Then $\Lambda$ is charmenable.

For the proof we need to establish the following freeness result.

\begin{lem}\label{freeness1}
Let $k$ be a local field and $\G$ a connected $k$-almost simple $k$-algebraic group. 
Let ${\bf H} \lneq \G$ be a proper $k$-subgroup and let $G = \G(k)$, $H = {\bf H}(k)$. We endow $G/H$ with the unique $G$-invariant class of Radon measures.  Then for every $g \in G \setminus \cZ(G)$, for almost every $w \in G/H$, we have $gw \neq w$.
\end{lem}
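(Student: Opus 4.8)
The plan is to realise $\Fix(g)$ as the set of $k$-points of an algebraic subvariety of $\G$, to use $k$-almost simplicity to force that subvariety to be proper, and finally to invoke that a proper subvariety has a Haar-null set of $k$-points over a local field. This null set will then pull back to the statement on $G/H$.

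First I would rewrite the fixed-point condition. For $w \in G$ one has $gwH = wH$ if and only if $w^{-1}gw \in H$, so
\[ \Fix(g) = \{ wH \in G/H \mid w^{-1}gw \in H \} = V_g(k)/H, \qquad V_g := \{ w \in \G \mid w^{-1}gw \in {\bf H}\}. \]
Here $V_g$ is the preimage of the Zariski-closed $k$-subgroup ${\bf H}$ under the $k$-morphism $w \mapsto w^{-1}gw$, hence a Zariski-closed $k$-subvariety of $\G$; it is right ${\bf H}$-invariant (since $(wh)^{-1}g(wh) = h^{-1}(w^{-1}gw)h$), so it is a union of right ${\bf H}$-cosets and the quotient makes sense. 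Denoting by $\pi : G \to G/H$ the projection, one has exactly $\pi^{-1}(\Fix(g)) = V_g(k)$.

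The key algebraic step is to show that $V_g \neq \G$ whenever $g \notin \cZ(G)$. Suppose $V_g = \G$; then $w^{-1}gw \in {\bf H}$ for every $w \in \G(\bar k)$, so $g$ lies in the normal core ${\bf N} := \bigcap_{w \in \G(\bar k)} w {\bf H} w^{-1}$, the largest normal subgroup contained in ${\bf H}$. Being the intersection of a Galois-stable family of conjugates of a $k$-subgroup, ${\bf N}$ is a normal $k$-subgroup, and it is proper since ${\bf N} \subseteq {\bf H} \lneq \G$; hence $k$-almost simplicity of $\G$ forces ${\bf N}$ to be finite. A finite normal subgroup of a connected group is central (the conjugation action $\G \to \mathrm{Sym}({\bf N}(\bar k))$ is a morphism from a connected group to a finite set, hence constant), so ${\bf N} \subseteq \cZ(\G)$. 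Thus $g$ commutes with all of $\G(\bar k) \supseteq G$, giving $g \in \cZ(G)$, a contradiction. Since $\G$ is connected and $V_g$ is Zariski-closed, $V_g \neq \G$ forces $V_g \subsetneq \G$ to be a proper subvariety. (If $\G$ is commutative the statement is vacuous, as then $\cZ(G) = G$.)

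Finally I would invoke the standard fact that over a local field $k$ the set of $k$-points of a proper Zariski-closed subset $V \subsetneq \G$ is Haar-null in $G = \G(k)$; this follows by covering $\G$ by affine charts and using that the vanishing locus of a nonzero polynomial on $k^n$ is Lebesgue-null (induction on $n$ via Fubini, using that a nonzero one-variable polynomial over $k$ has finitely many roots). Hence $V_g(k)$ is null in $G$. Since the $G$-invariant measure class on $G/H$ is characterised by the property that a set is null precisely when its $\pi$-preimage is Haar-null in $G$ (Weil's integration formula), and $\pi^{-1}(\Fix(g)) = V_g(k)$, we conclude that $\Fix(g)$ is null, i.e.\ $gw \neq w$ for almost every $w \in G/H$. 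The conceptual heart of the argument is the almost-simplicity step identifying the only obstruction with centrality of $g$; the single technical point is the local-field measure-zero statement, which is routine.
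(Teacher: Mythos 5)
Your proof is correct and takes essentially the same approach as the paper's: the fixed-point set is realised as the set of $k$-points of a Zariski-closed set, its properness is forced by $k$-almost simplicity through the normal core of ${\bf H}$, and one concludes because a proper subvariety has a null set of $k$-points over a local field (which is exactly the content of the paper's Lemma~\ref{lem:nullsetsG/H}). The only cosmetic differences are that the paper intersects the $G$-conjugates of $H$ and takes the Zariski closure (citing Borel for $k$-definability and using density of $G$ in $\G$ for normality) where you intersect the $\G(\bar k)$-conjugates of ${\bf H}$ and invoke Galois stability, and that the paper runs the null-set argument on the quotient variety $\G/{\bf H}$ while you work directly with the preimage $V_g\subseteq\G$.
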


The proof of Lemma~\ref{freeness1} in turn relies on the following preliminary result.

\begin{lem} \label{lem:nullsetsG/H}
Let $k$ be a local field and $\bar{k}$ an algebraically closed field extension of $k$.
Let $\G$ be a connected $k$-algebraic group and denote $G = \G(k)$.
Let $\bH \leq \G$ be a $k$-algebraic subgroup
and denote $H = \bH(k)$.
We endow $G/H$ with the unique $G$-invariant class of Radon measures. 
We let ${\bf U}$ be a closed proper subvariety of $\G/\bH$, ${\bf U}\subsetneq \G/\bH$.
Then, considering $G/H$ as a subset of $\G/\bH(k)\subset \G/\bH(\bar{k})$, we have that ${\bf U}(\bar{k})\cap G/H$ is a null set in $G/H$.
\end{lem}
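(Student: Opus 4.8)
The plan is to reduce the statement to the elementary fact that a proper Zariski-closed subset of a smooth irreducible variety over a local field is null for the natural smooth measure class, while carefully handling the two delicate points: ${\bf U}$ is a priori only defined over $\bar k$, and we intersect with the $k$-points $G/H$. First I would record the geometry. Since $\G$ is connected it is irreducible, and by Chevalley's theorem $\G/\bH$ is a quasi-projective $k$-variety; being homogeneous under the smooth group $\G$ it is smooth and irreducible of some dimension $d$, so $(\G/\bH)(k)$ is a $d$-dimensional $k$-analytic manifold containing $G/H$ as an open $G$-orbit. The $G$-invariant measure class on $G/H$ is, in any $k$-analytic chart, equivalent to Lebesgue measure on an open subset of $k^d$; hence being null is a purely local, Lebesgue-measure statement, and since $G/H$ is second countable it suffices to prove the claim in a chart about an arbitrary point and take a countable union.

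Next I would cut ${\bf U}$ down to a single $k$-regular equation. Cover $\G/\bH$ by $k$-rational affine opens; on such a chart $A=\operatorname{Spec}R$ the ring $R$ is a finitely generated $k$-domain. As ${\bf U}\subsetneq\G/\bH$ is proper and $A$ is dense, ${\bf U}\cap A$ is cut out by a nonzero ideal of $R\otimes_k\bar k$; choose $0\neq f$ in it and write $f=\sum_s f_s\otimes c_s$ with $f_s\in R$ and the $c_s\in\bar k$ linearly independent over $k$, so some $f_{s_0}\in R$ is nonzero. For a $k$-point $v\in A(k)$ one has $f(v)=\sum_s f_s(v)c_s$ with $f_s(v)\in k$, so $f(v)=0$ forces $f_{s_0}(v)=0$ by linear independence. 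Therefore
\[ {\bf U}(\bar k)\cap A(k)\ \subseteq\ \{\,v\in A(k)\mid f_{s_0}(v)=0\,\}, \]
the $k$-points of a proper closed $k$-subvariety defined by a single nonzero regular function on the irreducible $A$.

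Finally I would show this hypersurface is null. In a $k$-analytic chart $\psi\colon W\to V\subseteq k^d$ the function $h:=f_{s_0}\circ\psi^{-1}$ is $k$-analytic on $V$, and it is not identically zero: otherwise $f_{s_0}$ would vanish on the nonempty analytic-open set $W$, but a regular function on the irreducible variety $\G/\bH$ that vanishes on an analytic neighbourhood of a smooth point is identically zero (the completed local ring at a smooth point is a power-series ring into which $R$ injects), contradicting $f_{s_0}\neq0$. The zero set of a nonzero $k$-analytic function on an open subset of $k^d$ is Lebesgue-null — classical in the archimedean case, and provable by Weierstrass preparation and induction on $d$ together with Fubini in the non-archimedean case — so $\{h=0\}$, and with it ${\bf U}(\bar k)\cap W$, is null. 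Covering $G/H$ by countably many such charts completes the argument.

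I expect the main obstacle to be precisely the interplay between the field of definition of ${\bf U}$ and the $k$-points: the reduction to a single nonzero $k$-regular equation via the linear-independence trick is what makes the measure-zero input applicable, and one must separately ensure that passing to an analytic chart cannot annihilate that equation, which is exactly the identity-principle step at a smooth point of the irreducible variety.
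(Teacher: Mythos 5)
Your proof is correct, but it follows a genuinely different route from the paper's. The paper does not work in analytic charts at all: it pulls the problem back to the group itself via the projection $\pi\colon \G \to \G/\bH$, using the standard fact that a subset of $G/H$ is null if and only if its preimage in $G$ is Haar-null. The field-of-definition issue that you resolve with the linear-independence trick ($f=\sum_s f_s\otimes c_s$ with the $c_s$ linearly independent over $k$) is handled there by citing Borel's theorem that the Zariski closure of a set of $k$-rational points is a variety defined over $k$: the closure ${\bf V}_0$ of ${\bf V}(k)$, where ${\bf V}=\pi^{-1}({\bf U})$, is a proper $k$-subvariety of $\G$ with ${\bf V}_0(k)={\bf V}(k)$. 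The measure-theoretic input is then outsourced to Margulis (Proposition I.2.5.3(ii) in his book): the $k$-points of a proper $k$-subvariety of a connected group are Haar-null. Your argument essentially reproves that cited result by hand -- smoothness of the homogeneous space, the identity principle at a smooth $k$-point via the injection of the coordinate ring into the completed local ring, and nullity of zero sets of nonzero $k$-analytic functions (Weierstrass preparation plus Fubini in the non-archimedean case). What your approach buys is self-containedness and the avoidance of the pullback to $\G$ (you stay on $\G/\bH$ throughout); what the paper's approach buys is brevity and a cleaner resolution of the $\bar{k}$-versus-$k$ issue through a single general theorem rather than a chart-by-chart coefficient argument. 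Both correctly isolate the same two difficulties: that ${\bf U}$ is only defined over $\bar{k}$, and that one must intersect with $k$-points before speaking of measure.
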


\begin{proof}
Denote by $\pi:\G(\bar{k}) \to \G/\bH(\bar{k})$ the natural map and by $\pi_k:G \to G/H \subset \G/\bH(\bar{k})$ its restriction to the $k$-points. 
It is a general fact about lcsc groups that a subset of $G/H$ is null if and only if its preimage in $G$ is null. Let us check that indeed $\pi_k^{-1}({\bf U}(\bar{k}))$ is null in $G$.

We denote by ${\bf V}$ the preimage of ${\bf U}$ in $\G$ and observe that 
this is a closed proper subvariety of $\G$ satisfying ${\bf V}(k) =\pi_k^{-1}({\bf U}(\bar{k}))$. 
By \cite[Theorem AG.14.4]{Bo91}, the Zariski closure ${\bf V}_0$ of ${\bf V}(k)$ in $\G$ is a $k$-subvariety of $\G$, contained in ${\bf V}$. So in particular ${\bf V}_0$ is a proper $k$-subvariety of $\G$, which satisfies ${\bf V}_0(k)={\bf V}(k)$.
Since $\G$ is connected, \cite[Proposition I.2.5.3(ii)]{Ma91} implies that ${\bf V}_0(k)$ is indeed a null set in $G$.
%
%We also observe that $\G$ is a smooth $k$-variety and $G$ is Zariski dense in $\G$ by \cite[Corollary 18.3]{Bo91}.
%As $\G$ is connected, ${\bf V}\subsetneq {\bf G}$ is of a lower dimension.
%By Lemma~\ref{lem:kanalytic}, $V$ is a null set in $G$ when considering $G$ with the volume form measure class.
%As the volume form measure class is quasi-invariant, it must coincide with the Haar measure class on $G$.
%This proves the lemma.
\end{proof}

\begin{proof}[Proof of Lemma~\ref{freeness1}]
Fix $g \in G\setminus \cZ(G)$.
Note that $g$ acts non-trivially on $G/H$, otherwise $g$ would belong to the normal subgroup $\bigcap_{x \in G} xHx^{-1}$, the Zariski closure of which is a proper normal $k$-subgroup ${\bf N}$ of $\G$. 
Since $\G$ is $k$-almost simple, we have ${\bf N} \subset \cZ(\G)$, forcing $g \in \cZ(G)$, which we excluded.
Hence the subvariety   ${\bf U}$ of fixed points of $g$ in $\G/\bH$ is proper, and we conclude by applying Lemma~\ref{lem:nullsetsG/H}.
\end{proof}

We now have set up all the tools we need to prove Proposition~\ref{SAF}.

\begin{proof}[Proof of Proposition~\ref{SAF}]
By Proposition~\ref{prop:stable}, we assume as we may that the set $S$ is finite. 
We consider the finite set $I$ of places $v$ of $K$ such that the image of $\Gamma$ in ${\bf G}(K_v)$ is unbounded. 

For each $i\in I$, we denote by
\begin{itemize}
\item $k_i$ the completion of $K$ with respect to the place $i$;
\item ${\G}_i$ the algebraic group ${\bf G}$ viewed as a $k_i$-group;
\item ${\bf P}_i$ a minimal $k_i$-parabolic subgroup of ${\bf G}_i$;
\item $G_i < \G_i(k_i)$ the closure of the image of $\Gamma$ in $\G_i(k_i)$, and $P_i := {\bf P}_i(k_i) \cap G_i$.
\end{itemize}
Note that $\G_i(k_i)^+ \leq G_i \leq \G_i(k_i)$, by the strong approximation theorem (see \cite[Theorem II.6.8]{Ma91}). 
Therefore we may apply Example~\ref{Mautner algps}, and find that $G_i$ acts transitively on ${\bf G}_i(k_i)/{\bf P}_i(k_i)$
with stabilizer $P_i$ and the pair $(G_i,P_i)$ is stably self-normalizing and it has the relative Mautner property.
By \cite[Corollary 5.2]{BS04}, for every $i \in I$, there exists a generating measure $\mu_i$ on $G_i$ such that $(G_i/P_i,\nu_i)$ is the Furstenberg-Poisson boundary of $(G_i,\mu_i)$, where $\nu_i$ is the (unique) $\mu_i$-stationary measure on $G_i/P_i$ and it is $G_i$-quasi-invariant.
We denote $B_i=G_i/P_i$ and endow it with the quasi-invariant measure $\nu_i$.
We let $B=\prod_I B_i$, endow it with the measure $\nu=\prod_I \nu_i$ and set $\mu=\prod_I \mu_i$.
By \cite[Corollary 3.2]{BS04}, $(B,\nu)$ is the Furstenberg-Poisson boundary of $(G,\mu)$
and by \cite[Theorem 2.7]{BF14} and \cite[Lemma 3.5]{BF18} it is amenable and metrically ergodic as a $G$-space and as a $\Gamma$-space. %\textcolor{red}{Isn't this obvious since it is the Poisson boundary of $G$?}

We will prove that $\Gamma$ satisfies condition (a') and (b) from Proposition \ref{top crit 2}. By Margulis normal subgroup theorem \cite[VIII(A), p.\ 259]{Ma91}, the amenable radical of $\Gamma$ is its center, so condition (b) is automatically fulfilled, as was observed in the proof of Corollary \ref{kill center}.
Set $N := L^\infty(B)$, and take a non-invariant $(\Gamma,N)$-von Neumann algebra $(M,E)$. We need to argue that $E$ and $E_g$ are singular, for every $g \in \Gamma \setminus \cZ(\Gamma)$.
For the sake of clarity, we first give the proof in the simply connected setting, and then explain the modifications to make in the general case.

{\it Special case: $\G$ is simply connected. }

In this case the strong approximation theorem (see \cite[Theorem II.6.8]{Ma91}) gives that $G_i = \G_i(k_i)$ and $\Gamma$ is a lattice with dense projections in $G := \prod_{i \in I} G_i$. By Theorem \ref{non-trivial G points}, there exists $i \in I$ such that the $G_i$-algebra $M_i$ in $M$ is non-trivial, $E_{|M_i}$ is not $G_i$-invariant and its image is in $N_i := L^\infty(B_i) \subset N$.

Since the action $\Gamma \actson M$ is ergodic, we note that ${G}_i \actson M_i$ is also ergodic. By Lemma \ref{ergodic extremal}, we find that $E_{|M_i}$ is an extremal $({G}_i,N_i)$-structure map. Proposition \ref{FMM implies singular} then gives that for every $g \in G_i \setminus \cZ(G_i)$, $E_{|M_i}$ and $(E_{|M_i})_g$ are singular.
Observe that the projection map $\Gamma \to G_i$ is injective, 
indeed it coincides with the injection
\[ \Gamma \hookrightarrow \G(K) \to {\bf G}(k_i)={\bf G}_i(k_i). \]
Therefore, the image of $g \in \Gamma \setminus \cZ(\Gamma)$ is in $G_i \setminus \cZ(G_i)$,
thus $E$ and $E_g$ are singular when restricted to $M_i$
and by Remark~\ref{rem:subsing}, it follows that they are singular on $M$. 
This is the desired conclusion.

{\it General case.}

In general unfortunately we don't know that $\Gamma$ is with dense projections, so we may not apply Theorem \ref{non-trivial G points} as such. Nevertheless we show that we can still get the conclusion of this theorem. Once we arrive there, we will just continue the proof as in the simply connected case.

Denote by $(\tM,\tE)$ the induced $(G,\tN)$-structure, as in Example \ref{induced structure}. Since $E$ is not invariant, Lemma \ref{invariant induced} implies that $E_N \circ \tE$ is not $G$-invariant. Since $(B,\nu)$ is the Furstenberg-Poisson boundary of $G$, Example \ref{stationary structure} further implies that the faithful $\mu$-stationary state $\phi = \nu \circ E_N \circ \tE$ is not $G$-invariant on $\tM$. In particular, there exists $i \in I$ such that $\phi$ is not $G_i$-invariant. Gather the factors of $G$ to write it as the product of two groups $G_i \times H_i$. By Lemma \ref{lem:structure-product}, we find that $\phi$ is not $G_i$-invariant on $\tM^{H_i}$. Thanks to the observations in Example \ref{stationary structure}, this amounts to saying that $E_N \circ \tE$ is not $G_i$-invariant on $\tM^{H_i}$. 

At this stage, we don't know a priori that $\tM^{H_i}$ identifies with the $G_i$-algebra in $M$, because we don't know that the projection of $\Gamma$ into $H_i$ is dense. Fortunately, this algebra $\tM^{H_i}$ can be expressed without reference to $H_i$, as the algebra of $\Gamma$-equivariant $L^\infty$-functions $G_i \to M$. We claim that the structure map $E_N \circ \tE$ on this algebra may also be described without appealing to the specific group $H_i$, provided $H_i$ acts metrically ergodically on the Lebesgue space $B_i' := \prod_{j \neq i} B_j$.
In fact, given such an equivariant function $f \in \tM^{H_i}$, the $\Gamma$-invariant function $g \in G_i \mapsto \sigma_g(E(f(g)) \in N$ is essentially constant, by density of the image of $\Gamma$ in $G_i$. We denote by $F(f)$ its essential value.

{\bf Claim.} $F(f) = E_N \circ \tE(f)$, for every $f \in \tM^{H_i}$.

By definition $E_N \circ \tE(f)$ is obtained by viewing the function $f': g \in G \mapsto \sigma_g(E(f(g)) \in N$ as a right $\Gamma$-invariant function and integrating it against the $G$-invariant probability measure on $G/\Gamma$. View $f'$ as an element of $L^\infty(G/\Gamma) \ovt L^\infty(B_i) \ovt L^\infty(B_i')$, which is invariant under the diagonal $H_i$-action (where $H_i$ acts trivially on $B_i$ and metrically ergodically on $B_i'$). Since $\Gamma H_i$ is dense in $G$, $H_i$ acts ergodically on $G/\Gamma$, and hence, by metric ergodicity, we find that $f' \in 1 \ovt L^\infty(B_i) \ovt 1$. So $f'$ is essentially constant; its integral over $G/\Gamma$ is equal to its essential value $y \in L^\infty(B_i)$, i.e. $E_N \circ \tE(f) = y$. Moreover, since $f'$ is essentially constant when viewed as a function over $G$, we find that for almost every $g \in G_i$, $h \in H_i$, $\sigma_{hg}(E(f(g))) = y$. In particular, for almost every $g \in G_i$, $\sigma_g(E(f(g))$ is an $H_i$-invariant element in $N$, equal to $y = E_N \circ \tE(f)$. We thus conclude that $F(f) = E_N \circ \tE(f)$, as claimed.

Thanks to these observations we will replace $H_i$ at our advantage to get the dense projections assumption, and verify that we are still in a situation where $N$ is the Poisson boundary of the (new) ambient group.
Define $H_i' < H_i$ to be the closure of the image of $\Gamma$ in $H_i$ and view $\Gamma$ as a lattice with dense projections inside $G_i \times H_i'$. It is important to observe that $H_i'$ contains the group $\prod_{j \neq i} {\bf G}_j(k_j)^+ $, thanks to the strong approximation theorem (see \cite[Theorem II.6.8]{Ma91}). Thus we may apply \cite[Corollary 5.2]{BS04}, and find a generating measure $\mu_i'$ on $H_i'$ such that the Poisson boundary of $(H_i',\mu_i')$ can be identified with $B_i'$, as a Lebesgue $H_i'$-space.
By \cite[Corollary 3.2]{BS04}, the Lebesgue space $B = B_i \times B_i',$ is the Furstenberg-Poisson boundary of $G_i \times H_i'$, for the measure $\mu_i \otimes \mu_i'$. We can now apply Theorem \ref{continuous elts} to $\Gamma < G_i \times H_i'$ with the $(\Gamma,N)$-structure $(M,E)$. We obtain an identification between the $G_i$-algebra $M_i$ and the algebra of $H_i'$-invariant elements $L^\infty(G_i \times H_i',M)^{H_i' \times \Gamma}$ which intertwines the natural $(G_i,N_i)$-structure maps. By the previous paragraph, we know that the later algebra $L^\infty(G_i \times H_i',M)^{H_i' \times \Gamma}$ together with its $(G_i,N_i)$-structure map identifies with $(\tM^{H_i},E_N \circ \tE)$. Since $E_N \circ \tE$ is not $G_i$-invariant on $\tM^{H_i}$, we conclude that $E$ is not $G_i$-invariant on $M_i$.

As announced we thus conclude that there is an index $i$ and a $\Gamma$-invariant von Neumann subalgebra\footnote{which really is the algebra of all $G_i$-continuous elements in $M$} $M_i \subset M$ on which the $\Gamma$-action extends to a continuous action $G \curvearrowright M_i$ that factors through the projection map $G \to G_i$, and on which $E$ is not $\Gamma$-invariant. We can now finish the proof as in the simply connected case.
\end{proof}

We end this subsection by proving Theorem~\ref{thm:SD}.

\begin{proof}[Proof of Theorem~\ref{thm:SD}]
We denote $\Lambda=\SL_n(\Z) \ltimes \Z^n$ and $\Gamma=\SL_n(\Z)$.
We let $B$ the flag manifold associated with $G=\SL_n(\R)$ and check that (a) and (b) of Proposition~\ref{prop:chmcrit3}
are satisfied.
Using Fourier transform we identify $\Char(\Rad(\Lambda))$ with $\Prob(T^n)$.
Then (b) follows from the main result of \cite{Fu98}. 
The proof of property (a) follows from \cite[Theorem B]{BH19}
by combining Proposition \ref{FMM implies singular} with Lemma \ref{freeness1} as above.
\end{proof}

%%%%%
\subsection{Lattices in product of trees}
\label{ss:lpt}

This subsection is devoted to the proof of Theorem~\ref{thm:Tree}.

\begin{prop}\label{freeness2}
Fix $n\geq 2$ and natural numbers $p_1,\ldots,p_n,q_1,\dots,q_n>1$.
For each $1\leq i \leq n$, let $T_i$ be a $(p_i+1,q_i+1)$-biregular simplicial tree and let $\Gamma < \Aut^+(T_1) \times \cdots \times \Aut^+(T_n)$ be a cocompact lattice.
We look at a projection onto a factor, say the first factor. Endow $\partial T_1$ with its unique $\Aut^+(T_1)$-invariant measure class, and look at the action of $\Gamma$ on $\partial T_1$. If the first projection is injective on $\Gamma$, then for every $g \in \Gamma\setminus \{e\}$, the fixed point set in $\partial T_1$ has measure $0$.
\end{prop}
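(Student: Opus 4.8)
The plan is to reduce the statement to a purely geometric claim about a single automorphism and then exploit that $\Gamma$ acts properly and cocompactly on the product of trees $X := T_1 \times \cdots \times T_n$, being a cocompact lattice in $\prod_i \Aut^+(T_i)$, which itself acts properly and cocompactly on $X$ (vertex stabilizers are compact open, and there are finitely many orbits of cells).

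Fix $g \in \Gamma \setminus \{e\}$ and write $g_1$ for its first projection; by injectivity $g_1 \neq e$. First I would classify $g_1$ as a tree automorphism. Since $g_1 \in \Aut^+(T_1)$ preserves the bicoloring it cannot invert an edge, so $g_1$ is either hyperbolic or elliptic. If $g_1$ is hyperbolic with axis $\ell$, its fixed set in $\partial T_1$ is exactly the two endpoints of $\ell$, a finite set, hence $\nu_1$-null since the visual class is non-atomic. If $g_1$ is elliptic then $\Fix(g_1) \subseteq T_1$ is a subtree, and a boundary point is fixed precisely when it is an end of this subtree, so $\Fix_{\partial T_1}(g_1) = \partial(\Fix(g_1))$. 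The key measure-theoretic input I would then establish is that the visual class assigns positive measure to $\partial S$, for a subtree $S$, if and only if $S$ contains a half-tree (the ends beyond some directed edge): cylinders generate the measure, and a Lebesgue-density argument shows that a subtree with non-null boundary must, by self-similarity of the measure, contain a full half-tree. Consequently $\nu_1(\Fix_{\partial T_1}(g_1)) > 0$ forces $g_1$ to fix some half-tree $\mathrm{Sh}(w,u)$ (the shadow of $u$ seen from the adjacent vertex $w$) pointwise.

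It then remains to rule out a nontrivial $g$ whose first coordinate $g_1$ fixes a half-tree pointwise, and this is where the lattice hypotheses enter, via a displacement argument on $X$. Fix a basepoint $o = (o_1,\ldots,o_n)$ with $o_1 \in \mathrm{Sh}(w,u)$ and march a sequence of vertices $x_k$ to infinity along a ray $[u,\eta)$ inside the fixed half-tree. Because $g_1$ fixes this half-tree pointwise, the points $o^{(k)} := (x_k, o_2, \ldots, o_n)$ all satisfy $d_X(o^{(k)}, g\,o^{(k)})^2 = \sum_{i \geq 2} d_{T_i}(o_i, g_i o_i)^2$, a constant independent of $k$. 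By cocompactness of $\Gamma \curvearrowright X$ I can choose $\gamma_k \in \Gamma$ moving $o$ to within a bounded distance of $o^{(k)}$; the triangle inequality then shows that $\gamma_k^{-1} g \gamma_k$ moves $o$ by a uniformly bounded amount. By properness of the action, the set of elements of $\Gamma$ with bounded displacement at $o$ is finite, so the family $\{\gamma_k^{-1} g \gamma_k\}_k$ takes only finitely many values.

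The main obstacle, and the crux of the argument, is to convert this finiteness into a contradiction by showing the conjugates are nonetheless eventually distinct. For this I would track the first coordinate: $p_1(\gamma_k^{-1} g \gamma_k) = p_1(\gamma_k)^{-1} g_1\, p_1(\gamma_k)$ fixes the half-tree $p_1(\gamma_k)^{-1}\mathrm{Sh}(w,u)$ pointwise while displacing the image under $p_1(\gamma_k)^{-1}$ of a vertex actually moved by $g_1$ (which exists since $g_1 \neq e$ lies off the fixed half-tree). As $p_1(\gamma_k)\,o_1 \to \eta$, these fixed half-trees and displaced vertices migrate out to infinity, so the conjugates are pairwise distinct in $\Aut^+(T_1)$ for infinitely many $k$, hence distinct in $\Gamma$ by injectivity of $p_1$, contradicting the finiteness above. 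The delicate point is exactly this eventual distinctness, equivalently that $g$ is not centralized by the drifting elements $\gamma_k\gamma_{k'}^{-1}$; this is precisely where injectivity of the first projection is essential and where a careful choice of $\eta$ and bookkeeping of the ``support'' of the conjugates must be carried out. Combining the three cases yields $\nu_1(\Fix_{\partial T_1}(g)) = 0$ for every $g \neq e$.
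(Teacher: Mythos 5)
Your proof breaks down at the key measure-theoretic claim: that a subtree $S$ of $T_1$ has $\nu_1(\partial S)>0$ if and only if $S$ contains a half-tree. This is false. For a subtree, containing a half-tree is equivalent to $\partial S$ having non-empty interior in $\partial T_1$, and a closed subset of $\partial T_1$ can perfectly well have positive measure and empty interior (a ``fat Cantor set'': remove a countable dense family of shadows whose total measure is small; the complement is closed, nowhere dense, and of positive measure). Since every closed subset $C\subseteq\partial T_1$ with at least two points is $\partial S$ for some subtree (its convex hull, the union of all geodesics joining pairs of points of $C$), the claimed equivalence fails. The Lebesgue density argument you invoke only yields that at a.e.\ point of $\partial S$ the relative measure of $\partial S$ in small shadows tends to $1$; it never produces a shadow where that ratio \emph{equals} $1$, which is what a half-tree requires. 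Nor can the claim be rescued by restricting to fixed trees of automorphisms: in a regular tree one can prune, at a sparse but coarsely dense family of vertices of $S$, two of the subtrees hanging below each such vertex, so that the surviving subtree $S$ has positive-measure boundary yet contains no half-tree; and $S$ is exactly the fixed tree of the automorphism that swaps each pruned pair and is the identity on $S$. So your elliptic case only treats a special configuration, and the general case is left completely open.

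What is true, and what the paper actually proves, is that the fixed tree of $g_1$ is not an arbitrary subtree, because of the ambient group action: since $\Gamma$ acts geometrically on the CAT(0) space $X=T_1\times\cdots\times T_n$, Ruane's theorem \cite[Theorem 3.2]{Ru99} shows that the centralizer $Z$ of $g$ in $\Gamma$ acts cocompactly on the minset of $g$ in $X$, whose projection to $T_1$ is precisely the fixed tree $T_1'$ of $g_1$; hence the stabilizer of $T_1'$ acts cocompactly on it. The paper then shows (Lemma \ref{lem:nullpartial}) that a \emph{proper subtree with cocompact stabilizer} has null boundary, via a random-walk estimate: a.e.\ ray in $T_1'$ meets the coarsely dense orbit of a vertex having a neighbour outside $T_1'$ infinitely often, and at each such visit it exits $T_1'$ forever with probability bounded below. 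This group-theoretic input is exactly what replaces your false dichotomy, and it handles precisely the fixed trees (no half-tree, positive candidate measure) that your argument cannot. For what it is worth, the second half of your argument --- the displacement/properness argument excluding a nontrivial $g$ whose first coordinate fixes a half-tree pointwise --- is essentially sound, and can even be simplified: if some value $h$ of the conjugates $\gamma_k^{-1}g\gamma_k$ occurs for infinitely many $k$, then $p_1(h)$ fixes balls of arbitrarily large radius around $o_1$, forcing $p_1(h)=e$, hence $h=e$ by injectivity of $p_1$, a contradiction. But since the half-tree case is not the general case, this does not repair the proof.
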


To prove the proposition we need the following lemma.

\begin{lem} \label{lem:nullpartial}
Fix integers $p,q>1$, a $(p+1,q+1)$-biregular simplicial tree $T$ and a proper subtree $T'\subsetneq T$.
Assume that the subgroup $H \leq \Aut(T)$ which stabilizes $T'$ acts on it cocompactly.
Then $\partial T'$ is a null set of  $\partial T$, where $\partial T$ is endowed with the unique $\Aut(T)$-invariant measure class.
In particular, this measure class on $\partial T$ is non-atomic.
\end{lem}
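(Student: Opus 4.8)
The plan is to fix a base vertex $o\in T'$ and work with the visual (harmonic) measure $\nu_o$ on $\partial T$ representing the $\Aut(T)$-invariant class. Writing $c_T(v)$ for the number of children of a vertex $v\neq o$ (and $c_T(o)=\deg(o)$), the cylinder $C_v$ of ends whose geodesic ray from $o$ passes through $v$ satisfies $\nu_o(C_v)=\prod_{i=0}^{n-1}c_T(u_i)^{-1}$ along the geodesic $o=u_0,\dots,u_n=v$; the $C_v$ with $|v|=n$ partition $\partial T$, and $\nu_o(C_w)=\nu_o(C_v)/c_T(v)$ whenever $w$ is a child of $v$. Since $p,q>1$ every $c_T(v)\ge 2$, so $\nu_o(\{\xi\})\le\inf_n\nu_o(C_{u_n})=0$ for every end $\xi$, which already gives the non-atomicity claim. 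As null sets depend only on the measure class, it then remains to prove $\nu_o(\partial T')=0$, where $\partial T'=\bigcap_n\bigsqcup_{|v|=n,\,v\in T'}C_v$.

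Next I would quantify the properness of $T'$ via cocompactness. For $v\in T'$ set $\mathrm{def}(v)=\deg_T(v)-\deg_{T'}(v)$, the number of $T$-neighbours of $v$ lying outside $T'$. Since every $h\in H$ is an automorphism of $T$ preserving $T'$, the function $\mathrm{def}$ is $H$-invariant; because $T'$ is connected, contains $o$, and $T'\neq T$ with $T$ thick, some vertex of $T'$ has positive defect. As a $T'$-vertex and its $T$-parent are joined by a geodesic lying in the connected subtree $T'$, the missing neighbours of a non-root vertex are all children, so $c_{T'}(v)=c_T(v)-\mathrm{def}(v)$. Cocompactness means the quotient graph $H\backslash T'$ is finite; letting $R$ be its diameter and lifting geodesics from the quotient, every vertex of $T'$ lies within distance $R$ of a vertex in the orbit of a positive-defect vertex. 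Thus defects are \emph{syndetic}: every ball $B_{T'}(x,R)$ contains a vertex of positive defect.

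The core of the argument is a density/propagation estimate. Introduce the conditional densities $\rho(v)=\nu_o(C_v\cap\partial T')/\nu_o(C_v)$ for $v\in T'$; mass conservation gives $\rho(v)=c_T(v)^{-1}\sum_{w}\rho(w)$, the sum over the $T'$-children $w$ of $v$. By Lévy's martingale convergence theorem applied to the filtration generated by the cylinders, for $\nu_o$-almost every $\xi\in\partial T'$ one has $\rho(u_n(\xi))\to 1$ along its ray. Assuming $\nu_o(\partial T')>0$, fix such a $\xi$, set $q'=\max(p,q)$, and choose $n$ with $\rho(v)>1-\varepsilon$ for $v=u_n(\xi)$ and $\varepsilon<(q')^{-(2R+1)}$. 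From the recursion, $\rho(v)>1-\varepsilon$ forces $\mathrm{def}(v)=0$ and $\rho(w)>1-q'\varepsilon$ for every child $w$; iterating, every descendant of $v$ up to depth $2R$ is defect-free and lies in $T'$. Picking a vertex $w$ at depth exactly $R$ below $v$, the ball $B_T(w,R)$ consists entirely of descendants of $v$ at depth $\le 2R$, hence is defect-free, contradicting syndeticity, which forces a positive-defect vertex inside $B_{T'}(w,R)$. Therefore $\nu_o(\partial T')=0$.

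The main obstacle, and the reason one cannot simply argue that "defects occur along every ray," is that syndetic defects do not by themselves forbid a defect-free ray: a comb subtree (a geodesic ray with one pendant edge attached at each vertex) has defects within distance $1$ of every vertex yet a defect-free spine. What rescues the proof is that such thin configurations have measure-zero boundary anyway, whereas \emph{positive} boundary measure produces, through the density theorem, a vertex whose entire forward cone is defect-free to large depth; it is the clash of this genuinely two-dimensional defect-free region with the uniform, all-directions syndeticity coming from cocompactness that yields the contradiction. I expect the only delicate bookkeeping to be the choice of $\varepsilon$ and of the depth $2R$ ensuring that the ball around the depth-$R$ vertex remains inside the defect-free cone.
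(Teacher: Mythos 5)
Your proof is correct, but it follows a genuinely different route from the paper's. The paper also works with the $\Stab(o)$-invariant measure for a base vertex $o\in T'$ (on the space of rays from $o$), but its mechanism is recurrence plus escape: it fixes one adjacent pair $u\in T'$, $v\notin T'$, lets $A$ be the set of rays meeting the orbit $Hu$ infinitely often, argues via cocompactness and the law of large numbers that $A$ is conull in the set of rays staying in $T'$, and then kills $A$ by a geometric series --- at each visit to $Hu$ the ray steps onto the escape vertex with conditional probability at least $1/p$, and a geodesic ray can never re-enter the convex subtree $T'$ once it leaves, so the measure of ``at least $n$ visits'' is at most $\left(\frac{p-1}{p}\right)^{n-1}$. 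You instead run a density-point argument: positive measure for $\partial T'$ yields, by L\'evy's martingale convergence theorem, a cylinder of density $>1-\varepsilon$; the mass-conservation recursion propagates this to a defect-free cone of depth $2R$; and that contradicts the $R$-syndeticity of defect vertices, which is your use of cocompactness. Both proofs turn on the same local estimate --- a defect vertex sheds at least a $1/\max(p,q)$ fraction of its cylinder mass out of $\partial T'$ --- but they spend cocompactness differently: the paper needs almost every ray in $\partial T'$ to return to one fixed defect orbit infinitely often (a step it justifies in a single sentence, and exactly the kind of claim your comb example warns about), whereas you only need defects to be $R$-dense in $T'$, which follows at once from finiteness of $H\backslash T'$ and path lifting; the price you pay is the appeal to martingale convergence and the $\varepsilon$-versus-depth-$2R$ bookkeeping, both of which check out. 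Your non-atomicity argument is also more direct: the paper deduces it by applying the lemma to a bi-infinite geodesic, while you simply note that cylinder masses decay geometrically.
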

\begin{proof}
We fix a vertex $o\in T'$ and consider the space $R$ consisting of rays in $T$ emanating at $o$
and the natural surjection $\pi:R \to \partial T$.
Endowing $R$ with the unique probability measure $\mu$ which is invariant under $\Stab(o)<\Aut(T)$,
this map is measure class preserving.
We thus need to show that the subset $R'=\pi^{-1}(\partial T')$ is a null set in $R$.

By the assumption that $T'\neq T$ there exist adjacent vertices $u,v\in T$ such that $u\in T'$, $v\notin T'$.
Without loss of the generality we assume that the degree of $u$ is $p+1$.
Setting 
\[ A=\{x\in R \mid x(k)\in H u\mbox{ for infinitely many values of $k$}\} \]
we easily see that $A\subset R'$.
By the fact that $H$ acts cocompactly on $T'$ and the law of large numbers we also have that $A$ is conull in $R'$,
thus $\mu(A)= \mu(R')$.
Writing $A$ as the descending intersection
$A=\bigcap_{n\in \mathbb{N}} A_n$ of
\[ A_n=\{x\in R\mid x(k)\in H u\mbox{ for at least $n$ values of $k$}\}, \]
we have that $\mu(A)= \lim_n \mu(A_n)$.
Since for all $n \geq 1$, $\mu(A_{n+1})\leq \left(\frac{p-1}{p}\right)\mu(A_n)$
we get that $\mu(A_n)\leq \left(\frac{p-1}{p}\right)^{n-1}$,
thus indeed $\partial T'$ is a null set in $\partial T$.

The last sentence of the proposition follows by considering the special case where $T'$ is a geodesic in $T$.
\end{proof}

\begin{proof}[Proof of Proposition \ref{freeness2}]
We fix a non-trivial element $g\in \Gamma$ and set $F=\Fix(g)$.
We assume as we may that $F$ has at least three points.
Let $T'_{1}$ be the convex hull in $T_{1}$ of $F$.
Then $T'_{1}$ is non-empty, it coincides with the set of fixed points of $g$ in $T_{1}$
and $F=\partial T'_{1}$.
Let $Z<G$ be the centralizer of $g$
and note that $T'_{1}$ is $Z$ invariant.
We claim that the $Z$-action on $T'_{1}$ is cocompact.
From this claim we will get by Lemma~\ref{lem:nullpartial} that $F=\partial T'_{1}$ is a null
set in $\partial T_{1}$,
thus proving the proposition.

We endow $X = T_1 \times \cdots \times T_n$ with the $L^2$-product metric and note that this is a CAT(0) space.
We consider the displacement function $D:X\to [0,\infty)$, $D(x)=d(gx,x)$ and let $Y\subset X$ be its minset,
that is setting $d_0=\min_{x\in X} d(gx,x)$, $Y=D^{-1}(d_0)$.
Note that the image of $D$ is discrete in $[0,\infty)$, as the $\Gamma$ action on $X$ is simplicial, thus $D$ attends its minimum $d_0$ and $Y$ is a 
closed convex subset of $X$.
By a result of Kim Ruane, \cite[Theorem 3.2 and Remark 1]{Ru99},
the action of $Z$ on $Y$ is cocompact.
In particular, the $Z$-action on the image of $Y$ under the projection $X \to T_{1}$ is cocompact.
We are done by observing that this image is exactly the minset of $g$ in $T_{1}$, that is the tree of $g$-fixed points $T'_{1}$. 
\end{proof}

\begin{proof}[Proof of Theorem~\ref{thm:Tree}]
By \cite[Lemma 3.1.1, Proposition 3.1.2]{BM00}, the $2$-transitivity assumption implies that for every $i$, every closed normal subgroup of $G_i$ is co-compact. This $2$-transitivity also implies that $G_i$ is non-amenable, and hence it has no non-trivial amenable normal closed subgroup. So $\Gamma$ has trivial amenable radical. 

Let us argue that each projection map $\Gamma \to G_i$ is injective. Indeed, the kernel of such a projection map is equal to $\Gamma \cap H_i$, where $H_i = \prod_{j \neq i} G_j$. It is a closed subgroup of $H_i$, which is normalized by the projection of $\Gamma$ on $H_i$. So by the dense projection assumption, $\Gamma \cap H_i$ is a normal closed subgroup of $H_i$. Since every non-trivial normal subgroup of each factor $G_j$, $j \neq i$, is co-compact in $G_j$, this normal subgroup is either trivial or it contains a co-compact normal closed subgroup $G_j'$ of some $G_j$, $j \neq i$. In this case, $\Gamma$ contains $G_j'$. Further, $\Gamma/G_j'$ is a lattice with dense projections inside $(G_j/G_j') \times \prod_{i \neq j} G_i$. The only way this can happen is if the compact factor $G_j/G_j'$ is trivial. In this case, $G_j = G_j'$ is discrete, which contradicts the $2$-transitivity assumption, and the fact that $T_i$ is thick.

We set for every $i \in I$, $B_i=\partial T_i$ endowed with the unique $G_i$-invariant measure class and let $B=\prod B_i$.
For every $i \in I$, we fix a point in $\partial T_i$ and let $P_i$ be its stabilizer.
By this we identify $B_i=G_i/P_i$.
By Example~\ref{Mautner trees}, $P_i$ is stably self-normalizing and it has the relative Mautner property in $G_i$.
By \cite[Theorem 5.1]{BS04} we have that $(B_i, \nu_i)$ is the Furstenberg-Poisson boundary of $G_i$ for some generating measure $\mu_i$ on $G_i$
and by \cite[Corollary 3.2]{BS04}, $(B,\nu)$ is the Furstenberg-Poisson boundary of $G$ for the measure $\mu=\prod \mu_i$.
By \cite[Theorem 2.7]{BF14}, $B$ is amenable and metrically ergodic $G$-space
and by \cite[Lemma 3.5]{BF18} it is amenable and metrically ergodic $\Gamma$-space.
Therefore, by Proposition~\ref{top crit}, it is enough to verify condition (a) of Proposition~\ref{prop:chmcrit3}.

We now fix a separable, ergodic, faithful $(\Gamma,L^\infty(B))$-von Neumann algebra $(M,E)$ which is not $\Gamma$-invariant
and argue to show that it is $\Gamma$-singular.
By Theorem \ref{non-trivial G points}, we find an index $i \in I$ such that the $G_i$-algebra $M_i$ in $M$ is non-trivial, and such that $E_{|M_i}$ is not $G_i$-invariant. Since the action $\Gamma \actson M$ is ergodic, we note that $G_i \actson M_i$ is also ergodic. By Lemma \ref{ergodic extremal}, we find that $E_{|M_i}$ is an extremal $(G_i,L^\infty(B_i))$-structure map. 
We combine Proposition \ref{FMM implies singular} with our freeness result Proposition \ref{freeness2} and find that $E_{|M_i}$ is $\Gamma_i$-singular, where $\Gamma_i$ is the projection of $\Gamma$ into ${G}_i$. 
As the projection map $\Gamma \to G_i$ is injective, $E_{|M_i}$ is $\Gamma$-singular. From the characterizations of singular ucp maps we gave, this implies that $E$ is $\Gamma$-singular, as desired.
\end{proof}

%%%%%

\section{Proofs of charfiniteness}\label{sec:proofs}

%%%

In this section, we prove the second half of Theorem~\ref{thm:AG} and Theorem~\ref{thm:Zp}.

\subsection{Finite dimensional unitary representations}
\label{ss:fdur}

In this subsection, we prove the following proposition, which is well known to experts.

\begin{prop} \label{prop:ffd}
Let $K$ be a global field and ${\bf G}$ a connected non-commutative $K$-almost simple $K$-algebraic group.
Let $\Gamma\leq {\bf G}(K)$ be an $S$-arithmetic subgroup of higher rank.
If either $S$ is finite or ${\bf G}$ is simply connected then $\Gamma$ has a finite number of isomorphism types of
unitary representation at each finite dimension.
\end{prop}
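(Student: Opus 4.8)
The plan is to count homomorphisms $\pi \colon \Gamma \to \mathrm{U}(d)$ up to unitary equivalence, i.e.\ up to $\mathrm{U}(d)$-conjugacy. First I would reduce to irreducible representations: every $d$-dimensional unitary representation is a direct sum of irreducibles of dimension at most $d$, so if for each $e \le d$ there are only finitely many irreducible unitary representations of dimension $e$, then there are only finitely many of dimension $d$ (the number of multisets of irreducibles whose dimensions sum to $d$ being finite). Thus I fix an irreducible unitary $\pi$ of dimension $e \le d$ and study its image $F := \pi(\Gamma) \le \mathrm{U}(e)$, through which $\pi$ factors as a faithful irreducible representation of $F$.

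The first substantial step is that $F$ is finite. Since $\Gamma$ is of higher rank, it is an irreducible lattice in the semisimple group $G = \prod_{v \in V} \mathbf{G}(K_v)$ of total rank $\geq 2$, where $V$ is the set of places at which $\Gamma$ is unbounded. By Margulis superrigidity \cite{Ma91}, the finite-dimensional representation $\pi$ virtually extends to a continuous representation of $G$; as each simple factor $\mathbf{G}(K_v)$ is non-compact and admits no non-trivial finite-dimensional unitary representation, the extension is trivial on each factor, whence $\pi$ has finite image. (Equivalently, one invokes the normal subgroup theorem \cite{Ma91}: $\ker \pi$ is either central, which is excluded since a higher rank lattice has no almost faithful bounded linear representation, or of finite index.) Therefore $F$ is a finite quotient of $\Gamma$ carrying a faithful irreducible representation of dimension $e \le d$.

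Next I would use the congruence subgroup property, which holds under either hypothesis ($S$ finite, or $\mathbf{G}$ simply connected), to control these finite quotients. CSP identifies the profinite completion of $\Gamma$ with its congruence completion, which (up to a finite extension arising from the center and, in the non-simply-connected case, from the isogeny $\widetilde{\mathbf{G}} \to \mathbf{G}$) is the compact group $\prod_{v \notin S} \mathbf{G}(\mathcal{O}_v)$. Since $F$ is finite, $\pi$ extends to a continuous representation of this profinite group, and an irreducible continuous representation of such an (almost) direct product is an external tensor product $\bigotimes_{v \in T} \rho_v$ over a finite set $T$ of places, with each $\rho_v$ a non-trivial irreducible of $\mathbf{G}(\mathcal{O}_v)$ and $\prod_{v \in T} \dim \rho_v = e \le d$. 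Finiteness then follows from two growth statements: by the Landazuri--Seitz lower bounds, the minimal dimension of a non-trivial representation of $\mathbf{G}(\mathbb{F}_{q_v})$ tends to infinity with $q_v$, so only finitely many places $v$ admit a non-trivial $\rho_v$ of dimension $\le d$; and for each such $v$ the compact $v$-adic group $\mathbf{G}(\mathcal{O}_v)$ has only finitely many irreducibles of dimension $\le d$, since the dimension of a representation non-trivial on the $k$-th congruence layer grows with $k$. Together these bound the data $(T,(\rho_v)_{v\in T})$ and hence the number of possible $\pi$.

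The hard part will be marshalling the congruence subgroup property together with the representation-growth estimates uniformly across both hypotheses, in particular when $\Gamma$ is not finitely generated (infinite $S$) or $\mathbf{G}$ is not simply connected. For the latter one passes to the simply connected cover via the central isogeny, whose kernel is finite, and absorbs the resulting finite ambiguity; for infinite $S$ the product description of the congruence completion already has the finite-tensor-product form, so no finite generation is needed and the growth argument applies verbatim. As a cross-check in the finitely generated case, one may argue instead by local rigidity: finite image together with the finiteness of the abelianizations of all finite-index subgroups (a consequence of \cite{Ma91}) gives $H^1(\Gamma, \mathrm{Ad}\,\pi) = 0$, so each conjugacy class is open in the compact variety $\mathrm{Hom}(\Gamma, \mathrm{U}(d))$, forcing only finitely many classes.
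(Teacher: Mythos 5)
There is a fatal gap at your first substantial step: it is \emph{not} true that every irreducible finite-dimensional unitary representation of $\Gamma$ has finite image. Margulis superrigidity does not let you extend $\pi$ to a continuous representation of $G=\prod_{v\in V}\G(K_v)$, where $V$ is the set of places at which $\Gamma$ is unbounded: that form of superrigidity applies to homomorphisms with \emph{unbounded} Zariski-dense image in the target, whereas a unitary representation has precompact image. The correct statement in this setting, and the one the paper invokes, is \cite[VIII(B)(iii), p.~258]{Ma91}: in characteristic zero one has a decomposition $\pi=\phi\cdot\nu$ where $\nu$ has finite image and $\phi$ is the restriction to $\Gamma$ of an $\R$-algebraic morphism $R_{K/\Q}\G\to\bH$ into the unitary group. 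The algebraic part $\phi$ need not be trivial, because it can factor through the \emph{compact} factors of $R_{K/\Q}\G(\R)$, i.e.\ through places at which $\Gamma$ is bounded --- and these are invisible to the product $\prod_{v\in V}\G(K_v)$. Concretely, take $K=\Q$, $\G=\SL_1(D)$ the reduced norm one group of the Hamilton quaternions $D$, and $S=\{3,5\}$: then $\Gamma$ is an $S$-arithmetic group of product type (unbounded at $3$ and $5$), yet it embeds densely in the compact group $\G(\R)\cong\mathrm{SU}(2)$, so the restriction to $\Gamma$ of the $n$-dimensional irreducible representation of $\mathrm{SU}(2)$ is an irreducible unitary representation with \emph{infinite} image, for every $n$. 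The same example refutes your parenthetical claim that a higher rank lattice admits no almost faithful bounded linear representation. Your argument therefore counts only the finite-image representations and misses a non-empty class; handling the algebraic part $\phi$ (there are only finitely many $\R$-algebraic morphisms $R_{K/\Q}\G\to\bH$ in each dimension) and thereby reducing the problem to counting finite homomorphisms is precisely the content of the paper's proof, and is what makes the proposition true in the example above.

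A second, independent problem is your reliance on the congruence subgroup property, even for the finite-image count. CSP in the stated generality (arbitrary connected non-commutative $K$-almost simple $\G$ of product type) is Serre's conjecture and remains open for many anisotropic forms --- notably forms of type $A_n$ arising from division algebras, which are exactly the forms for which the first gap occurs. The paper's route avoids CSP entirely: after reducing to $\G$ simply connected and $S$ finite, strong approximation realizes $\Gamma$ as a lattice with dense projections in a product $\prod_i G_i$ whose factors have no non-trivial finite-dimensional unitary representations (Proposition~\ref{prop:trivcharlie}); Shalom's theorem \cite[Theorem 5.7]{Sh99} then gives finitely many conjugacy classes of homomorphisms $\Gamma\to\U(n)$, in particular finitely many finite ones, so all finite homomorphisms into $\U(n)$ factor through a single finite quotient, and the normal subgroup theorem \cite[VIII(A), p.~259]{Ma91} converts the existence of a non-central element in the corresponding kernel into the desired finiteness. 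Your local-rigidity cross-check (vanishing of $H^1(\Gamma,\operatorname{Ad}\pi)$ plus compactness of $\Hom(\Gamma,\U(d))$) is sound in spirit for finitely generated $\Gamma$, but it too is derived only for finite-image $\pi$, so it does not repair the first gap.
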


We will use heavily the results of \cite[Chapter VIII]{Ma91}
and also rely on \cite[Section 5]{Sh99}.
For the terminology regarding arithmetic groups used in the proof, see Definition~\ref{def:arith}.

\begin{proof}
We first note that if $\Gamma$ is of a simple type and of higher rank then it has property (T),
which clearly implies the result.
Thus we assume as we may that $\Gamma$ is of a product type.
Next, we observe that if
$\Lambda$ has the property of having a finite number of isomorphism types of
unitary representation at each finite dimension and $\Lambda\to \Gamma$ is a homomorphism with a finite kernel and 
finite index image then also $\Gamma$ has this property.
Therefore we assume as we may that ${\bf G}$ is simply connected even in case $S$ is finite.
Indeed, in this case letting $\tilde{\bf G}$ be the simply connected cover of ${\bf G}$ and letting $\Lambda$ be the preimage of 
$\Gamma$ under the covering map $\tilde{\bf G}(K)\to {\bf G}(K)$, we have that $\Lambda\leq \tilde{\bf G}(K)$ is an $S$-arithmetic subgroup of higher rank and $\Lambda\to \Gamma$ is a homomorphism with a finite kernel and 
finite index image.
We fix $n$ and argue to show that $\Gamma$ has a finite number of $\U(n)$-conjugacy classes of homomorphisms into $\U(n)$. This fact would easily follow from \cite[Theorem 5.7]{Sh99} in the case where $S$ is finite. However, such a  statement is badly behaved under inductive limits. For this reason we need to be more accurate and invoke superrigidity techniques of Margulis.

Let us say that such a homomorphism $\Gamma \to \U(n)$ is finite if it has a finite image.

\begin{claim}
$\Gamma$ has a finite number of $\U(n)$-conjugacy classes of finite homomorphisms into $\U(n)$.
\end{claim}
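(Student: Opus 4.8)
The plan is to show that every finite homomorphism $\rho\colon\Gamma\to\U(n)$ factors through one of only finitely many fixed finite quotients of $\Gamma$, after which finiteness of the conjugacy classes is automatic, since a finite group has only finitely many $n$-dimensional representations up to conjugacy. First I would apply Jordan's theorem to the finite group $\rho(\Gamma)\le\U(n)$: it contains an abelian normal subgroup $A$ whose index is bounded by a constant $J(n)$ depending only on $n$. Setting $\Gamma_0=\rho^{-1}(A)$, this gives a normal subgroup of index at most $J(n)$ on which $\rho$ has abelian image, so $\rho$ is encoded by the bounded-order quotient $\Gamma/\Gamma_0$ together with the abelian homomorphism $\rho|_{\Gamma_0}$, which factors through $\Gamma_0^{\mathrm{ab}}$.

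The structural input comes from the congruence structure of $\Gamma$. Since $\Gamma$ is of product type it is of higher rank (the ranks at the unbounded places sum to at least $2$), so the superrigidity and normal-subgroup machinery of \cite[Chapter VIII]{Ma91} applies; in particular the congruence kernel is finite, so every finite-index subgroup of $\Gamma$ contains a principal congruence subgroup $\Gamma(\mathfrak a)$ for some nonzero ideal $\mathfrak a$ of $\cO_S$. This has two consequences. First, $\Gamma$ has only finitely many subgroups of each bounded index: a subgroup of index $\le m$ contains some $\Gamma(\mathfrak a)$ with $[\Gamma:\Gamma(\mathfrak a)]$ bounded, there are finitely many such $\mathfrak a$, and each finite congruence quotient has finitely many subgroups. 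Hence there are finitely many candidates for $\Gamma_0$, and each of them has finite abelianization. Second, since $\G$ is simply connected, strong approximation identifies each congruence quotient with $\G(\cO_S/\mathfrak a)\cong\prod_{\mathfrak p\mid\mathfrak a}\G(\cO_{\mathfrak p}/\mathfrak p^{k_{\mathfrak p}})$, a product of finite groups of Lie type together with their pro-$\mathfrak p$ congruence layers.

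The hard part is to bound which congruence quotients can actually carry an $n$-dimensional representation, because there are infinitely many ideals $\mathfrak a$ and, when $S$ is infinite, $\Gamma$ is not even finitely generated; this is precisely why one cannot simply take an inductive limit of the finite-$S$ statement \cite[Theorem 5.7]{Sh99}. Here I would invoke the Landazuri--Seitz--Zalesskii lower bounds: the minimal dimension of a nontrivial complex representation of the quasi-simple group $\G(\F_{\mathfrak p})$ tends to infinity with $|\F_{\mathfrak p}|$. As $\rho(\Gamma)$ is abelian-by-(order $\le J(n)$), its nonabelian composition factors have bounded order, so the quasi-simple quotient $\G(\F_{\mathfrak p})$ can be hit nontrivially only when $|\F_{\mathfrak p}|\le C(n)$; thus at every large prime $\mathfrak p\mid\mathfrak a$ the reduction of $\rho$ kills $\G(\F_{\mathfrak p})$ and survives only on the pro-$\mathfrak p$ congruence layer, which feeds into the abelian datum and is therefore controlled by the finiteness of $\Gamma_0^{\mathrm{ab}}$. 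Combining the finiteness of the admissible $\Gamma_0$, the finiteness of each $\Gamma_0^{\mathrm{ab}}$, and the bound $|\F_{\mathfrak p}|\le C(n)$ confines $\rho$ to finitely many finite quotients, and the claim follows. The genuinely delicate point throughout is establishing the congruence structure uniformly in the infinite-$S$ case, which is where Margulis' superrigidity is indispensable.
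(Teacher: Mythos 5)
Your argument hinges on the assertion that ``the congruence kernel is finite, so every finite-index subgroup of $\Gamma$ contains a principal congruence subgroup $\Gamma(\mathfrak{a})$'', which you attribute to the superrigidity and normal subgroup machinery of \cite[Chapter VIII]{Ma91}. This is a genuine gap, for two reasons. First, finiteness of the congruence kernel is \emph{not} a consequence of superrigidity or of the normal subgroup theorem; it is the congruence subgroup property (Serre's conjecture), a separate and much deeper statement whose known proofs (Bass--Milnor--Serre, Raghunathan, Prasad--Rapinchuk, \dots) require substantial additional input. Second, and more seriously, the congruence subgroup property is still open in the generality of the claim: the claim allows an arbitrary connected non-commutative $K$-almost simple simply connected $\G$ of product type, and product type does \emph{not} force $\G$ to be $K$-isotropic. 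For $K$-anisotropic forms --- for instance $\G=\SL_1(D)$ for a division algebra $D$ that splits at two or more places in $S$ --- finiteness of the congruence kernel is a well-known open problem. Since every subsequent step of your proof (finitely many candidates for $\Gamma_0$, the congruence description of the finite quotients as $\G(\cO_S/\mathfrak{a})$, and the Landazuri--Seitz--Zalesskii reduction) is filtered through this congruence structure, the argument collapses without it. Even in cases where the congruence subgroup property is a theorem, your deduction that a subgroup of index at most $m$ contains some $\Gamma(\mathfrak{a})$ of \emph{bounded} index is asserted rather than proved; and since $\Gamma$ is not finitely generated when $S$ is infinite, ``finitely many subgroups of bounded index'' is precisely the kind of statement that needs an argument here rather than being automatic.

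The paper's proof avoids classifying finite quotients altogether. It first reduces the claim, via the normal subgroup theorem \cite[VIII(A), p.~259]{Ma91}, to producing a single non-central element $g\in\Gamma$ lying in the kernel of every finite homomorphism $\Gamma\to\U(n)$: the normal closure $N$ of such a $g$ then has finite index, so all finite homomorphisms factor through the one finite group $\Gamma/N$, which has finitely many $n$-dimensional representations. To produce $g$, it passes to a finite subset $S_0\subset S$ for which $\Gamma\cap\Lambda_{S_0}$ is still of product type, uses strong approximation \cite[Theorem II.6.8]{Ma91} to realize this subgroup as a lattice with dense projections in a finite product $\prod_i G_i$, notes that each $G_i$ has only the trivial character (Proposition~\ref{prop:trivcharlie}) and hence no non-trivial finite-dimensional unitary representation, and then invokes Shalom's finiteness theorem \cite[Theorem 5.7]{Sh99} to conclude that this subgroup admits only finitely many homomorphisms into $\U(n)$ up to conjugacy; intersecting the finitely many kernels of the finite ones yields the desired $g$. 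If you wish to salvage your route through Jordan's theorem and congruence quotients, you would have to restrict to families where the congruence subgroup property is actually known, which is strictly narrower than what the claim asserts.
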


We will prove the claim later, first finishing the proof assuming the claim.
We first note that we may assume that $K$ is of characteristic zero,
as if $K$ is of positive characteristic then every homomorphism $\rho:\Gamma\to \U(n)$ is finite
by \cite[VIII(C), p.\ 259]{Ma91}.
Indeed, if $\rho$ had an infinite image, upon setting $\ell=\R$ and denoting by ${\bf H}$ the identity component of the 
Zariski closure of $\rho(\Gamma)$, we would get a field extension $K\to \ell$, thus a contradiction.
We thus may apply \cite[VIII(B)(iii), p.\ 258]{Ma91} for $\ell=\R$
and letting ${\bf H}$ be the $n$-dimensional real unitary group, thus ${\bf H}(\ell)=\U(n)$.
It follows that any $\rho:\Gamma\to \U(n)$ is
of the form $\rho=\phi\cdot \nu$ where
$\phi:\Gamma\to \U(n)$ is obtained by a composition
\[ \Gamma \to {\bf G}(K)\simeq R_{K/\Q}{\bf G}(\Q) \to R_{K/\Q}{\bf G}(\ell) \to {\bf H}(\ell)=\U(n), \]
where $R_{K/\Q}{\bf G}(\ell) \to {\bf H}(\ell)$ is the $\ell$-points evaluation of an $\ell$-algebraic morphism
$R_{K/\Q}{\bf G}\to {\bf H}$
and $\nu:\Gamma\to \U(n)$ is a finite homomorphism whose image commutes with $\phi(\Gamma)$.
Since the number of $\ell$-algebraic morphisms
$R_{K/\Q}{\bf G}\to {\bf H}$ is finite, we are done by the claim.

Next we prove the claim.
By \cite[VIII(A), p.\ 258]{Ma91}\footnote{We note that the assumption that ${\bf G}$ is simply connected is missing in this reference.
This is certainly a typo, as this assumption is used in its proof. Of course when $S$ is finite this doesn't matter, but 
when $S$ is infinite this assumption is necessary, as can be seen for example by the natural morphism $\PGL_2(\Q)\to\Q^*/(\Q^*)^2$
determined by the determinant morphism.} 
it is enough to show that there exists a non-central element $g\in \Gamma$ which is in the kernel of
all finite morphisms $\Gamma\to \U(n)$.
Indeed, letting $N\le \Gamma$ be the normal subgroup generated by $g$, we have that $\Gamma/N$ is finite and since every 
finite morphism $\Gamma\to \U(n)$ factors via $\Gamma/N$ there is only a finite number of isomorphism types of those.
We can find a finite subset $S_0\subset S$ such that the $S_0$-arithmetic subgroup $\Gamma_0=\Gamma\cap \Lambda_{S_0}$
(see Definition~\ref{def:arith}) is still of a product type. 
We thus assume as we may that $S$ is finite.
By the fact that ${\bf G}$ is simply connected, the strong approximation theorem (see \cite[Theorem II.6.8]{Ma91}) implies that $\Gamma$ is a lattice with dense projections in 
$G=\prod_{i\in I} G_i$ where for each $i\in I$, $G_i$ is the $k_i$-points of the $K$-algebraic group ${\bf G}$ for some local field extension $k_i$ of $K$.
By Proposition~\ref{prop:trivcharlie}, the groups $G_i$ have no non-trivial characters and it follows that they have no non-trivial finite dimensional unitary representations.
It then follows by \cite[Theorem 5.7]{Sh99} that $\Gamma$ has a finite number of isomorphism types of
homomorphism into $\U(n)$, and in particular a finite number of finite ones.
It follows that all finite homomorphisms into $\U(n)$ factor via one finite quotient, thus we can pick a non-central element $g\in \Gamma$
which is in its kernel.
This finishes the proof.
\end{proof}

A similar statement also holds for lattices in product of groups of automorphisms of trees.

\begin{prop} \label{prop:ffdlt}
Let $n\geq 2$. For every $i=1,\ldots,n$, let $T_i$ be a bi-regular tree and denote by $\Aut^+(T_i)$ the group of bi-coloring preserving automorphisms of $T_i$. Let $\Gamma < \Aut^+(T_1) \times \cdots \times \Aut^+(T_n)$ be a cocompact lattice. Denote by $G_i$ the closure of the image of $\Gamma$ in $\Aut^+(T_i)$ and assume that $G_i$ is $2$-transitive on the boundary $\partial T_i$. Further assume that $\Gamma < G_1 \times \cdots \times G_n$ is with dense projections. Then $\Gamma$ has a finite number of isomorphism types of
unitary representation at each finite dimension.
\end{prop}

\begin{proof}
We note that $G=G_1\times\cdots \times G_n$ is totally disconnected, hence it has an open kernel for every continuous homomorphism
into a Lie group.
It is a standard fact that the groups $G_i$ are just non-compact, thus we get that every open normal subgroup of $G$ 
is of finite index.
It follows that every continuous homomorphism of $G$ into the Lie group $\U(d)\ltimes \mathbb{C}^d$ has a finite image,
thus the corresponding cohomology group $H^1(G,\mathbb{C}^d)$ vanishes.
Since for a finite dimensional unitary representation $\Gamma\to \U(d)$ we have
$H^1(\Gamma,\mathbb{C}^d)=\bar{H}^1(\Gamma,\mathbb{C}^d)$ (for the definition of the reduced cohomology $\bar{H}^1$,
see \cite[Definition~1.7]{Sh99}),
we conclude by \cite[Theorem 4.1]{Sh99} that $H^1(\Gamma,\mathbb{C}^d)=0$.

By the fact that $\Gamma$ acts properly and cocompactly on a locally finite simplicial complex (a product of trees), it is finitely generated.
We may thus consider the compact representation variety $\Hom(\Gamma,\U(d))$.
The previous paragraph allow us to conclude that the $\U(d)$ action on it by postcomposition with conjugation has open orbits, see \cite{We63}.
It follows by compactness that there are only finitely many such orbits,
thus indeed, $\Gamma$ has a finite number of isomorphism types of
unitary representation at dimension $d$.
\end{proof}

We are now ready to prove Theorem~\ref{thm:Zp}.

\begin{proof}[Proof of Theorem~\ref{thm:Zp}]
Fix a non-empty set of primes $S$ and set $\Gamma=\SL_2(\mathbb{Z}_S)$.
We have seen in Proposition~\ref{SAF}
that $\Gamma$ is charmenable so we are left to verify properties (3)-(5) in Definition~\ref{def:charmenable}.
Property (3) follows from \cite[VIII(A), p.\ 258]{Ma91},
property (4) was verified in Proposition~\ref{prop:ffd}
and property (5) follows from \cite[Theorem 2.6]{PT13}.
So indeed, $\Gamma$ is charfinite.
\end{proof}

\subsection{Char-(T) and the proof of Theorem~\ref{thm:AG}}
\label{sec:upgrade}

Recall that an lcsc group $G$ has property (T) if and only if every amenable representation of $G$ contains a finite dimensional sub-representation, see \cite[Theorem~1.1]{BV91}. 

\begin{defn}
An lcsc group $G$ is said to have 
property {\em char-}(T) if for every amenable character the associated GNS representation contains a finite dimensional sub-representation.
\end{defn}

\begin{prop} \label{prop:charTcrit}
Let $\Gamma$ be a lattice with dense projections in $G = G_1\times G_2$.
Assume that $G_1$ has property {\em (T)} and $\Char(G_2) = \{1\}$.
Then $\Gamma$ has char-{\em (T)}.
\end{prop}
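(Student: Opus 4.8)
\emph{Setup and reduction.} I would let $\phi$ be an amenable character of $\Gamma$, with GNS data $(\pi_\phi,H_\phi,\xi_\phi)$, and set $\sigma := \pi_\phi \otimes \overline{\pi_\phi}$, viewed as a unitary $\Gamma$-representation on the Hilbert--Schmidt operators on $H_\phi$ (with $\Gamma$ acting by conjugation $T \mapsto \pi_\phi(\gamma)T\pi_\phi(\gamma)^*$). I would first observe that it suffices to produce a nonzero $\sigma$-invariant vector, i.e. a nonzero Hilbert--Schmidt (hence compact) operator $K \in \pi_\phi(\Gamma)'$: replacing $K$ by $K^*K$ and taking a spectral projection $\mathbf 1_{[\lambda,\infty)}(K^*K)$ for small $\lambda>0$ yields a nonzero finite-rank projection in $\pi_\phi(\Gamma)'$, whose range is a finite-dimensional $\pi_\phi(\Gamma)$-invariant subspace, which is exactly what char-(T) demands. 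The only input from $\phi$ is amenability, which by definition gives $\sigma \succ 1_\Gamma$ (almost invariant vectors).

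\emph{The $G_1$-step (property (T)).} The plan is to push $\sigma$ up to $G$ and cut it down with property (T). Induce to $G$ and use that induction is continuous for weak containment, together with $\Ind_\Gamma^G 1_\Gamma = L^2(G/\Gamma) \succ 1_G$ (valid since $\Gamma$ is a lattice), to get $\Ind_\Gamma^G \sigma \succ 1_G$; in particular the restriction to $G_1$ weakly contains $1_{G_1}$. Since $G_1$ has property (T), the space $(\Ind_\Gamma^G\sigma)^{G_1}$ is nonzero and there is a spectral gap: almost $G$-invariant unit vectors $v_n$ satisfy $\|v_n - Pv_n\| \to 0$, where $P$ is the orthogonal projection onto $(\Ind_\Gamma^G\sigma)^{G_1}$. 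As $P$ commutes with the (commuting) $G_2$-action and the $v_n$ are almost $G_2$-invariant, the vectors $Pv_n$ witness $\rho_2 \succ 1_{G_2}$, where $\rho_2$ is the $G_2$-representation carried by $(\Ind_\Gamma^G\sigma)^{G_1}$. By Proposition~\ref{continuity rep}, applied with $\iota = p_2\vert_\Gamma : \Gamma \to G_2$ (which has dense image by the dense-projections hypothesis), this space is $G$-equivariantly identified with the space of $\iota$-continuous vectors of $\sigma$, so $\rho_2$ is a genuine continuous $G_2$-representation extending the conjugation $\Gamma$-action on those continuity vectors. Finally, by Frobenius reciprocity $\rho_2^{G_2} = (\Ind_\Gamma^G\sigma)^G \cong \sigma^\Gamma$, so the problem reduces to showing $\rho_2^{G_2} \neq 0$.

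\emph{The $G_2$-step (triviality of $\Char(G_2)$), the crux.} At this point I would have a nonzero continuous $G_2$-representation $\rho_2$ with $\rho_2 \succ 1_{G_2}$, and I must upgrade this to an actual $G_2$-invariant vector. Since $G_2$ need not have property (T), this upgrade cannot be purely spectral and must consume $\Char(G_2) = \{1\}$. The point is that $\rho_2$ is not an arbitrary representation weakly containing $1_{G_2}$: it is built from the \emph{character} $\phi$, hence from the bi-representation $\widetilde\pi_\phi$ of $\Gamma \times \Gamma$ on $H_\phi$ with its diagonal-invariant trace vector $\xi_\phi$. The plan is to use this to equip $\rho_2$ with a commuting second $G_2$-action together with a trace, so that the almost-invariant vectors, averaged against $\tau = \langle\,\cdot\,\xi_\phi,\xi_\phi\rangle$, converge to a $G_2$-invariant positive-definite function, i.e. a character $\psi \in \Char(G_2)$; as $\Char(G_2) = \{1\}$ forces $\psi = 1$, whose GNS representation is trivial, this would produce a genuine nonzero $G_2$-invariant vector in $\rho_2$. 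In contrapositive form, assuming $\rho_2^{G_2} = 0$ one would aim to derive, exactly as in the proof of Proposition~\ref{prop:trivcharlie} via \cite[Theorem 6.1]{BG14}, the forbidden configuration of a $G_2 \times G_2$-representation with fixed-point-free factors but a diagonal-invariant vector.

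\emph{Conclusion and main obstacle.} Combining the three steps would give $\sigma^\Gamma \neq 0$ and hence the sought finite-dimensional subrepresentation of $\pi_\phi$. Everything outside the third paragraph is standard (weak-containment continuity of induction, Frobenius reciprocity, and the property-(T) spectral gap), and the identification of continuity vectors with induced fixed vectors is precisely Proposition~\ref{continuity rep}. The hard part will be the $G_2$-step: correctly extracting from the character structure the extra $G_2 \times G_2$-symmetry and the trace on $\rho_2$ that convert ``$\rho_2$ weakly contains $1_{G_2}$'' into ``$\rho_2$ contains $1_{G_2}$'' using only $\Char(G_2) = \{1\}$. This is where the hypothesis on $G_2$ is genuinely used, and where the argument must follow the Segal--von Neumann / \cite{BG14} mechanism rather than a property-(T) spectral gap.
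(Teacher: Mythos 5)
Your setup and your $G_1$-step coincide with the paper's proof: reduce char-(T) to producing a nonzero $\Gamma$-invariant Hilbert--Schmidt operator, induce $\sigma=\pi_\phi\otimes\overline{\pi_\phi}$ to $G$, use property (T) of $G_1$ to get $(\Ind_\Gamma^G\sigma)^{G_1}\neq 0$, and identify this space, via Proposition~\ref{continuity rep}, with the space $K$ of $\iota$-continuous vectors of $\sigma$, on which the $\Gamma$-action extends to a continuous $G_2$-representation $\rho_2$; your reduction $\rho_2^{G_2}=(\Ind_\Gamma^G\sigma)^G\cong\sigma^\Gamma$ is also correct (though, as you will see, the spectral-gap bookkeeping of almost invariant vectors is unnecessary). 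But the $G_2$-step, which you yourself flag as the main obstacle, is a genuine gap, and the plan you sketch for it points in the wrong direction. The missing observation is short and does not involve the vectors $Pv_n$ at all: since $\phi$ is a \emph{character}, the vector $\xi_\phi\otimes\overline{\xi_\phi}$ implements a faithful normal trace on $\sigma(\Gamma)''$, so the restriction $M_K$ of $\sigma(\Gamma)''$ to the invariant subspace $K$ is again a von Neumann algebra with a faithful normal tracial state $\tau$. Because $\iota(\Gamma)$ is dense in $G_2$ and $\rho_2$ extends $\sigma|_K$ continuously, each $\rho_2(g)$ is a strong limit of operators $\sigma(\gamma_n)|_K$, hence $\rho_2(G_2)\subset\mathcal{U}(M_K)$. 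Consequently $g\mapsto\tau(\rho_2(g))$ is a continuous, normalized, conjugation-invariant positive definite function on $G_2$, i.e.\ an element of $\Char(G_2)$. The hypothesis forces $\tau(\rho_2(g))=1$ for all $g$, and faithfulness of $\tau$ then gives $\rho_2(g)=1$, since $\tau\bigl((1-\rho_2(g))^*(1-\rho_2(g))\bigr)=2-2\Re\,\tau(\rho_2(g))=0$. So $\rho_2$ is outright trivial, $K\subset\bigl(H_\phi\otimes\overline{H_\phi}\bigr)^\Gamma$, and you are done; no conversion of weak containment into containment is ever needed.

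By contrast, the two mechanisms you propose do not work as stated. Averaging almost-invariant vectors ``against $\tau$'' can only reproduce the weak containment $\rho_2\succ 1_{G_2}$ you already have, and even granting that the limiting positive definite function $\psi$ is a character equal to $1$, the triviality of its GNS representation says nothing about vectors of $\rho_2$ unless $\psi$ is of the form $\tau\circ\rho_2$ with $\tau$ faithful --- which is precisely the point above, and it requires first establishing that $\rho_2(G_2)$ sits inside a tracial von Neumann algebra. The contrapositive via \cite[Theorem 6.1]{BG14} is moreover a category error: that theorem is the tool the paper uses to \emph{prove} $\Char(G)=\{1\}$ for concrete groups (Propositions~\ref{prop:trivchartree} and~\ref{prop:trivcharlie}), whereas here $G_2$ is an abstract group and $\Char(G_2)=\{1\}$ is the \emph{hypothesis}; the only way to use it is to manufacture an explicit element of $\Char(G_2)$ from your data, and that element is $\tau\circ\rho_2$.
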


\begin{proof}
Let $\phi$ be an amenable character of $\Gamma$, and denote by $(\pi,H,\xi)$ the corresponding GNS triple. 
We need to prove that $H$ contains a non-zero finite dimensional invariant subspace. We will argue that $(H \ot \oH,\pi \ot \overline\pi)$ has non-zero invariant vectors. Note that $\pi \ot \overline\pi$ is a tracial representation of $\Gamma$, in the sense that $(\pi \ot \overline\pi)(\Gamma)\dpr$ has a normal faithful trace, implemented by the vector $\xi \ot \overline\xi$. Moreover $\pi \ot \overline\pi$ has almost invariant vectors.

Denote by $(\tH,\tpi)$ the induced unitary representation of $(H \ot \oH,\pi \ot \overline\pi)$ to $G$. Then this representation has almost invariant vectors, and since $G_1$ has property (T), $\tH$ must contain non-trivial $G_1$-invariant vectors. 
By assumption, $\iota(\Gamma)$ is dense in $G_2$,  where $\iota: \Gamma \to G_2$ is the restriction of the projection map. 
We observed in Proposition \ref{continuity rep} that $\tH^{G_1}$ is naturally identified with the $\iota$-continuity space of $H \ot \oH$.
In other words, we have found a non-trivial subspace $K \subset H \ot \oH$ on which the representation $\pi \ot \overline\pi$ extends to a continuous representation of $G_2$. Since the representation of $\Gamma$ on $H \ot \oH$ is tracial, so is the restricted representation on $K$. Thus the continuous extension $\rho: G_2 \to \cU(K)$ is a tracial representation. 
But $\Char(G_2)$ is trivial so every continuous group homomorphism from $G_2$ into the unitary group of a tracial von Neumann algebra is trivial. Therefore $\rho$ is trivial, which implies that $H \ot \oH$ contains invariant vectors, as desired.
\end{proof}

\begin{prop} \label{prop:charTAG}
Let $K$ be a global field and ${\bf G}$ a connected non-commutative $K$-almost simple $K$-algebraic group.
Let $\Gamma\leq {\bf G}(K)$ be an $S$-arithmetic subgroup of a product type.
Assume further that there exists an absolute value $v$ such that ${\bf G}(K_v)$ has property {\em (T)}.
If either $S$ is finite or ${\bf G}$ is simply connected then 
$\Gamma$ has char-{\em (T)}.
\end{prop}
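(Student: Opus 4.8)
The plan is to deduce the result from the char-(T) criterion of Proposition~\ref{prop:charTcrit}, using strong approximation to realize $\Gamma$ as a lattice with dense projections in a product whose factors have either property (T) or trivial character space. First I would reduce to the case where ${\bf G}$ is simply connected. If ${\bf G}$ is already simply connected there is nothing to do; if instead $S$ is finite, I would pass to the simply connected cover $\widetilde{\bf G} \to {\bf G}$ and let $\Lambda \le \widetilde{\bf G}(K)$ be the preimage of $\Gamma$. Then $\Lambda$ is $S$-arithmetic of product type (boundedness of the images being unchanged under isogeny), the place $v$ still affords property (T) for $\widetilde{\bf G}(K_v)$ (property (T) being preserved under isogeny), and $\Lambda \to \Gamma$ has finite kernel and finite-index image. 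Exactly as in the proof of Proposition~\ref{prop:ffd}, char-(T) is inherited through such a map: an amenable character of $\Gamma$ restricts to an amenable character of the finite-index image and pulls back to an amenable character of $\Lambda$ (amenability of the GNS representation being preserved under restriction to finite-index subgroups and under pullback along a quotient by a finite normal subgroup), while a finite-dimensional subrepresentation produced for $\Lambda$ descends past the finite kernel and induces back up to a finite-dimensional subrepresentation for $\Gamma$. So it suffices to treat simply connected ${\bf G}$.

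Assuming ${\bf G}$ simply connected, let $V$ be the (finite or infinite) set of places at which $\Gamma$ is unbounded; since $\Gamma$ is of product type, $\lvert V\rvert \ge 2$, and the given property (T) place $v$ lies in $V$ (as in Theorem~\ref{thm:AG}). By the strong approximation theorem \cite[Theorem II.6.8]{Ma91}, $\Gamma$ is a lattice with dense projections in the (restricted) product $G = \prod_{w \in V} {\bf G}(K_w)$. I would then write $G = G_1 \times G_2$ with $G_1 = {\bf G}(K_v)$ and $G_2 = \prod_{w \in V \setminus \{v\}} {\bf G}(K_w)$: both projections of $\Gamma$ remain dense, $G_1$ has property (T) by hypothesis, and it remains only to verify that $\Char(G_2) = \{1\}$ before invoking Proposition~\ref{prop:charTcrit}.

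The heart of the argument is therefore the computation $\Char(G_2) = \{1\}$. Each factor ${\bf G}(K_w)$, $w \in V$, is a finite product of groups of the form ${\bf H}(k')$ with ${\bf H}$ simply connected $k'$-isotropic $k'$-almost simple over a local field $k'$, so Proposition~\ref{prop:trivcharlie} gives that every such factor has trivial character space. For a finite product $H_1 \times \cdots \times H_m$ of groups with trivial character space I would argue that any character $\phi$ restricts on each factor $H_i$ to an $H_i$-conjugation-invariant normalized positive-definite function, hence to the trivial character of $H_i$; this forces the GNS vector to be invariant under each factor, whence $\phi \equiv 1$. To pass from finite products to the possibly infinite restricted product $G_2$, I would use that positive-definite functions are continuous and that the finitely supported elements form a dense subgroup of $G_2$: a character of $G_2$ is identically $1$ on every finite sub-product, hence on the dense set of finitely supported elements, hence everywhere. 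This yields $\Char(G_2) = \{1\}$, and Proposition~\ref{prop:charTcrit} then delivers char-(T).

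The main obstacle I anticipate is the case $S$ infinite with ${\bf G}$ simply connected, where $G_2$ is a genuine infinite restricted product containing compact open factors ${\bf G}(\mathcal{O}_w)$ that individually carry many characters; the subtle point is that conjugation-invariance across the entire product, combined with continuity, nonetheless annihilates all nontrivial characters, so I must be careful that the restricted-product topology makes the finitely supported elements dense and that strong approximation genuinely provides the lattice-with-dense-projections structure in this non-finite setting. By contrast, the finite-cover reduction is routine once the three standard stability properties of char-(T) (restriction to finite-index subgroups, induction back up, and passage through finite normal kernels) are recorded.
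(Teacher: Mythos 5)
Your overall route is the paper's own: reduce to ${\bf G}$ simply connected via the finite-kernel/finite-index transfer of char-(T) (which the paper also asserts as an observation, so you are on equal footing there), apply strong approximation to realize $\Gamma$ as a lattice with dense projections, split off a property (T) factor $G_1$, verify $\Char(G_2)=\{1\}$, and invoke Proposition~\ref{prop:charTcrit}; your product-plus-density argument for characters of a restricted product is a correct elaboration of a step the paper leaves implicit. The genuine gap is the claim that each ${\bf G}(K_w)$, $w\in V$, is a finite product of \emph{isotropic} almost simple groups. Writing ${\bf G}=R_{L/K}({\bf H})$ with ${\bf H}$ absolutely almost simple over a finite extension $L/K$, one has ${\bf G}\times_K K_w\cong\prod_{u\mid w}R_{L_u/K_w}({\bf H})$, and ${\bf H}$ may perfectly well be isotropic at one place $u$ of $L$ above $w$ and anisotropic at another. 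Concretely, take $K=\Q$, $L$ real quadratic, ${\bf H}=\operatorname{Spin}(q)$ for a quadratic form $q$ in nine variables over $L$ of signature $(8,1)$ at one real place and $(9,0)$ at the other, and $\Gamma={\bf G}(\Z[1/p])$: then ${\bf G}$ is simply connected and $\Q$-almost simple, $\Gamma$ is of product type with $V=\{\infty,p\}$, and ${\bf G}(\Q_p)$ has property (T) (the local Witt indices at finite places are at least $2$), so all hypotheses hold; but ${\bf G}(\R)\cong\operatorname{Spin}(8,1)\times\operatorname{Spin}(9)$ has a nontrivial compact factor, and since $\operatorname{Spin}(8,1)$ fails (T), the place $v$ is forced to be finite, so this compact factor sits inside your $G_2$. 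A compact direct factor $U\neq 1$ yields nontrivial characters of $G_2$ (pull back the normalized trace of any nontrivial irreducible representation of $U$ along the projection), so the hypothesis $\Char(G_2)=\{1\}$ of Proposition~\ref{prop:charTcrit} fails and your argument stops. Note this does not make the proposition false -- such compact characters have finite-dimensional GNS summands -- it only breaks your route to it.

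The repair is exactly what the paper does: the factors of the ambient restricted product are not the full groups ${\bf G}(K_w)$ but the almost simple factors of ${\bf G}$ over each completion \emph{in which the image of $\Gamma$ is unbounded}. Anisotropic factors have compact groups of points, hence bounded $\Gamma$-image, so they are discarded automatically, and every surviving factor is simply connected, almost simple and isotropic over a local field, so Proposition~\ref{prop:trivcharlie} applies to it; your restriction/density argument then correctly gives $\Char(G_2)=\{1\}$ for the restricted product of the surviving factors away from $v$. Two pieces of bookkeeping come with this finer decomposition: strong approximation must be quoted to give dense projections with respect to it, and $G_1$ becomes the product of the surviving factors at $v$, which inherits property (T) as a quotient of ${\bf G}(K_v)$. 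Finally, the obstacle you flag at the end -- the compact open subgroups ${\bf G}(\mathcal{O}_w)$ entering the restricted product -- is not the issue at all: they are not direct factors, and your own density argument disposes of them; the anisotropic almost simple factors are the real danger, and you missed them.
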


\begin{proof}
We begin as in the proof of Proposition~\ref{prop:ffd}.
We first note that if $\Gamma$ is of a simple type and of higher rank then it has property (T),
which clearly implies the result.
Thus we assume as we may that $\Gamma$ is of a product type.
Next, observe that if
$\Lambda$ has char-(T) and $\Lambda\to \Gamma$ is a homomorphism with a finite kernel and 
finite index image then also $\Gamma$ has char-(T).
Therefore we assume as we may that ${\bf G}$ is simply connected even in case $S$ is finite.
Indeed, in this case letting $\tilde{\bf G}$ be the simply connected cover of ${\bf G}$ and letting $\Lambda$ be the preimage of 
$\Gamma$ under the covering map $\tilde{\bf G}(K)\to {\bf G}(K)$, we have that $\Lambda\leq \tilde{\bf G}(K)$ is an $S$-arithmetic subgroup of a higher rank and $\Lambda\to \Gamma$ is a homomorphism with a finite kernel and 
finite index image.
As usual, view $\Gamma$ as a lattice in the corresponding restricted product of all almost simple factors over all local completions of $K$ in which 
the image of $\Gamma$ is unbounded.
We set $G_1= {\bf G}(K_v)$ and denote the restricted product of all other factors by $G_2$.
By the strong approximation theorem (see \cite[Theorem II.6.8]{Ma91}), $\Gamma$ is a lattice with dense projections in $G = G_1\times G_2$.
By Proposition~\ref{prop:trivcharlie}, the group $G_2$ has no non-trivial characters and by assumption $G_1$ has (T).
It follows by Proposition~\ref{prop:charTcrit} that $\Gamma$ has char-(T).
\end{proof}

The proof of Theorem~\ref{thm:AG} now follows similarly to the proof Theorem~\ref{thm:Zp}.

\begin{proof}[Proof of Theorem~\ref{thm:AG}]
We have seen in Proposition~\ref{SAF}
that $\Gamma$ is charmenable so we are left to verify properties (3)-(5) in Definition~\ref{def:charmenable}.
Property (3) follows from \cite[VIII(A), p.\ 258]{Ma91},
property (4) was verified in Proposition~\ref{prop:ffd}
and property (5) follows from Proposition~\ref{prop:charTAG}.
So indeed, $\Gamma$ is charfinite.
\end{proof}

Combining \cite[Theorem B]{BH19} with Corollary \ref{kill center} and using property (T), we also get the following result. 

\begin{cor}\label{cor:BH19}
Let $G$ be a simple Lie group of higher rank with finite center and let $\Gamma$ be a lattice in $G$.
Then $\Gamma$ charfinite.
\end{cor}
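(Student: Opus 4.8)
The plan is to derive charmenability from Corollary \ref{kill center} with $(B,\nu)$ the Furstenberg-Poisson boundary of $G$, and then to promote it to charfiniteness using property (T) of $\Gamma$.

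First I would record the structural inputs. Since $G$ is simple of higher rank with finite center, the Margulis normal subgroup theorem (\cite[VIII(A), p.\ 259]{Ma91}) forces every normal subgroup of $\Gamma$ to be finite and central or of finite index; as a higher rank lattice is non-amenable, this shows that $\Rad(\Gamma)$ is finite, which is already property (3). Write $\bar{\Gamma}=\Gamma/\Rad(\Gamma)$, a lattice in the adjoint group $\bar{G}=G/\cZ(G)$, which is again simple of higher rank. Fix an admissible $\mu\in\Prob(G)$ and let $(B,\nu)$ be the associated Furstenberg-Poisson boundary; since $\cZ(G)$ acts trivially on $B$, this is a $\bar{G}$-space, and by \cite[Theorem 2.7]{BF14} together with \cite[Lemma 3.5]{BF18} it is amenable and metrically ergodic both as a $\bar{G}$-space and as a $\bar{\Gamma}$-space. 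Setting $N=L^\infty(B)$, Corollary \ref{kill center} reduces charmenability of $\Gamma$ to verifying condition (a): every separable, ergodic, faithful $(\bar{\Gamma},N)$-von Neumann algebra $(M,E)$ is either invariant or $\bar{\Gamma}$-singular.

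This is where \cite[Theorem B]{BH19} enters, playing the role that Theorem \ref{non-trivial G points} plays for product type. Given a non-invariant $(M,E)$, I would induce it to a $(\bar{G},N)$-structure and apply the noncommutative Nevo-Zimmer dichotomy of \cite[Theorem B]{BH19}: the associated faithful $\mu$-stationary state on the induced algebra is either $\bar{G}$-invariant, which is excluded by non-invariance via Lemma \ref{invariant induced}, or its structure map factors through a $\bar{G}$-equivariant map into $L^\infty(\bar{G}/Q)$ for a parabolic subgroup $Q$ containing a fixed minimal parabolic $P$. One then feeds this into the singularity criterion exactly as in the proof of Theorem \ref{thm:SD}: by Example \ref{Mautner algps} the subgroup $P$ is stably self normalizing and has the relative Mautner property in $\bar{G}$, so Proposition \ref{FMM implies singular} applies, while Lemma \ref{freeness1} guarantees that every non-central element of $\bar{G}$ acts with a null fixed-point set on $\bar{G}/Q$. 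Hence $E$ and $E_g$ are singular for every $g\in\bar{\Gamma}\setminus\{e\}$, so $(M,E)$ is $\bar{\Gamma}$-singular. This verifies condition (a), and Corollary \ref{kill center} yields charmenability, i.e.\ properties (1) and (2).

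It remains to check properties (4) and (5), both of which I would extract from property (T) of the higher rank lattice $\Gamma$. For (5), property (T) gives char-(T) at once: by \cite[Theorem 1.1]{BV91} every amenable representation contains a finite dimensional subrepresentation, so the GNS representation of an amenable extremal character does too; since this GNS representation is a factor representation, containing a finite dimensional subrepresentation forces the factor $M=\pi_\phi(\Gamma)\dpr$ to be of finite type $I$, hence finite dimensional, so the character is finite. For (4), property (T) makes $\Gamma$ finitely generated and, via the Delorme-Guichardet vanishing $H^1(\Gamma,\pi)=0$ for every unitary $\pi$, makes each representation $\Gamma\to\U(n)$ locally rigid; thus the conjugacy classes form isolated points in the compact space $\Hom(\Gamma,\U(n))/\U(n)$, and compactness forces finiteness in each dimension. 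Combining properties (1)--(5), $\Gamma$ is charfinite. The main obstacle will be the precise invocation of \cite[Theorem B]{BH19} in the second paragraph: one must correctly induce the $(\bar{\Gamma},N)$-structure to the ambient semisimple group, identify the boundary-type image produced by the Nevo-Zimmer structure theorem with a parabolic homogeneous space, and match its measure class with the Furstenberg-Poisson boundary so that Proposition \ref{FMM implies singular} and Lemma \ref{freeness1} apply verbatim. All the inputs downstream of this --- the relative Mautner property and the freeness statement --- are already packaged in Example \ref{Mautner algps} and Lemma \ref{freeness1} as exploited for Theorem \ref{thm:SD}.
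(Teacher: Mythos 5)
Your proposal is correct and coincides with the paper's own proof, which is precisely the recipe you follow: condition (a) via \cite[Theorem B]{BH19} combined with Proposition~\ref{FMM implies singular} and Lemma~\ref{freeness1}, charmenability via Corollary~\ref{kill center} (using that $\Rad(\Gamma)$ is finite and central by Margulis' normal subgroup theorem), and properties (3)--(5) from \cite[VIII(A)]{Ma91} and property (T), including your char-(T) argument for (5) and the local-rigidity/compactness argument for (4). The one imprecision, occurring exactly at the step you yourself flag as the main obstacle, is your paraphrase of \cite[Theorem B]{BH19}: that theorem produces a $G$-equivariant normal \emph{embedding} $L^\infty(G/Q)\hookrightarrow \tM$ \emph{into} the induced algebra, not a factorization of the structure map \emph{through} $L^\infty(G/Q)$, and since the $\Gamma$-action on $M$ cannot be extended to $G$ by any density argument in the simple case, converting that embedding into $\Gamma$-singularity of $(M,E)$ requires work on which the paper is equally silent, so your account is faithful to the paper's level of detail.
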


%%%%%

\bibliographystyle{plain}

\end{document}